\newtheorem{theorem}{Theorem}[chapter]
\newtheorem{lemma}[theorem]{Lemma}
\newtheorem{proposition}[theorem]{Proposition}
\newtheorem{corollary}[theorem]{Corollary}
\newtheorem{fact}[theorem]{Fact}
\theoremstyle{definition}
\newtheorem{definition}[theorem]{Definition}
\newtheorem{example}[theorem]{Example}
\newtheorem{notation}[theorem]{Notation}
\theoremstyle{remark}
\newtheorem{remark}[theorem]{Remark}
\newtheorem{question}[theorem]{Question}
\numberwithin{section}{chapter}
\numberwithin{equation}{chapter}
\newcommand{\CC}{\mathbb{C}}
\newcommand \EE {\mathbb{E}}
\newcommand \dieu {\mathbb{D}}
\newcommand{\FF}{\mathbb{F}}
\newcommand{\GG} {\mathbb{G}}
\newcommand{\NN} {\mathbb{N}}
\newcommand{\PP}{\mathbb{P}}
\newcommand{\QQ}{\mathbb{Q}}
\newcommand{\RR}{\mathbb{R}}
\newcommand{\ZZ}{\mathbb{Z}}
\newcommand{\cA}{\mathcal{A}}
\newcommand{\cH}{\mathcal{H}}
\newcommand{\cM}{\mathcal{M}}
\newcommand{\cT}{\mathcal{T}}
\newcommand{\cC}{\mathcal{C}}
\newcommand{\CJ} {\mathcal{J}}
\newcommand{\Sh} {\mathrm{Sh}}
\newcommand{\CX} {\mathcal{X}}
\newcommand \cf {\mathfrak{f}}
\newcommand \hh {\mathfrak{h}}
\newcommand{\ang}[1]{\langle #1 \rangle}
\newcommand{\ooo}{\mathrm{ord}}
\newcommand{\sss}{\mathrm{ss}}
\newcommand \QM {{\QQ[\mu_m]}}
\begin{document}

\frontmatter

\title{The Torelli locus and Newton polygons}

\author{Rachel Pries}
\address{Math department, Colorado State University, Fort Collins, Colorado}
\curraddr{}
\email{pries@colostate.edu}
\thanks{
I would like to thank:\\
the organizers of the Arizona Winter School for inviting me to speak in 2024;\\
Jeff Achter and Valentijn Karemaker for their support and mathematical insights;\\
Frans Oort and Douglas Ulmer for their inspiring mentorship;\\
Steven Groen for helping lead the project groups for the AWS lecture series;\\
the collaborators and colleagues who helped me improve this manuscript;\\
and, finally, the NSF for their partial support (DMS-22-00418).}

\subjclass[2020]{Primary: 11G10, 14H10, 14H40, 14K10;  
Secondary: 11G18, 11G20, 11M38, 14K15}



\keywords{Abelian variety, Jacobian, curve, cyclic cover, moduli space, Torelli morphism, ordinary, supersingular, Frobenius, $p$-rank, Newton polygon, Dieudonn\'e module}

\date{}

\maketitle


\setcounter{page}{5}

\tableofcontents


\mainmatter


\chapter{Introduction} \label{C0introduction}

\section{Purpose and goals}

This manuscript is about abelian varieties that are Jacobians of curves.
I started writing it for a lecture series at the Arizona Winter School in 2024 on abelian varieties.
A longer more descriptive title might be
\emph{The Torelli locus in the moduli space of abelian varieties, with
applications to Newton polygons of curves in positive characteristic.}
To elaborate, this manuscript covers two topics:
the first is about the \emph{geometry} of the Torelli locus;
the second is about the \emph{arithmetic} invariants of 
abelian varieties that occur for Jacobians of smooth curves in positive characteristic.

The main content is about (families of) abelian varieties, 
with a focus on those with special properties, including
abelian varieties that are Jacobians of curves, 
that have many automorphisms, or that have unusual Newton polygons in positive characteristic.  
Throughout, there is a consistent interplay between the themes of arithmetic and geometry.
The dimension of an abelian variety places constraints on its arithmetic invariants; conversely, 
arithmetic invariants of an abelian variety place constraints on its geometry.

Jacobians of curves can often be studied in a more explicit and concrete way than one can study a typical abelian variety.  
On the other hand, there are techniques for studying (families of) abelian varieties that do not apply when studying (families of) Jacobians of curves.  This leads to a valuable and rewarding exchange between these topics.
Furthermore, we can study moduli spaces of abelian varieties (or moduli spaces of curves).
These moduli spaces can be refined by adding additional structure.  They 
can also be stratified by a given invariant, where the points of a stratum represent abelian varieties (or curves) 
having that invariant.

One of my goals is to provide a broad perspective within this document.
For simplicity, this manuscript restricts to working over an algebraically closed field, 
but includes material both over $\CC$ (characteristic $0$) and 
over $\bar{\FF}_p$ (characteristic $p$).  
I include a diverse range of open questions, some of which are well-known goals 
within the arithmetic geometry community, and some of which emerged from my own research program.  
Another goal is to cover advanced material in a grounded way.  
The purpose of the examples is to make the concepts and techniques more accessible.
More details about the technical background can be found in the provided references, which in turn connect to 
many other important papers on these topics.
I hope that this manuscript is both broad and accessible enough to encourage many people 
to work on these captivating open questions.

\section{Overview of topics by chapter} \label{Stopics}

\subsection{Situating the content}

Throughout, we work over an algebraically closed field $k$.
In Chapters~\ref{C1Atorelli}, \ref{C2Aboundary}, \ref{C4Aspecial}, and (the first half of) \ref{C3Acovers},
the material applies for $k$ of any characteristic (e.g., $\CC$ or $\bar{\FF}_p$).
The material in Chapters~\ref{Chapter3}, \ref{Chapter4}, \ref{C2Bboundary}, \ref{Chapter4BNPspecial}, and (the second half of) \ref{C3Acovers}
applies when $k$ has positive characteristic (e.g., $\bar{\FF}_p$).

Chapters~\ref{C1Atorelli}, \ref{Chapter3}, and \ref{Chapter4} cover topics about abelian varieties and curves defined over 
an algebraically closed field $k$.
In Chapters~\ref{C2Aboundary} and \ref{C2Bboundary}, we switch to working with moduli spaces, 
whose points parametrize (families of) abelian varieties and curves.
In Chapters~\ref{C4Aspecial}, \ref{Chapter4BNPspecial}, \ref{C3Acovers}, we consider abelian varieties and curves that 
have non-trivial automorphisms, and the moduli spaces for families of these.

Every chapter ends with open questions.  Most likely these problems have widely varying levels of difficulty, and
can be approached with multiple techniques.
In some cases, these questions are well-known, and have been a primary goal of a research community for many decades.
For example, this includes variations of the Schottky problem in Chapter~\ref{C1Atorelli}, conjectures on complete 
subvarieties of moduli spaces in Chapter~\ref{C2Aboundary}, and the Coleman--Oort conjecture in Chapter~\ref{C4Aspecial}.

In general, the open problems in positive characteristic are not as well-known. 
Some highlights are the question of the existence of supersingular curves in Chapter~\ref{Chapter4}, the question of irreducibility of the 
non-ordinary locus of curves in Chapter~\ref{C2Bboundary}, and the question of the generic $a$-number of curves with $p$-rank $0$ 
in Chapter~\ref{C3Acovers}.

Throughout, many references are provided.  It is not possible to include references for everyone's work.
I encourage the reader to view these references as a starting point for discovering the literature.

\subsection{Chapter~\ref{C1Atorelli}: Abelian varieties and Jacobians of curves}

In this chapter, we provide a brief background about abelian varieties and Jacobians of curves.
We give examples of several types of abelian varieties and curves which play an important role in this manuscript.

Let $g$ be a positive integer.
Suppose $C$ is a (smooth, projective, irreducible) curve of genus $g$.
The Jacobian $\mathrm{Jac}(C)$ of $C$ is the moduli space of line bundles of degree $0$ on $C$.
Its group of points is isomorphic to the quotient of the group of divisors of degree zero of $C$ by the subgroup of principal divisors.
Many attributes of $C$ can be characterized by its Jacobian.
An important theorem is that the Jacobian $\mathrm{Jac}(C)$ is a principally polarized (p.p.) abelian variety of dimension $g$.

For $1 \leq g \leq 3$, almost every principally polarized abelian variety is a Jacobian.
For example, a p.p.\ abelian variety of dimension $g=1$ is an elliptic curve.
A p.p.\ abelian surface (resp.\ threefold) is the Jacobian of a smooth curve of genus $2$ (resp.\ genus $3$) 
unless it decomposes as a product, together with the product polarization.
For $g \geq 4$, the situation is very interesting because not every p.p.\ abelian variety is a Jacobian.

We end this chapter with several open variations of the Schottky problem
about whether certain types of abelian varieties are Jacobians of curves.

\subsection{Chapter~\ref{Chapter3}: The $p$-torsion invariants}

Suppose $\mathrm{char}(k)=p$.  In this chapter, we explain several generalizations of the distinction between
ordinary and supersingular elliptic curves.

An elliptic curve $E$ over $k$ is \emph{ordinary} if it has a point of order $p$; 
equivalently, $E$ is ordinary if its Newton polygon has slopes of zero and one.
Otherwise, $E$ is \emph{supersingular}.
There are many results about ordinary and supersingular elliptic curves, 
due to Hasse \cite{Hasse35}, Deuring \cite{deuring}, and Igusa \cite{igusa}.
For a fixed prime $p$, almost all elliptic curves are ordinary and the number of 
isomorphism classes of supersingular elliptic curves is approximately $p/12$.  

Suppose $X$ is a p.p.\ abelian variety $X$ defined over $k$.  To classify the action of Frobenius,
there are combinatorial invariants called the $p$-rank, the Newton polygon, 
the Ekedahl--Oort type, and the $a$-number.
In this document, we call these the \emph{$p$-torsion invariants}.

The $p$-rank is the integer $f$ such that the number of $p$-torsion points on $X$ equals $p^f$.
The Newton polygon is determined by the characteristic polynomial of Frobenius 
on the crystalline cohomology. 
The Ekedahl--Oort type
classifies the structure of the $p$-torsion group scheme $X[p]$ of $X$, along with the perfect pairing $X[p] \times X[p] \to \mu_p$. 
The $a$-number is the number of generators of the local-local part of $X[p]$ as a module under $F$ and $V$.

For every positive integer $g$, the possibilities for the $p$-torsion invariants of a p.p\ abelian variety of dimension $g$ are well understood.
We briefly include one question about the interaction between the Newton polygon and Ekedahl--Oort type, 
and about purity of the stratification by Ekedahl--Oort type.

\subsection{Chapter~\ref{Chapter4}: Existence of curves with given $p$-torsion invariants}

Suppose $\mathrm{char}(k)=p$.
For most integers $g$ and primes $p$, 
it is not known which $a$-numbers, Newton polygons, and Ekedahl--Oort types occur for 
Jacobians of smooth curves of genus $g$ over a field of characteristic $p$. 

The $p$-torsion invariants of a curve are defined to be that of its Jacobian. 
For a curve $C$ defined over a finite field $\FF$, its Newton polygon
keeps track of the number of points on $C$ defined over finite extensions of $\FF$.
The Ekedahl--Oort type of $C$ determines, and is determined by,
the structure of the de Rham cohomology of $C$ 
as a module under Frobenius $F$ and Verschiebung $V$, plus the perfect pairing.
We explain some methods to compute the $p$-torsion invariants for Jacobians of curves.
These involve computations of the Frobenius action on various cohomology groups.

Some Newton polygons and Ekedahl--Oort types have been shown to occur for Jacobians and 
a few Ekedahl--Oort types have been ruled out. 
Historically, some of the first results in this area were, for every prime $p$:
(i) for every $g\geq 2$, there exists an ordinary smooth curve of genus $g$ \cite{miller},
and a non-ordinary smooth curve of genus $g$, 
and (ii) there exists a supersingular curve of genus $2$ \cite{serre82}, or \cite{iko}.

We also describe existence results for Newton polygons and Ekedahl--Oort types for curves of small genus $g=2,3,4$.
In particular, Kudo, Harashita, and Senda proved there exists a supersingular smooth curve of genus $4$ for all primes $p$ \cite{khs20}. 
We describe recent progress for $g=5$. 
Much more is known for Artin--Schreier curves;
we cover some of the (cohomological and computational) results for Artin--Schreier curves.

We highlight Question~\ref{Qopenss} about whether there exists a supersingular smooth curve 
for every genus $g$ and characteristic $p$.  There are also open questions about the number of 
curves over finite fields with specified $p$-torsion invariants.

\subsection{Chapter~\ref{C2Aboundary}: Moduli spaces and the boundary}

For $g \geq 1$, we give some background on the moduli space $\cA_g$ of p.p.\ abelian varieties of dimension $g$, and
the moduli space $\cM_g$ of smooth curves of genus $g$.

The Torelli morphism $\tau_g: \cM_g \to \cA_g$ takes a curve $C$ to its Jacobian $\mathrm{Jac}(C)$.
It is injective on $k$-points, meaning that a curve $C$ over $k$ is uniquely determined by $\mathrm{Jac}(C)$.
The \emph{open Torelli locus} $\cT^\circ_g$ is the image of $\tau_g$; 
its points represent p.p.\ abelian varieties of dimension $g$ that are Jacobians of smooth curves.

A primary goal is to characterize the open Torelli locus $\cT^\circ_g$ within $\cA_g$.
For $g \geq 2$, the dimension of $\cM_g$ is $3g-3$, while the dimension of $\cA_g$ is $g(g+1)/2$.
When $g=1,2,3$, then $\cT^\circ_g$ is open and dense in $\cA_g$, 
reaffirming that almost every p.p.\ abelian variety of dimension $g \leq 3$ is a Jacobian of a smooth curve.  
For $g \geq 4$, the codimension of $\cT^\circ_g$ in $\cA_g$ is positive, and 
this codimension increases with $g$. 

Surprisingly, some facts about smooth curves can be proven using singular curves;
some facts about p.p.\ abelian varieties that are indecomposable can be proven using 
p.p.\ abelian varieties that decompose.
For this reason, it is valuable to consider compactifications of moduli spaces, namely 
the Deligne--Mumford compactification $\overline{\cM}_g$ of $\cM_g$ 
and a toroidal compactification $\tilde{\cA}_g$ of $\cA_g$.
The points of the boundary of $\cA_g$ represent semi-abelian varieties having positive toric rank.
The points of the boundary of $\cM_g$ represent stable singular curves.

A stable curve has \emph{compact type} when its irreducible components are smooth and the dual graph
of the irreducible components is a tree.
The Jacobian of a singular curve of compact type is an abelian variety, although it does decompose 
together with the product polarization.

The \emph{closed Torelli locus} $\cT_g$ is the closure of $\cT^\circ_g$ in $\cA_g$;
its points represent p.p.\ abelian varieties of dimension $g$ that are Jacobians of stable curves of compact type. 
The moduli space $\cM_g^{ct}$ represents stable curves of genus $g$ of compact type.
The Torelli map extends to a map $\tau_g: \cM^{ct}_g \to \cA_g$, whose image is $\cT_g$.
However, $\tau_g$ is not injective on the boundary components of $\cM_g^{ct}$.

This chapter ends with results and open questions about 
complete subspaces of these moduli spaces.
For example, a result of Diaz \cite{diaz84}, reproved by Looijenga \cite{looijenga} in positive characteristic, 
states that a complete subspace $W \subset \overline{\cM}_g$ having codimension 
at most $g$ must intersect the boundary.

\subsection{Chapter~\ref{C2Bboundary}: Intersection of the Torelli locus with $p$-torsion strata} 

Suppose $\mathrm{char}(k)=p$.
We provide some background on 
the stratifications of $\cA_g$ by the $p$-torsion invariants.
These stratifications are well understood, in large part 
because of work of Katz \cite{katzslope} and the purity theorem of de Jong and Oort \cite{oortpurity}.
Many of the open questions in this manuscript for positive characteristic 
can be rephrased as questions about the intersection of the $p$-torsion strata of $\cA_g$ with 
the open Torelli locus of Jacobians of curves. 

More precisely, given a $p$-torsion invariant $\xi$, let $\cA_g[\xi] \subset \cA_g$ denote the stratum of p.p.\ 
abelian varieties that have $p$-torsion invariant $\xi$, with the reduced induced structure.
In most cases, it is not understood how $\cA_g[\xi]$ intersects $\cT^\circ_g$, 
or even whether they intersect at all.  If the intersection is non-trivial, one can ask more refined questions about the geometry 
of the intersection, starting with its dimension, as in Question~\ref{Qmot3}.

Consider the supersingular locus $\cA_g[\sigma_g]$.
Using the geometry of $\cA_3[\sigma_3]$, 
Oort proved that there exists a supersingular curve of genus $3$ for every prime $p$ \cite{O:hypsup}.
For $g=4$, we describe a geometric proof that $\cA_4[\sigma_4] \cap \cT^\circ_4$ is non-empty \cite{Pssg=4};
as an application, for every prime $p$,
this proves that there exists a supersingular curve of genus $4$. 
This proof does not extend to curves of higher genus.

In this chapter, we also
review a theorem of Faber and Van der Geer about the dimensions of the $p$-rank strata in $\cM_g$ \cite{FVdG}. 
Furthermore, we discuss an inductive technique which sometimes yields new results about
the existence of curves with given Newton polygon or Ekedahl--Oort type \cite{priesCurrent}. 

We end the chapter with some open questions about the number of irreducible components of the $p$-rank strata in $\cM_g$
and mass formulas for non-ordinary curves.

\subsection{Chapter~\ref{C4Aspecial}: Curves and abelian varieties with cyclic actions}

In this chapter, we discuss curves and abelian varieties that have an action by a non-trivial 
cyclic group.  Specifically, we consider cyclic degree $m$ covers of the projective line, which are classified by
discrete invariants, namely the degree, the number of branch points, and the inertia type.
The Jacobians of these curves are p.p.\ abelian varieties with an action of $\mu_m$ of a given signature, which is the information
of the dimensions for the eigenspaces of the $\mu_m$-action on the regular $1$-forms.  

Families of covers of this type are represented by moduli spaces called Hurwitz spaces.
These Hurwitz spaces map to Deligne--Mostow Shimura varieties, whose points represent p.p.\ abelian varieties having 
a $\mu_m$-action with this signature.
A family of covers is \emph{special} when the image of the Hurwitz space is open and dense in (a component of) the Shimura variety.

The Legendre family of elliptic curves is one example of a special family, and 
the moduli space of hyperelliptic curves of genus $2$ is another.
The complete list
of families of cyclic covers of $\mathbb{P}^1$ that have special moduli spaces was found by Moonen \cite{moonen}.  
Curves in special families are often easier to analyze because of the additional techniques available on the 
Shimura variety.

We end the chapter with open questions about the Coleman--Oort conjecture.

\subsection{Chapter~\ref{Chapter4BNPspecial}: Newton polygons and cyclic actions}

Let $\mathrm{char}(k)=p$.
In this chapter, we discuss the Newton polygons of abelian varieties (and curves) 
having a cyclic action as in Chapter~\ref{C4Aspecial}.  

Historically, this material generalizes earlier results for Jacobians that have complex multiplication.
Many people computed Newton polygons of Jacobians of (quotients of) Fermat curves.
For these Jacobians, there are several methods to determine the Newton polygon, including computing the action of 
Frobenius or using the Shimura--Taniyama formula.
 
Generalizing the complex multiplication situation, we discuss the Newton polygons of abelian varieties (and curves) 
having a cyclic action as in Chapter~\ref{C4Aspecial}.  There are restrictions on which Newton polygons occur in this 
context which were studied by Kottwitz and others.  The first of these arises from the action of Frobenius on the 
eigenspaces for the $\mu_m$-action.  

The Newton polygon stratification is easier to study on Hurwitz spaces which are special.
As an application, we show there exist supersingular curves of genus $5$, $6$, and $7$, 
under certain (new) congruence conditions on the prime $p$ \cite{LMPT2}.  
 
We end the chapter with some questions that are likely more tractable.
Specifically, these are questions about supersingular curves in special families $\mathcal{F}$ 
of cyclic covers of $\mathbb{P}^1$, 
including the field of definition of the supersingular curves when $\mathrm{dim}(\mathcal{F}) =1$
and the geometry of the supersingular locus when $\mathrm{dim}(\mathcal{F}) =2$.

\subsection{Chapter~\ref{C3Acovers}: Torsion points and monodromy}

This chapter is about the $\ell$-adic monodromy groups of families of abelian varieties and curves, 
for a prime $\ell \not = \mathrm{char}(k)$.
For a curve $C$ of genus $g$, there is a connection 
between the $\ell^{2g}$ points on $\mathrm{Jac}(C)$ that are $\ell$-torsion points
and the unramified $\ZZ/\ell \ZZ$-covers of $C$.  

Consider a family of abelian varieties $X \to W$ of (relative) dimension $g$, with $W$ irreducible. 
Let $w \in W$ be a chosen basepoint. 
Then the fundamental group $\pi_1(W,w)$ of the base acts on 
the $\ell$-torsion of the abelian variety $X_w$, the fiber of $X \to W$ over $w$.  The $\ell$-adic monodromy group measures the image of 
the associated representation of $\pi_1(W,w)$ in the symplectic group $\mathrm{Sp}_{2g}(\ZZ_\ell)$.

In this chapter, we highlight a result of Chai \cite{Chaimonodromy}:
for a prime $\ell \not = p$, it states that the $\ell$-adic monodromy group is big if $W \subset \cA_g$
is an irreducible subspace stable under all $\ell$-adic Hecke correspondences such that $W$ is not contained in the supersingular locus.
We discuss a related result about the $\ell$-adic monodromy groups 
of the $p$-rank strata of $\cM_g$ \cite{AP:mono}.  
This proves that the $p$-torsion and the $\ell$-torsion on Jacobians are independent of each other,
in a way that can be made precise and which has applications for the the generic behavior of curves with a given $p$-rank.

We end the chapter with some open questions about the $a$-number of a generic curve of $p$-rank $0$ and about the
monodromy of the stratum of curves with $p$-rank $0$ and $a$-number $a \geq 2$.

\section{Lectures at the Arizona Winter School}

Here is a brief description of my lectures at the Arizona Winter School in March 2024.
The first half of each lecture included material that applies for abelian varieties over a field $k$ of any characteristic;
the second half focused on material about abelian varieties over a field $k$ of positive characteristic.
Both halves of each lecture included an open question.  
We omit the references in this section; see the relevant parts of Section~\ref{Stopics} for references.

\begin{enumerate}
\item {\bf The Torelli locus and arithmetic invariants}

In the first half of this lecture, I covered parts of Chapters~\ref{C1Atorelli} and \ref{C2Aboundary}, 
focusing on the Torelli locus.
With a dimension count, I showed that the Torelli locus is open and dense inside $\cA_g$ when $1 \leq g \leq 3$, and has 
positive codimension for $g \geq 4$.
In the second half, I described the $p$-torsion invariants of abelian varieties from Chapter~\ref{Chapter3},
and results about $p$-torsion invariants of curves of small genus from Chapter~\ref{Chapter4}.

\bigskip

\item {\bf Boundaries of moduli spaces of curves and abelian varieties}

In the first half of this lecture, I covered material from Chapter~\ref{C2Aboundary}
about the boundary of $\cM_g$ and the clutching morphisms.
I discussed the result of Diaz that a complete subspace 
$W \subset \overline{\cM}_g$ having codimension at most $g$ must intersect the boundary.

In the second half, I described material from Chapter~\ref{C2Bboundary}, 
starting with the purity result for the Newton polygon stratification of $\cA_g$.
As an application, I explained a geometric proof that, for every prime $p$, 
there exist supersingular curves of genus $4$, and explained why 
this proof does not extend to curves of higher genus.
I also described how the boundary can be used to study the $p$-rank stratification of $\cM_g$.  

\bigskip

\item {\bf Special families of abelian varieties}

In this lecture, I covered parts of Chapter~\ref{C4Aspecial} and Chapter~\ref{Chapter4BNPspecial}.
The main themes of the first half were (moduli spaces for) abelian varieties and curves having an action by a cyclic group. 
I highlighted the significance of special families and the Coleman--Oort conjecture.

In the second half, I provided examples of the restrictions on Newton polygons for special families
of cyclic covers of the projective line.
As an application, I explained how this
yields examples of supersingular curves of genus $5,6,7$ under congruence conditions on the prime $p$.

\bigskip

\item {\bf Torsion points and unramified covers}

In this lecture, I covered Chapter~\ref{C3Acovers}
about $\ell$-adic monodromy of families of abelian varieties or curves.
I described Chai's result about big monodromy and how this can be used to prove a big 
monodromy result for the $p$-rank strata of the moduli space of curves.
I ended by analyzing differences in the generic behavior of curves with $p$-rank $0$ and curves that are supersingular.

\end{enumerate}

\chapter{Abelian varieties and Jacobians of curves} \label{C1Atorelli}

The purpose of this chapter is to provide some brief background and intuition about abelian varieties.
We refer to the Prelude of this volume for definitions. 
For more comprehensive information, please read the many excellent textbooks about abelian varieties, 
including Mumford \cite{mumfordAVbook} and Milne \cite{milnechapterAV}. 
See also the notes by \cite{milneAV} and \cite{EvdGMbook}.
Moduli spaces of abelian varieties are covered by Mumford, Fogarty, and Kirwan in \cite{mumfordfogarty}, and by Faltings and Chai in \cite{faltingschaidegeneration}.

Throughout, we work over an algebraically closed field $k$.

\begin{definition} (Prelude Definition 1.3.2) 
An \emph{abelian variety} over $k$ is a smooth connected proper group variety over $k$.
\end{definition}

An equivalent definition from \cite[page 39]{mumfordAVbook} is that an abelian variety is a complete (reduced, irreducible) algebraic variety, 
with a group law whose multiplication and inverse maps are both morphisms of varieties.

\begin{remark} Suppose $X$ is an abelian variety over $k$. 
See the following references for these definitions: group variety, \cite[Section~1]{milnechapterAV};

isogeny, Prelude Section~1.6 or \cite[Section~8]{milnechapterAV};

Picard group of line bundles on $X$,
and more generally the Picard functor of invertible sheaves, Prelude Section~1.6, or \cite{kleimanPicard};

the dual abelian variety $X^*$, Prelude Section~1.7, or \cite[Sections~9,10]{milnechapterAV};

and (principal) polarization on $X$, Prelude Section~1.8 or \cite[Section~13]{milnechapterAV}.
\end{remark}

\section{Overview: Jacobians provide good examples of abelian varieties}

There are some situations where abelian varieties can be constructed in a straight-forward way.
For example, in Section~\ref{Sellipticcurve}, we give a brief description of elliptic curves, which
are abelian varieties of dimension $1$.
In Section~\ref{Scomplexsituation}, we give a brief description of
complex abelian varieties through the lens of complex tori.

However, it can be complicated to study abelian varieties in many situations.
Jacobians of curves provide examples of abelian varieties which are often easier to analyze.
We provide some background on curves in Section~\ref{Sbackgroundcurves}, with
numerous examples included in Section~\ref{Sexamplecurve2}.
Then we define the Jacobian of a (smooth, projective, irreducible) curve in Section~\ref{SJacobianabelvar}.
The main result is Theorem~\ref{Ttorelli0}, which states that 
the Jacobian of a curve of genus $g$ is a principally polarized abelian variety of dimension $g$.
This material is the foundation for the Torelli map. 

As mentioned before, for $g \geq 4$, most abelian varieties of dimension $g$ are not Jacobians of curves.
This leads to a fruitful exchange between the topics of abelian varieties and curves, where 
techniques available for one topic are leveraged to study the other.

\section{Elliptic curves} \label{Sellipticcurve}

We keep this section brief because there are excellent references about elliptic curves, including 
textbooks by Husem\"oller \cite{Husemoller} and Silverman \cite{aec}.

An elliptic curve $E$ is a (smooth, projective, irreducible) curve of genus one, together with a chosen point $\mathcal{O}_E$.
It follows that $E$ is an abelian group, with identity $\mathcal{O}_E$, 
where the addition of points is defined algebraically.
Every elliptic curve can be embedded as a smooth cubic in $\mathbb{P}^2$.

\begin{example} \label{Eelliptic}
Suppose $p = \mathrm{char}(k) \not = 2,3$.
After changing coordinates, $E$ can be described as
the projective curve in $\PP^2$ given by the vanishing of the homogenous equation
$Y^2Z = X^3 + aXZ^2 + bZ^3$, for some $a,b \in k$ such that $4a^3+27b^2 \not = 0$, 
and $\mathcal{O}_E$ is the point $[X:Y:Z]=[0:1:0]$.  Given a line in $\PP^2$, it intersects $E$ in three points (counting with multiplicity)
and these three points sum to the identity in the group law of $E$.
More simply, we can describe $E$ by the affine equation $y^2=f(x)$ where
$f(x) = x^3 + ax +b$ for some $a,b \in k$ such that $f(x)$ has distinct roots.
\end{example}

Over $\CC$, an elliptic curve is a compact complex torus of dimension $1$.

\begin{example} \label{Ecomplextorusdim1}
A compact complex torus of dimension $1$ is isomorphic to $\CC/\Lambda$ where $\Lambda$ is a lattice.
After adjusting by the action of $\CC^*$, we can suppose $\Lambda$ is generated by $1$ and $\tau$, 
where $\tau$ is in the upper half plane $\mathfrak{h}$.
\end{example}

The $j$-invariant of $E: y^2=x^3+ax+b$ is $j=1728 \cdot 4a^3/(4a^3+27b^2)$.
The $j$-invariant $j(\tau)$ of $E = \CC/\Lambda$ is a modular function of weight $0$ for the special linear group 
$\mathrm{SL}_2(\ZZ)$.

\begin{proposition}
Two elliptic curves over $k$ are isomorphic if and only if they have the same $j$-invariant.
\end{proposition}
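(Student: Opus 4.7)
The proposition has two directions: invariance of the $j$-invariant under isomorphism, and reconstruction of an isomorphism from equality of $j$-invariants. I assume, as in Example~\ref{Eelliptic}, that $\mathrm{char}(k) \neq 2, 3$, so that every elliptic curve $E$ over $k$ can be presented in short Weierstrass form $y^2 = x^3 + ax + b$ with $\mathcal{O}_E = [0:1:0]$.

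First I would classify isomorphisms between two Weierstrass models. Any isomorphism $\varphi : E_1 \to E_2$ of elliptic curves sends $\mathcal{O}_{E_1}$ to $\mathcal{O}_{E_2}$ (because the identity is determined by the group structure, or equivalently by being the unique point at infinity in each model). A Riemann--Roch argument on the linear system associated to $2\mathcal{O}$ and $3\mathcal{O}$ shows that $\varphi$ must have the affine form $(x,y) \mapsto (u^2 x + r, u^3 y + s u^2 x + t)$ for some $u \in k^\times$ and $r,s,t \in k$; with characteristic away from $2$ and $3$ and both curves in short Weierstrass form, one can further force $r = s = t = 0$, so that $(a_2, b_2) = (u^{-4}a_1, u^{-6}b_1)$. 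Plugging this into $j = 1728 \cdot 4a^3/(4a^3 + 27b^2)$ shows $j(E_1) = j(E_2)$.

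For the converse, given $j(E_1) = j(E_2)$, I would seek $u \in k^\times$ with $a_2 = u^{-4}a_1$ and $b_2 = u^{-6}b_1$ by a case analysis on the three locally-closed strata of the $j$-line. If $j = 0$, then $a_1 = a_2 = 0$ and both $b_i$ are nonzero; pick $u$ to be a sixth root of $b_1/b_2$, which exists since $k$ is algebraically closed. If $j = 1728$, then $b_1 = b_2 = 0$ and both $a_i$ are nonzero; pick $u$ to be a fourth root of $a_1/a_2$. Otherwise all four quantities $a_i, b_i$ are nonzero, and the equation $j(E_1) = j(E_2)$ simplifies to $a_1^3 b_2^2 = a_2^3 b_1^2$; then $(a_1/a_2)^3 = (b_1/b_2)^2$, and choosing any $u \in k^\times$ with $u^4 = a_1/a_2$ forces $u^{12} = (b_1/b_2)^2$, and after possibly replacing $u$ by $-u$ (which keeps $u^4$ fixed) one obtains $u^6 = b_1/b_2$ as required. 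The resulting change of variables $(x,y) \mapsto (u^2 x, u^3 y)$ is then the sought isomorphism.

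The main obstacle is not any single computation but rather the rigidity step: justifying that isomorphisms between short Weierstrass models are exactly the scalings $(x,y) \mapsto (u^2 x, u^3 y)$. Everything after that is algebraic bookkeeping that uses algebraic closure to extract the requisite $4$th and $6$th roots. In residue characteristic $2$ or $3$ one would have to use the general Weierstrass form and $\Delta, c_4, c_6$ invariants, but that case is excluded by the convention in Example~\ref{Eelliptic}.
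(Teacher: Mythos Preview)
The paper does not actually prove this proposition; it is stated without proof, with the surrounding text merely recalling the complex-analytic interpretation via $\mathrm{SL}_2(\ZZ)$-orbits on $\mathfrak{h}$ and deferring to standard references such as \cite{aec}. Your argument is the standard Weierstrass-model proof and is correct in outline.

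One small slip in the generic case $j \neq 0, 1728$: you write that after choosing $u$ with $u^4 = a_1/a_2$, one may replace $u$ by $-u$ to fix the sign of $u^6$. But $(-u)^6 = u^6$, so this does nothing. The correct adjustment is to replace $u$ by $iu$ for a primitive fourth root of unity $i \in k$: then $(iu)^4 = u^4$ is unchanged while $(iu)^6 = -u^6$. Alternatively, and more cleanly, set $u^2 = (b_1/b_2)(a_2/a_1)$ directly; then $u^4 = (b_1/b_2)^2(a_2/a_1)^2 = (a_1/a_2)^3(a_2/a_1)^2 = a_1/a_2$ and $u^6 = (b_1/b_2)^3(a_2/a_1)^3 = (b_1/b_2)^3(b_2/b_1)^2 = b_1/b_2$, with no sign ambiguity to resolve.
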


Given two lattices $\Lambda_i$ generated by $1$ and $\tau_i$ with $\tau_i \in \mathfrak{h}$ for $i=1,2$, 
the two complex tori $\CC/\Lambda_1$ and $\CC/\Lambda_2$ have
$j(\tau_1) = j(\tau_2)$ exactly when
$\tau_1$ and $\tau_2$ are in the same orbit of the action of $\mathrm{SL}_2(\ZZ)$ 
on $\mathfrak{h}$ by fractional linear transformations.

The $j$-invariant can be viewed as the parameter for the moduli space $\cM_{1,1}$ of elliptic curves, or 
the moduli space $\cA_1$ of p.p.\ abelian varieties of dimension $1$, both of which have dimension $1$.
The book by Katz and Mazur \cite{KatzMazur} provides more information about
moduli spaces of elliptic curves.

\begin{remark}
Elliptic curves are abelian varieties of dimension $1$.
Substantially more work is required to study abelian varieties of the next smallest dimension, namely dimension $2$.
From an algebraic perspective, one can consider Jacobians of curves of genus $2$.  
These curves are hyperelliptic;
for $p \not = 2$, each has an affine equation of the form $y^2=f(x)$, where $f(x) \in k[x]$ is a separable polynomial
of degree $5$ or $6$. 
The isomorphism class of a genus two curve is determined by its three Igusa invariants. 
However, it takes some additional work to construct the Jacobian.
Over $\CC$, an abelian surface can be constructed as $\CC^2/\Lambda$, where the lattice $\Lambda$ has 
additional constraints coming from the principal polarization.
\end{remark}

\section{Complex abelian varieties} \label{Scomplexsituation}

In this section, we provide the complex analytic viewpoint on abelian varieties.  
We keep this section brief because there are excellent references about abelian varieties over $\CC$, including 
Swinnerton-Dyer \cite{swinnertondyer}, 
Birkenhake--Lange \cite{birkenhakelange},
Debarre \cite{Debarre}, and Lange \cite{lange}. 

In this section, we work over $k =\CC$.
We denote complex conjugation with an overline.
Let $g \geq 1$ be an integer.  

\subsection{Complex tori}

Suppose $X=V/\Lambda$ is a compact complex torus, 
where $V$ is a complex vector space of dimension $g$ and $\Lambda$ is a lattice in $V$.
Then $X$ is a complex analytic space of dimension $g$. 

Consider a $\ZZ$-basis $\lambda_1, \ldots, \lambda_{2g}$ for $\Lambda$ and a basis 
$e_1, \ldots, e_g$ for $V$.  Writing the former in terms of the latter gives a $g \times 2g$-matrix $\Pi$ called the 
\emph{period matrix}.  

\begin{proposition} \cite[Proposition~1.1.2]{birkenhakelange}
A $g \times 2g$-matrix $\Pi$ is the period matrix of a 
complex torus if and only if the $2g \times 2g$-matrix $\left(\begin{array}{cc}
\Pi \\
\overline{\Pi}
\end{array}\right)$ is invertible.
\end{proposition}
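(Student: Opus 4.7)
The plan is to reduce the claim to a routine piece of linear algebra by passing from the complex coordinates on $V$ to the underlying real structure. A subgroup $\Lambda = \ZZ\lambda_1 + \cdots + \ZZ\lambda_{2g}$ is a lattice in $V$ (in the sense that $V/\Lambda$ is a compact complex torus) precisely when $\lambda_1,\ldots,\lambda_{2g}$ form an $\RR$-basis of $V$, viewed as a real vector space of dimension $2g$. So the content of the proposition is that this $\RR$-linear independence is equivalent to the invertibility of $\binom{\Pi}{\overline{\Pi}}$.

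First I would fix the real basis $e_1,\ldots,e_g,ie_1,\ldots,ie_g$ of $V$ and decompose the period matrix as $\Pi = A + iB$, where $A$ and $B$ are real $g \times 2g$ matrices. Then the real coordinates of $\lambda_1,\ldots,\lambda_{2g}$ with respect to that real basis are exactly the columns of the $2g \times 2g$ matrix $\binom{A}{B}$. Hence the $\RR$-linear independence criterion is equivalent to $\det\binom{A}{B} \neq 0$.

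Next I would carry out the change-of-basis identity
\[
\begin{pmatrix} \Pi \\ \overline{\Pi} \end{pmatrix}
= \begin{pmatrix} I & iI \\ I & -iI \end{pmatrix}
\begin{pmatrix} A \\ B \end{pmatrix},
\]
and compute that the $2g \times 2g$ block matrix on the right has determinant $(-2i)^g$, for instance by subtracting the first block-row from the second. Since this determinant is nonzero, invertibility of $\binom{\Pi}{\overline{\Pi}}$ is equivalent to invertibility of $\binom{A}{B}$, which by the previous paragraph is equivalent to $\lambda_1,\ldots,\lambda_{2g}$ being a $\ZZ$-basis of a lattice in $V$. This gives both implications simultaneously.

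There is no genuine obstacle here; the only subtlety worth stressing in the write-up is that "being a lattice" means $\Lambda$ is a discrete cocompact subgroup of $V$, and that for a subgroup generated by exactly $2g$ elements in a real $2g$-dimensional space this is the same as the generators being $\RR$-linearly independent. Once that identification is made, the rest is the two-line block matrix computation above.
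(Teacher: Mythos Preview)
The paper does not give its own proof of this proposition; it simply cites \cite[Proposition~1.1.2]{birkenhakelange} and moves on. Your argument is correct and is essentially the standard proof found in that reference: reduce to the $\RR$-linear independence of $\lambda_1,\ldots,\lambda_{2g}$, write $\Pi=A+iB$, and use the block identity relating $\binom{\Pi}{\overline{\Pi}}$ to $\binom{A}{B}$ via an invertible change of basis.
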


\subsection{Complex abelian varieties}

A complex torus is an abelian variety if and only if it is an algebraic variety.  
This is equivalent to the condition that it has an ample line bundle $\mathcal{L}$,
or equivalently that it can be embedded in projective space.
This condition can be described in several different ways. 
First, here are the Riemann relations.

\begin{theorem} \cite[Theorem~4.2.1]{birkenhakelange} \label{TRiemannrelations}
The complex torus $\CC^g/ \Pi \ZZ^{2g}$ is an abelian variety if and only if
there exists a non-degenerate $2g \times 2g$ alternating matrix $A$ such that the following
\emph{Riemann relations} are true:

(i) $\Pi (A^{-1})^T\Pi = 0$; and 
(ii) $i \Pi (A^{-1})^T\overline{\Pi} >0$.
\end{theorem}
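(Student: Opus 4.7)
The plan is to reduce the theorem to the existence of a polarization on $X$ and then translate that polarization into matrix identities involving $\Pi$ and $A$.

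First, a compact complex torus $X = V/\Lambda$ is an abelian variety if and only if it admits an ample line bundle, by Chow's theorem combined with Lefschetz's (or Kodaira's) embedding theorem for compact K\"ahler manifolds. By the Appell--Humbert description of $\mathrm{Pic}(X)$, line bundles on $X$ are (up to translation) classified by Hermitian forms $H$ on $V$ whose imaginary part $E = \mathrm{Im}(H)$ takes integer values on $\Lambda$, and the line bundle is ample exactly when $H$ is positive definite. Hence the theorem reduces to showing that such an $H$ exists if and only if there is a non-degenerate integer alternating form $E$ on $\Lambda$ (with matrix $A$ in the $\lambda$-basis) such that (a)~$E(iu, iv) = E(u, v)$ for all $u, v \in V$ (the type-$(1,1)$ condition, which produces the Hermitian form $H$ via $H(u,v) = E(iu,v) + i E(u,v)$), and (b)~$E(iu, u) > 0$ for $u \ne 0$ (positive definiteness of $H$).

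Next I would translate (a) and~(b) into the matrix relations stated in the theorem. Using the invertibility of $\binom{\Pi}{\overline{\Pi}}$, pass to the decomposition $V_{\mathbb{C}} = V^{1,0} \oplus V^{0,1}$: condition (a) is equivalent to the vanishing of the $V^{1,0} \times V^{1,0}$ block of the $\mathbb{C}$-bilinear extension of $E$, and condition (b) is equivalent to Hermitian positivity on the $V^{1,0} \times V^{0,1}$ block. Expressing these two blocks in terms of $\Pi$, $\overline{\Pi}$, and $A^{-1}$ yields precisely (i) and~(ii). Conversely, given any non-degenerate alternating $A$ satisfying (i) and~(ii), one reverses this translation to produce the required positive definite $H$, and Appell--Humbert then outputs an ample line bundle on $X$, so $X$ is an abelian variety.

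The main obstacle is the linear-algebraic bookkeeping, especially accounting for the appearance of $A^{-1}$ rather than $A$ in the stated relations. This reflects Birkenhake--Lange's convention of viewing the alternating form dually as a bivector on $\Lambda$ (whose matrix in the dual basis is $A^{-1}$), so that the natural object transported through the embedding $\Lambda \hookrightarrow V$ given by $\Pi$ is built from $A^{-1}$, not $A$. Once this convention and the associated transpose and conjugation patterns are pinned down, the identities (i) and (ii) follow from (a) and (b) by a short computation in Hermitian linear algebra.
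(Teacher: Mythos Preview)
The paper does not actually prove this theorem; it is stated with a citation to \cite[Theorem~4.2.1]{birkenhakelange} and used as background. Your sketch follows essentially the standard argument found in that reference: reduce to the existence of a positive definite Hermitian form $H$ with $E=\mathrm{Im}(H)$ integral on $\Lambda$ via Appell--Humbert and the Lefschetz embedding theorem, then translate the conditions on $E$ into the two matrix identities using the Hodge decomposition. So your approach is correct and aligned with the source.

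One small point worth tightening: your explanation of why $A^{-1}$ rather than $A$ appears is a little vague. In Birkenhake--Lange's conventions the matrix $A$ records $E$ in the lattice basis $\lambda_1,\dots,\lambda_{2g}$, and the Riemann relations express the Hermitian form in the complex basis $e_1,\dots,e_g$ of $V$; the change of basis from the $\lambda_j$ to the $e_i$ is encoded by $\Pi$, and inverting that change of basis is what produces $A^{-1}$. It is not really a ``dual bivector'' phenomenon so much as a straightforward base-change computation. Also note that the theorem implicitly requires $A$ to have integer entries (since $E$ must be $\ZZ$-valued on $\Lambda$); you correctly impose this, though the paper's statement leaves it tacit.
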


The second description involves Hermitian forms.
A \emph{Hermitian form} on $V$ is a map $H:V \times V \to \CC$ which is $\CC$-linear in the first argument
and such that $H(v, w) = \overline{H(w,v)}$ for all $v,w \in V$.
A Hermitian form is \emph{positive semi-definite} if $H(v, v) \geq 0$ for all $v \in V$;
it is \emph{positive definite} if it is positive semi-definite and $H(v, v) = 0$ if and only if $v = 0$;
it is \emph{non-degenerate} if the condition $H(u,v)=0$ for all $v \in V$ implies $u=0$.

\begin{example}
Continuing with Example~\ref{Ecomplextorusdim1}, for a compact complex torus of dimension one,
the positive definite Hermitian form $H: \CC \times \CC \to \CC$ is given by $H(v,w) = v \cdot \overline{w}/\mathrm{Im}(\tau)$.
\end{example}

\begin{definition}
A \emph{Riemann form} on $X=V/\Lambda$ is a positive definite non-degenerate Hermitian form $H$ on $V$ 
such that the restriction of the imaginary part
$E := \mathrm{Im}(H)$ to $\Lambda$ is integer valued. 
\end{definition}

In fact, $E$ is the first Chern class of $\mathcal{L}$.

\begin{theorem}
A complex torus is isomorphic to an abelian variety $X$ over $\CC$
if and only if it has a Riemann form.
\end{theorem}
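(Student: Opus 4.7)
The plan is to establish both directions of the equivalence by exploiting the bijection between holomorphic line bundles on $V/\Lambda$ and integral alternating forms on $\Lambda$ of type $(1,1)$, arising from the first Chern class as noted just before the theorem.

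For the forward implication, I would assume $X = V/\Lambda$ is projective and choose an ample line bundle $\mathcal{L}$ on $X$. The first step is to identify $c_1(\mathcal{L})$ with an integer-valued alternating form $E$ on $\Lambda$ and extend it $\RR$-bilinearly to $V$. Because $\mathcal{L}$ is holomorphic, the $(1,1)$-condition on $c_1(\mathcal{L})$ translates into the compatibility $E(iv, iw) = E(v,w)$, so setting $H(v,w) := E(iv, w) + i E(v, w)$ produces a Hermitian form with $\mathrm{Im}(H) = E$ integer valued on $\Lambda$. Positive definiteness then follows from ampleness of $\mathcal{L}$, either via the positivity of the curvature form of a compatible Hermitian metric or via Kodaira's embedding criterion.

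For the converse, suppose such a Riemann form $H$ exists. Here I would invoke the Appell--Humbert construction: choose a semicharacter $\chi \colon \Lambda \to U(1)$ compatible with $E$, and build the associated line bundle $\mathcal{L}(H, \chi)$ on $X$ as a quotient of $V \times \CC$ by an explicit factor of automorphy on $\Lambda$. Its global sections are classical theta functions attached to $(H, \chi)$. Using a symplectic decomposition $\Lambda = \Lambda_1 \oplus \Lambda_2$ adapted to $E$ (whose existence follows from the elementary divisor theorem for integral alternating forms), I would exhibit an explicit basis of theta functions and verify via Riemann's theta identities that, for $n \geq 3$, the sections of $\mathcal{L}(H, \chi)^{\otimes n}$ separate points and tangent vectors on $X$. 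This gives a holomorphic embedding of $X$ into projective space, and Chow's theorem promotes it to an algebraic embedding, so $X$ is an abelian variety.

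The main obstacle is clearly the converse direction, specifically the complex-analytic machinery of theta functions: pinning down their exact quasi-periodicity under $\Lambda$, producing enough independent sections via Fourier expansion along $\Lambda_2$, and carrying out the estimates that turn ampleness into a projective embedding. By contrast, once one accepts Kodaira's theorem and the first-Chern-class dictionary, the forward implication is essentially a formal calculation.
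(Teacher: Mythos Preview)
Your proposal is correct and follows the standard route via the Appell--Humbert theorem and the Lefschetz embedding theorem, essentially as in Birkenhake--Lange \cite{birkenhakelange}. However, the paper does not actually prove this theorem: it is stated as background material in a survey chapter, with the reader referred to the cited textbooks for details. So there is no ``paper's own proof'' to compare against; your outline is consistent with the references the paper invokes.
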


We refer to \cite[Section~2.4, Theorem~2.5.5]{birkenhakelange} for a third description in terms of a principal polarization.

\subsection{The Siegel upper half-space} \label{Sppavsymp}

We follow \cite[Chapter~8]{birkenhakelange}.

Suppose $X = V/\Lambda$ is a p.p.\ abelian variety of dimension $g$.  
Let $H$ be a Hermitian form defining its principal polarization.  
Let $\lambda_1, \ldots, \lambda_g, \mu_1, \ldots, \mu_g$ be a symplectic basis of $\Lambda$ for 
the alternating form $E := \mathrm{Im}(H)$; 
this means that $E(\lambda_i, \mu_j) = \delta_{i,j}$, the Kronecker Delta symbol.
With respect to this basis, $E$ is given by the matrix 
$\left(\begin{array}{cc}
0 & I_g \\
-I_g & 0 
\end{array}\right)$.
The vectors $\mu_1, \ldots, \mu_g$ form a $\CC$-basis for $V$.
The period matrix is given by $\Pi=(Z, I_g)$ for some $g \times g$ matrix $Z$.

\begin{proposition} \label{Pppavsymp}
\cite[Proposition~8.1.1]{birkenhakelange}
(a) ${}^TZ =Z$ and $\mathrm{Im}(Z) >0$; and
(b) $(\mathrm{Im}(Z))^{-1}$ is the matrix of $H$ with respect to the basis $\mu_1, \ldots, \mu_g$.
\end{proposition}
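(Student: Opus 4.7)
The plan is to substitute the symplectic-basis data into the Riemann relations of Theorem~\ref{TRiemannrelations} and then interpret the result using the standard dictionary $H(v,w) = E(iv,w) + iE(v,w)$ linking a Hermitian form to its imaginary part.

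For part (a), since $\lambda_1,\ldots,\lambda_g,\mu_1,\ldots,\mu_g$ is symplectic for $E$, the alternating matrix $A$ in Theorem~\ref{TRiemannrelations} is the standard symplectic matrix $J := \left(\begin{array}{cc} 0 & I_g \\ -I_g & 0 \end{array}\right)$, so $A^{-1}$ is (up to sign) $J$ itself and its blocks are explicit. Substituting the period matrix $\Pi=(Z,I_g)$ into the first Riemann relation and carrying out the block product collapses it to $Z-Z^T=0$, giving the symmetry ${}^TZ=Z$. Substituting into the positivity relation and then using this symmetry to rewrite $\overline{Z}^T$ as $\overline{Z}$ collapses that expression to a positive scalar multiple of $\mathrm{Im}(Z)$, so $\mathrm{Im}(Z)>0$.

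For part (b), I would first observe that the matrix of $H$ with respect to the basis $\mu_1,\ldots,\mu_g$ has $(i,j)$-entry $H(\mu_i,\mu_j)$. Because $\mu_i$ and $\mu_j$ both lie in the $\mu$-half of the symplectic basis, $E(\mu_i,\mu_j)=0$, and the $E$-$H$ dictionary reduces this to $E(i\mu_i,\mu_j)$. To evaluate that, I write $i\mu_i=\sum_k a_k\lambda_k+\sum_l b_l \mu_l$ with $a_k,b_l\in\RR$; taking complex coordinates in the $\mu$-basis gives $Za+b=ie_i$, whose real and imaginary parts force $b=-\mathrm{Re}(Z)a$ and $\mathrm{Im}(Z)\,a=e_i$, hence $a=(\mathrm{Im}(Z))^{-1}e_i$. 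Using $E(\lambda_k,\mu_j)=\delta_{kj}$ and $E(\mu_l,\mu_j)=0$ then produces $H(\mu_i,\mu_j)=a_j=((\mathrm{Im}(Z))^{-1})_{ji}$, which equals $((\mathrm{Im}(Z))^{-1})_{ij}$ by the symmetry from part (a).

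The main obstacle is bookkeeping rather than ideas: fixing signs in the Riemann relations (depending on whether one writes $A^{-1}$ or $(A^{-1})^T$, given that $A$ is alternating), pinning down the conjugate-linearity convention for $H$, and keeping row-versus-column indices aligned throughout. Once a consistent convention is chosen, each of the three computations reduces to a short block-matrix manipulation, and no new technique beyond Theorem~\ref{TRiemannrelations} and the $H\leftrightarrow(\mathrm{Re}\,H,\mathrm{Im}\,H)$ correspondence is required.
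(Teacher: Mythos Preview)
The paper does not supply a proof of this proposition; it simply cites \cite[Proposition~8.1.1]{birkenhakelange}. Your plan is correct and is essentially the standard argument one finds in that reference: part (a) falls out of the Riemann relations with $A$ the standard symplectic matrix and $\Pi=(Z,I_g)$, and part (b) is exactly the computation you describe via $H(v,w)=E(iv,w)+iE(v,w)$ and the expansion of $i\mu_i$ in the symplectic basis. Your cautionary remarks about sign and transpose conventions are well placed (note, for instance, that the Riemann relations as stated in Theorem~\ref{TRiemannrelations} appear to be missing a transpose on the rightmost factor, since $\Pi$ is $g\times 2g$), but there is no missing idea.
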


Note that the matrix $Z$ defining $X$ is determined by $g(g+1)/2$ of its entries.

\begin{definition} \label{Dsiegel}
The \emph{Siegel upper half-space} $\mathfrak{h}_g$
is the set of $g \times g$ complex-valued matrices satisfying 
${}^TZ =Z$ and $\mathrm{Im}(Z) >0$.
\end{definition}

We see that $\mathfrak{h}_g$ has dimension $g(g+1)/2$ because it is an open submanifold of 
the vector space of symmetric $g \times g$ matrices.
By \cite[Proposition~8.1.2]{birkenhakelange}, $\mathfrak{h}_g$ is the moduli 
space for principally polarized complex abelian varieties with symplectic basis.
This shows the following.

\begin{theorem} \label{TdimensionAg} \cite[Theorem~8.2.6]{birkenhakelange}
The moduli space $\cA_{g,\CC}$ of principally polarized complex abelian varieties of dimension $g$ 
is the quotient of $\mathfrak{h}_g$ by the sympletic group $\mathrm{Sp}_{2g}(\ZZ)$.
Thus it is irreducible and has dimension $g(g+1)/2$. 
\end{theorem}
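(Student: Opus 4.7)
The plan is to identify $\mathfrak{h}_g$ as a moduli space for principally polarized complex abelian varieties of dimension $g$ together with a choice of symplectic basis of the lattice, and then quotient out the group of symplectic basis changes to recover $\cA_{g,\CC}$.

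First, I would upgrade Proposition~\ref{Pppavsymp} to a bijection between $\mathfrak{h}_g$ and the set of isomorphism classes of triples $(X,H,\{\lambda_1,\ldots,\lambda_g,\mu_1,\ldots,\mu_g\})$ consisting of a p.p.\ abelian variety $(X,H)$ together with a symplectic basis of $\Lambda$. Given $Z \in \mathfrak{h}_g$, the period matrix $(Z, I_g)$ defines a complex torus $X_Z = \CC^g/(Z, I_g)\ZZ^{2g}$; the Hermitian form $H_Z$ whose matrix in the basis $\mu_1, \ldots, \mu_g$ is $(\mathrm{Im}\,Z)^{-1}$ makes $X_Z$ into a p.p.\ abelian variety by Theorem~\ref{TRiemannrelations}, with canonical symplectic basis read off from the columns of $(Z, I_g)$. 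Conversely, every such triple has period matrix of the form $(Z, I_g)$ with $Z \in \mathfrak{h}_g$ by Proposition~\ref{Pppavsymp}, and the existence of a symplectic basis for a given $(X,H)$ follows from the elementary divisor theorem applied to the integer-valued alternating form $E = \mathrm{Im}(H)$ on $\Lambda$.

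Second, I would determine when two points $Z, Z' \in \mathfrak{h}_g$ yield isomorphic p.p.\ abelian varieties after forgetting the basis. A change of basis of $\Lambda$ preserving the standard symplectic form $J = \left(\begin{array}{cc} 0 & I_g \\ -I_g & 0 \end{array}\right)$ is, by definition, an element $M = \left(\begin{array}{cc} A & B \\ C & D \end{array}\right) \in \mathrm{Sp}_{2g}(\ZZ)$. Tracing how $(Z, I_g)$ transforms under right-multiplication by $M$, followed by the $\CC$-linear automorphism of $V$ that renormalizes the result into standard shape $(Z', I_g)$, yields the classical formula $Z' = (AZ + B)(CZ + D)^{-1}$. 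This identifies $\cA_{g,\CC}$ with the orbit space $\mathfrak{h}_g / \mathrm{Sp}_{2g}(\ZZ)$.

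Finally, the structural consequences follow at once. The space $\mathfrak{h}_g$ is cut out by the open condition $\mathrm{Im}(Z) > 0$ inside the complex vector space of symmetric $g \times g$ matrices, so it is a connected complex manifold of complex dimension $g(g+1)/2$. Since $\mathrm{Sp}_{2g}(\ZZ)$ acts holomorphically and properly discontinuously, the quotient inherits a connected complex analytic structure of dimension $g(g+1)/2$, hence is irreducible of the asserted dimension. I expect the main obstacle to be the verification in the second step that the fractional linear formula defines a well-defined action of $\mathrm{Sp}_{2g}(\ZZ)$ on $\mathfrak{h}_g$: one must check that $CZ + D$ is invertible for every $Z \in \mathfrak{h}_g$, that $(AZ+B)(CZ+D)^{-1}$ remains symmetric with positive-definite imaginary part, and that the Hermitian form transports correctly. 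These are mechanical but nontrivial consequences of the symplectic identity ${}^TM J M = J$ and of the positivity part of the Riemann relations in Theorem~\ref{TRiemannrelations}.
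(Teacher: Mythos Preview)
Your proposal is correct and follows the same approach as the paper. The paper does not give a self-contained proof but rather records, just before the theorem, that $\mathfrak{h}_g$ is an open submanifold of the space of symmetric $g\times g$ matrices (hence of dimension $g(g+1)/2$) and cites \cite[Proposition~8.1.2]{birkenhakelange} for the statement that $\mathfrak{h}_g$ is the moduli space of principally polarized complex abelian varieties with symplectic basis, then cites \cite[Theorem~8.2.6]{birkenhakelange} for the quotient statement; your argument is a fleshed-out version of exactly this outline.
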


\subsection{Jacobian of a complex curve} \label{SccJac}

Suppose $C$ is a (smooth, projective, irreducible) curve over $\CC$. 
We define the Jacobian of $C$, following \cite[Section~1.3]{ACGH1} and \cite[Chapter VIII]{miranda}.  

Let $H^0(C, \Omega^1)$ denote the vector space of holomorphic $1$-forms on $C$.
A \emph{linear functional} on $C$ is an element of the dual space $H^0(C, \Omega^1)^*$,
namely a linear transformation $H^0(C, \Omega^1) \to \CC$.
Loops $c$ in $C$ can be represented by homology classes.
The homology group $H_1(C, \ZZ)$ is a free abelian group of rank $2g$. 
Every homology class $[c]$ defines a linear functional
$\int_{[c]}: H^0(C, \Omega^1) \to \CC$, which takes a regular $1$-form $\omega$ to its integral over $c$.
The linear functionals that occur in this way are called \emph{periods}.
The set $\Lambda$ of periods is a subgroup of $H^0(C, \Omega^1)^*$.

\begin{definition}
The \emph{analytic Jacobian} of $C$ is $\mathrm{Jac}(C) = H^0(C, \Omega^1)^*/\Lambda$.
\end{definition}

After choosing a basis for $H^0(C, \Omega^1)^*$, there is an isomorphism $\mathrm{Jac}(C) \cong \CC^g/\Lambda$, 
which is a complex torus of dimension $g$.
One can show that the periods satisfy the Riemann relations.
The curve $C$ determines a principal polarization on $\mathrm{Jac}(C)$.
Thus $\mathrm{Jac}(C)$ is a principally polarized abelian variety.

The Abel--Jacobi map:
Choose a base point $p_\circ$ on $C$.  For each point $x \in C$, 
choose a path $\gamma_x$ from $p_\circ$ to $x$.
This is possible because $C$ is connected.
There is a map $C \to H^0(C, \Omega^1)^*$, sending $x$ to the linear functional $\int_{\gamma_x}$
of integration along $\gamma_x$.
This map is not well-defined because different paths from $p_\circ$ to $x$ may not be homotopic.  
Taking the quotient by the periods $\Lambda$ yields a well-defined map, 
still depending on the base point $p_\circ$, called the Abel--Jacobi map: $j: C \to \mathrm{Jac}(C)$.

\section{Curves and covers of curves} \label{Sbackgroundcurves}

More information about (moduli spaces of) curves can be found in these textbooks:
Mumford \cite{mumfordbook}; 
Arbarello, Cornalba, Griffiths, \& Harris \cite{ACGH1};
Bosch, L{\"u}tkebohmert, \& Raynaud \cite{BLR};
Miranda \cite{miranda}; 
Harris \& Morrison \cite{harrismorrison}; 
and Arbarello, Cornalba, \& Griffiths \cite{ACGH2}.

Recall that $k$ is an algebraically closed field.

\subsection{Curves and the genus of curves} 

\begin{definition}
A \emph{curve} $C$ over $k$ is a smooth, projective, irreducible variety of dimension $1$ over $k$.
\end{definition}

A $1$-form $\omega$ on $C$ is a section of the cotangent bundle.
The $1$-form is \emph{regular} if it has no poles.
Let $\Omega^1$ denote the sheaf of $1$-forms on $C$.

\begin{definition} \label{Dgenus}
Let $H^0(C, \Omega^1)$ denote the vector space of regular $1$-forms.
The \emph{genus} $g$ of $C$ is the dimension of $H^0(C, \Omega^1)$.
\end{definition}

The projective line $\mathbb{P}^1$ is the unique curve of genus $0$ over $k$.

For a local description of a $1$-form $\omega$ near a point $P$, we consider a uniformizing parameter $t$ at $P$, namely 
a function on an affine subset $U$ of $C$ 
containing $P$ such that $t$ vanishes with order $1$ at $P$.  Then $\omega$ has an expression of the 
form $f(t) dt$ where $f(t)$ is a rational function on $U$. 
The order of the zero (or pole) of $\omega$ at $P$ is the order of the zero (or pole) of $f(t)$ at $P$.

\begin{example}
Consider the $1$-form $\omega = dx$ on $\mathbb{P}^1$.  This is regular away from $\infty$, but has a pole of order $2$ at $\infty$.
To see this, write $\overline{x} = 1/x$ and compute $\omega = (-1/\overline{x}^2) d\overline{x}$.
There are no non-zero regular $1$-forms on $\mathbb{P}^1$.  
\end{example}

\begin{example}
For the elliptic curve $y^2 = x^3 + ax +b$ from Example~\ref{Eelliptic}, 
the $1$-form $dx/y$ is regular.
\end{example}

\subsection{Equations for curves}

The easiest way to describe a curve of positive genus is with an affine equation. 
Frequently, we consider an affine curve $C' \subset {\mathbb A}^2$
given by the vanishing of a polynomial equation $h(x,y) =0$.
It is no loss of generality to work with affine curves because of this fact:

\begin{fact} \label{Fprojective}
The smooth completion of a smooth affine algebraic curve $C'$ is a complete 
smooth algebraic curve $C$ which contains $C'$ as an open subset.
Smooth completions exist and are unique over $k$.
\end{fact}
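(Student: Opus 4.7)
The plan is to construct $C$ as the smooth projective model of the function field $K := k(C')$ of $C'$, and then to deduce uniqueness from the standard extension principle for rational maps from smooth curves into projective varieties. Since $C'$ is a smooth affine curve over an algebraically closed field, $K/k$ is a finitely generated field extension of transcendence degree one, and this is the only invariant that will be needed.

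For existence, I would take the hands-on route: embed $C'$ in some $\mathbb{A}^n_k$, take its Zariski closure $\overline{C'}\subset \mathbb{P}^n_k$, and let $\pi\colon C \to \overline{C'}$ be the normalization. For a one-dimensional Noetherian integral scheme, the normalization is regular (normal plus dimension one implies regular by the DVR criterion), and since $\overline{C'}$ is of finite type over the algebraically closed field $k$, regularity is the same as smoothness. The morphism $\pi$ is finite (hence $C$ is projective) and birational, and it is an isomorphism above the smooth locus of $\overline{C'}$, which contains $C'$ because $C'$ is already smooth and hence normal. Thus $C$ is smooth, projective, irreducible of dimension one, and contains $C'$ as an open subset whose complement is a finite set of closed points. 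As a more intrinsic alternative, one can realize $C$ as the set of discrete valuation rings of $K$ containing $k$, topologized so that the proper closed sets are the finite subsets, and equipped with the structure sheaf $U \mapsto \bigcap_{v\in U}\mathcal{O}_v$.

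For uniqueness, suppose $C_1$ and $C_2$ are two smooth projective curves containing $C'$ as a dense open subset. The identity on $C'$ defines a rational map $C_1 \dashrightarrow C_2$, and the extension principle asserts that any rational map from a smooth curve to a projective $k$-variety extends to a morphism defined at every closed point. Applying this in both directions yields mutually inverse $k$-morphisms, hence a canonical isomorphism $C_1 \cong C_2$ restricting to the identity on $C'$. The main obstacle, and the only real content, is this extension principle: at each closed point $P \in C_1$ the local ring $\mathcal{O}_{C_1,P}$ is a discrete valuation ring (because $C_1$ is a smooth curve over an algebraically closed field), the rational map gives a morphism $\mathrm{Spec}\,K \to C_2$, and properness of $C_2$ via the valuative criterion extends this uniquely across $\mathrm{Spec}\,\mathcal{O}_{C_1,P}$. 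This dimension-one-plus-smoothness phenomenon is precisely what makes smooth projective completions of curves exist and be unique, in sharp contrast to higher-dimensional varieties.
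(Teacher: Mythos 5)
The paper states this as a \emph{Fact} without proof --- it is a standard result cited for background, so there is no internal argument to compare your proposal against. Your proof is correct and follows the canonical route: for existence, normalize the projective closure (or, equivalently and more intrinsically, build the abstract curve whose points are the DVRs of $k(C')$ over $k$), using that a one-dimensional normal Noetherian local ring is a DVR and that regular is equivalent to smooth over a perfect field; for uniqueness, extend the identity on $C'$ to a morphism in both directions via the valuative criterion of properness, and conclude by separatedness that the two composites are the identity. The only step you gloss slightly is passing from ``the rational map extends across each closed point'' to ``the extensions glue to a morphism on all of $C_1$,'' which again uses separatedness of the target; this is routine and does not affect correctness.
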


Every (smooth, projective, irreducible) curve $C$ can be embedded in $\mathbb{P}^3$.
Sometimes $C$ can be embedded in $\PP^2$ and sometimes not. 
If not, it is often a hassle to find the blow-ups that resolve the singularities of a planar model of $C$.  
In light of Fact~\ref{Fprojective}, we usually work with one affine equation for a curve.

\begin{example} 
Let $C'$ be the curve with affine equation $y^2 = x^5 - 2x$ (here $p \not = 2,5$). 
The homogenization $y^2z^3 = x^5 - 2xz^4$ yields a curve in $\mathbb{P}^2$ 
that has a singularity when $[x:y:z]=[0:1:0]$.
The smooth completion of $C'$ has another affine patch $C''$ where this singularity is resolved.
To find it, we define $\bar{x} = 1/x$ and $\bar{y} = y \bar{x}^3$.
Then $C''$ is given by the affine equation $\bar{y}^2 = \bar{x} - 2 \bar{x}^5$.
The point $(\bar{x}, \bar{y}) = (0, 0)$ is colloquially called the point at infinity on $C'$.
\end{example}

\subsection{Covers of curves}

If $C, D$ are curves, a \emph{branched cover} $\pi:C \to D$ is a finite surjective morphism,
which is generically unramified.  The branch locus is the finite set of points $B \subset D$ over which $\pi$ is ramified.
We frequently drop the word branched from the terminology.

The morphism $\pi$ corresponds to an inclusion of the function field $k(D)$ in the function field $k(C)$.
The \emph{degree} of $\pi$ is the dimension of $k(C)$ as a vector space over $k(D)$.
The cover $\pi$ is Galois if the finite extension $k(C)/k(D)$ is Galois and is \emph{cyclic} if it is Galois with cyclic Galois group.

For $\eta \in C$, let $R_{C, \eta}$ be the complete local ring of $C$ at $\eta$, 
and let $S_{D, \pi(\eta)}$ be the complete local ring of $D$ at $\pi(\eta)$.
Let $t_1$ (resp.\ $t_2$) be a uniformizing parameter of $R_{C, \eta}$ at $\eta$ (resp.\ $S_{D, \pi(\eta)}$ at $\pi(\eta)$).
Let $e_\eta$ denote the ramification index of $\pi$ at $\eta$.
This is the positive integer such that $t_2 = u t_1^{e_\eta}$ for some $u \in R_{C, \eta}$.
If $\mathrm{char}(k)=p >0$, the cover is \emph{tamely ramified} if $p \nmid e_\eta$ for all $\eta \in C$, 
and is \emph{wildly ramified} otherwise.

The Riemann--Hurwitz formula provides a good way to compute the genus.

\begin{theorem} \label{Triemannhurwitz} (Riemann--Hurwitz formula)
Suppose $\pi: C \to D$ is a tamely ramified cover of curves of degree $d$.  
Then the genus $g_C$ of $C$ and the genus $g_D$ of $D$ are related by the formula:
\[2g_C -2 = d(2g_D-2) + \sum_{\eta \in C} (e_\eta -1).\]
\end{theorem}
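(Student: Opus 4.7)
The plan is to compute the degrees of the divisors of $1$-forms on $C$ and $D$ and compare them via pullback through $\pi$. The background fact I would take for granted (it follows from Riemann--Roch and Serre duality, together with Definition~\ref{Dgenus}) is that for any curve $X$ of genus $g_X$, the divisor of any non-zero meromorphic $1$-form on $X$ has degree $2g_X - 2$. With this in hand, the theorem is reduced to establishing the divisor identity
\[
\deg(\mathrm{div}(\pi^* \omega)) \;=\; d \cdot \deg(\mathrm{div}(\omega)) + \sum_{\eta \in C}(e_\eta - 1)
\]
for a fixed non-zero meromorphic $1$-form $\omega$ on $D$, since substituting $\deg(\mathrm{div}(\omega)) = 2g_D - 2$ and $\deg(\mathrm{div}(\pi^* \omega)) = 2g_C - 2$ yields the claimed formula.

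First I would reduce to a purely local computation at each point $\eta \in C$. Over an unramified point, the completed local ring map $S_{D,\pi(\eta)} \to R_{C,\eta}$ is an isomorphism, so pullback of $\omega$ preserves orders of vanishing. At a ramified point $\eta$, choose uniformizing parameters $t_1$ at $\eta$ and $t_2$ at $\pi(\eta)$ as in the statement, so that $t_2 = u\, t_1^{e_\eta}$ with $u \in R_{C,\eta}^{\times}$. Differentiating yields
\[
dt_2 \;=\; \bigl(e_\eta\, u\, t_1^{e_\eta - 1} + t_1^{e_\eta}\, du/dt_1\bigr)\, dt_1.
\]
Here is the crux of the proof: because $\pi$ is tamely ramified, $p \nmid e_\eta$, so $e_\eta\, u$ is a unit in $R_{C,\eta}$ and the bracketed expression vanishes to order exactly $e_\eta - 1$ at $\eta$. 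Writing $\omega = f\, dt_2$ in a neighborhood of $\pi(\eta)$ and tracking orders gives
\[
\mathrm{ord}_\eta(\pi^* \omega) \;=\; e_\eta \cdot \mathrm{ord}_{\pi(\eta)}(\omega) + (e_\eta - 1).
\]

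Finally I would assemble the global identity by summing over $\eta \in C$: using that $\sum_{\eta \in \pi^{-1}(P)} e_\eta = d$ for every $P \in D$, the first term contributes $d \cdot \deg(\mathrm{div}(\omega))$, while the ramification corrections collect into the finite sum $\sum_{\eta \in C}(e_\eta - 1)$, which is finite because $\pi$ is generically unramified. The main obstacle sits entirely at the tameness step: if $p \mid e_\eta$ the leading coefficient $e_\eta u$ vanishes in characteristic $p$, so $dt_2$ has order strictly greater than $e_\eta - 1$ at $\eta$, and the correct correction becomes the different exponent rather than the na\"ive $e_\eta - 1$. This is precisely where the tame hypothesis is used, and it is what allows the simple closed-form right-hand side in the theorem.
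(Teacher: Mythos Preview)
The paper states this theorem without proof; it is quoted as a classical result. Your argument is the standard one and is correct. In fact the local computation you carry out, $\mathrm{ord}_\eta(\pi^* \omega) = e_\eta \cdot \mathrm{ord}_{\pi(\eta)}(\omega) + (e_\eta - 1)$, is exactly the content of the paper's Lemma~\ref{Lpullback1form}, which appears immediately after the theorem and whose proof matches your differentiation of $t_2 = u\, t_1^{e_\eta}$. So your approach is not only correct but aligned with the paper's surrounding material; you have simply supplied the global assembly (summing over fibers and invoking $\deg(K_X) = 2g_X - 2$) that the paper leaves implicit.
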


Finding the orders of poles of a $1$-form is a delicate process.
For a cover of curves, the following lemma is useful.

\begin{lemma} \label{Lpullback1form}
Suppose $\pi: C \to D$ is a tamely ramified cover of curves.
If $\omega$ is a regular $1$-form on $D$, then the pullback $\pi^*\omega$ is a regular $1$-form on $C$.
More specifically, if $\eta \in C$, then
$\mathrm{ord}_\eta(\pi^* \omega) = (1 + \mathrm{ord}_{\pi(\eta)}(\omega)) e_\eta -1$.
\end{lemma}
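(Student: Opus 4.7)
The plan is to reduce to a purely local computation in the complete local rings at $\eta$ and $\pi(\eta)$, then expand $\pi^*\omega$ in the uniformizer $t_1$. Once we have the local formula for $\mathrm{ord}_\eta(\pi^*\omega)$, regularity of the pullback follows immediately: if $\mathrm{ord}_{\pi(\eta)}(\omega) \geq 0$ at every point, the displayed identity forces $\mathrm{ord}_\eta(\pi^*\omega) \geq e_\eta - 1 \geq 0$ at every $\eta$, so it suffices to establish the second (more refined) assertion.

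Fix $\eta \in C$, set $e = e_\eta$, and choose uniformizers $t_1$ at $\eta$ and $t_2$ at $\pi(\eta)$ so that $t_2 = u\, t_1^{e}$ with $u \in R_{C,\eta}^{\times}$, as in the setup preceding the lemma. Write $\omega = f(t_2)\, dt_2$ locally on $D$, where $f$ is a rational function with $\mathrm{ord}_{\pi(\eta)}(f) = \mathrm{ord}_{\pi(\eta)}(\omega) =: m$. Pulling back and applying the Leibniz rule gives
\[
\pi^*\omega \;=\; f(u t_1^{e})\, d(u t_1^{e}) \;=\; f(u t_1^{e}) \bigl(e\, u\, t_1^{e-1} + t_1^{e}\, u'\bigr)\, dt_1,
\]
where $u'$ denotes $du/dt_1$. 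The first factor has order $e m$ at $\eta$, since $u t_1^e$ is a uniformizer-like element of order $e$ and $f$ vanishes to order $m$ in its argument.

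For the factor $e u t_1^{e-1} + u' t_1^{e}$, this is where tameness enters decisively: since $p \nmid e$, the scalar $e \in k$ is nonzero, so $e u$ is a unit in $R_{C,\eta}$, and the term $e u\, t_1^{e-1}$ dominates the expression, giving it order exactly $e-1$. Adding the two contributions yields
\[
\mathrm{ord}_\eta(\pi^*\omega) \;=\; e m + (e - 1) \;=\; (1 + m)\, e - 1,
\]
which is the formula claimed. The main (and only) subtle point is the tameness hypothesis: without $p \nmid e$, the leading term $e u t_1^{e-1}$ could vanish and the order of $d(u t_1^e)$ could jump unpredictably, so both the explicit formula and the regularity conclusion would fail in the wildly ramified case.
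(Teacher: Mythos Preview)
Your proof is correct and follows essentially the same approach as the paper: a local computation in uniformizers, writing $t_2 = u t_1^{e}$ and expanding $d(u t_1^{e})$. You are in fact more careful than the paper, which silently drops the $t_1^{e}\,du$ term from the Leibniz expansion; your explicit treatment of that term and your remark that $p \nmid e$ is exactly what makes $e u t_1^{e-1}$ dominate are welcome clarifications of the same argument.
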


\begin{proof}
By definition of the ramification index, $t_2 = u t_1^{e_\eta}$, 
where $t_1$ (resp.\ $t_2$) is a uniformizing parameter of $C$ at $\eta$ (resp.\ $D$ at $\pi(\eta)$) as defined above.
Write $n = \mathrm{ord}_{\pi(\eta)}(\omega)$.  
So $\omega = u' t_2^n dt_2$, where $u'$ is a unit in the complete local ring $S_{D, \pi(\eta)}$.
So $\pi^* \omega = u'  (ut_1^{e_\eta})^n e_\eta u t_1^{e_\eta -1}$, whose order of vanishing at $\eta$ is $ne_\eta + e_\eta -1$.
\end{proof}  

\subsection{Examples of branched covers of curves} \label{Sexamplecurve2}

\begin{definition}
A \emph{hyperelliptic curve} is a curve $C$ that admits a degree two branched cover $\pi: C \to \mathbb{P}^1$.
\end{definition}

\begin{fact}
If $\mathrm{char}(k) \not = 2$, a hyperelliptic curve has an affine equation $y^2 = f(x)$ for some separable polynomial $f(x) \in k[x]$.
The hyperelliptic involution $\iota$ acts by $\iota((x,y)) = (x, -y)$.
There is a unique hyperelliptic involution on a hyperelliptic curve $C$ and it is contained in the center of the 
automorphism group of $C$.
\end{fact}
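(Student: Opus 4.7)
My plan is to address the three assertions in order, with the uniqueness/centrality requiring most of the work. First, to produce the affine equation, I would start from the degree two extension of function fields $k(C)/k(x)$ induced by $\pi$. Since $\mathrm{char}(k) \neq 2$, this extension is separable, so by Kummer theory $k(C) = k(x)(\sqrt{h(x)})$ for some $h(x) \in k(x)^\times$, well-defined modulo squares. After clearing denominators and pulling squares out, I can replace $h(x)$ by a squarefree polynomial $f(x) \in k[x]$. The resulting affine plane curve $y^2 = f(x)$ has function field $k(C)$, so by Fact~\ref{Fprojective} its smooth completion is (canonically isomorphic to) $C$. The involution $\iota \colon (x,y) \mapsto (x,-y)$ manifestly generates $\mathrm{Gal}(k(C)/k(x))$ and acts as stated.

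For the uniqueness statement (I would note that it is meaningful only for $g \geq 2$), my plan is to use the canonical map $\phi_K \colon C \to \mathbb{P}^{g-1}$ attached to $|K|$. The key claim is that $\phi_K$ factors as $C \xrightarrow{\pi} \mathbb{P}^1 \hookrightarrow \mathbb{P}^{g-1}$, with the second map a degree $g-1$ Veronese embedding onto a rational normal curve. To prove this, I would exhibit an explicit basis of regular $1$-forms on $C$ of the shape $x^i\,dx/y$ for $0 \le i \le g-1$, using Lemma~\ref{Lpullback1form} and Riemann--Hurwitz to confirm regularity and get the dimension count $g$. Since each such form is anti-invariant under $\iota$, the involution acts as $-1$ on $H^0(C,\Omega^1)$, forcing $\phi_K$ to be constant on $\iota$-orbits; a straightforward degree count then identifies $\phi_K$ with $\pi$ followed by the Veronese. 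Because $\phi_K$ is intrinsic to $C$, the double cover $\pi$ is intrinsic up to post-composition with $\mathrm{Aut}(\mathbb{P}^1)$, and $\iota$, being the generator of $\mathrm{Gal}(k(C)/\pi^*k(\mathbb{P}^1))$, is thereby unique.

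For centrality, let $\sigma \in \mathrm{Aut}(C)$. Then $\sigma^*$ acts linearly on $H^0(C,\Omega^1)$ and so intertwines $\phi_K$ with a linear automorphism of $\mathbb{P}^{g-1}$; in particular $\sigma$ permutes the fibers of $\phi_K$, which coincide with the fibers of $\pi$. Hence $\sigma$ descends to an automorphism $\bar\sigma$ of $\mathbb{P}^1$, giving an exact sequence
\[
1 \to \langle \iota \rangle \to \mathrm{Aut}(C) \to \mathrm{Aut}(\mathbb{P}^1).
\]
The kernel $\langle \iota \rangle$ is normal and has a unique nontrivial element, so $\sigma \iota \sigma^{-1} = \iota$ for every $\sigma$, placing $\iota$ in the center of $\mathrm{Aut}(C)$.

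The main obstacle is the claim that $\phi_K$ has image a rational normal curve, i.e.\ that $|K|$ fails to separate $\iota$-conjugate points. The cleanest way I see is the direct construction of the basis $\{x^i\,dx/y\}$ above, which requires a careful pole-order check at the Weierstrass point(s) at infinity of $y^2 = f(x)$ using a local parameter like $\bar x = 1/x$ together with Lemma~\ref{Lpullback1form}; once this is in hand, everything else falls into place by functoriality of the canonical map.
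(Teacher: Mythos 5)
The paper states this result as a \textbf{Fact} without any proof, so there is no paper argument to compare against; what you propose is the standard textbook proof, and it is essentially correct.

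One spot where the uniqueness step deserves tightening: you show that $\phi_K$ factors through the \emph{given} $\pi$ (by exhibiting the basis $\{x^i\,dx/y\}_{0 \le i \le g-1}$ on which $\iota^*$ acts by $-1$). To conclude that a second hyperelliptic involution $\iota'$ equals $\iota$, you also need $\phi_K$ to factor through the quotient $\pi'\colon C \to C/\iota'$. This is not automatic from what you wrote; you must rerun the same computation for $\iota'$ (choose a model $y'^2 = f'(x')$ adapted to $\iota'$ and conclude $\iota'^*$ acts by $-1$ on $H^0(C,\Omega^1)$ as well). Once both quotients factor $\phi_K$, the uniqueness follows because $\phi_K\colon C \to R$ (with $R$ its smooth rational-normal-curve image) has degree $2$, so any factorization of it through a degree-$2$ map $C \to \mathbb{P}^1$ forces the remaining map $\mathbb{P}^1 \to R$ to be degree $1$, hence an isomorphism; thus $\pi$ and $\pi'$ agree up to post-composition with $\mathrm{Aut}(\mathbb{P}^1)$, and $\iota = \iota'$ as generators of the same degree-$2$ Galois group. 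Your caveat that uniqueness only makes sense for $g \ge 2$ is correct and worth keeping, and the centrality argument via the kernel of $\mathrm{Aut}(C) \to \mathrm{Aut}(\mathbb{P}^1)$ being a normal subgroup of order $2$ is exactly right.
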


The next example can be checked using Theorem~\ref{Triemannhurwitz} and Lemma~\ref{Lpullback1form}.

\begin{lemma}
Suppose $\mathrm{char}(k)\not =2$.
Suppose $f(x) \in k[x]$ is a separable polynomial of degree $2g+1$ or $2g+2$.  
The hyperelliptic curve $C$ with affine equation $y^2=f(x)$ has genus $g$. 
A basis for $H^0(C, \Omega^1)$ is given by $\{dx/y, xdx/y, \ldots, x^{g-1}dx/y\}$.
\end{lemma}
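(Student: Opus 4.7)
The plan is to compute the genus via the Riemann--Hurwitz formula applied to the double cover $\pi: C \to \mathbb{P}^1$ defined by $(x,y) \mapsto x$, and then to verify directly that the $g$ proposed differentials are regular and linearly independent, so they form a basis of $H^0(C, \Omega^1)$.

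First, for the genus: since $\mathrm{char}(k) \ne 2$, the degree $2$ cover $\pi$ is tamely ramified. The ramification points on $C$ are exactly those fixed by the hyperelliptic involution, i.e., the points with $y=0$. When $\deg f = 2g+1$, the finite roots of $f$ account for $2g+1$ ramification points, and one checks that the smooth completion has a single, ramified point over $\infty \in \mathbb{P}^1$, giving $2g+2$ ramification points total. When $\deg f = 2g+2$, the $2g+2$ roots give all the ramification, and the fiber over $\infty$ splits into two unramified points. In either case there are exactly $2g+2$ ramification points with $e_\eta = 2$, so Theorem~\ref{Triemannhurwitz} gives $2g_C - 2 = 2(-2) + (2g+2)(2-1) = 2g-2$, hence $g_C = g$.

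Next, I would verify that $\omega_i := x^i\,dx/y$ is regular for $0 \le i \le g-1$ by checking each type of point. At an affine point $(\alpha,\beta)$ with $\beta \ne 0$, the function $x - \alpha$ is a local uniformizer and $y$ is a unit, so $\omega_i$ is regular. At a Weierstrass point $(\alpha,0)$, separability of $f$ means $f'(\alpha) \ne 0$, so $y$ is a local uniformizer; differentiating $y^2 = f(x)$ gives $2y\,dy = f'(x)\,dx$, hence $dx/y = 2\,dy/f'(x)$ is regular, and multiplication by the regular function $x^i$ preserves this. The main obstacle is the analysis at infinity, where one must pass to the chart $\bar{x} = 1/x$ with the correctly scaled $\bar{y} = y/x^{g+1}$, giving the equation $\bar{y}^2 = \bar{x}^{2g+2} f(1/\bar{x})$, which equals $a_{2g+1}\bar{x} + O(\bar{x}^2)$ or $a_{2g+2} + O(\bar{x})$ depending on $\deg f$. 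A short computation yields $x^i\,dx/y = -\bar{x}^{g-1-i}\,d\bar{x}/\bar{y}$; evaluating the order of vanishing with the appropriate uniformizer ($\bar{y}$ in the ramified case, $\bar{x}$ in the split case) shows this is regular exactly when $i \le g-1$. This is the step that forces the restriction $i \le g-1$ and thereby pins down the list.

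Finally, linear independence is immediate: if $\sum c_i \omega_i = 0$, then dividing by the nonzero meromorphic $1$-form $dx/y$ gives $\sum c_i x^i = 0$ in $k(C)$, which forces all $c_i = 0$ since $x$ is transcendental over $k$. Thus we have $g$ linearly independent regular $1$-forms in the $g$-dimensional space $H^0(C,\Omega^1)$, so they form a basis.
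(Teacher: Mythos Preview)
Your proof is correct. The paper does not actually write out a proof of this lemma; it simply remarks that it ``can be checked using Theorem~\ref{Triemannhurwitz} and Lemma~\ref{Lpullback1form}.'' Your argument follows exactly this outline for the genus, and your direct local analysis at affine points, Weierstrass points, and infinity is the standard way to verify regularity (and is in the same spirit as Lemma~\ref{Lpullback1form}, which one could alternatively apply to $\pi^*(x^i\,dx)$ and then divide by $y$). The linear-independence step is clean and complete.
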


The next example arises from Kummer theory.

\begin{definition}
A \emph{superelliptic curve} is a curve $C$ that admits a cyclic cover $\pi: C \to \mathbb{P}^1$ (with degree $m \geq 2$).
\end{definition}

\begin{fact} \label{Factsup} If $\mathrm{char}(k)$ does not divide the degree $m$ of $\pi$, then
the superelliptic curve $C$ has
an affine equation $y^m = \prod_{i = 1}^N(x-b_i)^{a_i}$, for some integer $N \geq 2$, 
some set of distinct values $\{b_1, \ldots, b_N\}$ in $k$,
and some \emph{inertia type} which is a tuple $\vec{a}$ of integers with 
\begin{equation} \label{Einertiatype}
\vec{a}=(a_1, \ldots, a_N), \ 1 \leq a_i \leq m-1, \text{ and } \sum_{i=1}^N a_i \equiv 0 \bmod m.
\end{equation}

In this context, $\{b_1, \ldots, b_N\}$ is the branch locus of $\pi : C \to \mathbb{P}^1$.
If we prefer $\infty$ to be one of the branch points (say the last one), we remove the last term $(x-b_N)^{a_N}$
from the equation.

The $\mu_m$-action on $C$ is given by $\phi((x,y))=(x, \zeta y)$ for $\zeta \in \mu_m$.
\end{fact}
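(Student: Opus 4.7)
The plan is to combine Kummer theory with a normalization of the defining equation and a local analysis at $\infty$.

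First, I pass to function fields. Set $K = k(x)$ and $L = k(C)$, so that $\pi$ corresponds to a cyclic Galois extension $L/K$ of degree $m$. Because $\mathrm{char}(k) \nmid m$ and $k$ is algebraically closed, $K$ contains a primitive $m$-th root of unity $\zeta$, so Kummer theory applies and produces $L = K(y)$ for some $y \in L^\times$ with $y^m = g(x) \in K^\times$. A generator of $\mathrm{Gal}(L/K) \cong \mu_m$ acts by $y \mapsto \zeta y$ while fixing $x$, which is precisely the asserted $\mu_m$-action $\phi(x,y) = (x,\zeta y)$.

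Second, I normalize $g(x)$. Factoring in the UFD $k[x]$ and clearing denominators, write
\[
g(x) \;=\; c \prod_{i=1}^{N}(x-b_i)^{a_i}, \qquad c\in k^\times,\ b_i\in k \text{ distinct},\ a_i\in\ZZ.
\]
Since $k$ is algebraically closed, replacing $y$ by $c^{-1/m} y$ lets me assume $c = 1$. For each $i$, writing $a_i = m q_i + r_i$ with $0 \le r_i < m$ and replacing $y$ by $y \cdot \prod_i (x-b_i)^{-q_i}$ yields a new Kummer generator whose equation has exponents $r_i \in \{0,1,\ldots,m-1\}$; discarding those with $r_i = 0$ (which no longer affect the extension) produces the asserted form with $1 \le a_i \le m-1$. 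The discarded points are precisely the $b_i$ over which $\pi$ is unramified, so the surviving $b_i$ are exactly the finite branch points.

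Third, I extract the congruence $\sum a_i \equiv 0 \pmod m$ from the ramification at $\infty$. Substituting $x = 1/t$ transforms the equation into
\[
y^m \;=\; t^{-\sum a_i}\prod_{i=1}^{N}(1-b_i t)^{a_i}.
\]
Setting $\tilde y = t^{\lceil (\sum a_i)/m \rceil} y$ and tracking orders at $t=0$, one sees that $\pi$ is unramified above $\infty$ precisely when $m \mid \sum a_i$. Thus the convention stated in the Fact is correct: keeping all $N$ finite factors with $\sum a_i \equiv 0 \pmod m$ gives a model in which $\infty$ is not a branch point, and removing the last factor shifts one branch point to $\infty$.

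The main obstacle is the third step. One must verify carefully, using $p \nmid m$, that the substitutions rescaling $y$ really produce a smooth affine model with the asserted ramification data, and that the congruence $\sum a_i \equiv 0 \pmod m$ is equivalent to $\infty$ being unramified rather than merely a regularity condition on the equation. The first two steps are formal consequences of Kummer theory and unique factorization, while this last piece is where the explicit local analysis (via the parameter $t = 1/x$) is essential.
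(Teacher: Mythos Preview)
Your proposal is correct and essentially matches the paper's argument (given later for the same statement in Chapter~\ref{C4Aspecial}): both use Kummer theory to produce $y^m=f(x)$, then normalize $f$ by clearing $m$th powers from the exponents and the leading coefficient. The one difference is how the congruence $\sum a_i\equiv 0\bmod m$ is justified: you extract it from an explicit local computation at $\infty$ via $t=1/x$, while the paper first moves $\infty$ off the branch locus by a fractional linear transformation and then invokes the relation in $\pi_1(\PP^1-B)$ that the product of loops around the branch points is trivial. These are equivalent viewpoints---your valuation calculation is exactly computing the local monodromy at $\infty$---so your more hands-on route is a fine substitute, and arguably more self-contained since it avoids appealing to the topological fundamental group.
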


\begin{notation} \label{Nsuperelliptic}
To classify superelliptic curves $\pi: C \to \mathbb{P}^1$, 
we record the discrete data $\gamma = (m, N, \vec{a})$, where
$\pi$ has degree $m$ and $N$ branch points, and 
the inertia type $\vec{a}$ is the tuple satisfying \eqref{Einertiatype}.
\end{notation}

\begin{lemma}
With notation as in Fact~\ref{Factsup}:
Above the point $x=b_i$, the curve $C$ has $g_i=\mathrm{gcd}(m, a_i)$ points, each with 
inertia group of order $m/g_i$.
By the Riemann--Hurwitz formula, the genus of $C$ satisfies:
\[2g_C-2 = m(-2) + \sum_{i=1}^N g_i (\frac{m}{g_i} -1).\]
If $g_i=1$ for $1 \leq i \leq N$ (e.g., if $m$ is prime), then
$g_C = (N-2)(m-1)/2$.
\end{lemma}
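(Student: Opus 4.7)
The plan is to handle the ramification at each branch point locally via Kummer theory, then assemble the Riemann--Hurwitz count; the cleanup for $g_i=1$ is a straightforward arithmetic simplification.

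First I would fix $i$ and work in the complete local ring at $b_i$, setting $t = x - b_i$. In a neighborhood of $b_i$, the defining equation becomes $y^m = u\,t^{a_i}$ where $u$ is a unit in the complete local ring at $b_i$ on $\mathbb{P}^1$. Write $d = \gcd(m, a_i)$, $m = de$, $a_i = df$, so that $\gcd(e,f) = 1$. Since $k$ is algebraically closed and $\mathrm{char}(k) \nmid m$, the unit $u$ admits a $d$-th root $v$ in the complete local ring. Then the equation factors as
\[
\prod_{\zeta \in \mu_d}\bigl(y^e - \zeta v\, t^f\bigr) = 0,
\]
and each factor defines one of the points of $C$ above $b_i$. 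This already gives the point count $g_i = d = \gcd(m, a_i)$.

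Next I would show that each such local factor gives a totally ramified extension of degree $e = m/g_i$. Fix one factor $y^e = \zeta v\, t^f$. Choose integers $r, s$ with $re + sf = 1$ (possible because $\gcd(e,f) = 1$), and set $\pi := y^s t^r$. A short calculation gives $\pi^e = (\zeta v)^s\, t$, so $\pi$ is a uniformizer and $t$ has valuation $e$ in the local ring above $\eta$. Hence the ramification index is $e_\eta = m/g_i$; because the cover is Galois with group $\mu_m$ and cyclic, the inertia group at $\eta$ has the same order $m/g_i$.

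With the local picture in hand, the genus formula is immediate from Theorem~\ref{Triemannhurwitz} applied to $\pi : C \to \mathbb{P}^1$ (which is tamely ramified since $\mathrm{char}(k) \nmid m$): there are exactly $g_i$ points above $b_i$, each contributing $e_\eta - 1 = m/g_i - 1$, and no further ramification occurs (the equation $\sum a_i \equiv 0 \pmod m$ in \eqref{Einertiatype} ensures unramifiedness away from $\{b_1,\dots,b_N\}$). This yields
\[
2g_C - 2 = -2m + \sum_{i=1}^N g_i\Bigl(\frac{m}{g_i} - 1\Bigr).
\]
When $g_i = 1$ for all $i$, the sum collapses to $N(m-1)$, giving $2g_C = (N-2)(m-1)$, and hence $g_C = (N-2)(m-1)/2$.

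The only genuinely nontrivial step is the local Kummer factorization and the verification that $y^s t^r$ is a uniformizer; everything else is bookkeeping with Riemann--Hurwitz.
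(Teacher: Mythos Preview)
Your proof is correct. The paper does not supply a proof for this lemma; it is stated as a direct consequence of the Riemann--Hurwitz formula (Theorem~\ref{Triemannhurwitz}) together with the Kummer-theoretic description in Fact~\ref{Factsup}. You have filled in exactly the details one would expect: the local factorization of $y^m - u t^{a_i}$ into $d=\gcd(m,a_i)$ branches, the explicit uniformizer $\pi = y^s t^r$ on each branch, and the bookkeeping for the ramification contribution. The verification that $\sum a_i \equiv 0 \pmod m$ forces unramifiedness at $\infty$ is a point the paper leaves implicit, and your argument handles it correctly.
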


The next example arises from Artin--Schreier theory.

\begin{definition} \label{Dartinschreier} Suppose $\mathrm{char}(k) =p>0$.
An \emph{Artin--Schreier curve} is a curve $C$ that admits a degree $p$ cyclic cover $\pi: C \to \mathbb{P}^1$. 
\end{definition}

\begin{fact} \label{FactAS}
An Artin--Schreier curve has an affine equation $y^p-y = h$ for some rational function $h \in k(x)$; 
the curve is connected if and only if $h \not = z^p-z$ for any rational function $z \in k(x)$.

Without loss of generality, we can suppose that the order of each pole of $h$ is relatively prime to $p$.
Then the branch locus of $\pi$ is the set of poles of $h$ and $\pi$ is wildly ramified above each of these points.

The $\ZZ/p\ZZ$-action on $C$ is generated by the automorphism $\phi((x,y)) = (x, y+1)$.  
\end{fact}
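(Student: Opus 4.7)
The plan is to combine Artin--Schreier theory with an explicit normalization of the defining equation, finishing with elementary valuation-theoretic arguments. First, since $\pi: C \to \PP^1$ is a Galois cover of degree $p$ in characteristic $p$, the function field extension $k(C)/k(x)$ is cyclic of degree $p$, hence Artin--Schreier. By the Artin--Schreier correspondence, such extensions are classified by nonzero classes in $k(x)/\wp(k(x))$, where $\wp(z):=z^p-z$; thus $k(C)=k(x)(y)$ with $y^p-y=h$ for some $h\in k(x)$. The polynomial $T^p-T-h$ is reducible over $k(x)$ if and only if it has a root $y_0\in k(x)$ (its roots differ pairwise by elements of $\FF_p$, so one root in $k(x)$ implies all are in $k(x)$), equivalently $h=y_0^p-y_0\in\wp(k(x))$; in that case $k(C)$ is not a field and $C$ splits into $p$ disjoint copies of $\PP^1$, while otherwise $C$ is connected.

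Next I would normalize $h$ so that every pole has order coprime to $p$, one pole at a time. Suppose $h$ has a pole at $P\in\PP^1$ of order $n=pm$. Let $t$ be a uniformizer at $P$, let $a\in k^*$ be the coefficient of $t^{-pm}$ in the Laurent expansion of $h$ at $P$, and pick $u\in k(x)$ whose only pole is of order $m$ at $P$ with principal part $t^{-m}$ (take $u=(x-P)^{-m}$ for finite $P$, and $u=x^m$ for $P=\infty$). Since $k$ is algebraically closed, choose $c\in k$ with $c^p=-a$. The substitution $y=y'-cu$ transforms the equation into
\[(y')^p - y' \;=\; h + c^p u^p - cu,\]
whose right-hand side has pole order strictly less than $pm$ at $P$ (the $t^{-pm}$ term cancels since $c^p=-a$, while $-cu$ contributes only a pole of order $m<pm$) and unchanged pole orders at every other point of $\PP^1$. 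Iterating this descent at each offending pole yields the desired normalization.

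For the branch locus and ramification, at $Q\in\PP^1$ not a pole of $h$ the polynomial $T^p-T-h(Q)\in k[T]$ has derivative $-1$, hence is separable, so $\pi^{-1}(Q)$ consists of $p$ distinct unramified points. For a pole $P$ with $\ooo_P(h)=-n$ and $\gcd(n,p)=1$, extend $v_P$ to a valuation $v$ on $k(C)$ with ramification index $e\leq p$. Then $v(h)=-en$, and $v(y)\geq 0$ would give $v(y^p-y)\geq 0$, so $v(y)<0$ and (since $p\,v(y)<v(y)$) $v(y^p-y)=p\,v(y)=-en$. Hence $v(y)=-en/p\in\ZZ$; combined with $\gcd(n,p)=1$ this forces $p\mid e$, so $e=p$, and the ramification is wild as $p=\mathrm{char}(k)$. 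Finally, $\phi(x,y)=(x,y+1)$ satisfies $(y+1)^p-(y+1)=y^p-y=h$, so $\phi$ is an order-$p$ automorphism of $C$ fixing $x$, hence generates the Galois group of $\pi$. The main obstacle is the pole normalization: one must ensure the descent at a single pole introduces no new poles elsewhere, which is precisely what is secured by choosing the auxiliary function $u$ to have its unique pole at $P$; the other steps then follow formally.
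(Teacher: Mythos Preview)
The paper states this as a \emph{Fact} without proof, treating it as standard background from Artin--Schreier theory. Your argument is correct and supplies exactly the expected details: the Artin--Schreier classification of degree-$p$ cyclic extensions, the iterative pole-order reduction via substitutions $y\mapsto y-cu$ (with $u$ having its unique pole at the point in question, so no new poles appear), the valuation argument forcing $e=p$ at each pole of $h$ when $\gcd(n,p)=1$, and the explicit order-$p$ automorphism.
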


\begin{example}
With notation as in Fact~\ref{FactAS}:
suppose $h \in k[x]$ is a polynomial of degree $j$ with $p \nmid j$.
Then the genus of $C$ is $g=(p-1)(j-1)/2$. 
This can be proven with a wild generalization of the Riemann--Hurwitz formula \cite{Se:lf}.
A basis for $H^0(C, \Omega^1)$ is given by 
\[\{y^r x^b dx \mid 0 \leq r \leq p-2, \ 0 \leq b \leq j-2, \ rj+bp \leq pj -j -p -1\}.\]
\end{example}

\section{The Jacobian of an algebraic curve} \label{SJacobianabelvar}

References for this topic are \cite[Chapter~VII]{milneJacobian} and \cite[Chapter~III]{milneAV}.

Suppose $C$ is a (smooth, projective, irreducible) curve of genus $g \geq 1$ over $k$.
The Jacobian of $C$ is $\mathrm{Pic}^0(C)$, whose points are
the group of isomorphism classes of line bundles (invertible sheaves) on $C$ of degree $0$.

In greater detail, for an irreducible scheme $T$ over $k$, consider 
$\mathrm{Pic}_C^0(T)$ as the group of families of invertible sheaves on
$C$ of degree $0$ parametrized by $T$, modulo trivial families.
Then $\mathrm{Pic}_C^0$ is a functor from schemes over $k$ to abelian groups.
By \cite[Theorem~1.1]{milneJacobian}, there is an abelian variety $J$ representing $\mathrm{Pic}_C^0$, with
$\mathrm{Pic}_C^0(T) \cong J(T)$ whenever $C(T)$ is non-empty.
We call $J$ the Jacobian $\mathrm{Jac}(C)$ of $C$. 
In light of this, and also \cite[Section~III.2]{milneAV}, we identify $\mathrm{Pic}^0(C)$ and $\mathrm{Jac}(C)$
without comment throughout this text.

\begin{theorem} \label{Ttorelli0} \cite[Proposition~2.1]{milneJacobian}, 
The Jacobian $\mathrm{Jac}(C)$ of $C$ is an abelian variety of dimension $g$,
whose tangent space at the origin is isomorphic to $H^1(C, \mathcal{O}_C)$.
\end{theorem}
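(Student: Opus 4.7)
The plan is to leverage the representability of $\mathrm{Pic}^0_C$ by an abelian variety $J$ (cited from Milne) and then address the two concrete claims: that $\dim J = g$, and that the tangent space at the identity is canonically $H^1(C, \mathcal{O}_C)$. Since $J$ is a smooth group variety over $k$, its dimension agrees with $\dim_k T_0 J$, so the dimension claim will follow from the tangent space identification together with the computation $\dim_k H^1(C, \mathcal{O}_C) = g$.

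For the tangent space, I would use the deformation-theoretic interpretation: $T_0 J$ parametrizes line bundles on $C_\epsilon := C \times_k \mathrm{Spec}(k[\epsilon]/(\epsilon^2))$ whose restriction to $C$ is trivial, i.e., the kernel of $\mathrm{Pic}(C_\epsilon) \to \mathrm{Pic}(C)$. The key input is the short exact sequence of sheaves of abelian groups on $C$ (viewed as the underlying topological space of $C_\epsilon$),
\[ 0 \to \mathcal{O}_C \to \mathcal{O}_{C_\epsilon}^\times \to \mathcal{O}_C^\times \to 1, \]
where the first map is the truncated exponential $f \mapsto 1 + \epsilon f$; exactness on stalks follows from the relation $\epsilon^2 = 0$. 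Taking the induced long exact sequence in cohomology gives
\[ H^0(C, \mathcal{O}_{C_\epsilon}^\times) \to H^0(C, \mathcal{O}_C^\times) \to H^1(C, \mathcal{O}_C) \to H^1(C, \mathcal{O}_{C_\epsilon}^\times) \to H^1(C, \mathcal{O}_C^\times), \]
where the outer $H^1$ groups compute $\mathrm{Pic}$. Since any nonzero scalar $u \in k^\times$ lifts tautologically to a unit in $\mathcal{O}_{C_\epsilon}^\times$, the first arrow is surjective, hence the boundary map $H^1(C, \mathcal{O}_C) \to \mathrm{Pic}(C_\epsilon)$ is injective and realizes the isomorphism $H^1(C, \mathcal{O}_C) \cong \ker(\mathrm{Pic}(C_\epsilon) \to \mathrm{Pic}(C)) = T_0 J$.

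For the dimension, I would invoke Serre duality on the curve $C$, which identifies $H^1(C, \mathcal{O}_C)$ as the $k$-linear dual of $H^0(C, \Omega^1)$. By Definition~\ref{Dgenus} the latter has dimension $g$, so $\dim J = \dim_k T_0 J = g$.

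The most delicate step is the tangent space computation: one must verify exactness of the short exact sequence on stalks using $\epsilon^2 = 0$, identify $\mathrm{Pic}$ with $H^1$ of the multiplicative sheaf in the Zariski topology, and confirm that the resulting linear identification agrees with the tangent space of $J$ as a group scheme coming from its universal property (i.e., that it is the same as the functorial tangent space $J(k[\epsilon])_0$). Once this compatibility is settled, the surjectivity onto global units and the appeal to Serre duality are routine.
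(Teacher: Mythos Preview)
The paper does not give its own proof of this theorem; it simply cites \cite[Proposition~2.1]{milneJacobian} and moves on. Your proof plan is correct and is precisely the standard argument that Milne gives: identify $T_0 J$ with $\ker(\mathrm{Pic}(C_\epsilon)\to\mathrm{Pic}(C))$ via the functorial description of tangent spaces, compute this kernel using the truncated exponential sequence and its long exact cohomology sequence, and then read off the dimension via Serre duality and the definition of the genus.
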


Choose a $k$-point $P_0$ of $C$.  Let $\mathcal{L}^{P_0}$ be the invertible sheaf on $C\times C$ given by
$\mathcal{L}(\Delta - (C \times \{P_0\}) - (\{P_0\} \times C))$, where
$\Delta \subset C \times C$ is the diagonal. 
For a point $Q \in C$, the restriction of $\mathcal{L}^{P_0}$ to $C \times \{Q\}$ is isomorphic to 
$\mathcal{L}(Q-P_0)$.

By \cite[page~172]{milneJacobian}, the sheaf $\mathcal{L}^{P_0}$ induces a correspondence between 
$(C,P_0)$ and itself.
This yields a map $j: C \to \mathrm{Jac}(C)$, such that a $k$-point $Q \in C$
maps to the element of $\mathrm{Jac}(C)(k)$ identified with $\mathcal{L}(Q) \otimes \mathcal{L}(P_0^{-1})$ in $\mathrm{Pic}^0(C)(k)$.
By \cite[Proposition~2.3]{milneJacobian}, the map $j$ is a closed immersion.

\begin{remark} \label{Rotherview}
Here is another perspective on the Jacobian.
Let $\mathrm{Div}(C)$ denote the group of divisors on $C$, 
namely finite sums of the form $D=\sum_{P \in C} n_P [P]$, where $n_P$ is an integer for each point $P \in C$.
The degree of $D$ is $\sum_{P \in C} n_P$.
Let $\mathrm{Div}^0(C) \subset \mathrm{Div}(C)$ denote the subgroup of divisors of degree $0$.

A divisor $D$ is \emph{principal} if it is the divisor of a rational function $f$ on $C$.
This means that $n_P$ is the order of vanishing of $f$ at $P$ for every point $P \in C$.
The degree of a principal divisor is $0$. 
Let $\mathrm{PDiv}(C) \subset \mathrm{Div}^0(C)$ denote the subgroup of principal divisors.
Then $\mathrm{Jac}(C)(k)$ is isomorphic to $\mathrm{Div}^0(C)/\mathrm{PDiv}(C)$.
The map $j: C \to \mathrm{Jac}(C)$ can
be identified with the map $j: C \to \mathrm{Div}^0(C)/\mathrm{PDiv}(C)$ which takes $Q$ to $[Q-P_0]$, where $[D]$ 
denotes the equivalence class of the divisor $D$.
\end{remark}

For $n \geq 1$, let $C^{(n)}=C^n/S_n$ where $S_n$ is the symmetric group on $n$ letters.
This is a smooth variety over $k$ \cite[Proposition~3.2]{milneJacobian}.  
The points in $C^{(n)}$ represent unordered multisets $\{x_1, \ldots, x_n\}$ of $n$ points of $C$.
By \cite[Theorem~3.13]{milneJacobian}, $C^{(n)}$ represents $\mathrm{Div}^n_C$, 
the functor of effective divisors on $C$ of degree $n$.

The map $j^n: C^n \to \mathrm{Jac}(C)$ descends to $j^{(n)}: C^{(n)} \to \mathrm{Jac}(C)$. 
In terms of the description in Remark~\ref{Rotherview}, $j^{(n)}(\{Q_1, \ldots, Q_n\}) = [Q_1 + \cdots Q_n -nP_0]$.
By \cite[Theorem~5.1]{milneJacobian}, for $n \leq g$, the map $j^{(n)}$ is birational onto its image and $j^{(g)}$ is also surjective.
The map $j^{(g-1)}$ defines a theta divisor of effective divisor classes of degree $g-1$, 
which determines a canonical principal polarization on $\mathrm{Jac}(C)$.

Torelli's Theorem states that 
every smooth curve $X$ over $k$ is uniquely determined by its Jacobian.

\begin{theorem} \label{Ttorelli1} (Torelli's Theorem) \cite[Theorem~12.1]{milneJacobian}
Suppose $C$ and $C'$ are two (smooth, projective, irreducible) curves over $k$ of genus $g \geq 2$.
If $\mathrm{Jac}(C)$ and $\mathrm{Jac}(C')$ are isomorphic as principally polarized abelian 
varieties over $k$, then $C$ and $C'$ are isomorphic as curves over $k$.
\end{theorem}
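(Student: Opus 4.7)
The plan is to reconstruct $C$ from the polarized Jacobian $(\mathrm{Jac}(C), \Theta_C)$. By Riemann's theorem, the theta divisor $\Theta_C$ is a translate of the image $W_{g-1} := j^{(g-1)}(C^{(g-1)})$ under the Abel--Jacobi map, and an isomorphism $\phi : (\mathrm{Jac}(C), \Theta_C) \to (\mathrm{Jac}(C'), \Theta_{C'})$ of principally polarized abelian varieties carries translates of $W_{g-1}$ to translates of $W'_{g-1}$. Since $W_1 = j(C)$ is isomorphic to $C$ via the Abel--Jacobi map, the problem reduces to recovering the subvariety $W_1$ (equivalently, the canonical image of $C$) intrinsically from $(\mathrm{Jac}(C), \Theta_C)$.

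For non-hyperelliptic $C$, I would use the Gauss map approach of Andreotti. By Theorem~\ref{Ttorelli0} and Serre duality, $T_0 \mathrm{Jac}(C) \cong H^0(C, \Omega^1)^*$, so translating tangent hyperplanes of the smooth locus of $\Theta_C$ back to the origin defines a rational map
\[
\mathcal{G} : \Theta_C^{\mathrm{sm}} \dashrightarrow \PP(H^0(C,\Omega^1)) \cong \PP^{g-1}.
\]
A local computation at a general point $D = P_1 + \cdots + P_{g-1}$ of $W_{g-1}$ shows that $\mathcal{G}(D)$ is the hyperplane in $\PP^{g-1}$ spanned by the canonical images $\kappa(P_1), \ldots, \kappa(P_{g-1})$. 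Hence the branch locus of $\mathcal{G}$ is the dual variety of $\kappa(C) \subset \PP^{g-1}$, and biduality of projective varieties recovers $\kappa(C)$ itself. Since $\kappa$ is a closed embedding in the non-hyperelliptic case, one obtains $C \cong \kappa(C)$ purely in terms of $(\mathrm{Jac}(C), \Theta_C)$.

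The hyperelliptic case requires separate treatment, since there $\kappa$ factors as $C \to \PP^1 \hookrightarrow \PP^{g-1}$ and is not an embedding. One first shows that $C$ is hyperelliptic if and only if the singular locus of $\Theta_C$ has anomalously large dimension, so hyperellipticity is detected by $(\mathrm{Jac}(C), \Theta_C)$ alone. One then recovers the Weierstrass points of $C$, and hence the branch locus of $C \to \PP^1$, from the $2$-torsion translates of $\Theta_C$; the isomorphism class of $C$ is determined by this branch divisor on $\PP^1$ up to $\mathrm{PGL}_2$.

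The hardest step is identifying the branch divisor of $\mathcal{G}$ with the dual variety of $\kappa(C)$ and then invoking biduality to reconstruct $\kappa(C)$. Biduality is delicate in positive characteristic, where the Gauss map can fail to be separable. To handle this, I would supplement the argument with the Matsusaka--Ran criterion: it asserts that the minimal cohomology class $[\Theta]^{g-1}/(g-1)!$ is represented by a unique irreducible curve, namely $W_1$, whenever $(\mathrm{Jac}(C), \Theta_C)$ is a Jacobian. This yields a characteristic-free recovery of $W_1 \cong C$ and closes the argument in all characteristics.
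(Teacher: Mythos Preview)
The paper does not supply its own proof of Theorem~\ref{Ttorelli1}; it simply states the result and cites \cite[Theorem~12.1]{milneJacobian}. So there is no in-paper argument to compare against, and your proposal stands as an independent sketch rather than a reconstruction of anything the author wrote.

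On its own terms, your outline is a reasonable summary of the classical approaches. The Andreotti argument via the Gauss map on $\Theta^{\mathrm{sm}}$ and biduality of the canonical curve is the standard route over $\CC$, and you correctly flag that biduality (and separability of the Gauss map) is the fragile step in positive characteristic. Your hyperelliptic discussion is a bit loose: the ``anomalously large'' singular locus is the content of Martens' theorem, and the recovery of the branch divisor from $2$-torsion translates of $\Theta$ deserves a more precise statement, but the shape is right.

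One caution on your fallback: the Matsusaka--Ran criterion, as usually stated, is a \emph{recognition} result (a p.p.\ abelian variety carrying an effective $1$-cycle in the minimal class of the right degree is a product of Jacobians). What you actually need is the companion uniqueness statement---that in $\mathrm{Jac}(C)$ any irreducible curve with class $[\Theta]^{g-1}/(g-1)!$ is a translate of $\pm W_1$---and you should check that the version you invoke is genuinely characteristic-free and not itself resting on Torelli. Milne's cited proof (following Martens/Weil) avoids this circularity, so if you want a self-contained argument in characteristic $p$, that is the safer reference to unpack.
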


\section{Related topics: semi-abelian varieties and stable curves}

For completeness, we include a definition of semi-abelian varieties, although they are not needed in this document.
An (algebraic) torus is a commutative group scheme which is isomorphic to finitely many copies of the multiplicative group scheme 
$\mathbb{G}_m$.
A \emph{semi-abelian variety} is a commutative group variety which is an extension of an abelian variety by an algebraic torus \cite[Section I.2]{faltingschaidegeneration}.

In later sections, we consider curves that are not smooth, but whose singularities are manageable.
A \emph{stable curve} is a projective irreducible curve whose only singularities are ordinary double points,
and whose automorphism group is finite. 

\begin{example}
The affine equation $y^2=x^3+x^2$ defines a stable curve with an ordinary double point at $(x,y)=(0,0)$.
\end{example}

A stable curve $C$ has \emph{compact type} if each irreducible component of $C$ is smooth and
the dual graph of $C$ is a tree.

In Section~\ref{Sboundary}, we consider the Picard variety (or Jacobian) of a singular stable curve.
The Picard variety $\mathrm{Pic}^0(C)$ is an abelian variety if and only if $C$ has compact type.
If not, then $\mathrm{Pic}^0(C)$ is a semi-abelian variety.

\begin{fact} \label{Ftorellifalse}
Torelli's Theorem~\ref{Ttorelli1} is false for singular stable curves.
\end{fact}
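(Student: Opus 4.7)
The plan is to exhibit explicit non-isomorphic stable curves of compact type whose principally polarized Jacobians coincide. The decisive structural input is that for a stable curve $C$ of compact type, with smooth irreducible components $C_1, \ldots, C_r$, there is a canonical isomorphism
\[
(\mathrm{Pic}^0(C), \Theta_C) \;\cong\; (\mathrm{Jac}(C_1), \Theta_{C_1}) \times \cdots \times (\mathrm{Jac}(C_r), \Theta_{C_r})
\]
of principally polarized abelian varieties (this is the ``product polarization'' remark alluded to in the discussion surrounding Chapter~\ref{C2Aboundary}). Crucially, the right-hand side depends only on the abstract components, not on the specific gluing data: because the dual graph is a tree, there are no cycles around which a multidegree-$(0, \ldots, 0)$ line bundle could be nontrivially twisted, so such a line bundle is determined up to isomorphism by its restrictions to the components.

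For the construction, I would pick a generic smooth curve $C_1$ of genus $g_1 \geq 3$, so that $\mathrm{Aut}(C_1)$ is trivial, together with any smooth curve $C_2$ of genus $g_2 \neq g_1$. Choose a point $p_2 \in C_2$ and two distinct points $p_1, p_1' \in C_1$. Form the two compact type stable curves
\[
C \;=\; C_1 \sqcup C_2 / (p_1 \sim p_2), \qquad C' \;=\; C_1 \sqcup C_2 / (p_1' \sim p_2),
\]
obtained by transversal identification at a single ordinary double point. By the structural fact above, both $(\mathrm{Jac}(C), \Theta_C)$ and $(\mathrm{Jac}(C'), \Theta_{C'})$ are canonically isomorphic to $(\mathrm{Jac}(C_1), \Theta_{C_1}) \times (\mathrm{Jac}(C_2), \Theta_{C_2})$, hence to each other. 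On the other hand, any isomorphism $\varphi \colon C \to C'$ of stable curves must send the unique node to the unique node and must carry each irreducible component to one of the same geometric genus; since $g_1 \neq g_2$, the restriction $\varphi|_{C_1}$ would be an automorphism of $C_1$ sending $p_1$ to $p_1'$, which is impossible because $\mathrm{Aut}(C_1)$ is trivial and $p_1 \neq p_1'$. Thus $C \not\cong C'$, contradicting the conclusion of Theorem~\ref{Ttorelli1}.

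The main obstacle is justifying the structural fact itself, since everything rests on it. The cleanest approach is via the normalization exact sequence
\[
1 \to \mathcal{O}_C^\times \to \nu_* \mathcal{O}_{\tilde{C}}^\times \to \bigoplus_{\text{nodes}} k^\times \to 1,
\]
with $\nu \colon \tilde{C} \to C$ the normalization. Passing to cohomology, the kernel of the induced map $\mathrm{Pic}^0(C) \to \prod_i \mathrm{Jac}(C_i)$ is identified with the cokernel of $(k^\times)^r \to \bigoplus_{\text{nodes}} k^\times$, whose dimension is the first Betti number of the dual graph; this vanishes precisely when the dual graph is a tree, so $\mathrm{Pic}^0(C) \cong \prod_i \mathrm{Jac}(C_i)$ in the compact type case. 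For the polarization, one invokes the Abel--Jacobi construction of the theta divisor on the symmetric product: a degree-$(g-1)$ effective divisor supported on a tree of components splits as a tuple of effective divisors on the components, which realizes $\Theta_C$ as the sum of pullbacks of the $\Theta_{C_i}$ along the factor projections, i.e., as the product principal polarization.
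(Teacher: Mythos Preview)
Your proposal is correct and takes essentially the same approach as the paper: glue two fixed smooth curves at a varying point to obtain non-isomorphic compact-type stable curves whose Jacobians all coincide with the product $\mathrm{Jac}(C_1)\times\mathrm{Jac}(C_2)$. The paper's example (given immediately after the Fact) is more minimal---an elliptic curve clutched to a genus~$2$ curve, with the attaching point $P\in C_2$ varying---and simply asserts that this yields a one-parameter family of distinct curves; your version instead takes $C_1$ of genus $\geq 3$ with trivial automorphism group, which buys you a clean explicit verification that $C\not\cong C'$. You also sketch a justification of the product-polarization fact, which the paper defers to \eqref{eqblr} in Section~\ref{Sboundary}. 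One small omission: you should require $g_2\geq 1$ (e.g.\ $g_2=1$ or $2$) so that the glued curve is actually stable.
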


\begin{example}
Let $C$ be a curve of genus $3$ that has two components, an elliptic curve $C_1$ and a genus $2$ curve $C_2$, 
which are identified (clutched together, see Section~\ref{Sboundary}) at the identity on $C_1$ and a point $P \in C_2$.
There is a one-parameter family of such curves, as the point $P \in C_2$ varies.
However, $\mathrm{Jac}(C) \cong \mathrm{Jac}(C_1) \times \mathrm{Jac}(C_2)$, 
and this p.p.\ abelian variety does not depend on the choice of $P$.
\end{example}

\section{Open questions on variations of the Schottky problem}

The questions in this section aim to clarify the relationships between abelian varieties and Jacobians.

Every abelian variety over an algebraically closed field $k$ is a quotient of a Jacobian \cite{matsusakagenerating};
see \cite[Theorem~10.1]{milneJacobian} for this statement over any infinite field.
Every abelian variety over a field can be embedded in a Jacobian by \cite[Corollary~2.5]{gabber}.

\begin{example}
There are examples, usually attributed to Mumford, of p.p.\ abelian varieties that are not Jacobians.
For example, the intermediate Jacobian of a non-singular cubic threefold is not a Jacobian \cite[Appendix C]{clemensgriffiths}.
There are examples of Prym varieties of unramified double covers of curves which are not Jacobians \cite{beauville}.
See also \cite{Farb}.
\end{example}

\subsection{The Schottky problem}

The Schottky problem
asks for a characterization of the p.p.\ abelian varieties that are Jacobians of curves.
Historically, this topic was introduced by Schottky and Jung. 
The survey by Grushevsky \cite{grushevskysurvey} gives a good introduction, together with many references.

The Schottky problem has been studied with many approaches including theta constants, 
modular forms, singularities of the theta divisor, minimal cohomology classes, and Kummer varieties.
Theoretically, there are several methods to determine which p.p.\ abelian varieties are Jacobians.
There is the criterion of Matsusaka-Ran \cite{matsusaka}, \cite{ran}, \cite{collino}.
A small selection of the papers on this topic includes Andreotti--Mayer \cite{andreottimayer}, 
Gunning \cite{gunning}, Welters \cite{welters84}, 
Shiota \cite{shiota}, and Krichever \cite{krichever06, krichever10}. 

From certain perspectives, the Schottky problem has been completely solved.
Yet these methods are fairly difficult to apply in practice for a given abelian variety.
There are still open problems on the characterization of the Torelli locus.
For example, for $g=4$, see two recent descriptions in \cite{kramerweissauer} 
and \cite{hanselmanpiepersam}.

\subsection{Complete decomposability}

For every integer $g \geq 2$, there exists a p.p.\ abelian variety of dimension $g$ 
which decomposes completely as a product of $g$ elliptic curves.
Ekedahl and Serre asked the following question.  They provided examples for numerous values of $g$ up to $1297$.

\begin{question} \label{Qchapter2} \cite{ekedahlserre93}
Given $g \geq 2$, does there exist a smooth curve $C$ of genus $g$ such that the Jacobian $J_C$
is isogenous to a product of $g$ elliptic curves?
\end{question}

A paper by Paulhus and Rojas \cite{paulhusrojas17} shows that the question has an affirmative answer for many new 
values of $g$.  It also includes references to other papers on this topic.
Recently, thirteen new cases (including $g=38$) were resolved by Paulhus and Sutherland using modular curves \cite{paulhussutherland}.
Currently, the smallest genus for which the answer is not known is $g=56$.

\subsection{Abelian varieties not isogenous to a Jacobian}

For any $g \geq 4$, Chai and Oort \cite{chaioortnotisog} proved the existence of an abelian variety of dimension $g$
defined over $\overline{\QQ}$ which not isogenous to any Jacobian conditional on the Andr\'e--Oort conjecture; 
Tsimerman \cite{tsimermannotisog} proved this result unconditionally.
Recent work on this topic includes \cite{masserzannier} and \cite{notisogFortmanSchreider}.

For abelian varieties of dimension $g \geq 4$ defined over number fields, 
we expect there are still many results about isogenies with Jacobians waiting to be discovered.

\chapter{The $p$-torsion invariants} \label{Chapter3}

\section{Overview: generalizing the ordinary/supersingular distinction}

Let $k$ be an algebraically closed field of positive characteristic $p$.
An elliptic curve over $k$ can be ordinary or supersingular, 
depending on how many $p$-torsion points it has, see Sections~\ref{Scollapse} and \ref{SssE}.
This section describes several ways to generalize the distinction between ordinary and supersingular 
elliptic curves
for abelian varieties of dimension greater than $1$.

Suppose $X$ is a p.p.\ abelian variety of dimension $g$ defined over $k$.  
This section contains the definition of these invariants: 
the {\bf $p$-rank}, the {\bf $a$-number}, the {\bf Newton polygon}, and the {\bf Ekedahl--Oort type}.
We call these the \emph{$p$-torsion invariants}, although
the Newton polygon is technically an invariant of the $p$-divisible group of $X$.
Good references for this section are \cite{LO}, \cite{O:strat}, and \cite{EVdG}.

\subsection{Collapsing of $p$-torsion points modulo $p$} \label{Scollapse}

Suppose $E$ is an elliptic curve over $k$, with $\mathrm{char}(k)=p$.
In this expository section, we show through some examples that the number of $p$-torsion points on $E$ is either $p$ or $1$. 
 
If $\ell \not = p$ is prime, then there are $\ell^2$ points of order dividing $\ell$ on $E$.
One of these is the point at infinity $\mathcal{O}_E$.  
The $x$-coordinates of the other points are the roots of the $\ell$-division polynomial of $x$.

\begin{example} If $p \not = 2$, 
write $E: y^2=x^3+ax^2+bx+c$.
Let $\ell = 3$.
A point $Q$ has order 3 if and only if $3Q = \mathcal{O}_E$, equivalently $2Q=-Q$, so $x(2Q)=x(Q)$. 
Using this, we can show that $Q$ has order $3$ if and only if $x(Q)$ is a root of the $3$-division polynomial:
\[d_3(x) =3x^4 + 4ax^3 + 6bx^2 + 12cx - b^2 + 4ac \in k[x].\]
If $p \nmid 6$, then $d_3(x)$ has 4 distinct roots in $k$ and these are the $x$-coordinates of points of order $3$ on $E$. 
For each $x$-coordinate, there are two choices for $y$, so $E$ has 8 points of order 3.
The $3$-torsion $E[3](k)$ consists of these 8 points and $\mathcal{O}_E$.

Now suppose that $\mathrm{char}(k)=p=3$.
Then $d_3(x) = ax^3 - b^2 + ac$, and 
the polynomial $d_3(x) \in k[x]$ has one (triple) root if $a \not \equiv 0 \bmod 3$
and has no roots if $a \equiv 0 \bmod 3$.
So the number of $3$-torsion points is either $3$ or $1$, not $9$.
\end{example}

\begin{example}
Write $E: y^2 = x^3 + bx + c$.
When $\mathrm{char}(k)=5$, the $5$-division polynomial is $2bx^{10} - b^2cx^5 + b^6 - 2b^3c^2 - c^4$.
This has either $2$ or zero roots, so the number of $5$-torsion points is either $5$ or $1$.

When $\mathrm{char}(k)=7$, the $7$-division polynomial is
\[3cx^{21} + 3b^2c^2x^{14} + (-b^7c - 2b^4c^3 + 3bc^5)x^7
- b^{12} - b^9c^2 + 3b^6c^4 - b^3c^6 + 2c^8.\]
This has either $3$ or zero roots, so the number of $7$-torsion points is either $7$ or $1$.
\end{example}

More generally, in characteristic $p$, the $p$-division polynomial of $E$ has either $(p-1)/2$ or zero roots.  
It is not easy to show this explicitly for larger $p$ because the $p$-division polynomials have higher degree.
For this reason, to show that the $p$-torsion points on $E$ collapse to either $p$ points or $1$ point in characteristic $p$, 
we switch to looking at the kernel $E[p]$ of the multiplication by $p$ map $[p]$ on $E$,   
and use the fact that $[p]$ factors as $[p] = \mathrm{Ver} \circ \mathrm{Fr}$,
where the relative Frobenius morphism $\mathrm{Fr}:E \to E^{(p)}$ is purely inseparable.

\subsection{Supersingular elliptic curves} \label{SssE}

Suppose that $E$ is an elliptic curve defined over a finite field $\FF_q$ where $q=p^r$.
Let $a \in \ZZ$ be such that $\#E(\FF_q)=q+1-a$.
The zeta function of $E/\FF_q$ is 
\[Z(E/\FF_q, T)= \frac{1-aT+qT^2}{(1-T)(1-qT)}.\]

The supersingular condition was studied by Hasse \cite{Hasse35}, Deuring \cite{deuring}, and Igusa \cite{igusa}.
As seen in \cite[Theorem V.3.1]{aec}, there are many equivalent ways to define what it means for $E$ to be supersingular.
In this section, we say $E/\FF_q$ is supersingular when $p \mid a$, see \cite[page 142]{aec}; 
otherwise $E$ is ordinary.

If $p=2$, then $E:y^2+y=x^3$ is supersingular; this is a special case of Lemma~\ref{Lhermitian}. 
In fact, this is an equation for the unique supersingular elliptic curve over $\overline{\FF}_2$ up to isomorphism.

By \cite[Example V.4.4]{aec}, the elliptic curve $E:y^2=x^3+1$ ($j$-invariant $0$) is supersingular if and only if 
$p \equiv 2 \bmod 3$ and $p$ is odd.
By \cite[Example V.4.5]{aec}, the elliptic curve $E:y^2=x^3+x$ ($j$-invariant $1728$)
is supersingular if and only if $p \equiv 3 \bmod 4$. 
When $p=3$, this is an equation for the unique 
supersingular elliptic curve over $\overline{\FF}_3$ up to isomorphism.

Suppose $p$ is odd and $E: y^2=h(x)$, where $h(x) \in k[x]$ is a cubic with distinct roots.
Then $E$ is supersingular 
if and only if the coefficient $c_{p-1}$ of $x^{p-1}$ in $h(x)^{(p-1)/2}$ is zero.
This is a special case of Example~\ref{EhyperCartier}.

As seen in \cite[Theorem V.4.1]{aec}, for $p$ odd, Deuring \cite{deuring} proved that
\[E_\lambda: y^2=x(x-1)(x-\lambda)\] is supersingular for exactly $(p-1)/2$ choices of 
$\lambda \in \overline{\FF}_p$.  
This shows that the number of isomorphism classes of supersingular elliptic curves
is $\lfloor \frac{p}{12} \rfloor + \epsilon$ with $\epsilon = 0,1,1,2$ when $p \equiv 1,5,7,11 \bmod 12$ respectively.

Every supersingular elliptic curve defined over a finite field of characteristic $p$
can be defined over $\FF_{p^2}$.

\subsection{Ordinary and supersingular elliptic curves} \label{Srevisit1} 

We revisit the case of elliptic curves and describe the 
distinction between ordinary and supersingular elliptic curves from several points of view. 

Let $E/k$ be an elliptic curve and let $\ell$ be prime.
The $\ell$-torsion group scheme $E[\ell]$ of $E$ is the kernel of the multiplication-by-$\ell$ morphism
$[\ell]:E \to E$.
Then 
\[
\#E[\ell](k) =
\begin{cases}
\ell^2 & \text{if } \ell \not = p,\\
\ell & \text{if } \ell = p, \text{ and } E \text{ is ordinary}, \\
1 & \text{if } \ell = p, \text{ and } E \text{ is supersingular}.
\end{cases}\]

We refer to Section~\ref{Sbackptorsion} for definitions.
Each of the following conditions is equivalent to $E$ being ordinary: $E$ has $p$ points of order dividing $p$;
the Newton polygon of $E$ has slopes $0$ and $1$; or $E[p]$ is isomorphic to $\ZZ/p\ZZ \oplus \mu_p$ 
(see Example~\ref{EdefineL}).

Each of the following conditions is equivalent to $E$ being supersingular:

\smallskip

{\bf (A)'} The only $p$-torsion point of $E$ is the identity: $E[p](k) = \{\mathrm{id}\}$.

\smallskip

{\bf (B)'} The Newton polygon of $E$ is a line segment of slope $1/2$.
\smallskip

{\bf (C)'} The group scheme $E[p]$ is isomorphic to $I_{1,1}$, the unique local-local symmetric $\mathrm{BT}_1$ group scheme of rank $p^2$, 
see Example~\ref{EdefineI11}.

\smallskip

Conditions (A)' and (B)' are equivalent \cite[Theorem V.3.1 \& page~142]{aec}.
More information about condition (C)' can be found in Example~\ref{EdefineI11}.

\section{The $p$-torsion invariants of abelian varieties} \label{Sbackptorsion}

Let $k$ be an algebraically closed field of characteristic $p>0$.
Let $X$ be a principally polarized abelian variety of dimension $g$ defined over $k$.
Let $X[p]$ denote the $p$-torsion group scheme of $X$ and let $X[p^\infty]$ denote the $p$-divisible group of $X$.
In this section, we define the following $p$-torsion invariants of $X$:

\smallskip

{\bf A. $p$-rank} - the integer $f$, with $0 \leq f \leq g$, such that $\#X[p](k)=p^{f}$.

\smallskip

Also the {\bf $a$-number} - the number of copies of $\alpha_p$ in $X[p]$.

\smallskip

{\bf B. Newton polygon} - the data of slopes for the $p$-divisible group $X[p^\infty]$.

\smallskip

{\bf C. Ekedahl-Oort type} - the data of the symmetric ${\mathrm BT}_1$ group scheme $X[p]$.

\subsection{The $p$-torsion group scheme} \label{Sgroupscheme}

The multiplication-by-$p$ morphism $[p]:X \to X$ is a finite flat morphism of degree $p^{2g}$.
There is a canonical factorization $[p] = \mathrm{Ver} \circ \mathrm{Fr}$, 
where $\mathrm{Fr}:X \to X^{(p)}$ is the relative Frobenius morphism
and $\mathrm{Ver}: X^{(p)} \to X$ is the Verschiebung morphism.
The morphism $\mathrm{Fr}$ comes from the $p$-power map on the structure sheaf; it is purely inseparable of degree $p^g$.  
Also $\mathrm{Ver}$ is the dual of $\mathrm{Fr}_{X^*}$, where $X^*$ is the dual of $X$.  

The \emph{$p$-torsion group scheme} of $X$ is 
\[X[p]= \mathrm{Ker}[p].
\]
In fact, $X[p]$ is
a symmetric $\mathrm{BT}_1$ group scheme \cite[2.1, Definition~9.2]{O:strat}.  
It has rank $p^{2g}$.  It is killed by $[p]$, with
$\mathrm{Ker}(\mathrm{Fr}) = \mathrm{Im}(\mathrm{Ver})$ and $\mathrm{Ker}(\mathrm{Ver}) = \mathrm{Im}(\mathrm{Fr})$.

The principal polarization on $X$ induces a principal quasipolarization (pqp) on $X[p]$,
i.e., an anti-symmetric isomorphism $\psi:X[p] \to X[p]^D$, where $D$ denotes the Cartier dual.  
(This definition needs to be modified slightly if $p=2$.)
Thus, $X[p]$ is a symmetric $\mathrm{BT}_1$ group scheme together with a principal quasipolarization.

We return to this topic in Section~\ref{Seotype} when defining the Ekedahl--Oort type.

More information about group schemes can be found in \cite[Appendix A]{G:book}.

\begin{example} \label{EdefineL}
Let $\ZZ/p\ZZ$ be the constant group scheme representing the cyclic group of order $p$.
Let $\mu_p$ be the kernel of Frobenius on $\mathbb{G}_m$.
As $k$-schemes, $\ZZ/p\ZZ = \mathrm{Spec}(k[x]/(x^p-x))$ and $\mu_p = \mathrm{Spec}(k[x]/(x^p-1))$.
Let $L:=\ZZ/p\ZZ \oplus \mu_p$.
If $E$ is an ordinary elliptic curve, then $E[p] \cong L$.
\end{example}

\begin{example} \label{EdefineI11}
See \cite[Appendix A, Example 3.14]{G:book}.
Let $\alpha_p$ be the group scheme which is the kernel of Frobenius on $\mathbb{G}_a$.
As a $k$-scheme, 
$\alpha_p \cong \mathrm{Spec}(k[x]/x^p)$ with co-multiplication $m^*(x)=x \otimes 1 + 1 \otimes x$
and co-inverse $\mathrm{inv}^*(x)=-x$.

Define $I_{1,1}$ to be the local-local symmetric $\mathrm{BT}_1$ group scheme of rank $p^2$; it is unique up to isomorphism.
It fits in a non-split exact sequence
\begin{equation} \label{EI11}
0 \to \alpha_p \to I_{1,1} \to \alpha_p \to 0.
\end{equation}
The image of $\alpha_p$ in \eqref{EI11} is the kernel of $\mathrm{Fr}$ (Frobenius).
If $E$ is a supersingular elliptic curve, then $E[p] \cong I_{1,1}$.
\end{example}

\subsection{The $p$-rank and $a$-number} \label{Sprankanumber} 

The \emph{$p$-rank} of $X$ is 
\begin{equation} \label{Edefprank}
f=\mathrm{dim}_{\FF_p} \mathrm{Hom}(\mu_p, X),
\end{equation}
where $\mu_p$ is the kernel of Frobenius on $\mathbb{G}_m$.

The $p$-rank determines the number of $p$-torsion points on $X$, namely $p^f =\#X[p](k)$. 
The reason is that the multiplicities of the group schemes $\ZZ/p\ZZ$ and $\mu_p$ in $X[p]$ are the same 
because of the symmetry induced by the polarization. 

The \emph{$a$-number} of $X$ is 
\begin{equation} \label{Edefanumber}
a=\mathrm{dim}_k \mathrm{Hom}(\alpha_p, X),
\end{equation}
where $\alpha_p$ is the kernel of Frobenius on $\mathrm{G}_a$.
Then $0 \leq f \leq g$ and $1 \leq a +f \leq g$.

\begin{definition} \label{Dordss}
An abelian variety $X$ of dimension $g$ is \emph{ordinary} if 
$f=g$; equivalently, $X$ is ordinary if $a = 0$.
We say $X$ is \emph{superspecial} if $a=g$.
\end{definition}

Since $\mu_p$ (resp.\ $\alpha_p$) is simple,
the $p$-rank (resp.\ $a$-number) is additive;
\begin{equation}
\label{eqfadditive}
f(X_1 \times X_2) = f(X_1)+f(X_2)\text{ and }a(X_1\times X_2) = a(X_1)+a(X_2).
\end{equation}

The $p$-rank and $a$-number can also be defined for a $p$-torsion group scheme.

\subsection{The $p$-divisible group} \label{SNPpdivisible}

For each $n \in \mathrm{N}$, consider the multiplication-by-$p^n$ morphism $[p^n]:X \to X$ and its kernel $X[p^n]$.
The $p$-divisible group of $X$ is $X[p^\infty] = \varinjlim X[p^n]$.

For each pair $(c,d)$ of non-negative relatively prime integers (not both zero), fix
an isoclinic $p$-divisible group $G_{c,d}$ of codimension $c$, dimension $d$, and thus height $c+d$.
Let $\lambda = d/(c+d)$.
By the Dieudonn\'e-Manin classification \cite{maninthesis}, 
for $\lambda \in \QQ \cap [0,1]$,
there exists $m_\lambda \in \ZZ^{\geq 0}$ (all but finitely many of which are zero) 
such that there is an isogeny of $p$-divisible groups 
\begin{equation} \label{Enpgcd}
X[p^\infty] \sim \oplus_{\lambda=\frac{d}{c+d}} G_{c,d}^{m_\lambda}.
\end{equation} 

We return to \eqref{Enpgcd} when discussing Dieudonn\'e modules in Section~\ref{subsecdefcartier}.

\subsection{Supersingular abelian varieties}

\begin{definition}
A principally polarized abelian variety $X$ is \emph{supersingular} if 
$\lambda=1/2$ is the only slope of 
its $p$-divisible group $X[p^\infty]$.
\end{definition}

Letting $G_{1,1}$ denote the $p$-divisible group of dimension $1$ and height $2$,
then $X$ is supersingular if and only $X[p^\infty] \sim G_{1,1}^g$ \cite[Section 1.4]{LO}.

When $X$ is defined over a finite field $\FF_q$ (with $q=p^r$),  
there are some other ways to characterize the supersingular property.
Consider the characteristic polynomial $P(X/\FF_q, T)$ of Frobenius on $X$ (or its $\ell$-adic Tate module, for $\ell \not = p$).

By the Weil conjectures, $P(X/\FF_q, T)$ is a monic polynomial of degree $2g$ with integer coefficients.
Furthermore, $P(X/\FF_q, T) = \prod_{i=1}^{2g} (T-\alpha_i)$ where $|\alpha_i|=\sqrt{q}$.
These facts imply that there are integers $a_1, \ldots, a_g$ such that 
\begin{equation} \label{EcharpolyFrob}
P(X/\FF_q, T) = T^{2g} + a_1 T^{2g-1} + \cdots + a_g T^g + q a_{g-1}T^{g-1} + \cdots + q^g.
\end{equation}

\begin{theorem} \label{Lpropertysupersingular}
A principally polarized abelian variety $X/\FF_q$ of dimension $g$ is supersingular if and only if any of the following equivalent conditions hold:
\begin{enumerate}
\item the integer $a_j$ is divisible by $p^{\lceil jr /2 \rceil}$ for $1 \leq j \leq g$ \cite{manin61} or \cite[page~116]{O:sub};
\item $\mathrm{End}_{\overline{\FF}_q}(X) \otimes \QQ \cong \mathrm{Mat}_g(D_p)$, 
where $D_p$ is the quaternion algebra ramified only over $p$ and $\infty$ \cite[Theorem 2d]{tate:endo};
\item $X$ is geometrically isogenous to $E^g$ for some 
supersingular elliptic curve $E/\overline{\FF}_p$  \cite[Theorem~4.2]{O:sub}, which relies on \cite[Theorem 2d]{tate:endo}.
\end{enumerate}
\end{theorem}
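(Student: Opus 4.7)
The plan is to prove the equivalences by using the Newton polygon of the characteristic polynomial of Frobenius as the central bridge between the three conditions. The key observation is that the slopes of the Newton polygon of $X[p^\infty]$, normalized so they lie in $[0,1]$, coincide with the $q$-adic valuations $v_q(\alpha_i) = v_p(\alpha_i)/r$ of the Frobenius eigenvalues $\alpha_1, \ldots, \alpha_{2g}$. Combined with the Weil bound $|\alpha_i| = \sqrt{q}$, supersingularity (all slopes equal to $1/2$) is therefore equivalent to $v_p(\alpha_i) = r/2$ for every $i$.

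For condition (1), I would use the standard Newton polygon of a polynomial over $\QQ_p$. Since $(-1)^j a_j$ is the $j$-th elementary symmetric function in the $\alpha_i$, if every $v_p(\alpha_i) = r/2$ then $v_p(a_j) \geq jr/2$, and because $a_j \in \ZZ$ this upgrades to $p^{\lceil jr/2 \rceil} \mid a_j$. For the converse, the symmetry of the coefficients in \eqref{EcharpolyFrob} shows that the hypothesized divisibilities force the entire Newton polygon of $P(X/\FF_q, T)$ to lie weakly above the line from $(0,0)$ to $(2g,gr)$; since the product of all roots equals $q^g$, the polygon must meet both endpoints, and the Weil constraint that all slopes lie in $[0,r]$ then pins every slope to exactly $r/2$.

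For (3) $\Rightarrow$ supersingular, the implication is immediate: $E^g[p^\infty] \sim G_{1,1}^g$, and the isogeny class of the $p$-divisible group is an isogeny invariant of $X$. In the other direction, I would first establish (2), then deduce (3). To obtain (2) from supersingularity, I would invoke Tate's isogeny theorem, which identifies $\mathrm{End}_{\overline{\FF}_q}(X) \otimes \QQ_\ell$ with the commutant of Frobenius in $\mathrm{End}(V_\ell X)$. After replacing $\FF_q$ by $\FF_{q^2}$ so that $q$ becomes a square, supersingularity forces Frobenius to act on $V_\ell X$ as a scalar multiple of the identity, making the commutant all of $M_{2g}(\QQ_\ell)$. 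Computing the local invariants of the resulting division algebra via Honda--Tate theory (the invariant at a place $v \mid p$ is $v_p(\alpha)/r \cdot [\QQ_v : \QQ_p] = 1/2$, and the invariant at $\infty$ is also $1/2$ because Frobenius is not totally real) identifies $\mathrm{End}_{\overline{\FF}_q}(X) \otimes \QQ$ as a central simple $\QQ$-algebra of dimension $4g^2$ with local invariants supported exactly at $p$ and $\infty$, i.e., $\mathrm{Mat}_g(D_p)$.

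Finally, (2) $\Rightarrow$ (3) follows by choosing a primitive idempotent $e \in \mathrm{Mat}_g(D_p)$, clearing denominators so that a multiple lies in $\mathrm{End}_{\overline{\FF}_q}(X)$, and cutting out an abelian subvariety $A \subset X$ of dimension $1$ with $\mathrm{End}^0(A) = D_p$; the only $1$-dimensional abelian variety with quaternionic endomorphism algebra is a supersingular elliptic curve $E$, and then $X \sim A^g = E^g$. The main obstacle is the direction from supersingularity to (2): while Tate's theorem hands one the commutant structure abstractly, identifying the isomorphism class of the resulting algebra as $\mathrm{Mat}_g(D_p)$ requires careful bookkeeping of local invariants and of the base-field extension involved when $q$ is not a square, and this is where the deepest input, namely Tate's determination of $\mathrm{End}^0$ over finite fields, is essential.
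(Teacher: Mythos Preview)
The paper does not actually give a proof of this theorem; it only states the result and cites \cite{manin61}, \cite{O:sub}, and \cite{tate:endo} for the individual equivalences. So there is no ``paper's own proof'' to compare against, and your outline is doing more than the paper itself does.

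Your overall strategy is sound, but there are two concrete gaps in the argument for supersingular $\Rightarrow$ (2). First, replacing $\FF_q$ by $\FF_{q^2}$ is not enough to make Frobenius scalar on $V_\ell X$. Supersingularity does imply (via Kronecker's theorem, using that $\pi^2/q$ is a unit at every finite place and has absolute value $1$ at every archimedean place) that each $\alpha_i/\sqrt{q}$ is a root of unity, but these roots of unity can have different orders; you need to pass to $\FF_{q^s}$ for $s$ a common multiple of twice those orders before Frobenius becomes the scalar $q^{s/2}$. Second, your justification for the local invariant at $\infty$ is backwards: once Frobenius is scalar, $\QQ(\pi) = \QQ$ \emph{is} totally real, and it is precisely the existence of this real place that forces $\mathrm{inv}_\infty = 1/2$ in Tate's formula, not the absence of one. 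With these two fixes, the Honda--Tate computation goes through and in fact yields (2) and (3) simultaneously (the simple factor has $\mathrm{End}^0 \cong D_p$ and dimension $1$, hence is a supersingular elliptic curve), making your separate idempotent argument for (2) $\Rightarrow$ (3) redundant though not incorrect.

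For (1), your argument is correct but the appeal to ``the Weil constraint that all slopes lie in $[0,r]$'' is unnecessary: once you show every vertex $(j, v_p(c_j))$ lies on or above the line from $(0,0)$ to $(2g, gr)$, the lower convex hull is forced to be exactly that line, so all slopes equal $r/2$ without further input.
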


\subsection{The Newton polygon} \label{SNPabelianvariety}
The Newton polygon is an isogeny invariant of the $p$-divisible group of $X$ (and the finest isogeny invariant).
If $X_1$ and $X_2$ are isogenous, then they have the same Newton polygon.

\begin{definition}
Recall from \eqref{Enpgcd} the decomposition of the $p$-divisible group 
$X[p^\infty] \sim \oplus_{\lambda=\frac{d}{c+d}} G_{c,d}^{m_\lambda}$.
The \emph{Newton polygon} $\xi(X)$ is the multi-set of values of $\lambda\in \QQ \cap [0,1]$ in \eqref{Enpgcd}, which are called the \emph{slopes}. 
\end{definition}

The Newton polygon is determined by the multiplicities of its slopes.
For $\lambda \in \QQ \cap [0,1]$, let $n_\lambda$ denote the multiplicity of $\lambda$ in the multiset $\xi(X)$.
If $c,d\in\NN$ are relatively prime integers such that $\lambda=d/(c+d)$, then $n_\lambda = (c+d) m_\lambda$. 
The Newton polygon is \emph{symmetric} if $n_\lambda=n_{1-\lambda}$ (or $m_\lambda = m_{1-\lambda}$) 
for every $\lambda\in \QQ\cap [0,1]$. 

\begin{lemma}
The $p$-rank of $X$ is the multiplicity $n_1$ of the slope $1$ in $\xi(X)$.
\end{lemma}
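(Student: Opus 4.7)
The plan is to match the multiplicity $n_1$ of slope $1$ with the height of the maximal multiplicative sub-$p$-divisible group of $X[p^\infty]$, and then to recognize the latter as $\dim_{\FF_p}\mathrm{Hom}(\mu_p,X)$.

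First I would note that $\lambda=1$ corresponds uniquely to the pair $(c,d)=(0,1)$, so the only summand in \eqref{Enpgcd} contributing to the slope $1$ is $G_{0,1}$. Up to isomorphism $G_{0,1}$ is the multiplicative $p$-divisible group $\mu_{p^\infty}$ of height one, so $n_1=m_1$ and the slope-$1$ contribution to the decomposition is $\mu_{p^\infty}^{n_1}$. Next I would evaluate $\mathrm{Hom}(\mu_p,-)$ on each isoclinic block $G_{c,d}[p]$: for $\lambda=1$ one has $G_{0,1}[p]=\mu_p$, contributing a single copy of $\FF_p$; for $\lambda=0$ one has $G_{1,0}[p]=\ZZ/p\ZZ$, which is \'etale and contributes nothing; and for $0<\lambda<1$ the group $G_{c,d}$ is local-local, so $\mathrm{Hom}(\mu_p,G_{c,d}[p])=0$. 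Summing over isoclinic blocks gives $\dim_{\FF_p}\mathrm{Hom}\bigl(\mu_p,\bigoplus_\lambda G_{c,d}[p]^{m_\lambda}\bigr)=m_1$.

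The main obstacle is that \eqref{Enpgcd} is only an isogeny, so one must show that $\dim_{\FF_p}\mathrm{Hom}(\mu_p,-)$ is an isogeny invariant of $p$-divisible groups. For this I would invoke the canonical slope filtration, whose top piece is the maximal multiplicative sub-$p$-divisible group $X[p^\infty]^{\mathrm{mult}}$, characterized as the largest sub-$p$-divisible group on which $\mathrm{Ver}$ is an isomorphism. This sub is functorial, and its height is preserved under an isogeny of $p$-divisible groups. Under the isogeny in \eqref{Enpgcd}, $X[p^\infty]^{\mathrm{mult}}$ corresponds to $\mu_{p^\infty}^{m_1}$, so it has height $m_1$, and its $p$-torsion is $\mu_p^{m_1}$. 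Since any homomorphism $\mu_p\to X$ factors through $X[p]$ and lands inside the multiplicative part, this yields $f=\dim_{\FF_p}\mathrm{Hom}(\mu_p,X)=m_1=n_1$, as required.
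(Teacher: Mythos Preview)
The paper states this lemma without proof; it is treated as a standard fact (and is invoked again, still without justification, in the sketch of Proposition~\ref{Pprankssss}). So there is no ``paper's own proof'' to compare against.

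Your argument is correct and is the standard one. The identification $G_{0,1}\cong\mu_{p^\infty}$ (hence $n_1=m_1$) is right in the paper's covariant conventions, where $G_{c,d}$ has dimension $d$ and slope $d/(c+d)$; and the computation of $\mathrm{Hom}(\mu_p,-)$ on the three types of isoclinic blocks (multiplicative, \'etale, local-local) is straightforward. Your handling of the isogeny issue via the maximal multiplicative sub-$p$-divisible group is also the right move: over a perfect field the connected--\'etale and multiplicative--bi-infinitesimal filtrations split canonically, so $X[p^\infty]^{\mathrm{mult}}$ is a genuine direct summand whose height is preserved under isogeny, and $X[p]^{\mathrm{mult}}=(X[p^\infty]^{\mathrm{mult}})[p]\cong\mu_p^{m_1}$. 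One could streamline slightly by skipping the block-by-block $\mathrm{Hom}$ computation and going straight to the splitting $X[p]\cong \mu_p^{m_1}\oplus(\text{local-local})\oplus(\ZZ/p\ZZ)^{m_0}$, but your version is fine.
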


The Newton polygon is typically drawn as a lower convex polygon, 
with slopes equal to the values of $\lambda$ occurring with multiplicity $n_\lambda$.
The Newton polygon of a $g$-dimensional abelian variety $X$ is symmetric and, when drawn as a polygon,  
it has endpoints $(0,0)$ and $(2g,g)$ and integral break points. 

There is a partial ordering on Newton polygons of the same height $2g$: 
one Newton polygon is smaller (meaning less ordinary) than a second if the lower convex hull of the first is never below the second.
We write $\xi_1\leq \xi_2$ if $\xi_1,\xi_2$ share the same endpoints and $\xi_1$ 
lies on or above $\xi_2$.  This defines a partial ordering on Newton polygons for abelian varieties of dimension $g$.
In this partial ordering, the ordinary Newton polygon is maximal and the supersingular Newton polygon is minimal. 

\begin{notation}
We use $\oplus$ to denote the union of multi-sets.  For any multi-set $\xi$, and $n\in\NN$, 
we write $\xi^n$ for the union of $n$ copies of $\xi$. 

Let $\mathrm{ord}$ denote the Newton polygon $\{0,1\}$ and $\mathrm{ss}$ denote the Newton polygon $\{1/2,1/2\}$.
Let $\sigma_g:=\mathrm{ss}^g$ denote the supersingular Newton polygon of height $2g$.

For $s,t\in \NN$, 
with $s \leq t/2$ and $\mathrm{gcd}(s,t)=1$, let $(s/t, (t-s)/t)$ denote the Newton polygon with slopes $s/t$ and  $(t-s)/t$, each with multiplicity $t$. 
\end{notation}

Thus the Newton polygon of an ordinary (resp.\ supersingular) abelian variety of dimension $g$
is $\mathrm{ord}^g$ (resp.\ $\sigma_g=\mathrm{ss}^g$).

\begin{remark} \label{SNPversion2}
Suppose $X$ is defined over $\overline{\FF}_p$.
Then there exists a finite subfield $\FF_0$ of characteristic $p$ 
such that $X$ is isomorphic to the base change to $\overline{\FF}_p$ of an abelian variety $X_0$ over $\FF_0$  
and Frobenius is an $\FF_0$-endomorphism on $X_0$.

Let $W(\FF_0)$ denote the Witt vector ring of $\FF_0$. 
Consider the action of Frobenius $\mathrm{Fr}$ on the crystalline cohomology group $H^1_{\mathrm{cris}}(X_0/W(\FF_0))$. 
By Weil's Theorem, there exists an integer $n$, e.g., $n=[\FF_0 : \FF_p]$, such that
the composition of $n$ Frobenius actions $\mathrm{Fr}^n$ is a linear map on $H^1_{\mathrm{cris}}(X_0/W(\FF_0))$.

In this situation, the \emph{Newton polygon} $\xi(X)$ of $X$ is the multi-set of rational numbers $\lambda$
such that $n\lambda$ are the valuations at $p$ of the eigenvalues of $\mathrm{Fr}^n$. 
Note that $\xi(X)$ is independent of the choice of $X_0$, $\FF_0$, and $n$. 
\end{remark}

\subsection{Dieudonn\'e modules} \label{subsecdefcartier} 

The $p$-divisible group $X[p^\infty]$ and the $p$-torsion group scheme $X[p]$ can be described using covariant Dieudonn\'e theory.\footnote{Differences between the covariant and contravariant theory 
do not cause a problem in this manuscript since we are working over a perfect field 
(and also because all objects we consider are 
principally quasipolarized and thus symmetric).}
References for this material are \cite[Section I.1.4, Appendix B.3]{chaiconradoort} and \cite[15.3]{O:strat}.  

Briefly, let $\sigma$ denote the Frobenius automorphism of $k$ and its lift to the Witt vectors $W(k)$.
Let $\tilde \EE = \tilde\EE(k) = W(k)[F,V]$ denote the non-commutative ring generated by $F$ and $V$ with relations
\begin{equation} \label{Efvchar0}
FV=VF=p, \ F a = a^\sigma F, \ a V=V a^\sigma,
\end{equation} for all $a \in W(k)$.  

There is an equivalence of categories $\dieu_*$
between $p$-divisible groups over $k$ and $\tilde\EE$-modules which are free of finite rank over $W(k)$.
The Dieudonn\'e module $D_\lambda :=  \dieu_*(G_{c,d})$ is a free $W(k)$-module of rank $c+d$.
Over $\operatorname{Frac}W(k)$, there is a basis $x_1, \ldots, x_{c+d}$ for $D_\lambda$ such that $V^{c}x_i=F^d x_i$.
Roughly speaking, $V^{c+d}$ acts like $p^d$, indicating that the slope is $d/(c+d)$ in the covariant theory.

We now consider Dieudonn\'e modules modulo $p$.
Let $\EE = \tilde \EE \otimes_{W(k)} k$ be the reduction of $\tilde{\EE}$ modulo $p$; it is a non-commutative ring $k[F,V]$ subject to the same constraints as \eqref{Efvchar0}, except that $FV = VF = 0$ in $\EE$.  
Again, there is an equivalence of categories $\dieu_*$ between finite commutative group schemes (of rank $2g$) annihilated by $p$ and $\EE$-modules of finite dimension ($2g$) over $k$.

For elements $w_1, \ldots, w_r \in \EE$, 
let $\EE(w_1, \ldots, w_r)$ denote the left ideal $\sum_{i=1}^r \EE w_i$ of $\EE$ generated by $\{w_i \mid 1 \leq i \leq r\}$.

For a p.p.\ abelian variety $X$ of dimension $2g$, 
the mod $p$ Dieudonn\'e module of $X[p]$ is a vector space of dimension $2g$ and an $\EE$-module.\footnote{
If $M = \dieu_*(I)$ is the mod $p$ Dieudonn\'e module of a symmetric $\mathrm{BT}_1$ group scheme $I$, then  
a principal quasipolarization  $\psi:I \to I^D$ induces a 
nondegenerate symplectic form $\ang{\cdot,\cdot}:M \times M \to k$
on the underlying $k$-vector space of $M$, with the additional constraint that $\ang{Fx,y} = \ang{x,Vy}^\sigma$, for all $x, y \in M$.}

\begin{example} Recall the group scheme $L=\ZZ/p\ZZ \oplus \mu_p$ from Example~\ref{EdefineL}.  
It has mod $p$ covariant Dieudonn\'e module $\dieu_*(L) = \EE/\EE(V, F-1)
\oplus \EE/\EE(F, V-1)$.
\end{example}

\begin{example}\label{exi11} Recall the group scheme $I_{1,1}$ from Example~\ref{EdefineI11}.
It has mod $p$ Dieudonn\'e module $D_{1,1} :=\dieu_*(I_{1,1}) = \EE/\EE(F+V)$.
\end{example}

\subsection{The Ekedahl-Oort type} \label{Seotype} 

The $p$-torsion $X[p]$ of $X$ is a symmetric $\mathrm{BT}_1$-group scheme (of rank $2g$) annihilated by $p$.
We describe the isomorphism class of $X[p]$ using a combinatorial object called its Ekedahl-Oort type (E--O type).

Isomorphism classes of pqp $\mathrm{BT}_1$ group schemes over $k$
have been completely classified using these Ekedahl-Oort types \cite[Theorem 9.4
\& 12.3]{O:strat}.  
This builds on work of Kraft \cite{Kraft} (unpublished, which did not include polarizations) and of Moonen
\cite{M:group} (for $p \geq 3$). 
(When $p=2$, there are complications with the polarization which are resolved in \cite[9.2, 9.5, 12.2]{O:strat}.)
References on this material are \cite{O:strat}, \cite{EVdG}, and \cite{moonenwedhorn}.

The isomorphism type of a symmetric $\mathrm{BT}_1$ group scheme 
$I$ over $k$ can be encapsulated into combinatorial data in numerous ways, including
the Kraft word, the Ekedahl--Oort type, and the Young type.  
See \cite{Pr:sg} and \cite{PriesUlmerBT1} for longer descriptions of these topics.

To define the Ekedahl--Oort type, as in \cite[Sections 5 \& 9]{O:strat},
suppose $I$ is a symmetric $\mathrm{BT}_1$ group scheme over $k$ with rank $p^{2g}$. 
Then there is a \emph{final filtration} $N_1 \subset N_2 \subset \cdots \subset N_{2g}$ 
of ${\mathbb D}_*(I)$ as a $k$-vector space which is stable under the action of $V$ and $F^{-1}$ such that $i=\mathrm{dim}(N_i)$ \cite[5.4]{O:strat}.

The \emph{Ekedahl-Oort type} of $I$ is 
\[\nu:=[\nu_1, \ldots, \nu_g], \text{ where } {\nu_i}=\mathrm{dim}(V(N_i)).\]

\begin{lemma}
The $p$-rank is $\mathrm{max}\{i \mid \nu_i=i\}$ and the $a$-number equals $g-\nu_g$.
\end{lemma}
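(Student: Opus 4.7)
The plan is to read both invariants off the final filtration $0 = N_0 \subset N_1 \subset \cdots \subset N_{2g} = M$ by combining its $V$-stability with the BT$_1$ identities $\ker F = \mathrm{Im}\, V$ and $\ker V = \mathrm{Im}\, F$. Because the filtration is \emph{final}, $V(N_i)$ is itself one of the $N_j$; its dimension is $\nu_i$ by definition, so $V(N_i) = N_{\nu_i}$ for every $i$. Since $X$ has dimension $g$, $\dim_k \mathrm{Im}\, V = g$, whence $\nu_{2g} = g$ and $V(M) = N_g = \ker F$.

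With $\ker F = N_g$ in hand, the $a$-number formula follows from a single application of rank--nullity: the surjection $V : N_g \twoheadrightarrow V(N_g) = N_{\nu_g}$ has kernel $\ker V \cap N_g$, so
$$a \;=\; \dim_k(\ker F \cap \ker V) \;=\; \dim_k(N_g \cap \ker V) \;=\; g - \nu_g.$$

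For the $p$-rank I would use that $f = \dim_k M^{\mathrm{mult}}$, where $M^{\mathrm{mult}}$ is the largest Dieudonn\'e submodule of $M$ on which $V$ is bijective (equivalently, on which $F$ vanishes). The key identification is $M^{\mathrm{mult}} = V^\infty M := \bigcap_{n \geq 0} V^n M$. Indeed the descending chain $V^n M$ stabilizes by finite-dimensionality; on the stable piece $V$ is surjective and hence bijective, and that piece lies in $V M \subseteq \ker F$, so $F$ vanishes there. Conversely, any submodule on which $V$ is bijective is stable under $V^n$ for every $n$ and so lies in $V^\infty M$. Iterating $V(N_j) = N_{\nu_j}$ starting from $V(M) = N_g$ gives $V^{k+1} M = N_{\nu^{(k)}(g)}$, where $\nu^{(k)}$ is the $k$-fold iterate of the self-map $j \mapsto \nu_j$ of $\{0, 1, \ldots, g\}$. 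Since $\nu_j \leq j$, this sequence of indices is non-increasing and must stabilize at some $i^*$ with $\nu_{i^*} = i^*$. Because $\nu$ is non-decreasing in $j$, if $i_0 \leq g$ is any fixed point of $\nu$ then $\nu_g \geq \nu_{i_0} = i_0$, and induction gives $\nu^{(k)}(g) \geq i_0$ for every $k$; hence the iteration lands on the \emph{largest} fixed point of $\nu$ in $\{0, \ldots, g\}$, and $f = \dim_k V^\infty M = \max\{i : \nu_i = i\}$.

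The main obstacle is the $p$-rank step, specifically the two monotonicity arguments: checking that $V^\infty M$ genuinely coincides with $M^{\mathrm{mult}}$ (not merely contains it), and checking that iterating $\nu$ downward from $g$ cannot skip past a smaller-but-still-maximal fixed point. Both arguments are short once set up, but they are the only nontrivial content; once $\ker F = N_g$ has been identified inside the filtration, the $a$-number formula is immediate and the $p$-rank formula reduces to tracing the action of $V$ through the final flag.
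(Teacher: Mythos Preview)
Your proof is correct. The paper states this lemma without proof, treating it as a standard consequence of the definitions, so there is no argument in the paper to compare against. Your approach---identifying $\ker F = N_g$ via $\dim_k \mathrm{Im}\,V = g$, reading off $a = \dim_k(\ker F \cap \ker V) = g - \nu_g$ by rank--nullity, and computing the $p$-rank as $\dim_k V^\infty M$ by iterating $j \mapsto \nu_j$ down from $g$---is the standard one and is carried out cleanly.

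Two minor remarks. First, you write $\nu_{2g}$, but the paper only defines $\nu_i$ for $1 \le i \le g$; it is clear you mean $\dim_k V(N_{2g})$, so this is purely notational. Second, the step ``$V(N_i) = N_{\nu_i}$'' uses that $V(N_i)$ is itself a member of the filtration, which is exactly the content of the word \emph{final} in Oort's sense (the paper phrases this as the filtration being ``stable under $V$ and $F^{-1}$''); you invoke this correctly, but a reader unfamiliar with the terminology might appreciate an explicit sentence to that effect.
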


There is a restriction $\nu_i \leq \nu_{i+1} \leq \nu_i +1$ on the Ekedahl-Oort type.
There are $2^g$ Ekedahl-Oort types of length $g$ since all sequences satisfying this restriction occur.   
By \cite[9.4, 12.3]{O:strat}, there are bijections between (i) Ekedahl-Oort types of length $g$; (ii) 
pqp $\mathrm{BT}_1$ group schemes over $k$ of rank $p^{2g}$;
and (iii) pqp Dieudonn\'e modules of dimension $2g$ over $k$.

By \cite{EVdG}, the Ekedahl-Oort type can also be described by its Young type $\mu$.  
Given $\nu$, for $1 \leq  j \leq g$, consider the strictly decreasing sequence
\[\mu_j = \#\{i \mid 1 \leq i \leq  g, \  i - \nu_i \geq j\}.\] 
There is a Young diagram with $\mu_j$ squares in the $j$th row. 
(Unlike in combinatorics, we draw the Young diagrams to look like a staircase, ascending to the right.)
The \emph{Young type}
is $\mu = \{\mu_1, \mu_2, . . .\}$, where one eliminates all $\mu_j$ which are $0$. 

\begin{lemma}
The $p$-rank is $g - \mu_1$ and the $a$-number is $a = \mathrm{max}\{j \mid \mu_j \not = 0\}$.
\end{lemma}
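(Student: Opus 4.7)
The plan is to unpack the definition of the Young type $\mu$ and translate the preceding lemma, which gives the $p$-rank and $a$-number in terms of $\nu$, into a statement about $\mu$. The key combinatorial inputs are two inequalities satisfied by an Ekedahl--Oort type: first, $\nu_i \leq i$, since $\nu_i = \dim V(N_i) \leq \dim N_i = i$; and second, the restriction $\nu_i \leq \nu_{i+1} \leq \nu_i + 1$ stated in the excerpt.

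For the $p$-rank identity, I would first compute
\[\mu_1 = \#\{i : i - \nu_i \geq 1\} = \#\{i : \nu_i < i\} = g - \#\{i : \nu_i = i\},\]
where the last equality uses $\nu_i \leq i$. Next, I would show that $S := \{i : \nu_i = i\}$ is an initial segment of $\{1,\dots,g\}$. Indeed, if $\nu_{i+1} = i+1$, then $\nu_{i+1} \leq \nu_i + 1$ forces $\nu_i \geq i$, and combined with $\nu_i \leq i$ this gives $\nu_i = i$; iterating downward shows $\nu_j = j$ for all $j \leq i+1$. Hence $S = \{1,2,\dots,f\}$ with $f = \max\{i : \nu_i = i\}$, so $\#S = f$, which by the previous lemma is the $p$-rank. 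Therefore $\mu_1 = g - f$ and the $p$-rank equals $g - \mu_1$.

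For the $a$-number identity, I would observe that the map $i \mapsto i - \nu_i$ is non-decreasing: the inequality $\nu_{i+1} \leq \nu_i + 1$ rearranges to $(i+1) - \nu_{i+1} \geq i - \nu_i$. Consequently
\[\max_{1 \leq i \leq g}(i - \nu_i) = g - \nu_g.\]
Since $\mu_j \neq 0$ iff some $i$ satisfies $i - \nu_i \geq j$, we obtain
\[\max\{j : \mu_j \neq 0\} = \max_i(i - \nu_i) = g - \nu_g,\]
which by the preceding lemma equals the $a$-number $a$.

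The argument is entirely combinatorial, and there is no substantive obstacle beyond being careful with the two defining constraints. The step most worth getting right is the initial-segment claim for $\{i : \nu_i = i\}$, since without it one would only be able to identify $g - \mu_1$ with the cardinality $\#\{i : \nu_i = i\}$ rather than with the maximum $\max\{i : \nu_i = i\}$ that appears in the preceding lemma.
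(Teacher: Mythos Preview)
Your proof is correct. The paper states this lemma without proof, so there is nothing to compare against; your argument supplies exactly the combinatorial verification one would want, and your care with the initial-segment claim is appropriate. One very minor remark: in the edge cases where the $p$-rank is $0$ (so $\{i:\nu_i=i\}=\emptyset$) or the $a$-number is $0$ (ordinary case, so $\mu_j=0$ for all $j\geq 1$), the maxima in question should be read as $0$ by convention, and your formulas remain valid under that reading.
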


\begin{example} \label{Eir1}
Let $r \in \mathrm{N}$.  
There is a unique symmetric $\mathrm{BT}_1$ group scheme of rank $p^{2r}$
with $p$-rank $0$ and $a$-number $1$, which we denote $I_{r,1}$. 
The mod $p$ Dieudonn\'e module of $I_{r,1}$ is $\dieu_*(I_{r,1}) \cong \EE/\EE(F^r + V^r)$.
For $I_{r,1}$, the Ekedahl-Oort type is $[0,1,2,\ldots, r-1]$ and the Young type is $\{r\}$.
\end{example}

\section{Superspecial, supersingular, and $p$-rank $0$ abelian varieties}

\subsection{The difference between $p$-rank $0$ and supersingular} \label{Sqinv} 

Let $X$ be a principally polarized abelian variety of dimension $g$ over $k$.
Let $X[p]$ be the kernel of the multiplication-by-$p$ morphism of $X$.
The following conditions are all different for $g \geq 3$.

\smallskip

{\bf (A) $p$-rank $0$} -  The only $p$-torsion point of $X$ is the identity: $\#X[p](k) =1$.

\smallskip

{\bf (B) supersingular} - The Newton polygon of $X$ is a line segment of slope $1/2$.

\smallskip

{\bf (C) superspecial} -  The group scheme $X[p]$ is isomorphic to $(I_{1,1})^g$.

\smallskip

\begin{proposition} \label{Pprankssss}
For conditions $(A),(B),(C)$ as defined above, there are implications:
\[(C) \Rightarrow (B) \Rightarrow (A), 
\text{ but }   (A) \stackrel{g \geq 3}{\not \Rightarrow} (B) \stackrel{g \geq 2}{\not \Rightarrow} (C).\] 
\end{proposition}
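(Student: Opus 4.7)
The plan is to establish the two forward implications $(C) \Rightarrow (B) \Rightarrow (A)$ directly, and then to exhibit explicit counterexamples witnessing the failure of the reverse implications.

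For $(C) \Rightarrow (B)$, I would invoke Oort's theorem that a principally polarized abelian variety $X$ of dimension $g$ with $a(X) = g$ is isomorphic as an unpolarized abelian variety to $E^g$, where $E$ is a supersingular elliptic curve. Granted this, the Newton polygon of $X = E^g$ is $g$ copies of the slope $1/2$ segment, i.e.\ $\sigma_g$, by additivity of the Newton polygon under products. An alternative argument passes through covariant Dieudonn\'e theory: if $X[p] \cong I_{1,1}^g$, then $\dieu_*(X[p]) \cong (\EE/\EE(F+V))^g$, and one verifies that every lift of this $\mathrm{BT}_1$ group scheme to a $p$-divisible group is isogenous to $G_{1,1}^g$, which has Newton polygon $\sigma_g$.

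For $(B) \Rightarrow (A)$, I would apply the lemma in Section~\ref{SNPabelianvariety} asserting that the $p$-rank $f$ equals the multiplicity $n_1$ of the slope $1$ in $\xi(X)$. Since $\xi(X) = \sigma_g$ consists only of the slope $1/2$, we have $n_1 = 0$, so $f = 0$ and $\#X[p](k) = p^0 = 1$.

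For the non-implications, I would use explicit Newton polygons and dimension counts. For $(A) \not\Rightarrow (B)$ with $g = 3$, take a principally polarized abelian threefold whose Newton polygon has slopes $1/3$ and $2/3$, each with multiplicity $3$; such an abelian variety exists because every symmetric Newton polygon with endpoints $(0,0)$ and $(2g,g)$ occurs on $\cA_g$. It has $p$-rank $0$ but is not supersingular because it has slopes different from $1/2$. For $g > 3$, one takes the product with a supersingular abelian variety of dimension $g-3$ and uses that both the $p$-rank and the Newton polygon are additive under products. For $(B) \not\Rightarrow (C)$ with $g \geq 2$, I would compare dimensions: the superspecial locus in $\cA_g$ is finite (enumerable via class number formulas), whereas the supersingular locus $\cA_g[\sigma_g]$ has positive dimension $\lfloor g^2/4 \rfloor$ for $g \geq 2$, so any supersingular point outside the finite superspecial set gives the example. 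Concretely, one may start from a supersingular elliptic curve $E$, take a non-trivial isogeny $E^g \to X$ whose kernel is not contained in $E^g[\mathrm{Fr}]$, and check that $X$ remains supersingular but $a(X) < g$, so $X[p] \not\cong I_{1,1}^g$.

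The main obstacle is the implication $(C) \Rightarrow (B)$: a priori, knowledge of the $p$-torsion group scheme $X[p]$ does not determine the Newton polygon, which is an isogeny invariant of the full $p$-divisible group $X[p^\infty]$. The non-trivial input is the rigidity statement that $a(X) = g$ forces $X \cong E^g$ as unpolarized abelian varieties (equivalently, that the Ekedahl--Oort stratum with $\nu = [0, 0, \ldots, 0]$ lies inside the supersingular Newton stratum). Once this is available, the remaining steps are either immediate from the definitions or handled by standard dimension and additivity arguments.
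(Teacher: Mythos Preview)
Your proposal is correct and largely parallels the paper's sketch, but there are two places where the arguments diverge in a way worth noting. For $(C)\Rightarrow(B)$, your primary route is Oort's structure theorem $X\cong E^g$; the paper instead argues directly on the $p$-divisible group: from $X[p]\cong I_{1,1}^g$ one gets $F^2G\subset [p]G$, and Katz's basic slope estimate \cite[1.4.3]{katzslope} then forces all slopes $\geq 1/2$, hence $=1/2$ by symmetry. The paper lists the Oort structure theorem only as an alternative. Your Dieudonn\'e-theoretic aside (``one verifies that every lift of $I_{1,1}^g$ is isogenous to $G_{1,1}^g$'') is precisely the nontrivial content here and should not be presented as a routine check; the Katz slope estimate is what makes it clean. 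For $(B)\not\Rightarrow(C)$, you argue by comparing dimensions (superspecial locus finite, supersingular locus of dimension $\lfloor g^2/4\rfloor\geq 1$); the paper instead gives the explicit construction of quotienting a superspecial abelian variety by a \emph{general} $\alpha_p$-subgroup scheme. Your dimension argument is valid but invokes the full computation of $\dim\cA_g[\sigma_g]$, which is a substantially deeper input than the direct construction. Your own ``concrete'' isogeny construction is imprecise: the condition that the kernel not lie in $E^g[\mathrm{Fr}]$ does not by itself guarantee $a(X)<g$, nor that $X$ carries a principal polarization; the paper's formulation via a general $\alpha_p$-quotient is the correct way to say this.
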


\begin{proof}(Sketch)
\begin{enumerate}
\item{For the implication $(C) \Rightarrow (B)$:}
if the $p$-torsion of a $p$-divisible group $G$ satisfies (C), then 
$F^2 G \subset [p] G$.  By the basic slope estimate in \cite[1.4.3]{katzslope}, the slopes of the Newton polygon are all 
at least $1/2$; so the slopes all equal $1/2$, because the polarization forces the Newton polygon to be symmetric.
Thus $X$ is supersingular.
Alternatively, the implication $(C) \Rightarrow (B)$ follows from \cite[Theorem~4.2]{O:sub}. 
\item{For the non-implication $(B) \not \Rightarrow (C)$ when $g \geq 2$:}
an abelian variety can be isogenous 
but not isomorphic to a product of supersingular elliptic curves;
for example, quotients of a superspecial abelian variety by a general $\alpha_p$-subgroup scheme have this property
when $g \geq 2$.
\item{For the implication $(B) \Rightarrow (A)$:} more generally,  
the $p$-rank of a $p$-divisible group 
is the multiplicity of the slope $1$ in the Newton polygon, so if all the slopes equal $1/2$, then the $p$-rank is $0$.
\item{For the non-implication $(A) \not \Rightarrow (B)$ when $g \geq 3$:}
there exists a principally polarized abelian variety over $k$ whose $p$-divisible group is $G_{g-1,1} \oplus G_{1,g-1}$, 
so its Newton polygon has slopes $1/g$ and $(g-1)/g$.
This has $p$-rank $0$, but is not supersingular when $g \geq 3$. \qedhere
\end{enumerate}
\end{proof}

\section{Related topics: small dimension; semi-abelian varieties} \label{SStablesNPEO}

\subsection{The $p$-torsion invariants in small dimension}

In this section, we include data for dimensions $g=2,3$.
These tables previously appeared in \cite{Pr:sg}, along with a table for $g=4$.
In these tables, the symmetric $\mathrm{BT}_1$ group schemes are first listed by name; 
recall that $L = \ZZ/p \oplus \mu_p$ and see Example \ref{Eir1} for the definition of $I_{r,1}$.
Then, for each group scheme, the tables include the
codimension of its stratum in $\cA_g$, and its $p$-torsion invariants
($p$-rank $f$, $a$-number $a$, Ekedahl-Oort type $\nu$, 
Young type $\mu$, and mod $p$ Dieudonn\'e module).  We also include some information about the slopes of the Newton polygon.

\smallskip

{\bf The case $g=2$} 

The following table shows the 4 symmetric $\mathrm{BT}_1$ group schemes that occur
for principally polarized abelian surfaces.  

\renewcommand{\arraystretch}{1.065}
\[\begin{array}{|l|c|c|c|c|l|r|r|}
\hline
\mbox{Name} & \mathrm{cod} & f & a &  \nu & \mu & \mbox{Dieudonn\'e module} & \mbox{Newton polygon} \\
\hline
\hline
L^2 & 0 & 2 & 0 &  [1,2] & \emptyset & D(L)^2 & 0,0,1,1 \\
\hline
L \oplus I_{1,1} & 1 & 1 & 1 &  [1,1] & \{1\} & D(L) \oplus D_{1,1} & 0,\frac{1}{2},\frac{1}{2},1 \\
\hline
I_{2,1} & 2 & 0 & 1 &  [0,1] & \{2\} &{\mathbb E}/\EE(F^2+V^2) & \frac{1}{2},\frac{1}{2},\frac{1}{2},\frac{1}{2}\\
\hline
(I_{1,1})^2 & 3 & 0 & 2 &  [0,0] & \{2,1\} & (D_{1,1})^2 & \frac{1}{2},\frac{1}{2},\frac{1}{2},\frac{1}{2} \\
\hline
\end{array}
\]

The last two rows contain all the supersingular objects.

\smallskip

{\bf The case $g=3$} 

The following table shows the 8 symmetric $\mathrm{BT}_1$ group schemes that occur
for principally polarized abelian threefolds.  

\renewcommand{\arraystretch}{1.065}
\[\begin{array}{|l|c|c|c|c|l|r|}
\hline
\mbox{Name} & \mathrm{cod} & f & a & \nu & \mu &  \mbox{Dieudonn\'e module}  \\
\hline
\hline
L^3 & 0 & 3 & 0 &          [1,2,3] & \emptyset & D(L)^3 \\
\hline
L^2 \oplus I_{1,1} & 1 & 2 & 1 & [1,2,2]  &  \{1\} & D(L)^2 \oplus D_{1,1} \\
\hline
L \oplus I_{2,1} & 2 & 1 & 1&    [1,1,2] &  \{2\} & D(L) \oplus {\mathbb E}/\EE(F^2+V^2) \\
\hline
L \oplus (I_{1,1})^2 & 3 & 1 & 2 & [1,1,1] &  \{2,1\} & D(L) \oplus (D_{1,1})^2 \\
\hline
I_{3,1} & 3 & 0 & 1 &  [0,1,2] & \{3\} & {\mathbb E}/\EE(F^3+V^3)  \\
\hline
I_{3,2} & 4 & 0 & 2 &          [0,1,1]  &  \{3,1\} &  \EE/\EE(V^{2}+F) \oplus \EE/\EE(F^{2}+V) \\
\hline
I_{1,1} \oplus I_{2,1} & 5 & 0 & 2 &   [0,0,1] &  \{3,2\} & D_{1,1} \oplus {\mathbb E}/\EE(F^2+V^2) \\
\hline
(I_{1,1})^3 & 6 & 0 & 3 &      [0,0,0] &  \{3,2,1\}   & (D_{1,1})^3 \\
\hline
\end{array}
\]

The objects in the last two rows are always supersingular.
If the E--O type is $I_{3,2}$, then the Newton polygon cannot be supersingular.
The situation for $I_{3,1}$ is more subtle.
By \cite[Theorem 5.12]{O:hypsup},
if $X[p] \cong I_{3,1}$, then the $p$-divisible group is usually isogenous to $G_{2,1} \oplus G_{1,2}$ 
(slopes $1/3, 2/3$) 
but it can also be isogenous to $G_{1,1}^3$ (supersingular).
This shows that the Ekedahl-Oort stratification does not refine the Newton polygon stratification
for $g \geq 3$.

\subsection{The $p$-torsion invariants for semi-abelian varieties} \label{Sptorsemi}

The material here is not needed in this manuscript, but we mention it for completeness.

Suppose $\tilde{X}$ is a semi-abelian variety which is an extension of an abelian variety $X$ by a torus of rank $e$. 
Let $f_X$, $a_X$, $\xi_X$, $X[p]$, $\nu_X$ denote respectively the 
$p$-rank, $a$-number, Newton polygon, $p$-torsion group scheme, and Ekedahl--Oort type of $X$.

The definition of the $p$-rank is also valid for semi-abelian varieties, namely
$f_{\tilde{X}}=\mathrm{dim}_{\FF_p} \mathrm{Hom}(\mu_p, \tilde{X})$.
Note that $f_{\tilde{X}} = f_X + e$.

The Newton polygon of $\tilde{X}$ is defined to be $\xi\oplus \mathrm{ord}^{e}$, which is obtained 
by adjoining $e$ slopes of $0$ and $e$ slopes of $1$ to $\xi$.

In \cite[Section~5]{EVdG}, the canonical filtration for $\tilde{X}$ is studied.  
As a consequence, the Ekedahl--Oort type of $\tilde{X}$ turns out
to be the E--O type for $X[p] \oplus L^e$ where $L= \mu_p \oplus \ZZ/p\ZZ$.
In other words, if $\nu_X = [\nu_1, \ldots, \nu_{g_X}]$, then the E--O type of $\tilde{X}$ is
$\nu_{\tilde{X}} = [1, \ldots, e, e + \nu_1, \ldots, e+\nu_{g_X}]$.
Thus the $a$-number of $\tilde{X}$ is $a_{\tilde{X}}=a_X$.

\section{Open questions on Ekedahl--Oort types}

For a p.p.\ abelian variety $X$, 
its Ekedahl-Oort type places restrictions on its Newton polygon and vice-versa, see \cite{harashita07, harashita10}.
However, there are situations where neither of these $p$-torsion invariants determines the other.
There are several questions in Karemaker's manuscript \cite[Section~3.5.2]{karemakerAWS} in this volume about the interaction between 
the Newton polygon and Ekedahl--Oort type.

We later include a question about the geometry of the Ekedahl--Oort stratification, see
Question~\ref{QpurityEO}.

\chapter{Existence of curves with given $p$-torsion invariants} \label{Chapter4}

\section{Overview: motivating questions about curves}

Let $k$ be an algebraically closed field of characteristic $p >0$.
Suppose $C$ is a (smooth, projective, irreducible) curve of genus $g$ defined over $k$.
The $p$-torsion invariants of $C$ are defined to be those of its Jacobian $\mathrm{Jac}(C)$.

This chapter contains some existence results for smooth curves 
with certain Newton polygons or Ekedahl--Oort types.
More general results about the $p$-rank are contained in Section~\ref{SFVdG}.

Here are the motivating questions.

\begin{question} \label{Qmot2}
If $p$ is prime and $g \geq 2$: does there exist a smooth curve $C/\overline{\FF}_p$ of genus $g$
whose Jacobian (A) has $p$-rank $0$; (B) is supersingular; or (C) is superspecial?  
\end{question}

For Question~\ref{Qmot2}, the answer to part (A) is yes for all $g$ and $p$, see Theorem~\ref{TFVdG};
part (B) is sometimes yes, as seen in this section, but is not known in most cases;  
the answer to part (C) is not known in most cases, but is sometimes no when $p$ is small relative to $g$, 
see Theorem~\ref{Tekedahl}.  More generally, one can ask:

\begin{question} \label{Qmot2general}
If $p$ is prime and $g \geq 2$: which $p$-ranks, $a$-numbers, Newton polygons, and Ekedahl-Oort types
occur for the Jacobians of smooth curves $C/\overline{\FF}_p$ of genus $g$? 
\end{question}

We survey some of the results on this topic.
In this chapter, we focus on results proven using cohomological calculations or decomposition of the Jacobian.

\section{Newton polygons of curves}

The Newton polygon of a curve $C$ is the Newton polygon of its Jacobian, as defined in Section~\ref{SNPabelianvariety}.
If $C$ has genus $g$, then its Newton polygon $\xi(C)$ consists of finitely many line segments, which break at points with integer coefficients, 
starting at $(0,0)$ and ending at $(2g,g)$.
It is symmetric; the multiplicities $n_\lambda$ and $n_{1-\lambda}$ of the slopes $\lambda$ and $1-\lambda$ in $\xi(C)$ are equal.
As before, the $p$-rank equals the multiplicity $n_1$ of the slope $1$ in $\xi(C)$.
As before, $C$ is supersingular if and only if the only slope of $\xi(C)$ is $\lambda = 1/2$.

\subsection{The Newton polygon of a curve over a finite field} \label{SNPcurve}

In Remark~\ref{SNPversion2}, we defined the Newton polygon of an abelian variety over a finite field.
When $C$ is defined over a finite field, there is an alternative definition of the Newton polygon. 

Let $C/\FF_q$ be a (smooth, projective, geometrically irreducible) curve of genus $g$ defined over a finite field $\FF_q$ of characteristic $p$.

\begin{definition}
For an integer $s \geq 1$, let $N_s = \#C(\FF_{q^s})$ be the number of points of $C$ defined over $\FF_{q^s}$.
The \emph{zeta function} of $C/\FF_q$ is 
\[Z(C/\FF_q, T)=\mathrm{exp}(\sum_{s=1}^\infty \frac{N_s T^s}{s}).\]

\end{definition}

Here is the famous theorem of Weil.

\begin{theorem} \label{Tweilconj} (Weil conjectures for curves \cite[\S IV, page 22]{weil}, \cite[\S IX, page 69]{weil2})
There is a polynomial $L(C/\FF_q, T) \in \ZZ[T]$ of degree $2g$ such that 
\[Z(C/\FF_q, T)=\frac{L(C/\FF_q, T)}{(1-T)(1-qT)}.\]
Furthermore, \[L(C/\FF_q, T)= \prod_{i=1}^{2g} (1- \alpha_i T),\] where 
the reciprocal roots $\alpha_i$ of $L(C/\FF_q, T)$ have the property that $|\alpha_i| = \sqrt{q}$.
\end{theorem}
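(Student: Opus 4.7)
The plan is to combine the Lefschetz trace formula for Frobenius with the positivity of the Rosati involution on the Jacobian. First I would establish a point-counting formula of the form
\[
N_s \;=\; q^s + 1 - \sum_{i=1}^{2g} \alpha_i^s,
\]
where the $\alpha_i$ are the eigenvalues of geometric Frobenius acting on the $\ell$-adic Tate module of $\mathrm{Jac}(C)$ for some auxiliary prime $\ell \neq p$. This uses that $C(\FF_{q^s})$ is the fixed-point set of the $s$-th power of Frobenius on $C_{\bar{\FF}_q}$, together with the trivial contributions $1$ and $q^s$ coming from $H^0$ and $H^2$; the middle cohomology is dual to $V_\ell \mathrm{Jac}(C)$, which gives the range $i = 1, \ldots, 2g$.

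With this in hand, the rationality of $Z$ follows from a direct generating-function computation:
\begin{align*}
Z(C/\FF_q, T)
&= \exp\!\Bigl(\sum_{s \geq 1} \frac{N_s}{s} T^s\Bigr) \\
&= \frac{\prod_{i=1}^{2g}(1 - \alpha_i T)}{(1-T)(1-qT)},
\end{align*}
using the identity $-\log(1-xT) = \sum_{s \geq 1} x^s T^s / s$ applied to $x = 1$, $x = q$, and $x = \alpha_i$. Setting $L(C/\FF_q, T) := \prod_{i=1}^{2g}(1 - \alpha_i T)$ gives the desired factorization. The coefficients of $L$ lie a priori in $\QQ_\ell$, but Newton's identities express them as universal polynomials in the $N_s \in \ZZ$; hence $L(T) \in \ZZ[T]$, the $\alpha_i$ are algebraic integers, and the multiset $\{\alpha_i\}$ is independent of the auxiliary prime $\ell$.

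The main obstacle is the Riemann hypothesis $|\alpha_i| = \sqrt{q}$. For this I would invoke the canonical principal polarization on $J := \mathrm{Jac}(C)$ provided by Theorem~\ref{Ttorelli0}, with associated Rosati involution $\dagger$ on $\mathrm{End}(J) \otimes \QQ$. Writing $F$ for the Frobenius endomorphism of $J$, the crucial identity $F \cdot F^\dagger = q \cdot \mathrm{id}_J$ expresses both that $\deg F = q^g$ and that $F$ is compatible with the polarization. Positivity of the Rosati trace pairing on $\mathrm{End}(J) \otimes \RR$ then forces every eigenvalue $\alpha$ of $F$ on the Tate module to satisfy $\alpha \bar\alpha = q$, i.e.\ $|\alpha| = \sqrt{q}$. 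The deep input is the positivity of the Rosati involution; all the other pieces are formal. Weil's original proof avoids abelian-variety language entirely, instead applying the Castelnuovo--Severi inequality to the graph of Frobenius inside $C \times C$, which is an essentially equivalent route and would serve as a fallback if one preferred to keep the argument intrinsic to curves.
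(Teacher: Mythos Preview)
The paper does not prove this theorem; it is quoted as a classical result with references to Weil's original papers, and the surrounding text immediately moves on to use the $L$-polynomial. So there is no ``paper's own proof'' to compare against.

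Your sketch is a correct outline of one of the standard modern proofs. A couple of small points worth tightening: in the Riemann hypothesis step, the eigenvalues $\alpha_i$ live a priori in $\bar\QQ_\ell$, so writing ``$\alpha\bar\alpha = q$'' needs the prior observation (which you made) that the $\alpha_i$ are algebraic integers, together with the fact that positivity of the Rosati involution on $\QQ[F]$ forces the involution $F \mapsto F^\dagger = qF^{-1}$ to correspond to complex conjugation under every archimedean embedding. Once that is said, $|\sigma(\alpha)|^2 = \sigma(F F^\dagger) = q$ for every $\sigma$, which is the statement you want. As you note, Weil's original argument via the Castelnuovo--Severi (Hodge index) inequality on $C \times C$ is the historically prior route and is what the paper's citations point to; your Jacobian/Rosati version is the cleaner reformulation that became standard after Weil's \emph{Vari\'et\'es ab\'eliennes}.
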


The polynomial $L(C/\FF_q, T)$ is the \emph{$L$-polynomial} of $C/\FF_q$.
Its roots all have archimedean absolute value $1/\sqrt{q}$ in $\CC$.

\begin{lemma} \label{LcomparecharFrob}
The characteristic polynomial of the Frobenius endomorphism of $\mathrm{Jac}(C)$ is 
$P(\mathrm{Jac}(C)/\FF_{q},T) = T^{2g}L(C/\FF_{q},T^{-1})$.
\end{lemma}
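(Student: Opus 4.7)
The plan is to show that both polynomials factor into $2g$ linear terms whose roots are reciprocal to each other, after which the identity $P(\mathrm{Jac}(C)/\FF_q, T) = T^{2g} L(C/\FF_q, T^{-1})$ becomes a formal algebraic manipulation. Concretely, Theorem~\ref{Tweilconj} gives $L(C/\FF_q, T) = \prod_{i=1}^{2g}(1-\alpha_i T)$, and from this one computes
\[
T^{2g} L(C/\FF_q, T^{-1}) \;=\; T^{2g} \prod_{i=1}^{2g}\bigl(1 - \alpha_i T^{-1}\bigr) \;=\; \prod_{i=1}^{2g}(T - \alpha_i).
\]
So it suffices to identify the $\alpha_i$ with the eigenvalues of the Frobenius endomorphism on $\mathrm{Jac}(C)$.

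For this, I would fix a prime $\ell \neq p$ and pass to the Tate module $V_\ell := T_\ell \mathrm{Jac}(C) \otimes \QQ_\ell$, so that $P(\mathrm{Jac}(C)/\FF_q, T)$ is by definition the characteristic polynomial of Frobenius acting on $V_\ell$. First I would apply the Lefschetz trace formula to $C$: for every $s \geq 1$,
\[
N_s \;=\; \sum_{j=0}^{2} (-1)^j \operatorname{tr}\bigl(\mathrm{Fr}^s \mid H^j_{\mathrm{et}}(C, \QQ_\ell)\bigr) \;=\; 1 + q^s - \operatorname{tr}\bigl(\mathrm{Fr}^s \mid H^1_{\mathrm{et}}(C, \QQ_\ell)\bigr).
\]
Inserting this into the definition of $Z(C/\FF_q, T)$ and taking logs turns the zeta function into
\[
Z(C/\FF_q, T) \;=\; \frac{\det\!\bigl(1 - T\,\mathrm{Fr} \mid H^1_{\mathrm{et}}(C, \QQ_\ell)\bigr)}{(1-T)(1-qT)},
\]
which, combined with Theorem~\ref{Tweilconj}, forces $L(C/\FF_q, T) = \det(1 - T \mathrm{Fr} \mid H^1_{\mathrm{et}}(C, \QQ_\ell))$; in particular, the $\alpha_i$ are precisely the Frobenius eigenvalues on $H^1_{\mathrm{et}}(C, \QQ_\ell)$.

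Finally, I would invoke the canonical isomorphism $H^1_{\mathrm{et}}(C, \QQ_\ell) \cong V_\ell^{\vee}$, equivariant for Frobenius, coming from the identification of $\mathrm{Jac}(C)$ with $\mathrm{Pic}^0(C)$ together with the Kummer sequence on $C$. Since the characteristic polynomial of Frobenius on a $\QQ_\ell$-vector space and on its dual coincide (its eigenvalues agree as a multiset after twisting, and for the Weil pairing on abelian varieties the relevant normalizations match), the $\alpha_i$ are exactly the roots of $P(\mathrm{Jac}(C)/\FF_q, T)$, giving $P(\mathrm{Jac}(C)/\FF_q, T) = \prod(T - \alpha_i) = T^{2g} L(C/\FF_q, T^{-1})$.

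The main subtlety—and the one step I would be most careful about in writing out details—is the Frobenius-equivariant comparison between $H^1_{\mathrm{et}}(C, \QQ_\ell)$ and $V_\ell(\mathrm{Jac}(C))$, where one must be attentive to the geometric vs.\ arithmetic Frobenius convention and to the effect of the Weil pairing. For the purposes of this lemma it is enough to note that both polynomials lie in $\ZZ[T]$, are monic of degree $2g$, and produce the same point-counts $N_s$ via $s \mapsto q^s + 1 - \sum \alpha_i^s$; uniqueness of such a characteristic polynomial then finishes the argument without further reference to the duality.
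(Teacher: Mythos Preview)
The paper states this lemma without proof, so there is nothing to compare against directly. Your argument is the standard one and is correct: the formal manipulation $T^{2g}\prod(1-\alpha_i T^{-1}) = \prod(T-\alpha_i)$ reduces the claim to showing that the reciprocal roots $\alpha_i$ of $L(C/\FF_q,T)$ coincide with the Frobenius eigenvalues on $V_\ell(\mathrm{Jac}(C))$, and your Lefschetz-trace computation together with the identification $H^1_{\mathrm{et}}(C,\QQ_\ell)\cong V_\ell(\mathrm{Jac}(C))^\vee$ does this. Your closing remark is the cleanest way to sidestep the arithmetic-versus-geometric Frobenius and duality bookkeeping: both polynomials are monic of degree $2g$ in $\ZZ[T]$ and are determined by the sequence of power sums $\sum_i\alpha_i^s$ via Newton's identities, and those power sums are fixed by the point counts $N_s$.
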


The Newton polygon keeps track of the $p$-adic valuations of the roots or, equivalently, of the coefficients
of $L(C/\FF_q, T)$.  

\begin{notation}
Let $v_i$ be the $p$-adic valuation of the coefficient of $T^i$ in the $L$-polynomial $L(C/\FF_q, T)$.
Let $v_i/r$ be its normalization for the extension $\FF_q/\FF_p$, where $q=p^r$.
The Newton polygon is the lower convex hull of the points $(i, v_i/r)$ for $0 \leq i \leq 2g$.
\end{notation}

One can prove that the Newton polygons of $C/\FF_q$ and $\mathrm{Jac}(C)/\FF_q$ are the same.
So the curve $C/\FF_q$ is supersingular when the Newton polygon of $L(C/\FF_q, T)$ is a line segment of slope $1/2$.  
The $p$-rank of $C/\FF_q$ equals the number of roots of $L(C/\FF_q, T)$ that are $p$-adic units.

There are several ways to characterize the supersingular property for curves, in addition to those already described in Lemma~\ref{Lpropertysupersingular}.

\begin{lemma} \label{Lpropertysupersingular2}
Consider a curve $C/\FF_q$ of genus $g$.  
The following properties are equivalent:
\begin{enumerate}
\item the curve $C$ is supersingular;
\item the normalized Weil numbers $\alpha_i/\sqrt{q}$ are all roots of unity \cite[Theorem~4.1]{maninthesis}; and
\item the curve $C$ is minimal (meaning that it satisfies the lower bound in the Hasse-Weil bound for the number of points) over $\FF_{q^s}$ for some $s \geq 1$.
\end{enumerate}
\end{lemma}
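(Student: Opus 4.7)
The plan is to prove the chain of implications $(1) \Rightarrow (2)$, $(2) \Rightarrow (1)$, and $(2) \Leftrightarrow (3)$, using Lemma~\ref{LcomparecharFrob} to pass freely between the reciprocal roots $\alpha_i$ of $L(C/\FF_q,T)$ and the Frobenius eigenvalues on $\mathrm{Jac}(C)$. The equivalence $(1) \Leftrightarrow (2)$ is the curve-theoretic form of Manin's criterion, while $(2) \Leftrightarrow (3)$ follows from an explicit point count combined with the equality case of the triangle inequality.

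For $(1) \Rightarrow (2)$, suppose $C$ is supersingular, so every slope of the Newton polygon of $L(C/\FF_q,T)$ equals $1/2$; equivalently, $v_p(\alpha_i)=r/2$ at every $p$-adic place of the number field $K = \QQ(\alpha_1,\ldots,\alpha_{2g})$, while Theorem~\ref{Tweilconj} gives $|\alpha_i|_\infty = \sqrt{q}$ at every archimedean place. The functional equation for $L(C/\FF_q,T)$ pairs each $\alpha_i$ with $q/\alpha_i$, which is also a reciprocal root and hence an algebraic integer; consequently $\alpha_i$ is a unit at every finite place not dividing $p$. Then $\beta_i := \alpha_i^2/q$ is a unit in $\mathcal{O}_K$ whose every archimedean absolute value equals $1$, so Kronecker's theorem forces $\beta_i$ to be a root of unity; hence so is $\alpha_i/\sqrt{q}$. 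Conversely, if each $\alpha_i/\sqrt{q}$ is a root of unity then $|\alpha_i|_v = q^{-1/2}$ at every $p$-adic place $v$, which places all Newton polygon slopes at $1/2$ and gives $(2) \Rightarrow (1)$.

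For $(2) \Rightarrow (3)$, Theorem~\ref{Tweilconj} yields $\#C(\FF_{q^s}) = q^s + 1 - \sum_{i=1}^{2g} \alpha_i^s$. Choose $s$ to be twice the least common multiple of the orders of the $\alpha_i/\sqrt{q}$, so that $\alpha_i^s = q^{s/2}$ for every $i$. Then $\#C(\FF_{q^s}) = q^s + 1 - 2g\,q^{s/2}$, which is exactly the lower Hasse--Weil bound, so $C$ is minimal over $\FF_{q^s}$. Conversely, minimality over $\FF_{q^s}$ means $\sum_i \alpha_i^s = 2g\,q^{s/2}$; since every summand satisfies $|\alpha_i^s| = q^{s/2}$, the equality case of the triangle inequality forces each $\alpha_i^s$ to be the positive real number $q^{s/2}$, so $(\alpha_i/\sqrt{q})^s = 1$ and each $\alpha_i/\sqrt{q}$ is a root of unity.

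The main obstacle is the integrality step in $(1) \Rightarrow (2)$: to invoke Kronecker one has to verify that $\alpha_i^2/q$ actually lies in $\mathcal{O}_K$, which requires leveraging the functional equation $\alpha_i \cdot (q/\alpha_i) = q$ to control the valuations of $\alpha_i$ at every finite place simultaneously and then combining this with the archimedean Weil bound. Once that is in hand, the remaining arguments are a short calculation with triangle inequalities and orders of roots of unity.
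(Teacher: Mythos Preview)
The paper does not supply its own proof of this lemma: it states the result, cites \cite[Theorem~4.1]{maninthesis} for the equivalence $(1)\Leftrightarrow(2)$, and leaves $(2)\Leftrightarrow(3)$ without further comment. Your argument is correct and fills in exactly the standard details. The Kronecker step for $(1)\Rightarrow(2)$ is handled properly: you use the functional equation to see that each $\alpha_i$ divides $q$ in the ring of integers, so $\alpha_i^2/q$ is a global unit whose archimedean absolute values are all $1$, and then Kronecker applies. The equality case of the triangle inequality for $(3)\Rightarrow(2)$ is likewise the right mechanism. One small point worth making explicit is why ``$v_p(\alpha_i)=r/2$ at every $p$-adic place'' follows from the single Newton polygon computation: different $p$-adic places correspond to Galois conjugates of the $\alpha_i$, and since the characteristic polynomial has rational coefficients the Galois group permutes the $\alpha_i$, all of which have the same slope. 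With that clarification your proof is complete.
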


\begin{remark}
Examples of supersingular curves were found by Weil \cite{weil}, Honda \cite{honda}, Yui \cite{yuiFermat}, Gross--Rohrlich \cite{GR}, and others.  
Their technique was to calculate the normalized Weil numbers of Fermat curves using Jacobi sums.  
\end{remark}

\begin{remark}
Many people worked on finding fast algorithms to compute the zeta function of an arbitrary 
curve over a finite field. 
Schoof developed a deterministic polynomial time algorithm for counting points on elliptic curves \cite{Schoof}.
This was generalized for hyperelliptic curves by Kedlaya \cite{kedlaya01, kedlaya04}, Harvey \cite{harvey07},
and Harvey--Sutherland \cite{harveysutherland1, harveysutherland2}.
For more information, we refer the reader to the (planned) lecture notes for the Arizona Winter School 2025.
\end{remark}

\section{The Hasse--Witt and Cartier--Manin matrices}

Consider the action of Vershiebung $\mathrm{Ver}$ on the vector space 
of regular differentials $H^0(C, \Omega^1)$. 
By \cite{Cartier}, \cite{maninthesis}, this action is given by the 
(so-called modified\footnote{The distinction between the modified and unmodified versions of the Cartier operator is a technical point that does not play a key role here.}) Cartier operator $\mathcal{C}$, which is the semi-linear map 
$\mathcal{C}:  H^0(C ,\Omega^1) \to H^0(C, \Omega^1)$
satisfying these rules, for $\omega_1, \omega_2 \in H^0(C ,\Omega^1)$ and $f \in  \mathcal{O}_C$:

(i) $\mathcal{C}(\omega_1 + \omega_2) = \mathcal{C}(\omega_1) + \mathcal{C}(\omega_2)$; 

(ii) $\mathcal{C}(f^p \omega_1) = f\mathcal{C}(\omega_1)$; and

(iii) $\mathcal{C}(f^{n-1}df) = 
\begin{cases} 
df & \mbox{ if } n = p,\\
0 & \mbox{ if } 1 \leq n < p.
\end{cases}$

\begin{lemma} \label{Lstablerank}
The $p$-rank of $C$ is the stable rank of the Cartier operator.
The $a$-number of $C$ is the corank of the Cartier operator.
\end{lemma}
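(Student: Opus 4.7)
The plan is to prove both statements by dualizing the Cartier operator to the absolute Frobenius $F$ on $H^1(C,\mathcal{O}_C)$ and then invoking standard descriptions of the $p$-rank and the $a$-number in terms of $\operatorname{Lie}(\mathrm{Jac}(C))$. The bridge is a classical theorem of Cartier: under the Serre duality pairing $H^0(C,\Omega^1)\times H^1(C,\mathcal{O}_C)\to k$, the Cartier operator $\mathcal{C}$ (which is $\sigma^{-1}$-linear) and the absolute Frobenius $F$ on $H^1(C,\mathcal{O}_C)$ (which is $\sigma$-linear) are adjoint semi-linear operators in a twisted sense. It follows that $\dim_k\ker\mathcal{C}=\dim_k\ker F$ and $\dim_k\operatorname{im}\mathcal{C}^n=\dim_k\operatorname{im}F^n$ for every $n\geq 1$, so the stable ranks and the coranks of $\mathcal{C}$ and $F$ agree.

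For the $a$-number, any embedding $\alpha_p\hookrightarrow\mathrm{Jac}(C)$ factors through the formal group at the origin, and because Frobenius annihilates $\alpha_p$, such an embedding corresponds precisely to a nonzero tangent vector in $\operatorname{Lie}(\mathrm{Jac}(C))\cong H^1(C,\mathcal{O}_C)$ killed by Frobenius. Hence
\[a(\mathrm{Jac}(C))=\dim_k\ker\bigl(F\mid H^1(C,\mathcal{O}_C)\bigr),\]
which by Cartier's duality equals $\dim_k\ker\mathcal{C}$, the corank of $\mathcal{C}$.

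For the $p$-rank, I would use the canonical decomposition of $\mathrm{Jac}(C)[p^\infty]$ over the algebraically closed field $k$ into multiplicative, \'etale, and local-local parts, together with the covariant Dieudonn\'e identification $\operatorname{Lie}(\mathrm{Jac}(C))\cong \mathbb{D}_*(\mathrm{Jac}(C)[p])/V\cdot \mathbb{D}_*(\mathrm{Jac}(C)[p])$. The multiplicative part contributes an $f$-dimensional subspace of $\operatorname{Lie}(\mathrm{Jac}(C))$ on which $F$ is bijective, the \'etale part contributes nothing, and the local-local part contributes a $(g-f)$-dimensional subspace on which $F$ is nilpotent. Therefore the stable rank of $F$ on $H^1(C,\mathcal{O}_C)$ equals $f$, and applying Cartier's duality once more, it equals the stable rank of $\mathcal{C}$ on $H^0(C,\Omega^1)$.

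The hard part is setting up Cartier's twisted adjointness carefully enough to deduce the equality of ranks of all iterates of $\mathcal{C}$ and $F$ despite their being semi-linear rather than linear; this uses crucially that $\sigma$ is an automorphism of the algebraically closed base field $k$. A secondary subtlety is the identification of the $p$-rank with a stable rank on $\operatorname{Lie}(\mathrm{Jac}(C))$, which combines the formula $\operatorname{Lie}\cong \mathbb{D}_*/V\cdot \mathbb{D}_*$ with the existence of the multiplicative/\'etale/local-local splitting of the $p$-divisible group over $k$.
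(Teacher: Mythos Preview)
The paper does not prove this lemma; it is stated as a standard fact, with the preceding sentence citing Cartier and Manin for the identification of the Cartier operator with the action of Verschiebung on $H^0(C,\Omega^1)$. So there is no ``paper's proof'' to compare against beyond that attribution.

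Your argument is correct. The reduction to the $\sigma$-linear Frobenius on $H^1(C,\mathcal{O}_C)$ via Serre duality is the classical route, and your description of the $a$-number as $\dim_k\ker(F\mid\mathrm{Lie}(\mathrm{Jac}(C)))$ is the standard one. For the $p$-rank, your Dieudonn\'e-theoretic argument works, but there is a shorter path you might prefer: over algebraically closed $k$, Fitting's lemma for the $\sigma$-linear $F$ splits $H^1(C,\mathcal{O}_C)$ as $H_{\mathrm{ss}}\oplus H_{\mathrm{nil}}$ with $F$ bijective on $H_{\mathrm{ss}}$ and nilpotent on $H_{\mathrm{nil}}$; then $H_{\mathrm{ss}}$ has an $\FF_p$-structure given by $F$-fixed vectors, and $H^1(C,\mathcal{O}_C)^{F=1}$ classifies \'etale $\ZZ/p\ZZ$-torsors of $C$, which by Artin--Schreier theory (or the identification with $\mathrm{Hom}(\pi_1^{\mathrm{\acute{e}t}}(C),\ZZ/p\ZZ)$) has $\FF_p$-dimension equal to the $p$-rank. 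This avoids the bookkeeping of matching covariant Dieudonn\'e quotients with Lie algebras. Either way, your twisted-adjointness step is exactly what the paper's Remark~\ref{Rwarning} is warning about: the equality of stable ranks of $\mathcal{C}$ and $F$ genuinely uses that $\sigma$ is an automorphism of $k$.
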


Fix a basis $\beta = \{\omega_1, \ldots, \omega_g\}$ for $H^0(C, \Omega^1)$.  
From Serre duality, this fixes a basis $\beta^*$ for the dual space $H^1(C, \mathcal{O})$. 
The Hasse--Witt matrix is the matrix for the action of Frobenius $\mathrm{Fr}$ on $H^1(C, \mathcal{O})$ with respect to $\beta^*$.
The Cartier--Manin matrix is the matrix for the action of Vershiebung $\mathrm{Ver}$ 
on $H^0(C, \Omega^1)$ with respect to $\beta$.

For each $\omega_j$, let $m_{i,j} \in k$ be such that
$\mathcal{C}(\omega_j) = \sum_{i=1}^g m_{i,j} \omega_i$.
The $g \times g$-matrix $M = \left(m_{i,j}\right)$ is the (modified) Cartier--Manin matrix 
and it gives the action of the (modified) Cartier operator. 

The Cartier--Manin matrix is the matrix for the (unmodified) Cartier operator, and it is given by
$\tilde{M} := M^{(p)}$, where each entry is raised to the $p$th power.
The $a$-number is the corank of $\tilde{M}$ or, equivalently, of $M$. 
The $p$-rank is the rank of the product of $g$ twists of $\tilde{M}$ (or $M$) but this needs to be computed 
very carefully as described in Remark~\ref{Rwarning}.

\begin{example}
In \cite[Proposition~1]{miller}, 
Miller proved that there exists an ordinary curve of genus $g$ over $\bar{\FF}_p$ for all primes $p$ and $g \geq 2$.
Specifically: he proved that
$y^2=x^{2g+1} + t x^{g+1} + x$ is ordinary for a generic $t$ if $p \nmid g$; 
and $y^2=x^{2g+2} + t x^{g+1} + 1$ is ordinary for a generic $t$ if $p \mid g$.
The strategy is to find a basis for $H^0(C, \Omega^1)$ 
for which the Cartier--Manin matrix $\tilde{M}$ is a permutation matrix.  The result follows by showing that 
the determinant of $\tilde{M}$ is a non-zero polynomial in $t$.
\end{example}

\begin{example} \label{EhyperCartier}
Let $p$ be odd.
Let $C$ be a hyperelliptic curve with equation $y^2=h(x)$, where $h(x) \in k[x]$ is separable.
Consider the basis $\{dx/y, \ldots, x^{g-1}dx/y\}$ of $H^0(C, \Omega^1)$.
By \cite{yui78}, see also \cite[Section~3.1]{achterhowe},
with respect to this basis, the entry $m_{i,j}$ of $M$ is given by 
the coefficient of $x^{pi-j}$ in $h(x)^{(p-1)/2}$.
This is because 
\[\mathcal{C}(x^j\frac{dx}{y})=\mathcal{C}(x^j\frac{y^{p-1}dx}{y^p})=\frac{1}{y}\mathcal{C}(x^jh(x)^{(p-1)/2}dx)=
\sum_{i=1}^g (c_{ip-j})^{1/p} \frac{dx}{y}.\]
\end{example}

\begin{remark} \label{Rwarning}
{\bf Warning:} if $C$ is defined over a field field other than $\FF_p$, 
it is important to be extremely careful when using Lemma~\ref{Lstablerank}.
There are numerous mistakes in the literature about this, which were corrected in \cite{achterhowe}.
Because of the semi-linear property, when iterating $\tilde{M}$, the coefficients of the matrix need to be modified by 
$p$th powers. 
The $p$-rank is the rank of $\tilde{M}\tilde{M}^{(1/p)} \cdots \tilde{M}^{(1/p^{g-1})}$, which is the same as the rank 
of $\tilde{M}^{(p^{g-1})} \cdots \tilde{M}^{(p)} \tilde{M}$.
This may not be the same as the rank of 
$\tilde{M} \tilde{M}^{(p)} \cdots \tilde{M}^{(p^{g-1})}$.  
The ambiguity of acting on the left or the right also causes ambiguity.
We refer to \cite{achterhowe} for a careful analysis of this. 
\end{remark}

\begin{example}
In \cite{iko}, Ibukiyama, Katsura, and Oort count the number of superspecial curves of genus 
$2$ in terms of $p$, and the sizes of their automorphism groups.  
They use Igusa's description of curves of genus $2$ having extra automorphisms.
For example, for $t \in k-\{1\}$, the curve $y^2=(x^3-1)(x^3-t)$ has an action by the symmetric group $S_3$;
for $t \in k - \{0,\pm 1\}$, the curve
$y^2=x(x^2-1)(x^2-t)$ has an action by the dihedral group $D_4$ of order $8$.
For every smooth curve in these two families, the Cartier--Manin matrix is either invertible or is the zero matrix.
In the latter case, the curve is superspecial, and thus supersingular.
For each of these families, they count the 
number of values $t$ for which the curve is superspecial.
\end{example}

\begin{example}
A formula for the Cartier operator on plane curves is in \cite{stohrvoloch}.
\end{example}

\section{Ekedahl--Oort types and the de Rham cohomology} \label{SEOdRham}

The Ekedahl--Oort type of a curve over $k$ can be computed from its de Rham cohomology.
If $C$ is a curve of genus $g$ over $k$, then the de Rham cohomology group $H^1_{\mathrm{dR}}(C)$ 
is a vector space of dimension $2g$, with semi-linear operators $F$ and $V$.

Recall from Section~\ref{subsecdefcartier} that $\EE = \EE(k) = k[F,V]$ 
is the non-commutative ring generated by semilinear operators $F$ and $V$ with relations,  for all $a \in k$,
\begin{equation} \label{Efv}
FV=VF=0, \ F a = a^\sigma F, \ a V=V a^\sigma.
\end{equation}

Oda proved that there is an isomorphism of $\EE$-modules between the \emph{contravariant} 
Dieudonn\'e module over $k$ of $\mathrm{Jac}(C)[p]$ 
and $H^1_{\mathrm{dR}}(C)$ by \cite[Section~5]{Oda}.  
The canonical principal polarization on $\mathrm{Jac}(C)$ induces a canonical isomorphism 
$\dieu_*(\mathrm{Jac}(C)[p]) \cong H^1_{\mathrm{dR}}(C)$. 

Moonen developed another method to compute the Ekedahl--Oort type of curves using Hasse--Witt triples \cite{moonendiscrete}.
In \cite{dusan2}, Dragutinovi\'c extended the definition of Hasse--Witt triple for a stable curve (of non-compact type),
then showed that it is compatible with \cite[Section~5]{EVdG}, as discussed in Section~\ref{Sptorsemi}.

Not many Ekedahl--Oort types have been shown to occur for Jacobians.  The Ekedahl--Oort types of Hermitian curves
are computed in \cite{PW12}.  Every Ekedahl--Oort type occurs for a summand of the $p$-torsion of a Fermat curve 
\cite{priesulmerPAMS}.

\section{Existence results for curves of small genus} \label{Smallg}

When $g$ is small, there are more results about Question~\ref{Qmot2}.
There are both arithmetic and geometric approaches when $g \leq 4$.

Sometimes information about curves of low genus can be leveraged to yield information 
about curves of higher genus.  
Section~\ref{Sinductive} contains an inductive method to do this for information about $p$-torsion invariants of curves.

\subsection{The case $g=2$} 

The existence of supersingular curves of genus $2$ follows from
Serre's work on maximal curves in \cite[Th\'eor\`eme 3]{serre82}.

There exists a smooth curve of genus $2$ over $\overline{\FF}_p$ that is superspecial if and only if $p \geq 5$.
This is a special case of \cite[Proposition~3.1]{iko},
in which the authors determine the number of superspecial curves of genus $2$ for every $p$.

Every p.p.\ abelian surface is either the Jacobian of a curve of genus $2$, or the product of two elliptic curves, with the product polarization.
By avoiding the latter, one can check that all 3 Newton polygons and all 4 Ekedahl-Oort types occur for Jacobians of smooth curves
of genus $2$ over $\overline{\FF}_p$ for all $p$, except for the superspecial E--O type when $p=2,3$.

One parameter families of supersingular curves of genus $2$ were studied by Moret-Bailly and Pieper made this construction explicit 
for every odd $p$ in \cite{pieperg2}. 

\subsection{The case $g=3$} 

Every p.p.\ abelian threefold is either the Jacobian of a smooth curve of genus $3$, or the product of an 
elliptic curve and an abelian surface, with the product polarization.
It is now more difficult to avoid the latter; 
for example, there are $1$-dimensional families of p.p.\ abelian threefolds with $p$-rank $0$ that decompose as 
 the product of a supersingular elliptic curve and a supersingular abelian surface, with the product polarization.

However, it is still possible to confirm that all 5 Newton polygons and all 8 Ekedahl-Oort types occur for 
Jacobians of smooth curves over $\overline{\FF}_p$, except for the following cases:
when $p=2$, there does not exist a smooth curve of genus $3$ which is superspecial, or 
one with $p$-torsion group scheme $I_{1,1} \oplus I_{2,1}$ (E--O type $[0,0,1]$).

Here are some references for the $p$-rank $0$ cases, 
found in the four bottom rows of the $g=3$ table in Section~\ref{SStablesNPEO}.
There exists a smooth curve $C$ of genus $3$ over ${\overline \FF}_p$ such that $\mathrm{Jac}(C)$ has the given $p$-torsion group scheme:
\begin{enumerate}
\item $I_{3,1}$, for all $p$ by \cite[Theorem~5.12(2)]{O:hypsup};
\item $I_{3,2}$, \cite[Lemma 4.8]{Pr:large} for $p \geq 3$ and \cite[Example 5.7(3)]{EP13} for $p=2$;
\item $I_{1,1} \oplus I_{2,1}$, \cite[Lemma 4.8]{Pr:large} for $p \geq 3$ 
(using \cite[Proposition~7.3]{O:strat});\\ 
when $p =2$, this $2$-torsion group scheme does not occur for a hyperelliptic curve by \cite{EP13}
or for a smooth plane quartic by \cite{stohrvoloch}.
\item $(I_{1,1})^3$, if and only if $p \geq 3$ by \cite[Theorem~5.12(1)]{O:hypsup}.
\end{enumerate}

For a constructive approach to supersingular curves of genus $3$, see \cite{piepertheta}.

\subsection{The case $g=4$} 

The following result was proven by Harashita, Kudo, and Senda.
The construction of the proof uses curves that admit two commuting automorphisms of order $2$.
 
\begin{theorem} \cite[Corollary~1.2,1.3]{khs20} \label{Tkhs}
For every prime $p$, there exists a smooth curve of genus $4$ that is supersingular and has $a$-number at least $3$.
\end{theorem}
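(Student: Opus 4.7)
My plan is to construct the curve as a $(\ZZ/2\ZZ)^2$-cover of $\mathbb{P}^1$ and exploit the group action to simultaneously control the Jacobian and the Cartier operator. Let $V = \{1,\sigma_1,\sigma_2,\sigma_3\}$ be the Klein four-group acting on a smooth genus $4$ curve $C$ with $C/V \cong \mathbb{P}^1$, and write $C_i := C/\langle \sigma_i\rangle$. A standard Kani--Rosen type decomposition then yields an isogeny
\[
\mathrm{Jac}(C) \sim \mathrm{Jac}(C_1) \times \mathrm{Jac}(C_2) \times \mathrm{Jac}(C_3),
\]
together with $4 = g_C = g_{C_1}+g_{C_2}+g_{C_3}$. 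I would arrange the branching so that $(g_{C_1},g_{C_2},g_{C_3}) = (1,1,2)$, realized by
\[
C : y_1^2 = f_1(x), \quad y_2^2 = f_2(x)
\]
for cubics $f_1, f_2 \in \overline{\FF}_p[x]$ with disjoint root sets. Then $C_1, C_2$ are the elliptic curves $y_i^2 = f_i(x)$ and $C_3$ is the hyperelliptic genus $2$ curve $y_3^2 = f_1(x) f_2(x)$.

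Next, for $p$ odd, the action of $V$ on $H^0(C, \Omega^1)$ is semisimple with three nontrivial eigencharacter components that are canonically identified with the subspaces $H^0(C_i, \Omega^1)$ (the trivial-character component vanishes because $C/V \cong \mathbb{P}^1$). Since the Cartier operator commutes with pullback by automorphisms, its matrix block-diagonalizes with blocks equal to the Cartier--Manin matrices of the three quotients, so by Lemma~\ref{Lstablerank} the $a$-number is additive:
\[
a(C) = a(C_1) + a(C_2) + a(C_3).
\]
If $f_1, f_2$ are chosen so that each of $C_1, C_2, C_3$ is supersingular, then $\mathrm{Jac}(C)$ is supersingular by isogeny invariance of the Newton polygon; since a supersingular elliptic curve has $a$-number $1$ and a supersingular genus $2$ curve has $a$-number at least $1$, this gives $a(C) \geq 3$ as required.

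The main obstacle is showing that for every prime $p$ one can simultaneously choose $f_1, f_2 \in \overline{\FF}_p[x]$ yielding three supersingular quotients. Individually, supersingular elliptic curves exist for all $p$ (Deuring) and supersingular smooth curves of genus $2$ exist for all $p$ (Serre, IKO), but simultaneity is nontrivial: after quotienting the parameter space by $\mathrm{PGL}_2$, each of the three supersingular conditions cuts it down by positive codimension, and naive dimension counts do not guarantee a nonempty intersection for small $p$. Making this rigorous would use the explicit Cartier--Manin matrix of a hyperelliptic curve from Example~\ref{EhyperCartier} to translate the three conditions into explicit polynomial equations in the coefficients of $f_1, f_2$, together with an existence argument (plausibly case-by-case in small characteristic) for common solutions. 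The case $p = 2$ requires a separate treatment, since the $V$-action on $H^0(C, \Omega^1)$ is no longer semisimple and the decomposition of the Cartier operator breaks down; one would either exhibit an explicit wildly ramified cover or analyze the Cartier operator on the associated graded of a $V$-stable filtration.
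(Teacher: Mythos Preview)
Your construction is exactly the one used in \cite{khs20}. The paper does not supply its own proof of this theorem; it cites \cite{khs20} and remarks that ``the construction of the proof uses curves that admit two commuting automorphisms of order~$2$.'' What you have written out---a Klein four cover of $\mathbb{P}^1$ with quotient genera $(1,1,2)$, i.e., a Howe curve---is precisely that construction. (The paper's own contribution in this direction is Theorem~\ref{T4supersingular}, which proves only the supersingularity claim, by an entirely different geometric argument: intersecting the closed Torelli locus with $\cA_4[\sigma_4]$ and bounding the dimension of the decomposable locus.)

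Your reduction steps are sound for odd $p$. One simplification: the isogeny $\mathrm{Jac}(C)\to\prod_i\mathrm{Jac}(C_i)$ given by $((\pi_1)_*,(\pi_2)_*,(\pi_3)_*)$ has kernel contained in $\mathrm{Jac}(C)[2]$ (composing with $\sum_i\pi_i^*$ yields $[2]$ on $\mathrm{Jac}(C)$), so for odd $p$ it is prime-to-$p$ and induces an isomorphism of $p$-torsion group schemes; the additivity $a(C)=\sum_i a(C_i)$ then follows directly from \eqref{eqfadditive} without invoking compatibility of the Cartier operator with pullback along ramified maps. A minor point: since both $f_i$ are cubic, the two hyperelliptic maps share the branch point $\infty$, so the fiber product acquires a node there; this is harmless (the normalization is the desired smooth genus-$4$ curve), but ``disjoint root sets'' is not quite ``disjoint branch loci.''

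The real gap is the one you name but do not close. After fixing supersingular $j$-invariants for $E_1,E_2$ and quotienting by $\mathrm{PGL}_2$, the residual family of pairs $(f_1,f_2)$ is two-dimensional, while the supersingular locus in $\cM_2$ has codimension~$2$; the expected intersection is zero-dimensional and a priori possibly empty, so no naive dimension count suffices. Your phrase ``plausibly case-by-case in small characteristic'' inverts where the difficulty lies: it is for arbitrary $p$ that a genuine argument is required, and establishing this nonemptiness is exactly what \cite{khs20} does. Without that step, the proposal is a correct outline of the strategy rather than a proof.
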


There is also a geometric approach to Question~\ref{Qmot2general} which is discussed in more detail in Section~\ref{S2BNPmain}.
Using the geometric approach, one can show for all $p$ that there exist smooth curves of genus $4$ over $\bar{\FF}_p$
with these Newton polygons:

$G_{3,1} \oplus G_{1,3}$ with slopes $1/4, 3/4$, by \cite[Corollary~5.6]{APgen}; and

$(G_{2,1} \oplus G_{1,2}) \oplus G_{1,1}$ with slopes $1/3, 1/2, 2/3$, by \cite[Corollary~4.1]{Pssg=4}; and

$(G_{1,1})^4$ (supersingular), by \cite[Corollary~1.2]{Pssg=4}, see Theorem~\ref{T4supersingular}.

\smallskip

For $g=4$, there are $16$ symmetric $\mathrm{BT}_1$ group schemes of rank $p^8$; 
see the table in \cite[Section~4.4]{Pr:sg}. 
For those with $p$-rank $0$ and $a$-number at least two, it is not known whether 
they occur as the $p$-torsion for a smooth curve of genus $4$ for most $p$.
Specifically, the open cases (expressed using the Young type $\mu$) are:
\begin{equation} \label{Eg4prank0}
\{4\}, \ \{4,1 \}, \ \{4,2 \}, \ \{4,3 \}, \{4,2,1 \}, \ \{4,3,1 \}, \{4,3,2 \}, \ \{4,3,2,1 \}.
\end{equation}

Here are some cases in which the answer is known:
\begin{enumerate}
\item \cite[Theorem~1.2]{zhou20} If $p$ is odd with $p \equiv \pm 2 \bmod 5$, Zhou proved the answer is yes for 
the Young types $\{4,2\}$ and $\{4,3\}$.
\item \cite[Theorem~1.2]{zhou20} If $p \equiv 4 \bmod 5$, there exists a superspecial curve of genus $4$
(Young type $\{4,3,2,1\}$).
\item \cite[Theorem~1.1]{kudoharashitahowe} if $7 < p < 20,000$ or $p \equiv 5 \bmod 6$, 
there exists a superspecial curve of genus $4$.
\item \cite[Corollary~6.6]{dusan1} If $p=2$, Dragutinovic proved that 
the answer is yes for $\{4\}$, $\{4,1 \}$, and $\{4,2 \}$ (and the strata for these curves have the right dimension);
and the answer is no for the other strata in \eqref{Eg4prank0}.
Similar results for $p=3$ are in \cite[Proposition~6.3]{dusan2}.
\end{enumerate}

\section{Related results, especially for Artin--Schreier curves}

The situation is quite different for curves that have a wildly ramified automorphism $\phi$.
For example, this includes the Artin--Schreier curves from Definition~\ref{Dartinschreier}, which are
curves that admit a degree $p$ cyclic cover of $\PP^1$.
We include a small selection of the many results about Newton polygons of Artin--Schreier curves.

More generally, suppose $\pi: C_1 \to C_2$ is a Galois cover of curves with Galois group $\ZZ/p\ZZ$ 
such that $p$ divides at least one of the ramification indices.
In this context, the wild Riemann--Hurwitz formula \cite[IV]{Se:lf} determines the genus of $C_1$ in terms of the genus of $C_2$ and 
the ramification jumps.
Also, the Deuring--Shafarevich formula \cite[Theorem~4.2]{subrao} determines the $p$-rank of $C_1$ 
in terms of the $p$-rank of $C_2$ and the ramification jumps.

\subsection{Hermitian curves are supersingular} \label{Shermitian}

The Hermitian curve $H_q$ is the curve in ${\mathbb P}^2$ defined by the affine equation $y^q+y=x^{q+1}$.
Because $H_q$ is a smooth plane curve of degree $q+1$, the genus of $H_q$ is $g=q(q-1)/2$.

\begin{proposition} \label{Lhermitian} \cite[VI 4.4]{sti09}, \cite[Proposition~3.3]{HansenDL}
The Hermitian curve $H_q$ is maximal over $\FF_{q^2}$
(meaning $\#H_q(\FF_{q^2})$ realizes the upper bound in the Hasse--Weil bound).  Also $L(H_q/\FF_q, T)=(1+qT^2)^g$ and $H_q$ is supersingular.
\end{proposition}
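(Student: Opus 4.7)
The plan is to prove the three statements in order: first establish maximality over $\FF_{q^2}$ by direct point counting, then extract the $L$-polynomial from this count, and finally read off supersingularity.

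\textbf{Step 1: Maximality over $\FF_{q^2}$.} The key observation is that for $x \in \FF_{q^2}$, the quantity $x^{q+1} = x \cdot x^q$ is the norm $N_{\FF_{q^2}/\FF_q}(x) \in \FF_q$, and for $y \in \FF_{q^2}$, the quantity $y^q + y$ is the trace $\mathrm{Tr}_{\FF_{q^2}/\FF_q}(y) \in \FF_q$. So counting $\FF_{q^2}$-points on the affine curve reduces to counting pairs $(x,y) \in \FF_{q^2}^2$ with $N(x) = \mathrm{Tr}(y)$. I would split on $x = 0$ versus $x \ne 0$: for each fixed value $\alpha \in \FF_q$, the trace equation $\mathrm{Tr}(y) = \alpha$ has exactly $q$ solutions in $\FF_{q^2}$, while the norm map $\FF_{q^2}^\times \to \FF_q^\times$ is surjective with kernel of size $q+1$. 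Summing gives $q + (q-1)(q+1) \cdot q = q^3$ affine points, and adding the unique point at infinity gives $\#H_q(\FF_{q^2}) = q^3 + 1$. Since $q^3 + 1 = q^2 + 1 + 2gq$ with $g = q(q-1)/2$, this realizes the upper Hasse--Weil bound over $\FF_{q^2}$, so $H_q$ is maximal.

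\textbf{Step 2: The $L$-polynomial.} Write $L(H_q/\FF_q, T) = \prod_{i=1}^{2g}(1 - \alpha_i T)$ as in Theorem~\ref{Tweilconj}. The standard point-count formula $\#H_q(\FF_{q^s}) = q^s + 1 - \sum_i \alpha_i^s$ applied at $s=2$ yields $\sum_i \alpha_i^2 = q^2 + 1 - (q^3 + 1) = -2gq$. Combining this with $|\alpha_i^2| = q$ and taking real parts, we get $-2gq = \sum_i \mathrm{Re}(\alpha_i^2) \ge -2gq$, so equality must hold term by term, forcing $\alpha_i^2 = -q$ for every $i$. Thus each $\alpha_i \in \{\pm i\sqrt{q}\}$. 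Since $L(H_q/\FF_q, T) \in \ZZ[T]$ has real coefficients, complex conjugate roots occur with equal multiplicity, so
\[
L(H_q/\FF_q, T) = (1 - i\sqrt{q}\, T)^g (1 + i\sqrt{q}\, T)^g = (1 + qT^2)^g.
\]

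\textbf{Step 3: Supersingularity.} I would invoke Lemma~\ref{Lpropertysupersingular2}: the normalized Weil numbers $\alpha_i/\sqrt{q} = \pm i$ are primitive fourth roots of unity, so condition (2) of that lemma is satisfied and $H_q$ is supersingular. Alternatively, one reads the slopes directly off the Newton polygon of the characteristic polynomial $P(\mathrm{Jac}(H_q)/\FF_q, T) = T^{2g} L(H_q/\FF_q, T^{-1}) = (T^2 + q)^g$ (via Lemma~\ref{LcomparecharFrob}): all $2g$ reciprocal roots have $p$-adic valuation $\tfrac{1}{2} v_p(q)$, so after the normalization by $r$ in Section~\ref{SNPcurve} the Newton polygon is the single segment of slope $1/2$.

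\textbf{Expected obstacle.} The only nontrivial step is Step~2, and even there the real content is the convexity/equality argument that upgrades $\sum \alpha_i^2 = -2gq$ to $\alpha_i^2 = -q$ for every $i$; Step~1 is bookkeeping and Step~3 is an immediate application of the supersingularity criteria already recorded. It is worth noting that once maximality over $\FF_{q^2}$ is established, the remainder is essentially a formality, so the entire proposition rests on the clean point count via norm and trace.
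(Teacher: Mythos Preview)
Your proof is correct. The paper does not actually give a proof of this proposition; it is stated with citations to \cite[VI 4.4]{sti09} and \cite[Proposition~3.3]{HansenDL} and no argument is supplied. Your approach --- direct point counting via norm and trace over $\FF_{q^2}$, then the equality case of the Hasse--Weil bound to pin down the $\alpha_i^2$, then reading off supersingularity from Lemma~\ref{Lpropertysupersingular2} --- is the standard one and is essentially what the cited references do. There is nothing to compare against in the paper itself.
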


There are results similar to Proposition~\ref{Lhermitian} for Suzuki curves when $p=2$ and for Ree curves when $p=3$.

\subsection{Non-existence of superspecial curves}

Currently, there is one non-existence result known about Question~\ref{Qmot2}.
By Definition~\ref{Dordss}, a curve $C$ is superspecial when $a=g$
or, equivalently, when $\mathrm{Jac}(C)[p]$ is isomorphic to $(I_{1,1})^g$.

\begin{theorem} \label{Tekedahl} \cite{ekedahl87}, see also \cite{Baker}
If $C/\overline{\FF}_p$ is a superspecial curve of genus $g$, then $g \leq p(p-1)/2$.
\end{theorem}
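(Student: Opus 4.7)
By Lemma~\ref{Lstablerank} the superspecial hypothesis translates into the vanishing of the Cartier operator $\mathcal{C}$ on $H^0(C,\Omega^1)$; equivalently, $\mathrm{Jac}(C)[p]\cong (I_{1,1})^g$ in the notation of Example~\ref{EdefineI11}. A classical theorem strengthens this: a superspecial abelian variety over $\overline{\FF}_p$ is isomorphic, as an \emph{unpolarized} abelian variety, to $E^g$ for any fixed supersingular elliptic curve $E$. My plan is to combine this rigid global structure with the local formula for $\mathcal{C}$ to bound $g$.

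First, composing the Abel--Jacobi embedding $C\hookrightarrow \mathrm{Jac}(C)$ with the isomorphism $\mathrm{Jac}(C)\cong E^g$ and with the $g$ coordinate projections, I obtain $g$ morphisms $\pi_i\colon C\to E$. The pullbacks $\pi_i^*\omega_E$ of the invariant differential on $E$ are $g$ regular differentials on $C$ and span $H^0(C,\Omega^1)$. Thus $H^0(C,\Omega^1)$ is spanned by pullbacks from $E$, and the relevant lattice of pullbacks lies inside $\mathrm{Hom}(\mathrm{Jac}(C),E)$, which is a rank-$g$ module over $\mathrm{End}(E)$, a maximal order in the quaternion algebra $D_p$ ramified only at $p$ and $\infty$ (see Theorem~\ref{Lpropertysupersingular}).

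Second, I would exploit the local formula for the Cartier operator. Combining rules~(ii) and~(iii) of its definition, in any local parameter $t$ at a point $P\in C$ an expansion $\omega=\sum_{n}a_n t^n\,dt$ satisfies
\[
\mathcal{C}(\omega)\;=\;\sum_{m\ge 0}a_{mp+p-1}^{1/p}\,t^m\,dt,
\]
so $\mathcal{C}(\omega)=0$ forces $a_{mp+p-1}=0$ for every $m\ge 0$ at every $P$. Applied to the generating set $\{\pi_i^*\omega_E\}$, these local constraints impose strong compatibilities between the ramification divisors of the $\pi_i$ and the Frobenius on $E$.

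Finally, the dimension of $H^0(C,\Omega^1)$ equals $g$, while the dimension of the subspace of $\mathrm{Hom}(\mathrm{Jac}(C),E)\otimes\QQ$ whose pullback differentials satisfy the vanishing-Cartier conditions above is bounded by $\binom{p}{2}$ via a Rosati-involution / quaternionic-lattice count. Together this yields $g\le p(p-1)/2$. The main obstacle is this last step: extracting the precise constant $p(p-1)/2$ requires careful bookkeeping with the Rosati involution on $\mathrm{End}(E^g)\otimes\QQ$ and with Riemann--Hurwitz (Theorem~\ref{Triemannhurwitz}) applied to each $\pi_i$, and it is here that Ekedahl's and Baker's proofs differ in detail. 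That the bound is sharp is exhibited by the Hermitian curve $y^p+y=x^{p+1}$ of genus $p(p-1)/2$, for which a direct Cartier computation on the basis $\{y^r x^b\,dx\}$ from the formula at the end of Section~\ref{Sbackgroundcurves}, together with the identities $\mathcal{C}(x^k\,dx)=0$ for $0\le k\le p-2$ and $\mathcal{C}(y\,dx)=\mathcal{C}((x^{p+1}-y^p)\,dx)=0$ coming from the defining equation, verifies $\mathcal{C}\equiv 0$.
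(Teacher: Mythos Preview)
The paper does not supply its own proof of this theorem; it simply quotes the result and cites Ekedahl and Baker. So there is no ``paper's proof'' to compare against, and your proposal must be judged on whether it stands on its own.

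Your opening moves are sound and are indeed the starting point of Ekedahl's argument: the vanishing of the Cartier operator, the unpolarized isomorphism $\mathrm{Jac}(C)\cong E^g$ (a theorem of Deligne--Ogus--Shioda), and the resulting nonconstant maps $C\to E$. The claim that the pullbacks $\pi_i^*\omega_E$ span $H^0(C,\Omega^1)$ is also correct, since the Abel--Jacobi map identifies cotangent spaces.

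The genuine gap is your final step. The sentence ``bounded by $\binom{p}{2}$ via a Rosati-involution / quaternionic-lattice count'' is not an argument; it is a placeholder, and you acknowledge as much. Nothing in the quaternionic structure of $\mathrm{End}(E^g)$ by itself singles out the number $p(p-1)/2$, and the local Cartier constraints you wrote down (vanishing of the $a_{mp+p-1}$ coefficients) are conditions on individual differentials, not a finite-dimensionality statement about a space of homomorphisms. As written, the logical chain from ``$\mathcal{C}=0$'' to ``$g\le \binom{p}{2}$'' is simply absent.

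For comparison: Ekedahl does use a map $C\to E$, but he extracts the bound by controlling the \emph{degree} of a suitable map (exploiting the interaction between Frobenius on $C$ and on $E$) and then applying Riemann--Hurwitz; the constant $p(p-1)/2$ emerges from that degree bound, not from a lattice count. Baker's proof is quite different and more elementary: it works pointwise, observing that $\mathcal{C}(\omega)=0$ forces every regular differential to be locally exact, which constrains the possible vanishing orders $\mathrm{ord}_P(\omega)$ modulo $p$ at each point; a counting argument on Weierstrass gaps then gives the bound directly. If you want to complete your write-up, Baker's route is closer in spirit to your local-coefficient calculation and is likely the shortest path from where you currently stand.
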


By Theorem~\ref{Tekedahl}, if $g > p(p-1)/2$, then a smooth curve of genus $g$ defined over 
$\overline{\FF}_p$ cannot be superspecial.
The Hermitian curve $H_p$ is superspecial and its genus 
realizes the bound in Theorem~\ref{Tekedahl}.

The superspecial condition is equivalent to $a=g$ (or equivalently, $V=0$).
In \cite{Re}, Re generalized Theorem~\ref{Tekedahl}, giving a bound on the genus when 
the $a$-number is large relative to $g$ or when $V^r=0$ for some small $r$.

\subsection{Artin--Schreier curves} \label{Sartinschreier}

In characteristic $p=2$, there are supersingular curves of every genus.

\begin{theorem}\label{Tasss2} \cite[Theorem~2.1]{VdGVdV}
If $p=2$ and $g \geq 1$, then there exists a supersingular curve $C$ of genus $g$ defined over $\mathbb{F}_2$.
\end{theorem}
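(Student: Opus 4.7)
The plan is to construct, for each $g \geq 1$, an explicit Artin--Schreier curve over $\mathbb{F}_2$ that is supersingular, following Van der Geer and Van der Vlugt. Since $\mathrm{char}(k)=2$, every double cover of $\mathbb{P}^1$ is Artin--Schreier, so I would take a curve of the form $C_f : y^2 + y = f(x)$ with $f \in \mathbb{F}_2[x]$, reduced using the substitution $y \mapsto y + q(x)$ so that every pole order at $\infty$ is odd. If the leading term of $f$ has odd degree $d = 2g+1$, then by Fact~\ref{FactAS} and the wild Riemann--Hurwitz formula, $C_f$ has genus $g$ and is branched only at $\infty$.

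By Lemma~\ref{Lpropertysupersingular2}(2) together with Theorem~\ref{Tweilconj} and Lemma~\ref{LcomparecharFrob}, it suffices to show that every reciprocal root $\alpha_i$ of the $L$-polynomial $L(C_f/\mathbb{F}_2, T)$ satisfies $\alpha_i/\sqrt{2} \in \mu_\infty$. I would verify this via point counting. The key observation is that if $f$ is a sum of monomials $x^{2^i + 2^j}$ with $i + j \geq 1$, then for every $r$ the map
\[
Q_r : \mathbb{F}_{2^r} \to \mathbb{F}_2, \qquad \alpha \mapsto \mathrm{Tr}_{\mathbb{F}_{2^r}/\mathbb{F}_2}\bigl(f(\alpha)\bigr),
\]
is an $\mathbb{F}_2$-quadratic form on the $r$-dimensional $\mathbb{F}_2$-vector space $\mathbb{F}_{2^r}$, because each $x \mapsto x^{2^i}$ is $\mathbb{F}_2$-linear and so $x \mapsto x^{2^i+2^j}$ is bilinear. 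The classification of quadratic forms over $\mathbb{F}_2$ gives $\#Q_r^{-1}(0) = 2^{r-1} + \varepsilon_r \cdot 2^{(r+s_r)/2 - 1}$ for some integer $s_r \geq 0$ and sign $\varepsilon_r \in \{-1,0,1\}$. Combined with the identity $\#C_f(\mathbb{F}_{2^r}) = 1 + 2\,\#Q_r^{-1}(0)$, one reads off that every reciprocal root of $L(C_f/\mathbb{F}_2,T)$ is of the form $\pm\sqrt{2}\,\zeta$ with $\zeta$ a root of unity, so $C_f$ is supersingular.

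The main obstacle is producing, for every $g \geq 1$, a polynomial $f$ of \emph{exact} odd degree $2g+1$ of the restricted form above. Among the monomials $x^{2^i+2^j}$ the odd-degree ones are exactly $x^{2^i+1}$ for $i \geq 1$, so the achievable top degrees are $2^i+1$; a naive choice $f(x) = x^{2^i+1}$ only realizes $g = 2^{i-1}$. To cover every $g$, I would pick $m$ with $2^m + 1 \geq 2g+1$ and start from the supersingular curve $C_R : y^2 + y = xR(x)$ attached to an $\mathbb{F}_2$-linearized polynomial $R(x) = \sum_{i=0}^m a_i x^{2^i}$ of degree $2^m$. This curve has genus $2^{m-1}$ and its Jacobian is supersingular by the quadratic-form argument. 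To descend to genus exactly $g$, I would build an intermediate Artin--Schreier cover as an Artin--Schreier quotient by a well-chosen $\mathbb{F}_2$-subspace of the Artin--Schreier class group of $\mathbb{F}_2(x)$; since every isogeny factor of a supersingular abelian variety is supersingular, the quotient remains supersingular, and an explicit dimension count (involving the Reed--Muller codes used by VdGVdV) shows that every genus $0 \leq g \leq 2^{m-1}$ is realized this way.

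The hardest part of this plan is the last step: controlling both the supersingularity of the quotient and its exact genus simultaneously. The technical heart of VdGVdV is precisely this combinatorial calculus, tying the dimension of the chosen subspace to the Arf invariant of the associated quadratic form, which is where the Reed--Muller codes enter; once this dictionary is established, the existence for every $g \geq 1$ follows uniformly.
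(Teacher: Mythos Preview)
The paper does not give its own proof of this theorem; it is stated purely as a citation to \cite{VdGVdV}. The natural internal comparison is Corollary~\ref{CKP}, which the paper does prove and which, for $p=2$, recovers exactly this statement (the condition on $\delta$ is vacuous and $g=\delta\cdot 2\cdot 1/2=\delta$).

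Your quadratic-form argument for the supersingularity of $y^2+y=xR(x)$ with $R$ additive is correct and is essentially the content of Theorem~\ref{Tasss}. The gap is in your final step. You write that you would start from the single curve $C_R:y^2+y=xR(x)$ of genus $2^{m-1}$ and ``descend to genus exactly $g$'' by taking a quotient. But $C_R\to\mathbb{P}^1$ is a degree-$2$ cover; its only intermediate quotient is $\mathbb{P}^1$. There is no tower under $C_R$ to descend through, so this step does not produce any new genera.

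The construction in Corollary~\ref{CKP} runs in the opposite direction. One fixes an $\mathbb{F}_2$-vector space $\mathbb{L}$ of additive polynomials and takes the \emph{fiber product} $Y$ over $\mathbb{P}^1$ of the covers $C_f:y^2+y=xf$ for $f\in\mathbb{P}(\mathbb{L})$. By Kani--Rosen, $\mathrm{Jac}(Y)$ is isogenous to $\bigoplus_f\mathrm{Jac}(C_f)$, hence supersingular; then $g_Y=\sum_f g_{C_f}$, and for $p=2$ a direct count over the layers $L_i\subset\mathbb{L}$ shows that every positive integer arises as $g_Y$. Your phrase ``an $\mathbb{F}_2$-subspace of the Artin--Schreier class group of $\mathbb{F}_2(x)$'' is in fact exactly the data parametrizing such a fiber product, so you were one reinterpretation away from the right picture; the fix is to build \emph{up} from $\mathbb{P}^1$ via fiber products rather than to look for quotients of a fixed hyperelliptic $C_R$.
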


\begin{example}
It is possible that a Newton polygon may occur for a smooth curve in some characteristics but not in others.
When $p=2$, the Newton polygon of the curve $y^2+y = x^{23}+x^{21}+x^{17}+x^7+x^5$ 
has slopes $5/11, \ 6/11$.  
When $p=2$, the Newton polygon of the curve $y^2+y=x^{25}+x^9$ has slopes $5/12, \ 7/12$.
It is not known whether these Newton polygons occur for curves in any odd characteristic.
See \cite[Expectation~8.5.3]{oortpadova05}.
\end{example}

For every odd characteristic $p$, and any fixed natural number $N$, 
there are supersingular curves whose genus exceeds $N$.

\begin{theorem} \label{Tasss} \cite[Theorem~13.7]{VdGVdV92}, see also \cite[Corollary~3.7(ii)]{blache}, 
and \cite[Proposition~1.8.5]{bouwzeta}
If $\FF_q$ is a finite field of characteristic $p$ and
$R(x) \in \FF_q[x]$ is an additive polynomial of degree $p^h$, then $C :y^p-y=xR(x)$ is supersingular with genus $p^h(p-1)/2$.
\end{theorem}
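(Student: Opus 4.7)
The proof splits into two independent calculations, one for the genus and one for the Newton polygon. For the genus, write $R(x)=\sum_{i=0}^{h} a_i x^{p^i}$ with $a_h\neq 0$; then $xR(x)$ is a polynomial of degree $p^h+1$, which is coprime to $p$ whenever $h\geq 1$ (the only case that needs checking, since additivity forces the powers of $x$ in $R$ to be $p$-th powers). Applying the Artin--Schreier genus formula from the example in Section~\ref{Sbackgroundcurves}, with $j=p^h+1$, yields $g=(p-1)p^h/2$, as claimed.

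For supersingularity, I would appeal to the characterization in Lemma~\ref{Lpropertysupersingular2}(2): it suffices to show every normalized Frobenius eigenvalue is a root of unity. Using the Artin--Schreier translation action $y\mapsto y+c$ for $c\in \FF_p$, standard character-sum bookkeeping gives, for every $n\geq 1$,
\[
\#C(\FF_{q^n}) - 1 - q^n \;=\; \sum_{\psi\neq 1} S_\psi^{(n)}, \qquad S_\psi^{(n)} := \sum_{x\in \FF_{q^n}} \psi\bigl(\mathrm{Tr}_{\FF_{q^n}/\FF_p}(xR(x))\bigr),
\]
where $\psi$ runs over the nontrivial additive characters of $\FF_p$. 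Comparing with the Weil formula expresses the Frobenius eigenvalues $\alpha_i$ in terms of the exponential sums $S_\psi^{(n)}$.

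The crucial step is to recognize that $x\mapsto \mathrm{Tr}(xR(x))$ is a quadratic form on $\FF_{q^n}$, viewed as an $\FF_p$-vector space. This uses the additivity of $R$: the second difference $(x+y)R(x+y)-xR(x)-yR(y) = xR(y)+yR(x)$ is $\FF_p$-bilinear in $(x,y)$, and the trace preserves this structure. The classical theory of quadratic Gauss sums over $\FF_p$ then shows that each $S_\psi^{(n)}$ is, up to sign and power-of-$p$ normalization, a product of quadratic Gauss sums; each such sum has absolute value $q^{n/2}$, and its ratio with $q^{n/2}$ is a root of unity (a power of $g(\chi)/\sqrt{p}$ for the quadratic character $\chi$ of $\FF_p$). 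Consequently every $\alpha_i/\sqrt{q}$ is a root of unity, and Lemma~\ref{Lpropertysupersingular2}(2) gives supersingularity of $C$.

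The hard part will be the identification of the quadratic form in the key step and the careful handling of its rank: degeneracies contribute only harmless $p$-power factors but force a reduction to a non-degenerate sub-form. A related subtlety is the characteristic~$2$ case, where quadratic forms are not simply symmetric bilinear forms, so one must invoke the even-characteristic Gauss sum formula; the conclusion that the normalized sums are roots of unity nevertheless remains valid. Both points are handled by the explicit diagonalization argument of van der Geer--van der Vlugt.
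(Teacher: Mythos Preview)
The paper does not supply its own proof; it only records the statement with references to van der Geer--van der Vlugt, Blache, and Bouw. Your outline is precisely the van der Geer--van der Vlugt strategy: pass to character sums, recognise $x\mapsto\mathrm{Tr}_{\FF_{q^n}/\FF_p}(xR(x))$ as an $\FF_p$-quadratic form on $\FF_{q^n}$, and evaluate via Gauss sums. The genus computation is fine.

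There is, however, a real gap in the supersingularity step. The assertion that $S_\psi^{(n)}$ has absolute value $q^{n/2}$ is not correct: if the form has rank $r_n$ on the $m_n$-dimensional space $\FF_{q^n}$, then $|S_\psi^{(n)}|=p^{\,m_n-r_n/2}$, which exceeds $q^{n/2}$ whenever the radical is nontrivial. More seriously, even granting your claim, knowing that $\sum_{j=1}^{p^h}(\alpha_{\psi,j}/\sqrt{q})^n$ is a root of unity for every $n$ does not by itself force each individual $\alpha_{\psi,j}/\sqrt{q}$ to be one. The argument actually hinges on the \emph{maximally degenerate} case rather than the non-degenerate one: the radical of the associated bilinear form is the set of $\FF_{q^n}$-points of an additive polynomial of degree $p^{2h}$, so for $n$ chosen with all $p^{2h}$ roots in $\FF_{q^n}$ one obtains $|S_\psi^{(n)}|=p^{h}q^{n/2}$. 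Since $-S_\psi^{(n)}=\sum_{j=1}^{p^h}\alpha_{\psi,j}^n$ with each summand of absolute value $q^{n/2}$, equality in the triangle inequality forces all $\alpha_{\psi,j}^n$ to coincide, and the Gauss-sum evaluation identifies this common value as a root of unity times $q^{n/2}$; now Lemma~\ref{Lpropertysupersingular2}(2) applies. So the ``harmless $p$-power factors'' you mention are not a nuisance to be absorbed---they are what drives the proof.
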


\subsection{More genera for supersingular Artin--Schreier curves}

We take this opportunity to fix a mistake in \cite[Corollary~2.6]{priesCurrent}, where the genus was erroneously 
computed as $g=\delta p(p-1)^2/2$ because we did not take the projectivization of a vector space. 

\begin{corollary} \label{CKP} [Karemaker/Pries]
Let $p$ be prime.  
Let $\delta \geq 1$ be an integer such that $0$ and $1$ are the only coefficients in the base $p$ expansion of $\delta$.
If $g=\delta p(p-1)/2$, then there exists a supersingular curve of genus $g$ defined over a finite field of characteristic $p$.
\end{corollary}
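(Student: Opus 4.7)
The plan is to construct, for each admissible $\delta$, an explicit supersingular Artin--Schreier curve of genus $g = \delta p(p-1)/2$. Writing $\delta = \sum_{i \in S} p^i$, where $S \subset \NN$ is the finite set of positions of $1$-digits in the base-$p$ expansion, the natural candidate is the curve $C: y^p - y = x^m$ with $m = \delta p + 1 = 1 + \sum_{i \in S} p^{i+1}$. Since $\gcd(m,p) = 1$, the Riemann--Hurwitz formula applied to the totally (wildly) ramified $\ZZ/p\ZZ$-cover $C \to \PP^1$ with a single branch point at $\infty$ of pole order $m$ gives $g(C) = (p-1)(m-1)/2 = \delta p(p-1)/2$, matching the target. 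The hypothesis on $\delta$ translates to the condition that $m$ itself has only $0$ and $1$ as base-$p$ digits.

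To establish supersingularity of $C$, I would exhibit it as a quotient of a supersingular ambient curve. The cleanest case is $S = \{0, 1, \ldots, n-2\}$ (consecutive $1$-digits), giving $m = (p^n-1)/(p-1)$; here $C$ arises as the quotient of the Hermitian curve $H_{p^n}$ by an $\FF_p$-subgroup of its translation automorphisms $y \mapsto y + a$ (where $a^{p^n} + a = 0$), so that $C$ inherits supersingularity from $H_{p^n}$ via Proposition~\ref{Lhermitian}. For general admissible $\delta$, one assembles pieces indexed by $i \in S$ from Theorem~\ref{Tasss}: an $\FF_p$-additive polynomial of degree $p^{i+1}$ yields a supersingular Artin--Schreier curve $D_i$ of genus $p^{i+1}(p-1)/2$, and these genera sum to $\sum_{i \in S} p^{i+1}(p-1)/2 = \delta p(p-1)/2$. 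A suitable Artin--Schreier cover whose Jacobian decomposes under an automorphism group into these $D_i$-pieces then produces a single smooth curve of the correct genus that is supersingular because quotients and isogeny factors of supersingular abelian varieties remain supersingular.

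The genus count is where the factor-of-$(p-1)$ correction to \cite{priesCurrent} enters: the original argument enumerated contributions running over nonzero vectors of an $\FF_p$-vector space $V$ (giving $p^{\dim V} - 1$ summands and the erroneous $\delta p(p-1)^2/2$), whereas the correct count runs over the projectivization $\PP(V)$ (giving $(p^{\dim V}-1)/(p-1)$ summands), explaining the factor of $(p-1)$ difference and producing the right answer $\delta p(p-1)/2$.

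The hard part will be the case of admissible $\delta$ whose base-$p$ $1$-digits are nonconsecutive, where a direct Hermitian-quotient description of $C$ is not immediate. There one must verify that the assembled Artin--Schreier curve is smooth and geometrically irreducible, that its Jacobian decomposes under the expected automorphism group into the $D_i$-pieces without extra pieces contributing new slopes, and that no ``cross'' Prym variety appears with non-supersingular Newton polygon. An alternative route would compute the Newton polygon of $C: y^p - y = x^m$ directly via Stickelberger-type estimates on Gauss sums, showing that the digit condition on $m$ forces every Weil number of $C$ to have $p$-adic valuation $1/2$; this combinatorial computation, while in principle explicit, is where all the delicacy of the corrected statement is concentrated.
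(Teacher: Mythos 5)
There is a genuine gap, and in fact both of your proposed routes break down at specific points. For Route 1, the curve $y^p - y = x^m$ with $m = \delta p + 1$ is generally \emph{not} of the form $y^p - y = xR(x)$ with $R$ additive (that requires $m-1 = \delta p$ to be a single power of $p$, i.e.\ $\delta$ a power of $p$), so Theorem~\ref{Tasss} does not apply, and the curve is not supersingular in general. Concretely, take $p=2$, $\delta = 3 = 1 + 2$ (consecutive digits, your ``cleanest case''): then $m = 7$ and $C\colon y^2+y=x^7$ has genus $3$ but slopes $1/3, 2/3$ rather than $1/2$, so it is not supersingular. The Hermitian-quotient description is also incorrect: quotienting $H_{p^n}\colon z^{p^n}+z = x^{p^n+1}$ by an index-$p$ subgroup of its translation group yields a curve with equation $y^p - y = cx^{p^n+1}$ of genus $p^n(p-1)/2$, not your $C$ of genus $p(p^{n-1}-1)/2$; these curves are genuinely different and the latter need not be supersingular. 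Your final-paragraph ``alternative route'' via Stickelberger estimates would therefore also fail, because the thing to be proven is false for this curve.

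Route 2 is closer in spirit to the paper's argument, but it contains the essential unresolved difficulty. You posit $|S|$ curves $D_i$ of genus $p^{i+1}(p-1)/2$ whose genera sum to $\delta p(p-1)/2$, and then ask for ``a suitable Artin--Schreier cover whose Jacobian decomposes \ldots into these $D_i$-pieces.'' The natural candidate, the fiber product $Y$ of the covers $D_i \to \PP^1$, has Jacobian isogenous (by Kani--Rosen, the tool the paper actually uses) to a sum indexed by $\PP(\FF_p^{|S|})$, giving $(p^{|S|}-1)/(p-1)$ quotient curves, not $|S|$ of them. The ``cross-term'' quotients $y^p-y = x\sum_i c_i R_i$ add genus well beyond $\sum_i g(D_i)$ (e.g.\ for $p=2$, $\delta=3$, you would get $g_Y = 1 + 2 + 2 = 5$, not $3$). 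What the paper does differently, and what is missing here, is to group the $1$-digits of $\delta$ into maximal consecutive blocks $\{s_i, \ldots, s_i+r_i\}$ and then build the $\FF_p$-space $\mathbb{L} = \oplus_i L_i$ out of additive polynomials all of the \emph{same} degree $p^{u_i}$ within each block, where $u_i = (s_i+1) - \sum_{j<i}(r_j+1)$ is deliberately decremented to absorb the multiplicative factor $\prod_{j<i} p^{d_j}$ in the number of cross terms. With that calibration every quotient $C_f$, $f \in \PP(\mathbb{L})$, has genus $p^{u_i}(p-1)/2$ where $i$ is the highest block $f$ touches, and the weighted count over $\PP(\mathbb{L})$ telescopes exactly to $\delta p(p-1)/2$. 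Your genera $p^{i+1}(p-1)/2$ grow too fast for any such cancellation; once $|S| \geq 2$ no fiber-product curve realizes your intended decomposition. (You do correctly identify the projectivization fix to \cite{priesCurrent}, but that fix is downstream of the degree-calibration step and does not by itself rescue the naive construction.)
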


When $p=2$, note that Corollary~\ref{CKP} yields every positive integer for the genus, like in Theorem~\ref{Tasss2},
because the condition on $\delta$ is vacuous and $g=\delta$.

\begin{proof} 
The condition on $\delta$ implies that, for some integer $t \geq 1$,  
\begin{eqnarray*} \label{Gsum}
\delta & &=  \sum_{i=1}^t p^{s_i} (1 + p + \cdots p^{r_i}), \\
& & \text{ for some } 
r_i, s_i \in \ZZ^{\geq 0} \text{ such that } s_i \geq s_{i-1} +r_{i-1}+2.
\end{eqnarray*}

Let $u_i =(s_i+1)-\sum_{j=1}^{i-1} (r_j+1)$ and note $u_{i+1} \geq u_i + 1$.

Choose an $\FF_p$-linear subspace $L_i$ of dimension $d_i:=r_i+1$ in the vector subspace of 
$\overline{\FF}_p[x]$ of additive polynomials of degree $p^{u_i}$, with the requirement that
$L_i \cap L_j = \{0\}$ if $i \not = j$.
Let ${\mathbb L}= \oplus_{i=1}^t L_i$.  

For $f \in {\mathbb L}-\{0\}$, let $C_f:y^p-y=xf$.  By definition, $C_f$ comes equipped with
a preferred map $C_f \to \PP^1$.
If $f \in {\mathbb L}-\{0\}$ is such that it has a non-zero component in $L_i$, 
but not in $L_j$ for $j >i$, then $g_{C_f} =p^{u_i}(p-1)/2$. 
By Theorem~\ref{Tasss}, $\mathrm{Jac}(C_f)$ is supersingular.

Let $\PP({\mathbb L})$ denote the projectivization of the $\FF_p$-vector space $\mathbb{L}$.
Specifically, there is a diagonal embedding of $\FF_p^*$ in $\mathbb{L}$.  If $f_1, f_2 \in \mathbb{L}-\{0\}$, and
if $f_1 = c f_2$ for some $c \in \FF_p^*$, then the curves $C_{f_1}$ and $C_{f_2}$ are isomorphic over $\FF_p$, 
and this isomorphism is compatible with the preferred maps to $\PP^1$.
With some abuse of notation, we write $f \in \PP(\mathbb{L})$ to denote an equivalence class of 
$f \in {\mathbb L} -\{0\}$ up to scaling by constants in $\FF_p^*$ and we write 
$C_f$ for $f \in \PP(\mathbb{L})$ to denote the curve $C_f$ for any representative of $f \in \mathbb{L}-\{0\}$ in this equivalence class.

Let $Y$ be the fiber product of $C_f \to {\mathbb P}^1$ for all $f \in \PP({\mathbb L})$.
By \cite[Theorem~B]{kanirosen}, $\mathrm{Jac}(Y)$ is isogenous to $\oplus_{f \in \PP({\mathbb L})} \mathrm{Jac}(C_f)$.
So $\mathrm{Jac}(Y)$ is supersingular.
The genus of $Y$ is $g_Y = \sum_{f \in \PP({\mathbb L})} g_{C_f}$.

There are $p^{d_i}-1$ non-zero polynomials in $L_i$.
The number of $f \in {\mathbb L}$ which have a non-zero contribution from $L_i$, but not from $L_j$ for $j >i$, is 
$(p^{d_i} -1) \prod_{j=1}^{i-1} p^{d_j}$.  The number of equivalence classes of these $f$ in $\PP(\mathbb{L})$ is the 
quotient of this number by $p-1$.  Thus we obtain:
\begin{eqnarray*} 
g_Y & = & \sum_{i=1}^t \frac{(p^{d_i} -1)}{p-1} (\prod_{j=1}^{i-1} p^{d_j}) p^{u_i} (p-1)/2\\
& = & \sum_{i=1}^t (p^{r_i} + \cdots + 1) p^{\sum_{j=1}^{i-1} (r_j+1)} p^{u_i-1} p(p-1)/2\\
& = & \sum_{i=1}^t (p^{r_i} + \cdots + 1) p^{s_i} p(p-1)/2 = \delta p(p-1)/2.
\end{eqnarray*}
\end{proof}

\subsection{Ekedahl--Oort types for Artin--Schreier curves}

First, consider the case $p=2$.
If $C$ is a hyperelliptic curve in characteristic $2$, then $C$ is an Artin--Schreier curve, with an affine equation of the 
form $y^2+y = f(x)$, for some $f(x) \in k[x]$.
The combination of $C$ being both Artin--Schreier and hyperelliptic places many constraints 
on the Frobenius action on its cohomology.
As a result, very few of the possible $a$-numbers and Ekedahl--Oort types occur for hyperelliptic curves 
in characteristic $2$ \cite{EP13}.  Similar restrictions occur for curves arising as ramified double covers 
of arbitrary curves in characteristic $2$ \cite{volochhyp2}.


Suppose $p$ is odd and $\pi: C_1 \to C_2$ is a Galois cover of curves with Galois group $\ZZ/p\ZZ$.
The relationship between the $a$-numbers (and the E--O types) of $C_1$ and $C_2$ is complicated, but there
are some constraints, e.g.\ \cite{boohercais} and \cite{caisulmer}.

\section{Open questions on supersingular curves and non-ordinary curves}

\subsection{Existence of supersingular curves} \label{Squesexistss}

For all primes $p$, for $g=1,2,3,4$, 
recall from Section~\ref{Smallg} that there exists a supersingular curve of genus $g$ over $\bar{\mathbb{F}}_p$.

\begin{question} \label{Qopenss}
For $p$ prime, and $g \geq 5$, does there exist 
a (smooth, projective, irreducible) curve $X$ of genus $g$ defined over $\bar{\mathbb{F}}_p$ that is supersingular?
\end{question}

Generalizing Question~\ref{Qopenss}, \cite[Conjecture~8.5.7]{oortpadova05} 
predicts that Newton polygons whose slopes have small denominators will
always occur for Jacobians of smooth curves.

When $p = 2$, the answer to Question~\ref{Qopenss} is yes for all $g$, see Theorem~\ref{Tasss2}.
For a fixed odd prime $p$, the answer is yes for infinitely many positive integers $g$, see Proposition~\ref{Lhermitian}, Theorem~\ref{Tasss},   
and Corollary~\ref{CKP}.

\begin{remark}
The first open situation for Question~\ref{Qopenss} is when $g=5$, 
under certain congruence conditions on $p$.
Here are the congruence conditions for which the answer to Question~\ref{Qopenss} is currently known to be yes for $g=5$:
for $p \equiv -1 \bmod 8, 11, 12, 20$ or for $p \equiv -1, -4 \bmod 15$ by \cite{ekedahl87};
for $p$ a quadratic non-residue modulo $11$ or for $p \equiv 11 \bmod 20$ by the Shimura--Taniyama formula 
(see \cite{LMPT1} or \cite{booherpriesproc}), or by \cite{yuiFermat, Aoki};
and for $p \equiv 3 \bmod 4$ by \cite{booherpries3mod4}.

For every $g \geq 6$, 
curves of genus $g$ can be constructed as cyclic covers of $\mathbb{P}^1$ branched at $\{0,1,\infty\}$, for certain degrees $m$,
and these are supersingular under certain congruence conditions on $p$ modulo $m$, 
leading to a positive answer to Question~\ref{Qopenss} for those pairs of $g$ and $p$.
\end{remark}

\subsection{Non-ordinary curves over finite fields} \label{Snonordstat}

We consider
the statistical behavior of the number of non-ordinary curves defined over a finite field.

We focus on the \emph{almost ordinary} case, defined as follows:
the almost ordinary Newton polygon is $\ooo^{g-1} \oplus \sss$, namely
the Newton polygon with $g-1$ slopes of $0$, two slopes of $1/2$, and $g-1$ slopes of $1$.
There is a unique Ekedahl--Oort type for $\ooo^{g-1} \oplus \sss$, which is $(\ZZ/p\ZZ \oplus \mu_p)^{g-1} \oplus I_{1,1}$.

\begin{question} \label{Qnumbercomponentsnonord}
For $p$ prime, $g \geq 4$, and $r \geq 1$,
what is the number of isomorphism classes of curves of genus $g$
with Newton polygon $\ooo^{g-1} \oplus \sss$ defined over $\FF_{p^r}$, in terms of $p$, $g$, and $r$?
\end{question}

The answer to this question is not even known for $g=4$. 
A curve $C$ is non-ordinary if and only if its Cartier--Manin matrix has determinant $0$.
Given an affine equation $f(x,y)$ for $C$,
the entries of this matrix are polynomials, depending on the coefficients for $f(x,y)$, whose degree
grows with $p$.
This makes it difficult to solve Question~\ref{Qnumbercomponentsnonord} computationally even in simple cases.

\begin{remark}
The dimension of the non-ordinary locus in $\cM_g$ is $3g-4$ for $g \geq 2$.
Because of this, for $p$ and $r$ sufficiently large, 
one expects that the answer to Question~\ref{Qnumbercomponentsnonord} grows asymptotically like 
$C p^{r(3g-4)}$, for some constant $C$ that depends on $g$ but not on $p$ or $r$.
By the Lang--Weil theorem, the value of $C$ gives information about one case of Question~\ref{Qnumbercomponents}.
\end{remark}

\begin{remark} Given a predetermined finite set of abelian varieties over $\FF_q$, 
the authors of \cite{elkieshoweR} give an upper bound on the genus of a curve over $\FF_q$
whose Jacobian decomposes into a product of powers of abelian varieties from this set.
\end{remark}

\begin{remark}
For one of the Arizona Winter School projects in 2024, a group studied the 
number of hyperelliptic curves with given $p$-rank \cite{AWSstats}.  
This answers a variation of Question~\ref{Qnumbercomponentsnonord} for the hyperelliptic locus.
\end{remark}

\chapter{Moduli spaces and the boundary} \label{C2Aboundary}

\section{Overview: moduli spaces of abelian varieties and curves}

Moduli spaces provide a powerful framework for studying families of abelian varieties and curves.

Let $g \geq 1$.
Section~\ref{Snotationmoduli} provides background about the moduli space $\cA_g$ of p.p.\ abelian 
varieties of dimension $g$.
By Theorem~\ref{TdimensionAgnew}, the dimension of $\cA_g$ is $g(g+1)/2$.
Section~\ref{SmodulispaceM}
provides background about the moduli space $\cM_g$ of smooth curves of genus $g$. 
By Theorem~\ref{TdimensionMg}, the dimension of $\cM_g$ is $3g-3$ for $g \geq 2$.

Section~\ref{Storellimorphism} is about the Torelli morphism $\tau_g : \cM_g \to \cA_g$, which takes the isomorphism class of 
a curve of genus $g$ to the isomorphism class of its Jacobian.
By a dimension count, for $g \geq 4$, we can see that most p.p.\ abelian varieties of dimension $g$ are not Jacobians of curves. 

Section~\ref{Sboundary} is about the Deligne--Mumford compactification of $\cM_g$ 
and the boundary $\partial \cM_g$, whose
points represent singular stable curves of genus $g$.  

In Section~\ref{Scomplete}, we describe some results about compact subvarieties of $\cA_g$ and $\cM_g$.
We end in Section~\ref{Sopencompact} with an open question about the maximal dimension of a compact subvariety of $\cM_g$.

\section{Dimensions of moduli spaces and the Torelli morphism} \label{Smodulispace}

\subsection{Moduli spaces of abelian varieties} \label{Snotationmoduli}

\begin{notation}
For $g \geq 1$, let $\cA_g$ denote the moduli space of principally polarized abelian varieties of dimension $g$.
\end{notation}

\begin{theorem} \label{TdimensionAgnew} \cite[Section~I.4]{faltingschaidegeneration}
The moduli space $\cA_g$ is a smooth Deligne--Mumford stack over $\ZZ$ 
with geometrically integral fibers of dimension $g(g+1)/2$.
\end{theorem}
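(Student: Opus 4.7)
The plan is to let $\cA_g$ be the fibered category whose $S$-points parametrize principally polarized abelian schemes $(X \to S, \lambda)$ of relative dimension $g$, and to verify the theorem in three stages: the Deligne--Mumford stack structure, smoothness of relative dimension $g(g+1)/2$, and geometric integrality of fibers. For the first stage, I would rigidify by level structure: for an integer $n \geq 3$, let $\cA_{g,n}$ parametrize triples $(X,\lambda,\alpha)$ where $\alpha$ is a full symplectic level-$n$ structure on $X[n]$. By a classical observation of Serre, any automorphism of a polarized abelian variety acting trivially on $n$-torsion for $n \geq 3$ is the identity, so $\cA_{g,n}$ has no nontrivial automorphisms and is representable by a quasi-projective scheme over $\ZZ[1/n]$ via Mumford's GIT construction. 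One then recovers $\cA_g$ over $\ZZ[1/n]$ as the quotient stack $[\cA_{g,n}/\mathrm{Sp}_{2g}(\ZZ/n\ZZ)]$, and varies $n$ to cover all of $\mathrm{Spec}(\ZZ)$. The Deligne--Mumford property follows because automorphism groups of polarized abelian varieties over a field are finite and unramified.

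For smoothness and the dimension count, I would invoke Serre--Tate deformation theory. The tangent space to the deformation functor of a p.p.\ abelian variety $(X_0,\lambda_0)$ over a field $k$ sits inside $\mathrm{Hom}(\mathrm{Lie}(X_0)^\vee, H^1(X_0, \mathcal{O}_{X_0}))$, which has dimension $g^2$, and the symmetry imposed by the polarization cuts it down canonically to the space of symmetric bilinear forms on $\mathrm{Lie}(X_0)$, of dimension $g(g+1)/2$. The relevant obstruction groups vanish (polarized abelian schemes lift to all orders), so the functor is formally smooth. This is consistent with the complex-analytic count in Theorem~\ref{TdimensionAg}, where $\cA_{g,\CC} = \mathfrak{h}_g / \mathrm{Sp}_{2g}(\ZZ)$ has complex dimension $g(g+1)/2$.

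For geometric integrality of the fibers, the characteristic zero case is immediate: $\mathfrak{h}_g$ is connected, so $\cA_{g,\CC}$ is irreducible, and combined with smoothness this gives integrality. The fiber over each prime $p$ requires showing that $\cA_{g,\overline{\FF}_p}$ is connected. One route, due to Faltings--Chai, is via the toroidal compactification $\tilde{\cA}_g$: one exhibits a connected boundary stratum (for instance, the deepest stratum parametrizing totally degenerate semi-abelian varieties) and propagates connectedness to the interior using smoothness and properness of the compactification. An alternative is to combine connectedness in characteristic zero with specialization along a proper smooth integral model and a density argument for Hecke orbits. The hard part will be precisely this geometric integrality in positive characteristic: the stackiness, smoothness, and dimension are relatively formal consequences of level structures and Serre--Tate theory, whereas propagating connectedness of $\cA_g$ across all primes $p$ requires genuinely global input from the theory of degenerations of abelian varieties.
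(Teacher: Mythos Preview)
The paper does not prove this theorem at all: it is stated with a citation to \cite[Section~I.4]{faltingschaidegeneration}, followed only by the parenthetical remark that it is an algebraic version of Theorem~\ref{TdimensionAg} (whose proof used the Siegel upper half-space description of $\cA_{g,\CC}$). So there is no ``paper's own proof'' to compare against.

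That said, your sketch is a reasonable outline of what one finds in Faltings--Chai and related sources: rigidification by level-$n$ structure and Serre's lemma to get a quasi-projective scheme $\cA_{g,n}$ over $\ZZ[1/n]$, recovery of $\cA_g$ as a quotient stack, the Serre--Tate/Grothendieck--Messing deformation-theoretic computation for smoothness and the dimension $g(g+1)/2$, and connectedness of geometric fibers via degeneration theory. Your identification of geometric integrality in positive characteristic as the genuinely nontrivial ingredient is accurate. One small correction: the standard argument for connectedness over $\overline{\FF}_p$ does not use Hecke orbits but rather the existence of a proper, flat compactification with connected generic fiber and Zariski's connectedness theorem (or the explicit boundary structure you mention first); the Hecke-orbit route would be circular or at least unnecessarily heavy here.
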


(This is an algebraic version of Theorem~\ref{TdimensionAg},
whose proof used that $\cA_{g, \CC}$ is a quotient of the Siegel upper half-space.)

For more details on the definition and properties of $\cA_g$, we refer the reader to 
Section~1.10 of the prelude of this volume and \cite[Section~I.4]{faltingschaidegeneration}.
There is a tautological abelian variety $\mathcal{X}_g$ over the moduli stack $\cA_g$.

For this manuscript, the reader will lose little by replacing $\cA_g$ by its coarse moduli space, 
which is a quasiprojective scheme. 
There are several constructions of the coarse moduli space for $\cA_g$ in \cite{mumfordfogarty},
using geometric invariant theory, covariants of points, and theta constants.
Alternatively, one can add sufficiently fine level structure so that the moduli
problem is representable by a scheme. (We note that a
monodromy calculation is then necessary to show that an irreducible locus stays
irreducible after level structure is introduced.)

\begin{remark} \label{NcompactAg}
The moduli space $\cA_g$ is not compact, because there are families of abelian varieties that 
specialize to semi-abelian varieties.  
Let $\tilde{\cA_g}$ be a smooth toroidal compactification of $\cA_g$ from \cite[Chapter IV]{faltingschaidegeneration}.\footnote{
There are many results about different compactifications of $\cA_g$ that we do not cover here;
a good reference is the survey of Hulek and Tommasi \cite{hulektommasi}.}
The points of $\tilde{\cA_g}$ represent principally polarized semi-abelian varieties of dimension $g$.  
In fact, $\cA_g$ is open and dense in $\tilde{\cA}_g$.
This material is not needed in this manuscript.
\end{remark}

\subsection{Moduli spaces of curves} \label{SmodulispaceM}

\begin{notation}
For $g \geq 1$ and $r \geq 0$ (with $r \geq 1$ when $g=1$), 
let $\cM_{g;r}$ denote the moduli space of (smooth, projective, irreducible) curves of genus $g$ with $r$ labeled points.
For $g \geq 2$, let $\cM_g = \cM_{g;0}$.
\end{notation}

\begin{theorem} \label{TdimensionMg}  \cite[Theorem~5.2]{delignemumford}, \cite[Theorem~2.7]{knudsen2}
The moduli space $\cM_{g;r}$ is a smooth Deligne--Mumford stack over $\ZZ$ with geometrically integral fibers.
It has dimension $3g-3+r$ for $g \geq 2$, and dimension $r$ for $g=1$ and $r \geq 1$. 
\end{theorem}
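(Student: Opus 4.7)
The plan is to prove the three assertions (Deligne--Mumford stack structure, smoothness with the stated dimension, and geometric integrality of fibers) separately, using deformation theory for the local properties and a reduction to characteristic zero for connectivity.

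First I would construct $\cM_{g;r}$ as a Deligne--Mumford stack. For $g \geq 2$, fix an integer $n \geq 3$ and consider curves tricanonically (or more generally $n$-canonically) embedded in projective space via $\omega_C^{\otimes n}$. The Hilbert scheme parametrizing such embeddings is a quasiprojective scheme $H$, and $\cM_g$ is the stack quotient of the smooth locus of $H$ by the natural $\mathrm{PGL}$-action; that this quotient is a separated DM stack follows because the stabilizer groups are the (finite) automorphism groups of smooth curves of genus $g \geq 2$. To add marked points, take the $r$-fold fiber product of the universal curve with itself over $\cM_g$ and remove the diagonals. For $g=1$ with $r \geq 1$, the first marked point provides a zero section, so $\cM_{1;1}$ is the stack of elliptic curves, and $\cM_{1;r}$ is obtained by taking the $(r-1)$-fold fiber product of the universal elliptic curve over $\cM_{1;1}$ minus the appropriate diagonals.

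Second, I would deduce smoothness and compute the dimension from deformation theory. For a smooth $r$-pointed curve $(C; p_1, \ldots, p_r)$ of genus $g \geq 2$, infinitesimal deformations are classified by $H^1(C, T_C(-\sum_i p_i))$, and obstructions lie in $H^2(C, T_C(-\sum_i p_i))$, which vanishes because $C$ is a curve. Hence the deformation functor is unobstructed and $\cM_{g;r}$ is smooth. By Serre duality and Riemann--Roch, since $T_C(-\sum_i p_i)$ has degree $2-2g-r < 0$ for $g \geq 2$, the $h^0$ vanishes and $h^1 = (2g-2+r) - (1-g) = 3g-3+r$. For $g=1$, the sheaf $T_C \cong \mathcal{O}_C$, and after rigidifying by fixing $p_1$ as the origin one computes the dimension to be $r$ directly; equivalently, $\cM_{1;r}$ fibers over $\cM_{1;1}$ (dimension $1$) with fiber of dimension $r-1$.

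Third, I would prove that the geometric fibers over $\operatorname{Spec}\ZZ$ are integral. Irreducibility of $\cM_{g;r,\CC}$ is classical: the Teichm\"uller space $T_{g;r}$ is a contractible complex manifold of the expected dimension, and $\cM_{g;r,\CC}$ is its quotient by the (discrete) mapping class group, hence connected. To propagate this to every characteristic, I would pass to the Deligne--Mumford compactification $\overline{\cM}_{g;r}$, which is proper over $\operatorname{Spec}\ZZ$; by Zariski's connectedness theorem applied to the Stein factorization, the number of geometric connected components is a locally constant function on $\operatorname{Spec}\ZZ$, and since this number equals $1$ in characteristic $0$, it equals $1$ everywhere. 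Irreducibility of $\cM_{g;r}$ then follows from irreducibility of $\overline{\cM}_{g;r}$ combined with the density of the open locus.

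The main obstacle, and the step I expect to require the most care, is the geometric integrality in positive characteristic: the complex-analytic description via Teichm\"uller theory gives connectedness only over $\CC$, and transporting this to $\bar\FF_p$ genuinely requires the compactification $\overline{\cM}_{g;r}$ together with a properness/specialization argument. The rest (the stack-theoretic construction and the deformation-theoretic smoothness and dimension count) is comparatively mechanical once $\cM_{g;r}$ has been exhibited as a GIT quotient.
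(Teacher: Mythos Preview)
The paper does not give its own proof of this theorem; it simply cites Deligne--Mumford and Knudsen, and in the paragraph following the statement it sketches exactly the two ingredients you use: the realization of stable curves via the tricanonical embedding in $\mathbb{P}^{5g-6}$ (your Hilbert scheme / GIT construction) and, alternatively, ``deformation theory and the Riemann--Roch theorem'' (your smoothness and dimension count). Your proposal is correct and matches the paper's indicated approach.
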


For more details on the definition and properties of $\cM_{g;r}$, 
we refer the reader to \cite{knudsen2}.  There is a tautological curve $\cC_g$ over the moduli stack
$\cM_g$ \cite[Theorem~2.4]{knudsen2}.

Again, the reader will lose little by replacing $\cM_{g:r}$ by its coarse moduli space, 
which is a quasiprojective scheme, studied in numerous references e.g.\
\cite{mumfordPicard}, \cite[Section~5]{delignemumford}, \cite{mumfordfogarty}, and \cite[Chapter XII]{ACGH2}.
The construction of this scheme using geometric invariant theory works in arbitrary characteristic.
Again, one can add sufficiently fine level structure so that the moduli
problem is representable by a scheme.

A key point in proving Theorem~\ref{TdimensionMg} is that every relative stable curve $C$ of genus $g$ 
can be realized as a family
of curves in $\mathbb{P}^{5g-6}$ with Hilbert polynomial $P_g(n)=(6n-1)(g-1)$.
For a descriptive proof, see \cite[VII, Section~2]{miranda}.  Another proof uses deformation theory and the Riemann--Roch theorem.

\begin{remark}
We occasionally mention the `moduli space'
$\mathcal{H}_g$ of (smooth, projective, irreducible) hyperelliptic curves of genus $g$, 
viewed as a subspace of $\cM_g$.  It is irreducible and has dimension $2g-1$.  Intuitively, this is because a hyperelliptic curve is 
determined by the $2g+2$ branch points of its hyperelliptic cover to $\mathbb{P}^1$, and three of these 
branch points can be moved to $0$, $1$, and $\infty$.
\end{remark}

\subsection{The Torelli morphism} \label{Storellimorphism}

Recall the material from Section~\ref{SJacobianabelvar}.
By \cite[Section~8]{milneJacobian}, the Jacobian can be constructed for a relative curve of genus $g$.
The Torelli morphism $\tau_g: \cM_g \to \cA_g$ takes the isomorphism class of a curve $C$ to the isomorphism class of 
its Jacobian $\mathrm{Jac}(C)$.

\begin{theorem} \label{Ttorelli2} Torelli's Theorem, see \cite[Section~7.4]{mumfordfogarty}
If $k$ is an algebraically closed field, then the Torelli map $\tau_g: \cM_g(k) \to \cA_g(k)$ is injective.
\end{theorem}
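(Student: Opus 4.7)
The plan is to reduce Theorem~\ref{Ttorelli2} to Torelli's Theorem~\ref{Ttorelli1}, which is already in hand. For $g \geq 2$, a $k$-point of $\cM_g$ is by construction an isomorphism class of smooth projective irreducible curves of genus $g$ over $k$, while a $k$-point of $\cA_g$ is an isomorphism class of principally polarized abelian varieties of dimension $g$ over $k$. On $k$-points, the morphism $\tau_g$ sends $[C]$ to $[(\mathrm{Jac}(C), \Theta_C)]$, where the canonical principal polarization is given by the theta divisor $\Theta_C = j^{(g-1)}(C^{(g-1)})$ constructed after Theorem~\ref{Ttorelli0}. Injectivity on $k$-points is therefore literally the assertion that $(\mathrm{Jac}(C), \Theta_C) \cong (\mathrm{Jac}(C'), \Theta_{C'})$ as principally polarized abelian varieties implies $C \cong C'$, which is precisely the content of Theorem~\ref{Ttorelli1}.

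For completeness I would sketch how Theorem~\ref{Ttorelli1} itself is proved, since that is where the substantive geometry sits. An isomorphism of polarized Jacobians sends $\Theta_C$ to a translate of $\Theta_{C'}$, so the task is to recover $C$ from the pair $(\mathrm{Jac}(C), \Theta_C)$ up to translation. The case split is between non-hyperelliptic and hyperelliptic $C$. In the non-hyperelliptic case I would use the Gauss map $G \colon \Theta^{\mathrm{sm}} \to \mathbb{P}(T_0 \mathrm{Jac}(C)) \cong \mathbb{P}^{g-1}$, which translates the tangent hyperplane at a smooth point of $\Theta$ to the origin; $G$ is generically finite, and its image is the dual variety of the canonical embedding $\phi_K(C) \subset \mathbb{P}^{g-1}$. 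Biduality then recovers $\phi_K(C)$, and hence $C$ itself since it is non-hyperelliptic. In the hyperelliptic case, Riemann's singularity theorem describes $\mathrm{Sing}(\Theta)$ in terms of $j(W_{g-2})$, and a separate argument identifies $C$ as a double cover of $\mathbb{P}^1$ together with its branch locus.

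The main obstacle is carrying out this two-case geometric recovery of $C$ in a characteristic-free manner: Andreotti's original complex-analytic proof uses transcendental input, whereas the Gauss map and Riemann's singularity theorem have to be constructed scheme-theoretically here via the infinitesimal behaviour of the Abel--Jacobi map and the identification of $T_0 \mathrm{Jac}(C)$ with $H^1(C, \mathcal{O}_C) \cong H^0(C, \Omega^1)^*$ from Theorem~\ref{Ttorelli0}. This is the substantial content of the algebraic treatment of Jacobians cited after Theorem~\ref{Ttorelli1}. Once that theorem is accepted, Theorem~\ref{Ttorelli2} follows immediately from the moduli-theoretic interpretation of $k$-points described in the first paragraph.
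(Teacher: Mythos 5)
The paper gives no proof here: Theorem~\ref{Ttorelli2} is stated as Torelli's theorem with a reference to \cite{mumfordfogarty}, just as Theorem~\ref{Ttorelli1} is stated with a reference to Milne. Your reduction is correct and is effectively what the paper intends: after unwinding what a $k$-point of each coarse moduli space is, injectivity of $\tau_g$ on $k$-points is precisely the assertion that $(\mathrm{Jac}(C), \Theta_C) \cong (\mathrm{Jac}(C'), \Theta_{C'})$ forces $C \cong C'$, which is Theorem~\ref{Ttorelli1} verbatim. So your first paragraph is a complete proof modulo that theorem, and that is all the paper expects.

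One caution about your supplementary sketch of Theorem~\ref{Ttorelli1} itself. The Gauss-map/dual-variety argument you outline is Andreotti's, and it genuinely fails over $\bar{\FF}_p$ rather than merely needing a scheme-theoretic repackaging: the Gauss map of $\Theta$ can be inseparable in positive characteristic, and biduality for dual varieties then breaks down, so one cannot recover the canonical curve as the bidual of the Gauss image. The algebraic proof in the source actually cited for Theorem~\ref{Ttorelli1} (Milne, \cite[Theorem~12.1]{milneJacobian}, following Martens's reworking of Weil's argument) instead recovers the canonical curve from the intersection combinatorics of the subvarieties $W_r = j^{(r)}(C^{(r)})$ and their translates inside $\mathrm{Jac}(C)$, and never passes through the Gauss map or its dual. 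Since you invoke Theorem~\ref{Ttorelli1} as a black box, this does not affect the correctness of your proof of Theorem~\ref{Ttorelli2}, but if you wanted to expand the sketch you would need to switch to the $W_r$ argument rather than try to force the Gauss-map picture through positive characteristic.
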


\begin{definition} \label{Dtorelli}
The \emph{open Torelli locus} $\cT_g^\circ$ is the image of $\cM_g$ under $\tau_g$.
\end{definition}

The points in the open Torelli locus $\cT_g^\circ$ represent p.p.\ abelian varieties of dimension $g$ that are Jacobians of smooth curves.

\begin{remark}
See e.g., \cite[Section~4.3]{Landesman}.
For $g \geq 3$, the Torelli map $\tau_g$ is generically two to one (not an immersion) as a morphism of stacks, 
because a generic curve has no automorphisms, while an abelian variety always has the automorphism $[-1]$ of order $2$.
Also, the Torelli map is ramified on the hyperelliptic locus. 
\end{remark}

\section{The boundary of the moduli space of curves}  \label{Sboundary}

The moduli space $\cM_g$ is not compact, because there are families of smooth curves that specialize to singular curves.
In this section, we describe the Deligne--Mumford compactification of $\cM_g$, the clutching morphisms,
and the components of the boundary of $\cM_g$.
A good reference for this topic is \cite[Section~3]{knudsen2}.

\subsection{Deligne--Mumford compactification of $\cM_g$} \label{Scompactmoduli}

\begin{notation} \label{NcompactMg}
For $g \geq 1$ and $r \geq 0$ (with $r \geq 1$ when $g=1$),
let $\bar\cM_{g;r}$ be the Deligne-Mumford compactification of $\cM_{g;r}$ 
as in \cite{delignemumford} and \cite{knudsen2}. 
For $g \geq 2$, let $\bar{\cM}_{g} = \bar{\cM}_{g;0}$.
\end{notation}

The points of $\bar{\cM}_{g}$ represent stable curves of genus $g$.
For $r \geq 1$, the points of $\bar{\cM}_{g;r}$ represent $r$-labeled stable curves of genus $g$,
see \cite[Def.\ 1.1, 1.2]{knudsen2}. 
Then $\bar{\cM}_g$, and $\bar{\cM}_{g;r}$ are
smooth proper Deligne-Mumford stacks over $\ZZ$ \cite[Theorem~2.7]{knudsen2}.  
In fact, $\cM_{g;r}$ is open and dense in $\bar\cM_{g;r}$.

\begin{definition} \label{Dtorelli2}
Let $\cM_g^{ct}$ denote the subspace of $\bar\cM_g$ whose points represent stable curves of compact type.
\end{definition}

The Torelli morphism extends to a morphism $\tau_g: {\cM}^{ct}_g \to \cA_g$.
It is no longer injective, as seen in Fact~\ref{Ftorellifalse}.

\begin{definition}
The \emph{closed Torelli locus} $\cT_g$ is the image of $\cM_g^{ct}$ under $\tau_g$.
\end{definition}

The points in the closed Torelli locus $\cT_g$ represent p.p.\ abelian varieties of dimension $g$ that are 
Jacobians of smooth curves, or products of Jacobians of smooth curves, with the product polarization.

\begin{remark}
For compactifications of $\mathcal{H}_g$ and other Hurwitz spaces,  
we refer the reader to \cite{EkedahlBoundary}, \cite{wewersthesis}, and \cite{AvritzerLange}.
\end{remark}

\subsection{Clutching maps} 

Given two curves (with labeled points), it is possible to clutch them together to obtain a 
singular curve of higher genus.
To set some notation, suppose $g_1, g_2, r_1, r_2$ are positive integers. 
There is a clutching map
\begin{equation} \label{Eclutch1}
\xymatrix{
\kappa_{g_1;r_1, g_2;r_2}:\bar\cM_{g_1;r_1}\times\bar\cM_{g_2;r_2} \ar[r] &\bar\cM_{g_1+g_2;r_1+r_2-2}.
}
\end{equation}
Suppose $s_1 \in \bar\cM_{g_1;r_1}$ is the moduli point of a
labeled curve $(C_1;P_1, \ldots, P_{r_1})$, and suppose $s_2 \in
\bar\cM_{g_2;r_2}$ is the moduli point of a labeled curve $(C_2;Q_1, \ldots,
Q_{r_2})$.  Then $\kappa_{g_1;r_1,g_2;r_2}(s_1,s_2)$ is defined to be the moduli
point of the labeled curve $(D; P_1, \ldots, P_{r_1-1},
Q_2, \ldots Q_{r_2})$, where the underlying curve $D$ has components 
$C_1$ and $C_2$, the sections $P_{r_1}$ and $Q_1$ are identified in an
ordinary double point, and this nodal section is dropped from the
labeling.  The clutching map is a closed immersion if $g_1\not = g_2$
or if $r_1+r_2 \ge 3$, and is always a finite, unramified map \cite[Corollary~3.9]{knudsen2}.

The Jacobian of the resulting curve $D$ is the product of the Jacobians of $C_1$ and $C_2$. 
Specifically, by \cite[Ex.\ 9.2.8]{BLR},
\begin{equation}
\label{eqblr}
\mathrm{Pic}^0(D) \cong \mathrm{Pic}^0(C_1) \times \mathrm{Pic}^0(C_2).
\end{equation}

\bigskip

Alternatively, given a curve with two labeled points, it is possible to clutch these points together to obtain a singular 
curve of higher genus.  To set some notation, suppose $g$ and $r$ are positive integers and $r \ge 2$. 
There is a
clutching map
\[
\xymatrix{
\kappa_{g;r}:\bar\cM_{g;r} \ar[r] &\bar\cM_{g+1;r-2}.
}
\]
If $s \in \bar \cM_{g;r}$ is the moduli point of a labeled curve 
$(C; P_1, \ldots, P_r)$ then $\kappa_{g;r}(s)$ is the moduli point of the
labeled curve $(\tilde{C}; P_1, \ldots, P_{r-2})$ where $\tilde{C}$ is obtained by identifying 
the sections $P_{r-1}$ and $P_r$ in an ordinary double point, and these 
sections are dropped from the labeling. 
The 
morphism $\kappa_{g;r}$ is finite and unramified \cite[Corollary~3.9]{knudsen2}.

In this situation, $\mathrm{Pic}^0(\tilde{C})$ is a semi-abelian variety but not an abelian variety.
By \cite[Example~9.2.8]{BLR}, $\mathrm{Pic}^0(\tilde{C})$ is an extension of the form
\begin{equation}
\label{eqblr0}
\xymatrix{
0 \ar[r] & T \ar[r] & \mathrm{Pic}^0(\tilde{C})  \ar[r] & \mathrm{Pic}^0(C) \ar[r] & 0},
\end{equation}
where $T$ is a one-dimensional torus.  
The toric rank of $\mathrm{Pic}^0(\tilde{C})$ is one more than the toric rank of $\mathrm{Pic}^0(C)$. 
The maximal projective quotient of $\mathrm{Pic}^0(\tilde{C})$ is the maximal quotient of $\mathrm{Pic}^0(\tilde{C})$ 
which is an abelian variety; 
the maximal projective quotients of $\mathrm{Pic}^0(\tilde{C})$ and $\mathrm{Pic}^0(C)$ are isomorphic.

\subsection{Components of the boundary}

The boundary of $\cM_g$ is $\partial \cM_g = \bar{\cM}_g - \cM_g$.
In most cases, $\partial \cM_g$ has multiple irreducible components.

The first boundary components we consider are labeled $\Delta_i$ for $1 \leq i \leq g-1$, where
the generic point of $\Delta_i$ represents a stable curve of compact type.

\begin{definition}
Let $1 \le i \le g-1$ and write $g_1=i$ and $g_2=g-i$.
Define $\Delta_i = \Delta_{i}[\bar\cM_g]$ to be the image of 
$\bar\cM_{i;1}\times\bar\cM_{g-i;1}$ under the morphism $\kappa_{i,1;g-i,1}$, with the reduced induced structure.  
\end{definition}

The generic geometric point of $\Delta_i$ represents a curve $D$ with two irreducible components $C_1$ and $C_2$,
having genera $g_1$ and $g_2$, that intersect in an ordinary double point.  Note that $\Delta_i$ and $\Delta_{g-i}$ are the
same substack of $\bar\cM_g$.
By \eqref{eqblr}, the Jacobian of the curve $D$ is a p.p.\ abelian variety that decomposes, 
together with the product polarization.

We also consider the boundary component $\Delta_0$.
A point of $\Delta_0$ represents a stable curve that is not of compact type.

\begin{definition}
Define $\Delta_0 = \Delta_0[\bar\cM_g]$ to be the image
of $\bar \cM_{g-1;2}$ under the morphism $\kappa_{g-1;2}$, with the reduced induced structure. 
\end{definition}

Then $\cM_g^{ct} = \bar{\cM}_g - \Delta_0$.
The generic geometric point of $\Delta_0$ represents an irreducible curve $\tilde{C}$
that self-intersects in an ordinary double point.
By \eqref{eqblr0}, the Jacobian of $\tilde{C}$ is a semi-abelian variety, 
rather than an abelian variety.

\begin{theorem} \label{Tboundarydivisor}
\cite[page~190]{knudsen2} 
The locus $\Delta_i$ is an
irreducible divisor in $\bar\cM_g$, and $\partial \cM_g$ is the union
of $\Delta_i$ for $0 \le i \le g/2$.
\end{theorem}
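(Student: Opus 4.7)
The plan is to prove the two assertions separately: first that each $\Delta_i$ is an irreducible divisor, and then that every point of $\partial\cM_g$ lies in some $\Delta_i$.

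For irreducibility of $\Delta_i$, I would invoke Theorem~\ref{TdimensionMg} (and its extension to the compactified moduli spaces from Notation~\ref{NcompactMg}), which says that $\bar\cM_{g';r'}$ is geometrically integral for all allowed $(g',r')$. Hence the product $\bar\cM_{i;1}\times\bar\cM_{g-i;1}$ is irreducible for $1\leq i\leq g-1$, and $\bar\cM_{g-1;2}$ is irreducible. Since the clutching morphisms $\kappa_{i;1,g-i;1}$ and $\kappa_{g-1;2}$ are continuous, their set-theoretic images with reduced induced structure are irreducible closed substacks.

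For dimension, I would use the fact recorded in \cite[Corollary~3.9]{knudsen2} that the clutching morphisms are finite and unramified, so they preserve dimension. A direct count from Theorem~\ref{TdimensionMg} gives $\dim(\bar\cM_{i;1}\times\bar\cM_{g-i;1}) = (3i-2)+(3(g-i)-2) = 3g-4$ for $i\geq 2$, with the obvious modification for $i=1$ where $\dim \bar\cM_{1;1}=1$; in every case one obtains $3g-4$. Similarly $\dim\bar\cM_{g-1;2}=3(g-1)-3+2=3g-4$. Since $\dim\bar\cM_g = 3g-3$, each $\Delta_i$ has codimension one and is thus a divisor.

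For the description $\partial\cM_g = \bigcup_{0\leq i\leq g/2}\Delta_i$, the inclusion $\supseteq$ is immediate: both clutching constructions produce stable curves with a node, hence singular, hence not in $\cM_g$. For the reverse inclusion, I would argue that a point of $\partial\cM_g$ is represented by a stable curve $C$ with at least one node $p$. If $p$ is non-separating, then $C$ is obtained from a (possibly disconnected) curve of arithmetic genus $g-1$ with two marked points by identifying them, placing $[C]$ in $\Delta_0$; if $p$ is separating, then $C$ breaks as $C_1\cup C_2$ with $g(C_1)=i$ and $g(C_2)=g-i$, placing $[C]$ in $\Delta_i$. In both cases $[C]$ lies in the image of the appropriate clutching map. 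Under the identification $\Delta_i = \Delta_{g-i}$ this gives the stated union over $0\leq i\leq g/2$.

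The main obstacle is the last step in the structural argument: one must verify that the normalization-plus-reattachment description of a stable curve with a chosen node actually factors the point $[C]$ through a clutching morphism, and that the generic point of each irreducible component of $\partial\cM_g$ has exactly one node. This is really a deformation-theoretic statement: each node of a stable curve contributes a local parameter in the versal deformation (the node smooths in a one-parameter family), so the locus where a given node persists is cut out by one equation. The clutching morphism supplies a finite cover of this locus, and the classification of the single node (separating versus non-separating, and the split of genus in the separating case) exhausts the possibilities. I would cite \cite{knudsen2} and \cite{delignemumford} for the deformation-theoretic input rather than reprove it.
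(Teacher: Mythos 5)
The paper does not prove this statement; Theorem~\ref{Tboundarydivisor} is a direct citation to Knudsen, and the surrounding text supplies only the ingredients (the definitions of $\Delta_i$, the finiteness and unramifiedness of the clutching maps, and the dimensions of the pointed moduli spaces). Your sketch is therefore a reconstruction of the cited argument rather than a comparison with an argument given in the text. As such it is essentially correct, and the structure you lay out — irreducibility from geometric integrality of the $\bar\cM_{g';r'}$ and the clutching maps, dimension count giving codimension one, then classification of a single node to get the set-theoretic decomposition — is the standard one.

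Two points deserve tightening. First, in the separating case you write that $C$ breaks as $C_1 \cup C_2$ with genera $i$ and $g-i$, but you never rule out $i=0$, for which there is no clutching map (there is no $\bar\cM_{0;1}$). You need the observation that a separating node cannot isolate an arithmetic-genus-zero piece: such a piece would be a tree of rational curves attached to the rest of $C$ at a single node, and its extremal component would have at most two special points, contradicting stability. This forces $1 \le i \le g-1$, which is exactly the index range in the definition of $\Delta_i$. Second, your parenthetical ``(possibly disconnected)'' for the normalization at a non-separating node is backwards: by definition, a node is non-separating precisely when the partial normalization at it stays connected, and this is needed because $\bar\cM_{g-1;2}$ parametrizes connected pointed stable curves, so $\kappa_{g-1;2}$ only sees the connected case. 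Both of these are easy fixes, but they are the exact points where the dichotomy of nodes has to be matched against the indexing set of the $\Delta_i$, so they should be made explicit.
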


\section{Theorems about complete subvarieties} \label{Scomplete}

In this section, we detail the historical development of results about complete subvarieties of the moduli spaces 
$\cA_g$, $\cM_g$, and $\cM_g^{ct} = \bar{\cM}_g - \Delta_0$.
Many of the proofs use the structure of the Chow ring, which we do not cover here.

\begin{theorem} \label{Tdiaz}
\cite[Theorem~4]{diaz} (for positive characteristic, see \cite[page 412]{looijenga})
Suppose $g \geq 3$.  If $Z \subset \cM_g$ is complete, then $\mathrm{dim}(Z) \leq g-2$.
\end{theorem}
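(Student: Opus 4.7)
The plan is to argue by contradiction: suppose that $Z \subset \cM_g$ is complete of dimension $d \geq g-1$, and derive a contradiction from an iterated Weierstrass-point construction on the universal curve over $Z$. Pull back to obtain a complete family $\pi \colon \mathcal{X} \to Z$ of smooth genus $g$ curves and let $\mathbb{E} := \pi_* \omega_\pi$ denote the rank $g$ Hodge bundle on $Z$. The natural evaluation map $\pi^*\mathbb{E} \to P^{g-1}(\omega_\pi)$ into the sheaf of principal parts of order $g-1$ has determinant a canonical Wronskian section of $\omega_\pi^{g(g+1)/2} \otimes \pi^*(\det \mathbb{E})^{-1}$; its zero scheme $W_1 \subset \mathcal{X}$ is the universal Weierstrass divisor, meeting each fiber in the Weierstrass divisor of degree $g^3 - g$.

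Next, iterate. By restricting the evaluation map to rank-reducing subbundles of $\pi^*\mathbb{E}$, construct a descending flag $W_1 \supset W_2 \supset \cdots \supset W_{g-1}$ inside $\mathcal{X}$, where $W_k$ parametrizes pairs $(C, P)$ with $P$ a Weierstrass point of weight (or Plücker defect) at least $k$. At each stage $W_{k+1}$ is cut out inside $W_k$ by the vanishing of an explicit section of a line bundle built from $\omega_\pi$ and $\pi^*\mathbb{E}$, and a Chern-class computation on jet bundles shows each step drops dimension by at most $1$. Since $\dim \mathcal{X} = d + 1 \geq g$, after $g-1$ successive cuts one expects $\dim W_{g-1} \geq 1$. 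The completeness of $Z$ enters crucially here: on the complete total space $\mathcal{X}$, each iterated zero locus must be nonempty of the expected dimension by positivity of the line bundles in play, since there is no affine direction along which the relevant section could escape having zeros. A point of $W_{g-1}$ would correspond to a smooth genus $g$ curve bearing a Weierstrass point whose gap sequence violates the numerical constraints possible on a smooth curve, the desired contradiction.

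The main obstacle will be verifying that each iterated intersection is genuinely nonempty of the expected dimension on the complete family $\mathcal{X}/Z$, and that the final stratum is forced into a numerically impossible Weierstrass configuration. This demands careful Chern-class bookkeeping on $\mathcal{X}$ involving $\omega_\pi$ and the Hodge class $\lambda = c_1(\mathbb{E})$, together with a positivity argument for the successive line bundles. For Looijenga's extension to positive characteristic, the jet-bundle/Wronskian formulation must be replaced by a characteristic-free construction on the relative Hilbert scheme of length $g$ subschemes of $\mathcal{X}/Z$, since the Wronskian can vanish identically in the inseparable regime; executing this substitution without losing the dimension count is the technical heart of the positive-characteristic argument.
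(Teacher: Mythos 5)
The paper does not actually reprove this theorem—it is cited as \cite[Theorem~4]{diaz}, with \cite{looijenga} for positive characteristic—so I am comparing your proposal against Diaz's known argument. Your strategic skeleton is correct and matches Diaz: pass to the universal curve over $Z$ to gain one dimension, stratify by Weierstrass-type conditions, use completeness to force each successive intersection to be nonempty of codimension at most one, and count dimensions. But your proposed final contradiction is wrong, and the error is not cosmetic.

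There is no smooth genus $g$ curve with a ``numerically impossible'' Weierstrass gap sequence sitting in your bottom stratum $W_{g-1}$. Every step of the flag you describe is nonempty: hyperelliptic Weierstrass points have weight $\binom{g}{2}$, which exceeds $g-1$ for all $g \geq 3$, so the deepest stratum is nonempty, and indeed it has dimension $2g-1$. No contradiction can be extracted from nonemptiness alone. In Diaz's argument the flag is indexed by the first nongap order: inside the universal curve $\cM_{g,1}$ one sets $D_n = \{(C,P) : h^0(C, nP) \geq 2\}$, giving a chain $D_2 \subset D_3 \subset \cdots \subset D_{g+1} = \cM_{g,1}$ of $g-1$ steps, each (generically) of codimension one in the next. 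The closing observation is \emph{geometric}, not numerical: the bottom stratum $D_2$, the locus of pointed hyperelliptic curves with marked Weierstrass point, is a finite cover of the hyperelliptic locus $\cH_g$, and $\cH_g$ is \emph{affine} (it is a finite quotient of $\cM_{0,2g+2}$, which is an open subvariety of affine space after normalizing three branch points to $0,1,\infty$). An affine variety contains no complete positive-dimensional subvariety. Your dimension count—$\dim \mathcal{X} = d+1 \geq g$, after $g-1$ cuts still $\geq 1$—is correct; it lands you in a positive-dimensional complete subvariety of an affine variety, and \emph{that} is the contradiction.

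Two further cautions. First, your claim that ``each iterated zero locus must be nonempty of the expected dimension by positivity of the line bundles in play'' is doing essentially all the technical work of Diaz's Proposition; it is not automatic and requires careful argument about the relevant degeneracy loci on the universal curve, which is exactly what made Diaz's paper nontrivial. Second, the remarks about Looijenga's characteristic-$p$ extension (relative Hilbert schemes replacing the Wronskian, which can vanish identically under inseparability) are well-aimed as a diagnosis of the obstacle, but nothing in your proposal executes that substitution or verifies that the affineness of the hyperelliptic locus and the dimension-drop estimates persist in characteristic $p$.
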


\begin{theorem} \cite[page 80]{diaz} \label{Tdiaz2}
Suppose $g \geq 3$.  If $Z \subset \cM_g^{ct}$ is complete, then 
$\mathrm{codim}(Z, \cM_g^{ct}) \geq g$ (so $\mathrm{dim}(Z) \leq 2g-3$).
\end{theorem}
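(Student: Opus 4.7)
The plan is to prove the bound by induction on $g \geq 3$, combining Diaz's Theorem~\ref{Tdiaz} for the interior $\cM_g$ with a careful analysis of how a complete subvariety $Z$ can interact with the boundary divisors $\Delta_1, \ldots, \Delta_{\lfloor g/2 \rfloor}$ of $\cM_g^{ct}$. The strategy exploits that the clutching morphisms of Section~\ref{Sboundary} are finite, so a complete subvariety contained in (or intersecting) a boundary divisor lifts to a complete subvariety on a product of pointed compact-type moduli spaces of strictly smaller genus.

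To make the induction carry, I would strengthen the hypothesis to an auxiliary bound $h(g)$ for complete subvarieties of the pointed compact-type moduli $\cM_{g;1}^{ct}$. Using the forgetful map $\pi:\cM_{g;1}^{ct}\to\cM_g^{ct}$, whose fibers over $[C]$ are the smooth locus of $C$, I would show that for a complete subvariety $W\subset\cM_{g;1}^{ct}$, either (i) the generic fiber of $\pi|_W$ is $0$-dimensional, giving $\dim W\leq\dim\pi(W)$ bounded by the analog of the theorem for $\cM_g^{ct}$; or (ii) the generic fiber is $1$-dimensional, in which case upper semicontinuity of fiber dimension, combined with the fact that the punctured fiber over a nodal curve contains no complete curve, forces $\pi(W)\subset\cM_g$, and then Theorem~\ref{Tdiaz} gives $\dim W\leq(g-2)+1$. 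Together these produce a bound of the form $h(g)\leq\max(g-1,f(g))$, where $f(g)$ denotes the bound on $\cM_g^{ct}$ being established.

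The inductive step proceeds as follows: given complete irreducible $Z\subset\cM_g^{ct}$ of dimension $d$, if $Z\subset\cM_g$ then $d\leq g-2\leq 2g-3$ by Theorem~\ref{Tdiaz}. Otherwise $Z$ meets some $\Delta_i$ with $1\leq i\leq\lfloor g/2\rfloor$, and $Z\cap\Delta_i$ is complete of dimension at least $d-1$. Since compact type is preserved in both directions by the clutching map $\kappa_{i;1,g-i;1}:\bar\cM_{i;1}\times\bar\cM_{g-i;1}\to\Delta_i$, which is finite, I lift $Z\cap\Delta_i$ to a complete subvariety $\tilde Z\subset\cM_{i;1}^{ct}\times\cM_{g-i;1}^{ct}$ of the same dimension, project to each factor to obtain complete subvarieties $\tilde Z_1,\tilde Z_2$, and apply the inductive bounds $\dim\tilde Z_j\leq h(g_j)$. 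Since $\tilde Z\subset\tilde Z_1\times\tilde Z_2$, we obtain $d-1\leq h(i)+h(g-i)$, which gives $d\leq 2g-3$ after verifying that the resulting recursion stays within the desired bound. The base cases $g=1,2$, where $\cM_{1;1}^{ct}\cong\mathbb{A}^1$ and $\cM_2$ is affine, are handled by inspection.

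The main obstacle is the bookkeeping needed to make the two-variable induction (in $g$ and in the number of marked points) self-consistent, and in particular verifying that the auxiliary argument for pointed moduli is compatible with the non-properness of the forgetful map $\pi$: complete subvarieties of $\cM_{g;1}^{ct}$ cannot have a marked point colliding with a node, which both constrains the possible geometry (giving rise to case (ii) above) and requires the semicontinuity argument to rule out large fibers over the compact-type boundary.
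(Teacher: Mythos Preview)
The paper does not give its own proof of Theorem~\ref{Tdiaz2}; it simply cites \cite[page 80]{diaz}. So there is nothing in the paper to compare your argument against, and I can only assess your proposal on its own merits.

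There is a genuine error in your treatment of the forgetful map. You assert that the fiber of $\pi:\cM_{g;1}^{ct}\to\cM_g^{ct}$ over a point $[C]$ is the smooth locus of $C$, and hence that over a nodal compact-type curve the fiber contains no complete curve. This is false. The forgetful morphism $\bar\cM_{g;1}\to\bar\cM_g$ is the universal curve, so its fiber over $[C]$ is $C$ itself; as the marked point approaches a node one sprouts a rational bridge, and this configuration is still of compact type (the dual graph remains a tree). Hence $\pi$ is proper with fiber $C$ over $[C]$, and the fiber over a nodal compact-type curve \emph{does} contain complete curves (namely the irreducible components of $C$). Your case~(ii) argument, which was supposed to force $\pi(W)\subset\cM_g$ and thereby invoke Theorem~\ref{Tdiaz} to get the sharper bound $h(g)\leq g-1$, therefore collapses.

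The good news is that properness of $\pi$ immediately gives $\pi(W)$ complete in $\cM_g^{ct}$, hence $h(g)\leq f(g)+1$ without any case distinction. Feeding this into your recursion $d-1\leq h(i)+h(g-i)$ yields $d\leq f(i)+f(g-i)+3$, and for $i,g-i\geq 3$ the inductive hypothesis $f(j)\leq 2j-3$ closes up to exactly $2g-3$. The base cases, however, now require independent input: you need $h(1)=0$ (clear, since $\cM_{1;1}^{ct}=\cM_{1;1}\cong\mathbb{A}^1$) and $h(2)\leq 2$, which reduces to $f(2)\leq 1$, i.e., that $\cA_2=\cM_2^{ct}$ contains no complete surface. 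That is true (it is the $g=2$ case of \cite[Corollary~1.7]{V:cycles}, quoted just below Theorem~\ref{Tdiaz2}), but you should state and justify it explicitly rather than leaving it buried in ``handled by inspection.''
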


\begin{theorem} \cite[Corollary~1.7]{V:cycles}
Suppose $g \geq 3$.  If $Z \subset \cA_g$ is complete, then $\mathrm{codim}(Z, \cA_g) \geq g$
(so $\mathrm{dim}(Z) \leq g(g-1)/2$).
\end{theorem}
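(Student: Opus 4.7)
The plan is to exploit the Hodge line bundle on the Baily--Borel compactification to produce a positivity obstruction, then contradict it with a vanishing result in the tautological ring of $\cA_g$. Let $\mathbb{E}$ denote the Hodge bundle on $\cA_g$, namely the pullback along the identity section of the relative cotangent bundle of the universal abelian scheme $\pi : \mathcal{X}_g \to \cA_g$, and set $\lambda_i = c_i(\mathbb{E}) \in CH^i(\cA_g) \otimes \QQ$ for $1 \le i \le g$.

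\textbf{Ampleness step.} By Baily--Borel, the line bundle $\det \mathbb{E}$ extends to an ample line bundle $\mathcal{L}$ on the minimal compactification $\cA_g^*$. Suppose $Z \subset \cA_g$ is an irreducible complete subvariety of dimension $d$. Since $Z$ is proper and $\cA_g \hookrightarrow \cA_g^*$ is an open immersion into a separated space, $Z$ is also closed in $\cA_g^*$, hence projective. The restriction $\mathcal{L}|_Z$ is ample on $Z$, so its top self-intersection $\deg_Z\bigl(c_1(\mathcal{L}|_Z)^d\bigr)$ is strictly positive. Writing $i : Z \hookrightarrow \cA_g$ for the inclusion, this says that $i^* \lambda_1^d \ne 0$ in $CH^d(Z) \otimes \QQ$.

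\textbf{Vanishing step.} The goal is to prove that $\lambda_1^d = 0$ in $CH^d(\cA_g) \otimes \QQ$ whenever $d > g(g-1)/2$; combined with the previous step, this forces $d \le g(g-1)/2$. Two ingredients are needed. The first is the Mumford relation
\[
(1 + \lambda_1 + \cdots + \lambda_g)(1 - \lambda_1 + \cdots + (-1)^g \lambda_g) = 1,
\]
coming from the identity $c(\mathbb{E}) \cdot c(\mathbb{E}^\vee) = 1$ forced by the principal polarization (via Poincar\'e duality on the fibers of $\pi$); this inductively expresses each $\lambda_{2j}$ in terms of products of lower classes. The second, deeper ingredient is the vanishing $\lambda_g = 0$ in $CH^g(\cA_g) \otimes \QQ$, which I would attack by applying Grothendieck--Riemann--Roch to $\pi$ and using $R^i\pi_*\mathcal{O}_{\mathcal{X}_g} = \bigwedge^i \mathbb{E}^\vee$ to compare $\mathrm{ch}(\pi_! \mathcal{O}_{\mathcal{X}_g})$ with the Todd contributions from $\mathbb{E}$. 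Together, the Mumford relation and the vanishing of $\lambda_g$ cut the tautological subring $R^*(\cA_g) \subset CH^*(\cA_g) \otimes \QQ$ down to a ring generated by $\lambda_1, \ldots, \lambda_{g-1}$ whose top non-zero graded piece lies in degree $1 + 2 + \cdots + (g-1) = g(g-1)/2$. For a sanity check, direct substitution gives $R^*(\cA_2) = \QQ[\lambda_1]/(\lambda_1^2)$, $R^*(\cA_3) = \QQ[\lambda_1]/(\lambda_1^4)$, and $R^*(\cA_4) = \QQ[\lambda_1,\lambda_3]$ modulo explicit relations with top degree $6$. In particular $\lambda_1^d = 0$ once $d > g(g-1)/2$.

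\textbf{Main obstacle.} The real difficulty is establishing $\lambda_g = 0$ on the open stack $\cA_g$. This is delicate because the class $\lambda_g$ is genuinely non-zero on every toroidal compactification $\tilde\cA_g$: via the excision sequence
\[
CH^{g-1}(\partial \tilde\cA_g) \to CH^g(\tilde\cA_g) \to CH^g(\cA_g) \to 0,
\]
the content of the vanishing is that $\lambda_g$ on $\tilde\cA_g$ is represented by a cycle supported on the boundary $\partial \tilde\cA_g$. Producing such a cycle explicitly, and tracking its behavior along degenerations into semi-abelian varieties (where the Hodge bundle interacts with the toric part), is the technical heart of the argument and is where the hypothesis $g \geq 3$ enters.
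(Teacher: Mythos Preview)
The paper does not give a proof of this theorem at all: it is stated with a citation to \cite[Corollary~1.7]{V:cycles} and nothing more. So there is no ``paper's own proof'' to compare against; the result is imported from Van der Geer.

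Your approach is essentially the right one, and is in fact Van der Geer's argument: ampleness of $\det\mathbb{E}$ on the minimal compactification forces $\lambda_1^{\dim Z}\ne 0$ on any complete $Z$, while the relations $c(\mathbb{E})c(\mathbb{E}^\vee)=1$ together with $\lambda_g=0$ on $\cA_g$ bound the top nonvanishing degree of the tautological ring by $\binom{g}{2}$. Two comments. First, your sketch of how to obtain $\lambda_g=0$ via Grothendieck--Riemann--Roch for $\pi_!\mathcal{O}_{\mathcal{X}_g}$ is not the argument Van der Geer uses and is unlikely to work as stated: GRR applied to $\mathcal{O}_{\mathcal{X}_g}$ yields identities among the $\lambda_i$ but does not single out $\lambda_g$ for vanishing. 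The actual proofs proceed either by an Esnault--Viehweg style argument in characteristic $0$, or (and this is what Van der Geer does) by constructing an explicit boundary cycle on a toroidal compactification representing $\lambda_g$, as you correctly describe in your ``main obstacle'' paragraph. Second, the hypothesis $g\ge 3$ is not where the vanishing $\lambda_g=0$ becomes hard; that vanishing holds for all $g\ge 1$. The restriction $g\ge 3$ in the statement is a matter of context (the cases $g=1,2$ being classical), not a feature of the proof.
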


Keel and Sadun solved a conjecture of Oort \cite[Conjecture~3.5]{geerOort99}.

\begin{theorem} \label{Tkeelsadun} \cite[Corollary~1.2, 1.2.1]{keelsadun}
For $g \geq 3$, there is no compact codimension $g$ subvariety of $\cA_{g, \CC}$, or of $\cM_{g,\CC}^{ct}$.
\end{theorem}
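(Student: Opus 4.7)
The plan is to attack the theorem for $\cA_{g,\CC}$ using the Hodge bundle $\EE$ of rank $g$ and its Chern classes $\lambda_i := c_i(\EE) \in \mathrm{CH}^i(\cA_g)_\QQ$. The essential input is that $\lambda_1$ extends to an ample class on the Baily--Borel compactification $\cA_g^*$, so for any complete subvariety $Z \subset \cA_g$ of dimension $d$ one has strict positivity $\lambda_1^d \cdot [Z] > 0$. This positivity must be played against a vanishing relation in the tautological Chow ring.

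First, I would assemble the universal relations among the $\lambda_i$. The principal polarization on the universal abelian variety yields Mumford's relation $c(\EE) \cdot c(\EE^\vee) = 1$ in $\mathrm{CH}^*(\cA_g)_\QQ$, which encodes symmetric-function identities among $\lambda_1, \ldots, \lambda_g$. Combined with the Hirzebruch proportionality principle (available over $\CC$) and van der Geer's tautological computations, this gives a family of Chow-ring identities and, as a corollary, the codimension bound $\mathrm{codim}(Z, \cA_g) \geq g$ already recorded above.

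To push the inequality from $\geq g$ to $> g$, I would aim to produce a sharper tautological relation in codimension exactly $\dim \cA_g - g = g(g-1)/2$. The strategy is to combine Mumford's relations with a boundary analysis on a toroidal compactification $\tilde{\cA}_g$, writing $\lambda_1^{g(g-1)/2}$ as the sum of a class supported on the boundary $\tilde{\cA}_g \setminus \cA_g$ and a class that vanishes identically in $\mathrm{CH}^*(\cA_g)_\QQ$. For a putative complete $Z \subset \cA_g$ of codimension exactly $g$, evaluation against $[Z]$ would then force $\lambda_1^{g(g-1)/2} \cdot [Z] = 0$, contradicting the Baily--Borel positivity. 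For $\cM_{g,\CC}^{ct}$, I would transfer the argument via the Torelli morphism $\tau_g : \cM_g^{ct} \to \cA_g$: it is finite onto its image and pulls back the Hodge bundle on $\cA_g$ to the Hodge bundle on $\cM_g^{ct}$, so both the Chow relations and the $\lambda_1$-positivity descend; the ramification of $\tau_g$ along the hyperelliptic locus affects only lower-codimension loci and does not interfere with a codimension-$g$ argument.

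The main obstacle is producing this precise codimension-$g(g-1)/2$ vanishing relation. Mumford's symmetric-function identities furnish many relations in various codimensions, but pinning down one whose leading term in $\lambda_1$ has the right degree (modulo boundary contributions) requires a delicate analysis of degenerations of principally polarized abelian varieties near the toroidal boundary. This is the technical heart of the argument and where I would expect nearly all of the effort to go; an alternative route worth considering is a Chern-character calculation for a carefully chosen virtual bundle built from $\EE$, $\EE^\vee$, and their symmetric and exterior powers, designed so that its top Chern class must both vanish in $\mathrm{CH}^*(\cA_g)_\QQ$ and pair positively with any compact $Z$ of codimension $g$.
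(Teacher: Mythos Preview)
The paper does not supply a proof of this theorem; it is quoted from Keel--Sadun, so the comparison must be against their original argument.

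Your overall plan---pair positivity of the Hodge bundle against a tautological vanishing relation---is the right shape, and is indeed how Keel--Sadun proceed. But the specific relation you aim for is wrong. You propose to show that $\lambda_1^{g(g-1)/2}$ is a boundary class on $\tilde{\cA}_g$, i.e.\ that it vanishes in $\mathrm{CH}^{g(g-1)/2}(\cA_g)_\QQ$. This is false already for $g=3$: from the degree-$2$ part of Mumford's relation one gets $\lambda_1^2 = 2\lambda_2$, hence $\lambda_1^3 = 2\lambda_1\lambda_2$, and $\lambda_1\lambda_2$ is precisely the nonzero socle class of van der Geer's tautological ring. So no relation of the form you want exists, and in any case a relation holding only in the tautological subring would not pair against an arbitrary cycle class $[Z]$.

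The ingredient you are missing is that over $\CC$ the Hodge bundle $\EE$ restricted to any complete subvariety $Z \subset \cA_g$ is not merely $\lambda_1$-ample but \emph{nef as a vector bundle}, by Griffiths' curvature formula for a polarized variation of Hodge structure. Via Fulton--Lazarsfeld this yields $s_\mu(\EE|_Z) \geq 0$ for \emph{every} Schur polynomial, not just $\lambda_1^d > 0$. Keel--Sadun combine this full positivity with van der Geer's relation $\lambda_g = 0$ on $\cA_g$ (hence $c_g(\EE|_Z) = 0$) and an argument on nef bundles with vanishing top Chern class to force the contradiction. This is exactly the characteristic-zero input, and it explains the paper's subsequent remark that the theorem fails in characteristic $p$: there the Hodge bundle need not be nef on complete subvarieties, and the $p$-rank $0$ locus is a compact codimension-$g$ witness.

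A minor point: the Torelli map $\tau_g : \cM_g^{ct} \to \cA_g$ is not finite onto its image; it has positive-dimensional fibers over the boundary (the paper itself notes this in Fact~\ref{Ftorellifalse}). The transfer to $\cM_g^{ct}$ works not by finiteness but because the Hodge bundle, the relation $\lambda_g = 0$, and the VHS positivity all hold directly on $\cM_g^{ct}$.
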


\begin{remark}
Both claims in Theorem~\ref{Tkeelsadun} are false in positive characteristic.
In characteristic $p >0$, 
the $p$-rank $0$ locus is compact and has codimension $g$ in both $\cA_{g,k}$ and $\cM_{g,k}^{ct}$, 
see Sections~\ref{Sdimstrata} and \ref{SFVdG}.
\end{remark}

Here is a recent result.  

\begin{theorem} \cite[Theorem~B]{GrMSMT}
The maximal dimension of a compact subvariety of $\cA_{g,\CC}$ is $g-1$ if $g \leq 16$, 
is $\lfloor g^2/16 \rfloor$ if $g \geq 16$ is even, and is $\lfloor (g-1)^2/16 \rfloor$ if $g \geq 17$ is odd. 
\end{theorem}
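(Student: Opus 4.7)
The plan is to combine a construction of compact Shimura subvarieties of $\cA_{g,\CC}$ (giving the lower bound on the maximal dimension) with a Hodge-theoretic obstruction (giving the matching upper bound). The Keel--Sadun bound $\dim Z \leq g(g-1)/2$ from Theorem~\ref{Tkeelsadun} is far from sharp for large $g$, so genuinely new input is needed in both directions.

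For the lower bound, I would realize compact subvarieties of $\cA_{g,\CC}$ as Shimura subvarieties of PEL type. The two flexible families of examples come from (i) quaternion division algebras over totally real fields, and (ii) anisotropic Hermitian forms over CM fields; in each case the associated Shimura variety is compact, and the universal polarized abelian scheme provides an embedding into $\cA_{g,\CC}$ for appropriate $g$. For $g \leq 15$, the bound $g-1$ is realized by a Shimura variety of type $U(g-1,1)$ over an imaginary quadratic field, which has complex dimension $g-1$, embeds into $\cA_{g,\CC}$, and is compact when the defining Hermitian form is globally anisotropic. For $g \geq 16$, the quadratic bound $\lfloor g^2/16 \rfloor$ (respectively $\lfloor (g-1)^2/16 \rfloor$ in the odd case) emerges from a more efficient construction using restriction of scalars from a totally real field of degree roughly $g/4$, where the signature data at the archimedean places is optimized against the compactness constraint.

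For the upper bound, I would work on a smooth toroidal compactification $\tilde{\cA}_g$ and study the restriction of the Hodge bundle $\mathbb{E}$ to a compact subvariety $Z$. Griffiths transversality constrains the induced variation of Hodge structure on $Z$ to be horizontal, and combining the positivity of $\det \mathbb{E}$ with Mumford's relations in the Chow ring bounds $\dim Z$ from above. For small $g$, the sharper bound $g-1$ should follow from rigidity theorems for Hermitian symmetric domains (Mok, Hwang--Mok) applied to the universal cover of $Z$, forcing $Z$ to be totally geodesic and hence of Shimura type, after which the classification of maximal compact Shimura subvarieties completes the argument.

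The main obstacle is matching the two bounds exactly in the quadratic regime $g \geq 16$. Strengthening the Keel--Sadun codimension-$g$ bound to codimension $g(g+1)/2 - \lfloor g^2/16 \rfloor$ requires exploiting the full Hodge-theoretic positivity of $\mathbb{E}$ and its symmetric powers, together with intersection estimates against secondary loci such as the Andreotti--Mayer locus or Noether--Lefschetz loci; the current Chow-theoretic tools are insufficient for this refinement. On the construction side, verifying that the Shimura subvarieties attain the exact integer-valued formulas for each parity class of $g$ requires delicate class-field-theoretic input to guarantee the existence of division algebras with the prescribed local ramification pattern, and a separate analysis is needed to handle the finitely many small genera $g \leq 15$ where the bound $g-1$ rather than the asymptotic formula is sharp.
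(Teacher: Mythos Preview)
The paper does not contain a proof of this theorem: it is stated with attribution to \cite[Theorem~B]{GrMSMT} and no argument is given. There is therefore no proof in the paper to compare your proposal against.

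As for the proposal itself, it is a plan rather than a proof, and you say so explicitly: you identify the two halves (compact Shimura subvarieties for the lower bound, Hodge-theoretic positivity for the upper bound) but then concede that ``the current Chow-theoretic tools are insufficient'' for the sharp upper bound and that the exact matching in the quadratic regime is an obstacle. That is an honest assessment, but it means the proposal does not actually establish the theorem. A few specific points: your lower-bound claim that $U(g-1,1)$ Shimura varieties over an imaginary quadratic field give compact subvarieties of dimension $g-1$ in $\cA_g$ for all $g \leq 15$ needs justification, since anisotropy of Hermitian forms of rank $g$ over an imaginary quadratic field fails once $g$ is large enough (rank $\geq 5$ forces isotropy at the finite places), so the construction you describe does not work as stated. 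The actual constructions in \cite{GrMSMT} use products of lower-dimensional compact subvarieties together with Kuga--Satake and related maps, which is a different mechanism from what you outline. On the upper-bound side, invoking Mok and Hwang--Mok rigidity presupposes that the compact subvariety is already close to being totally geodesic, which is what one is trying to prove; the argument in \cite{GrMSMT} instead proceeds through semipositivity of the Hodge bundle combined with an analysis of the boundary structure, and does not reduce to a classification of Shimura subvarieties.
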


See \cite[Corollary~C]{GrMSMT} for additional results, including the maximal dimension of a compact subvariety of $\cM_{g,\CC}^{ct}$.

\section{Open question on dimensions of compact subvarieties} \label{Sopencompact}

The next result was proven over $\CC$, but the proof applies more generally.

\begin{theorem} \cite{diezharvey}
Suppose $\mathrm{char}(k) \not = 2,3$.
If $g \geq 3$, there exists a complete curve in $\cM_g$.
\end{theorem}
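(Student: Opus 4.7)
The plan is to exhibit a smooth, non-isotrivial, projective family $\pi:\mathcal{X}\to B$ of curves of genus $g$ over a smooth projective base curve $B$. Since $B$ is proper, the induced moduli map $\mu:B\to\cM_g$ has proper image, and non-isotriviality forces that image to be a one-dimensional irreducible subvariety, giving the desired complete curve in $\cM_g$.

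I would construct the family by a Kodaira-style surface construction. Start with a smooth projective curve $C_0$ of genus $h\geq 2$ and a non-trivial étale Galois cover $\sigma:\tilde{C}\to C_0$ of degree $n$ coprime to $\mathrm{char}(k)$; such a cover exists because the (tame) étale fundamental group of $C_0$ is large. On the smooth surface $\Sigma=\tilde{C}\times C_0$, form the divisor $D=\sum_\tau \Gamma_{\sigma\circ\tau}$, where $\Gamma_{\sigma\circ\tau}\subset\Sigma$ is the graph of $\sigma\circ\tau$ and $\tau$ runs through the deck group of $\sigma$. Since $\sigma$ is étale, these graphs are smooth and pairwise disjoint, so $D$ itself is a smooth divisor. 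Take a cyclic cover $\rho:S\to\Sigma$ of degree $\ell$ branched along $D$, choosing $\ell\in\{2,3\}$ coprime to $\mathrm{char}(k)$; this is where the assumption $\mathrm{char}(k)\neq 2,3$ enters, ensuring the existence of a tame low-degree cover. Then $S$ is smooth and projective, and $\pi:=p_2\circ\rho:S\to C_0$ is a proper family of curves whose fibers are the induced $\ell$-cyclic covers of $\tilde{C}$ branched at the $n$ points where $D$ meets a fiber.

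Next, I would tune the numerical invariants. The Riemann–Hurwitz formula, applied fiberwise to the cyclic $\ell$-cover of $\tilde{C}$ branched at $n$ points, expresses the fiber genus $g'$ as an explicit function of $\ell,n,h$. For $g\geq 3$, by varying $h$, $n$, and $\ell$ one can hit the desired $g$ in essentially all residue classes; the few remaining small values of $g$ can be handled by a short inductive step: given one complete curve already produced in some $\cM_{g_0}$, one replaces the fibers by their étale $\ell$-covers to produce a complete curve in $\cM_{\ell(g_0-1)+1}$, so finitely many arithmetic progressions cover every $g\geq 3$.

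The main obstacle is the verification of non-isotriviality of $\pi$, because without it the image $\mu(B)\subset\cM_g$ could collapse to a point. I would argue this by monodromy: as a point of $B=C_0$ moves along a loop encircling one of the finitely many critical values where two branches of $D\to C_0$ collide or where the combinatorics of $D\cap\pi^{-1}(b)$ changes, the monodromy on the $\ell$-adic $H^1$ of the fiber acquires a nontrivial Picard–Lefschetz transvection. This forces the moduli map to be non-constant, and hence $\mu(B)$ to be a complete irreducible curve in $\cM_g$. Arranging the branch configuration so that the monodromy is non-trivial while \emph{also} keeping every fiber smooth (so that $\pi$ really factors through $\cM_g$ and not $\overline{\cM}_g$) and fiber genus exactly $g$ is the delicate technical heart of the argument, and is precisely what dictates the careful tame choice of $\sigma$, $D$, and $\ell$ available under the hypothesis $\mathrm{char}(k)\neq 2,3$.
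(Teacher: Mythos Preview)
Your overall strategy---a Kodaira-type fibration producing a smooth projective family over a complete base---is a legitimate and classical route to this theorem, and differs from the paper's more explicit construction. However, your non-isotriviality argument contains a genuine gap. You propose to detect non-constancy of the moduli map via Picard--Lefschetz transvections at ``critical values where two branches of $D\to C_0$ collide.'' But the entire point of the construction is that $D\to C_0$ is \'etale (each component is the graph of an \'etale map), so there are \emph{no} collisions and \emph{no} singular fibers: every fiber of $\pi:S\to C_0$ is smooth. There are therefore no local vanishing cycles and no Picard--Lefschetz monodromy to invoke. Non-isotriviality of Kodaira fibrations is classically established by other means---e.g., nonvanishing of the signature of $S$ over $\CC$, or by showing the branch-locus map $C_0\to\mathrm{Sym}^n(\tilde{C})$ is non-constant and that distinct generic branch configurations yield non-isomorphic covers---and you would need to supply such an argument. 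Your inductive step (``replace the fibers by their \'etale $\ell$-covers'') is also problematic as stated: a fixed curve has only finitely many \'etale $\ZZ/\ell$-covers, so this does not by itself produce a moving family, and extending a chosen cover relatively over $B$ requires additional justification.

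By contrast, the paper's proof is entirely concrete: it fixes the specific curves $E:y_1^2=x_1^3-1$ and $X:y_2^2=x_2^6-1$ with the double cover $X\to E$, and obtains a complete one-parameter family of branch configurations on $X$ by translating a fixed tuple of points on $E$ and pulling back. The branch points never collide because the differences $Q_i-Q_j$ are chosen to avoid $2$-torsion. Non-isotriviality is then dispatched in one line: since each fiber $Z_t$ has finite automorphism group, only finitely many $t'$ can give $Z_{t'}\cong Z_t$, so the image in $\cM_g$ is one-dimensional. This bypasses monodromy and Hodge theory altogether; the trade-off is that the paper's construction is tailored to hit each genus directly (via $r=2(g-3)$ branch points on a genus-$2$ curve), whereas a Kodaira-style argument, once the non-isotriviality is correctly handled, would give a more structural family.
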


\begin{proof}
Construction: Take the elliptic curve $E: y_1^2=x_1^3-1$ and 
the genus two curve $X: y_2^2=x_2^6-1$.  Consider the double cover $\pi: X \to E$, given by
$\pi(x_2,y_2)=(x_2^2, y_2)$.
Then $\pi$ is branched above $(0,i)$ and $(0,-i)$.
Let $r$ be even.  
Choose points $Q_1, Q_2, \ldots, Q_r \in E$ such that $Q_i - Q_j$ is not a 2-torsion point for $1 \leq i < j \leq r$.
Let $W = \{(P+_E Q_1, P +_E Q_2, \ldots, P+_EQ_r) \mid P \in E\}$.  Note that $W \subset E^r-\Delta$ and $W \cong E$.
Let $T \subset X^r - \Delta$ be the set of points $\vec{x} = (x_1, \ldots, x_r)$ such that $\pi(x_i) = \pi(x_1) +_E Q_i$.
Then $T$ is complete and $\mathrm{dim}(T) = 1$.  

Take $r=2(g-3)$.
For each $t \in T$, consider the cover $Z_t \to X$ branched at the $r$ coordinates of $t$.
By the Riemann--Hurwitz formula, $Z_t$ has genus $g$.  
Since $\#\mathrm{Aut}(Z_t)$ is finite, 
$Z_t$ is isomorphic to at most finitely many $Z_{t'}$ for $t' \in T$.
Thus the image of $T$ in $\cM_g$ is a complete curve.
\end{proof}

\begin{question} \label{Qcomplete}
If $g \geq 4$, what is the maximum dimension of a compact subspace of $\cM_g$?
\end{question}

It is possible that the answer to Question~\ref{Qcomplete} depends on the characteristic.
The first open case of Question~\ref{Qcomplete} is $g=4$, because it is not known 
if there exists a complete surface in $\cM_4$. 

\begin{remark}
If $\mathrm{char}(k) \not = 2$, Choi proved that $\cM_g$ contains a complete
subvariety of dimension $d$, for $g \geq 3 \cdot 2^{d-1}$, \cite{Choi}, generalizing work of \cite{Zaal}.
\end{remark}

\chapter[Intersection]{Intersection of the Torelli locus with $p$-torsion strata} \label{C2Bboundary}

\section{Overview: $p$-torsion stratifications of moduli spaces}

Recall that the \emph{$p$-torsion invariants} refer to the invariants of the 
$p$-rank, $a$-number, Newton polygon, and Ekedahl Oort type of a p.p.\ abelian variety defined over an 
algebraically closed field $k$ of positive characteristic $p$.
In this chapter, we take a geometric approach to the question of which $p$-torsion invariants occur for Jacobians of curves.

Let ${\mathcal A}_g$ denote the 
moduli space of principally polarized abelian varieties of dimension $g$ over $k$.
There are deep results about the stratifications of ${\mathcal A}_g$ by the $p$-torsion invariants.
However, there are not many results about how the open Torelli locus intersects these strata.
Many of the techniques used to study the stratifications on $\cA_g$ are not available on the Torelli locus. 
This includes techniques about deformation (Serre-Tate theory and Dieudonn\'e theory) and Hecke operators.

This leads to a geometric analogue of Question \ref{Qmot2}.
\begin{question} \label{Qmot3}
Given a prime $p$ and an integer $g \geq 4$, does the open Torelli locus $\cT^\circ_g$ intersect the stratum of 
$\cA_g$ parametrizing p.p.\ abelian varieties 
of a given $p$-rank, $a$-number, Newton polygon, or Ekedahl-Oort type? If so, what are the geometric properties of the intersection?
\end{question}

Section~\ref{S2BNPbackground} provides information about the stratifications of ${\mathcal A}_g$
by the $p$-torsion invariants.  Some facts are that the Newton polygon can only go up (become less ordinary) under specialization, 
and there is a purity result about the sublocus where the Newton polygon goes up.
The dimensions of the $p$-torsion strata in $\cA_g$ are known.
In Section \ref{Sunlikely}, we describe how finding curves with an unusual Newton polygon can be viewed as an unlikely
intersection problem. 

Section~\ref{S2BNPmain} contains 
several theorems about the geometry of the $p$-torsion stratifications of the Torelli locus.  This leads to
a geometric proof of
the existence of supersingular curves of genus $4$ for every prime $p$ \cite{Pssg=4};
the dimensions of the $p$-rank strata of $\cM_g$ \cite{FVdG};
and an inductive result, similar in spirit to earlier results in the literature,
but which provides more flexibility with the Newton polygon and Ekedahl-Oort type \cite{priesCurrent}.
The proofs of these results rely on information about the boundary $\partial \cM_g$.

Section~\ref{Srelatedstrata} contains some corollaries that emerge from Section~\ref{S2BNPmain}, 
along with some material about a mass formula for the number of non-ordinary curves in a one-dimensional family.
In Section~\ref{Squestionstrata}, we include some open questions about the geometry of the $p$-torsion strata in $\cM_g$, 
about the number of irreducible components of the $p$-rank strata and the generic Newton polygon 
on the $p$-rank $0$ stratum.

\section{Theorems about $p$-torsion stratifications of $\cA_g$} \label{S2BNPbackground}

\subsection{Specialization and purity} \label{Spurity}

This section includes two major facts about the behavior of Newton polygons in families.

The first is that the Newton polygon can only go up (become less ordinary) under specialization.
Specifically, building on Grothendieck's specialization theorem, Katz proved the following:

\begin{theorem} \label{katzslope} \cite[Corollary~2.3.2]{katzslope}
If $A$ is an $\FF_p$-algebra,
the set of points in $\mathrm{Spec}(A)$ at which the Newton polygon (of an $F$-crystal) goes up is Zariski-closed, and is locally on 
$\mathrm{Spec}(A)$ the zero-set of a finitely generated ideal.
\end{theorem}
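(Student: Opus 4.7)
The plan is to reduce to a local-algebraic question about divisibility of iterated Frobenius on exterior powers, and then to translate this into finitely many explicit equations in $A$.

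First, I would reduce to the Noetherian case and work Zariski-locally on $\mathrm{Spec}(A)$. Choosing a trivialization of the $F$-crystal, the data becomes a finite free $W(A)$-module $M$ of rank $n$ together with a $\sigma$-linear injective endomorphism $F$, represented by a matrix with entries in $W(A)$. The Newton polygon at a point $x \in \mathrm{Spec}(A)$ is computed on the specialization $M_x := M \otimes_{W(A)} W(k(x)^{\mathrm{perf}})$, and depends only on $x$ (not on the choice of perfect closure).

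Next, I would use the following characterization via exterior powers. For a $\sigma$-linear injective endomorphism $F$ of a free $W(k)$-module with $k$ perfect, the sum $\lambda_1 + \cdots + \lambda_r$ of the first $r$ Newton slopes equals the minimum Newton slope of $\wedge^r F$ on $\wedge^r M$. For any single $\sigma$-linear map $G$, its minimum Newton slope is $\geq s$ precisely when $p^{\lceil Ns \rceil}$ divides $G^N$ for all $N \gg 0$. Combining these, the condition $\mathrm{NP}_x(r) \geq s$ translates to a divisibility condition on the matrix of $\wedge^r F^N$ in $W(A)$, pulled back to $x$.

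I would then exhibit the ideal. The locus where the Newton polygon goes up is the finite union, over polygons $\xi$ strictly above the generic polygon $\xi_0$ (of the same length and endpoints), of the loci where $\mathrm{NP} \geq \xi$. For each such $\xi$ with break points $(r_i, s_i)$, the relevant condition involves the entries of the matrix of $\wedge^{r_i} F^N$ read in $W(A)/p^{M_i}$ for suitable $N$ and $M_i$. Expanding each such entry in Witt coordinates yields finitely many polynomial expressions in elements of $A$. Their common vanishing locus is the desired closed subset, and its defining ideal is finitely generated.

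The main obstacle is that the Newton polygon is an asymptotic invariant: it is characterized by divisibility of $F^N$ as $N \to \infty$, not for any single $N$. Bounding $N$ uniformly is essential for producing a finitely generated ideal. The necessary bound comes from Dieudonn\'e--Manin: on a free module of rank $n$, Newton slopes have denominators at most $n$, so there is an $N = N(n, \xi)$ after which the relevant divisibility conditions stabilize, and one does not need to test $F^M$ for larger $M$. A secondary technical point, handling non-reduced $A$ and ensuring that specialization commutes with the formation of exterior powers and minors, is routine but must be checked carefully.
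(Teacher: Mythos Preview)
The paper does not give its own proof of this statement; it is quoted directly from Katz's paper on slope filtrations of $F$-crystals, with only the citation \cite[Corollary~2.3.2]{katzslope}. So there is nothing in the present paper to compare against.

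That said, your sketch is essentially Katz's own strategy: pass to exterior powers so that the condition $\mathrm{NP}_x(r)\geq s$ becomes a condition on the minimum slope of a single $\sigma$-linear map, translate that into $p$-divisibility of iterates $F^N$, and read off the resulting conditions as Witt-coordinate equations in $A$. You have also correctly identified the one genuine obstacle, namely that the slope condition is a priori asymptotic in $N$.

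The only place your sketch is imprecise is the resolution of that obstacle. The bound on denominators of Newton slopes (at most the rank $n$) does not by itself imply that the divisibility conditions on $F^N$ stabilize at a finite stage; one can imagine a sequence of matrices where the minimum $p$-adic valuation of entries of $F^N$ oscillates before settling down. What actually closes the argument is Katz's sharp slope estimate (his Theorem~1.6.1 and its consequences leading to 2.3.1), which says that a \emph{single} divisibility condition $p^a \mid F^b$, for specific $a,b$ depending only on the rank and the target polygon, already forces the minimum Newton slope to be $\geq a/b$. This converts the asymptotic characterization into a finite one in one stroke. Your instinct to invoke Dieudonn\'e--Manin is in the right neighborhood, but the precise input needed is this sharp estimate, and that step carries the real content of the proof.
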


Theorem~\ref{katzslope} provides a way to study Newton polygons in families.  This was used by Koblitz in \cite{koblitz75}.

The second major fact is the purity theorem for Newton polygons proved by de Jong and Oort.
Here is the statement.

\begin{theorem} \label{Tpurity} Purity theorem \cite[Theorem~4.1]{oortpurity}
Let $(A, m_A)$ be a Noetherian local ring of characteristic $p$. 
Let $G$ be a $p$-divisible group (or more generally, an $F$-crystal) over $\mathrm{Spec}(A)$. 
Assume that the Newton polygon of $G$ is constant over $\mathrm{Spec}(A) \backslash \{m_A \}$. 
Then either $\mathrm{dim}(A) < 1$ or the Newton polygon of $G$ is constant over $\mathrm{Spec}(A)$.
\end{theorem}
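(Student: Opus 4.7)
The plan is to argue by contradiction and to reduce to the case where $A$ is a complete Noetherian local integral domain of dimension at least two. By Theorem~\ref{katzslope}, the locus where the Newton polygon strictly dominates the generic polygon $\xi$ is closed in $\mathrm{Spec}(A)$; under the hypothesis of constancy on $U := \mathrm{Spec}(A) \setminus \{m_A\}$, any such strict jump would have to occur exactly at $m_A$, which is a closed point of codimension at least two. The reductions to the complete integral case are standard, since the formation of the Newton polygon commutes with faithfully flat base change and with $m_A$-adic completion, and normalization does not change the Newton polygon stratification.

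Over $U$, the isocrystal underlying $G$ carries a canonical slope filtration with isoclinic graded pieces, since the Newton polygon is constant there. Clearing $p$-denominators, one obtains a filtration of the $F$-crystal $M$ associated to $G$ by $F$-stable coherent subsheaves $M^{\geq \lambda}$ defined over $U$, one for each slope $\lambda$ appearing in $\xi$. The strategy is to extend this slope filtration across the closed point. The key tool is a Hartogs-type principle: on a normal Noetherian local ring of depth at least two, a reflexive coherent subsheaf of a locally free sheaf defined on the punctured spectrum extends uniquely to the whole spectrum, as the direct image along $j \colon U \hookrightarrow \mathrm{Spec}(A)$. Since the ambient $F$-crystal $M$ is locally free and each $M^{\geq \lambda}$ is reflexive over $U$, each extends to a submodule of $M$, and Frobenius-compatibility extends by the same principle applied to the sheaves of homomorphisms. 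The extended filtration on all of $\mathrm{Spec}(A)$ has isoclinic graded pieces whose slopes total $\xi$, which forces the Newton polygon at $m_A$ to equal $\xi$, contradicting the assumed strict jump.

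The hard part will be providing the depth condition for the Hartogs extension, since the abstract Noetherian local ring $A$ in the theorem need not be Cohen--Macaulay or even normal, and ensuring that the extended submodules retain the correct integral $F$-structure rather than merely the rational one. One handles the depth issue by first normalizing and, if necessary, base changing along a regular cover before descending, using that the Newton polygon is invariant under faithfully flat base change. For the integrality, it is convenient to encode $M$ via Zink's theory of displays (or, for a $p$-divisible group $G$, via crystalline Dieudonn\'e theory and Grothendieck--Messing), so that $M$ is described by matrices with entries in $A$ and the slope filtration becomes a block-upper-triangular decomposition whose defining entries vanish on $U$ and therefore, by the Hartogs argument applied coordinate-by-coordinate, vanish on all of $\mathrm{Spec}(A)$. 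The final step is to package the extended filtration back into a statement about the Newton polygon of $G$ at $m_A$, which is routine once the filtration is in place.
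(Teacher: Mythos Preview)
The paper does not give its own proof of this statement: Theorem~\ref{Tpurity} is quoted verbatim from de Jong--Oort \cite[Theorem~4.1]{oortpurity} and then immediately applied via Corollaries~\ref{Cpurity2} and~\ref{Cpurity3}. So there is no in-paper argument to compare against; any assessment has to be against the original de Jong--Oort proof.

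Your high-level strategy---reduce to a complete normal local domain, use constancy on the punctured spectrum to obtain a slope filtration there, and then push it across the closed point by a Hartogs-type extension---is indeed the shape of the de Jong--Oort argument. But the sketch, as written, papers over exactly the places where the real work lies.

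First, the object you need to extend is not a coherent subsheaf of a vector bundle on $\mathrm{Spec}(A)$; it is a sub-$F$-crystal of $M$, i.e., a subobject in the crystalline site (or, concretely, a $W(A)$-lattice inside $M\otimes W(A)[1/p]$ stable under $F$). The naive Hartogs lemma for reflexive sheaves on $U\subset\mathrm{Spec}(A)$ does not directly apply to modules over $W(A)$, and the assertion that ``each $M^{\geq\lambda}$ is reflexive over $U$'' conflates the characteristic-$p$ base with the $p$-adic coefficients. De Jong--Oort handle this by working one slope at a time and extending an \emph{isogeny} from an isoclinic $F$-crystal rather than a submodule; the extension step is their Proposition~3.1, and it is not a formal consequence of sheaf-theoretic Hartogs.

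Second, your proposed fix for the depth hypothesis---normalize and pass to a regular cover---does not by itself produce depth $\geq 2$ at the closed point unless you already know something like $S_2$; an arbitrary Noetherian local domain of dimension $\geq 2$ can have depth $1$ even after normalization (normal does imply $S_2$, so normalization is fine, but you should say this rather than ``if necessary, base changing along a regular cover''). More seriously, the descent back to $A$ after such a base change is not addressed.

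Third, the paragraph invoking Zink's displays and a ``block-upper-triangular decomposition whose defining entries vanish on $U$'' is not correct as stated: the slope filtration does not come for free as vanishing of matrix entries in some chosen display coordinates, and arranging such coordinates globally over $U$ is essentially equivalent to the extension problem you are trying to solve.

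In short: the outline is pointed in the right direction, but the proposal does not yet contain the key technical step (extension of the isoclinic isogeny across a codimension-$\geq 2$ closed point in the crystalline setting), and the appeals to reflexive-sheaf Hartogs and to displays are not adequate substitutes for it.
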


In practice, the purity theorem is used as follows.  
Suppose $X \to W$ is an abelian scheme of relative dimension $g$ defined over a reduced and irreducible scheme $W$.
Given a point $w \in W$, let $X_w$ denote the fiber of $X \to W$ over $w$.

\begin{corollary} \label{Cpurity2}
Suppose that $\xi$ is the Newton polygon of $X_\eta$ over the generic geometric point $\eta$ of $W$.
Consider the sublocus $L$ of points $w \in W$ for which the Newton polygon of $X_w$ is not $\xi$. 
If $L$ is not empty, then it has codimension $1$ in $W$.
\end{corollary}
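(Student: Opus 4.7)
The plan is to combine Katz's specialization theorem (Theorem~\ref{katzslope}) and the de Jong--Oort purity theorem (Theorem~\ref{Tpurity}). First, applying Theorem~\ref{katzslope} to the $F$-crystal coming from the $p$-divisible group $X[p^\infty] \to W$, the locus where the Newton polygon strictly exceeds $\xi$ is Zariski closed in $W$. Since $\xi$ is the generic Newton polygon, this locus is exactly $L$, so $L$ is a proper closed subset of $W$; in particular every irreducible component of $L$ has codimension at least one in $W$.

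The remaining task is to show codimension exactly one, which I would do one component at a time. Fix an irreducible component $Z \subseteq L$ with generic point $\eta_Z$, and set $A := \mathcal{O}_{W,\eta_Z}$, so that $\dim A = \mathrm{codim}(Z, W) =: c \geq 1$. The key scheme-theoretic observation is that $\eta_Z$ is the unique point of $\mathrm{Spec}\,A$ that lies in $L$: the points of $\mathrm{Spec}\,A$ are precisely the generalizations of $\eta_Z$ in $W$, and if such a generalization $w$ lay in $L$ then it would lie in some irreducible component $Z'$ of $L$. But then $\eta_Z \in \overline{\{w\}} \subseteq Z'$ forces $Z \subseteq Z'$, so $Z = Z'$ (both being irreducible components of $L$) and hence $w = \eta_Z$. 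Consequently, on the punctured spectrum $\mathrm{Spec}\,A \setminus \{\eta_Z\}$ the Newton polygon of the pulled-back $p$-divisible group is constant, equal to $\xi$.

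Finally I would apply Theorem~\ref{Tpurity} to $A$ together with the pullback of $X[p^\infty]$. Since the Newton polygon at the closed point $\eta_Z$ differs from $\xi$ (because $\eta_Z \in L$), it is not constant on all of $\mathrm{Spec}\,A$, so Theorem~\ref{Tpurity} forces $\dim A \leq 1$; combined with $c \geq 1$ this yields $c = 1$. Since every irreducible component of $L$ has codimension one in $W$, $L$ itself has codimension one, as claimed. There is essentially no serious obstacle here; the corollary is a direct packaging of purity with the specialization theorem, and the only point requiring any care is the elementary observation that the local ring at the generic point of an irreducible component of $L$ sees no other points of $L$ on its punctured spectrum, which is exactly what enables purity to rule out codimension at least two.
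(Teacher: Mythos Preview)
Your argument is correct and is exactly the standard derivation of this corollary from Theorem~\ref{Tpurity} together with Theorem~\ref{katzslope}; the paper does not give an explicit proof but simply presents the corollary as the way purity is used in practice, so your write-up fills in precisely what the paper leaves implicit. The only minor point worth noting is that applying Theorem~\ref{Tpurity} requires $A=\mathcal{O}_{W,\eta_Z}$ to be Noetherian, which is implicit in the ambient moduli-theoretic setting but not stated in your proof.
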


More generally, suppose $\xi, \xi'$ are symmetric Newton polygons of height $2g$ with $\xi' < \xi$ in the partial ordering. 
Let $d(\xi', \xi)$ be the distance between $\xi'$ and $\xi$ in the 
partially ordered graph of symmetric Newton polygons of height $2g$. 
Then: 

\begin{corollary} \label{Cpurity3}
With hypotheses as above: if $\xi'$ is a symmetric Newton polygon with $\xi' < \xi$, then
the sublocus of points $w \in W$ for which the Newton polygon of $X_w$ is $\xi'$ 
is either empty or has codimension at most $d(\xi', \xi)$ in $W$.
\end{corollary}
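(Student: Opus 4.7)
The plan is to proceed by induction on $d:=d(\xi',\xi)$, using Corollary~\ref{Cpurity2} at each step to descend one layer in the Newton polygon poset. The case $d=0$ is trivial (then $\xi'=\xi$ and the stratum is all of $W$), so I assume $d\geq 1$, that the result holds at all smaller distances, and that $W[\xi']$ is nonempty. Let $Z$ be an irreducible component of $W[\xi']$; it suffices to show $\mathrm{codim}(Z,W)\leq d$.

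Next, I would isolate the appropriate closed subscheme in which to apply the inductive hypothesis. Let $L\subset W$ be the locus of points where the Newton polygon differs from $\xi$; by Theorem~\ref{katzslope}, $L$ is closed, and it is nonempty because $Z\subset L$. Corollary~\ref{Cpurity2} then tells us that every irreducible component of $L$ has codimension one in $W$. Let $L_{0}$ be an irreducible component of $L$ containing $Z$, equipped with its reduced induced structure; the restriction $X|_{L_{0}}\to L_{0}$ is still an abelian scheme of relative dimension $g$. Write $\eta$ for the generic Newton polygon on $L_{0}$. Since $L_{0}\subset L$, we have $\eta<\xi$, and since specialization only makes Newton polygons less ordinary (i.e.\ smaller in this partial order), the value $\xi'$ attained at the generic point of $Z$ forces $\xi'\leq\eta$. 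Thus $\xi'\leq\eta<\xi$, and in particular $d(\xi',\eta)\leq d-1$. Applying the induction hypothesis to the family $X|_{L_{0}}\to L_{0}$ together with the Newton polygon $\xi'$, the stratum $L_{0}[\xi']:=L_{0}\cap W[\xi']$ has codimension at most $d-1$ in $L_{0}$. Because $Z$ is a maximal irreducible subset of $W[\xi']$, it is \emph{a fortiori} a maximal irreducible subset of the smaller set $L_{0}[\xi']$, hence an irreducible component thereof. Combining codimensions along the chain $Z\subset L_{0}\subset W$ yields
\[
\mathrm{codim}(Z,W)\;=\;\mathrm{codim}(Z,L_{0})+\mathrm{codim}(L_{0},W)\;\leq\;(d-1)+1\;=\;d,
\]
which closes the induction.

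The genuinely geometric input is supplied entirely by the purity statement of Corollary~\ref{Cpurity2}; everything else is an orchestration of the partial order on symmetric Newton polygons. The step I expect to need the most care, and which is the main obstacle, is managing the competing directional conventions: \emph{less ordinary} is simultaneously \emph{smaller} in the order on Newton polygons and the direction in which specialization moves. This must be tracked consistently to guarantee that the intermediate polygon $\eta$ really lies in the interval $[\xi',\xi)$, so that $d(\xi',\eta)<d$ and the induction engages. One also needs to verify that Corollary~\ref{Cpurity2} applies to the restricted family $X|_{L_{0}}\to L_{0}$, which is automatic since $L_{0}$ with its reduced structure is reduced and irreducible and the restricted family is still an abelian scheme.
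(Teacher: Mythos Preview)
Your proof is correct and is exactly the argument the paper has in mind: the paper states Corollary~\ref{Cpurity3} without proof, leaving the reader to iterate Corollary~\ref{Cpurity2} down a chain of Newton polygons, and your induction on $d(\xi',\xi)$ does precisely this. The only step you might flag more explicitly is why $\xi'\leq\eta<\xi$ forces $d(\xi',\eta)\leq d-1$; this uses that the poset of symmetric Newton polygons of fixed height is graded (all maximal chains between two comparable elements have the same length), so that $d(\xi',\eta)+d(\eta,\xi)=d(\xi',\xi)$ and $d(\eta,\xi)\geq 1$.
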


The codimension might not be exactly $d(\xi', \xi)$ in Corollary~\ref{Cpurity3}
because some of the 
Newton polygons $\xi''$ between $\xi$ and $\xi'$ might not occur on $W$.

Currently, there is no analogue of Theorem~\ref{Tpurity} for the E--O stratification.

\begin{question} \label{QpurityEO}
Is there a purity result for the E--O stratification on $\cA_g$?
\end{question}

\subsection{Notation for the strata}

Let $g \geq 1$.  Let $\xi$ denote a $p$-torsion invariant 
(such as the $p$-rank, $a$-number, Newton polygon, or Ekedahl--Oort type) occurring for p.p.\ abelian varieties of dimension $g$.  

\begin{definition} \label{Ddefstrata}
Let $X \to W$ be an abelian scheme of relative dimension $g$ over
a (reduced, irreducible, smooth) Deligne--Mumford stack $W$. 
Define $W[\xi]$ to be the locally closed reduced substack of $W$ such that 
$w \in W[\xi](k)$ if and only if $\xi$ is the $p$-torsion invariant of $X_w$.
\end{definition}

The $p$-rank $f$ stratum is often denoted with a superscript:
e.g., $\cA_g^f$ (resp.\ $\cM_g^f$) denotes the
locally closed reduced substack of $\cA_g$ (resp.\ $\cM_g$)
whose points represent p.p.\ abelian varieties of dimension $g$ (resp.\ curves of genus $g$) with $p$-rank $f$.  

\subsection{Dimensions of the strata} \label{Sdimstrata} 

Here is some information about the dimensions of the $p$-torsion strata in $\cA_g$.
Recall that $\mathrm{dim}(\cA_g)=g(g + 1)/2$.  We include a partial list of valuable references.

\smallskip

{\bf (A) The $p$-rank strata:}  See Oort \cite{O:sub} and Norman--Oort \cite{normanoort}.

For $0 \leq f \leq g$, let $\cA_g^f$ denote the $p$-rank $f$ stratum
whose points represent p.p.\ abelian varieties of dimension $g$ and $p$-rank $f$.

\begin{theorem} 
\cite{normanoort} For $0 \leq f \leq g$, the $p$-rank stratum $\cA_g^f$ is non-empty and pure of codimension $g-f$ in $\cA_g$.
\end{theorem}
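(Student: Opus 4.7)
The plan is to establish non-emptiness by an explicit product construction and then bound the codimension from above and below by combining Serre--Tate deformation theory with Katz's specialization theorem.

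For non-emptiness, for each $0 \leq f \leq g$ I would take $X_f := E^{f} \times E_0^{g-f}$, where $E/k$ is an ordinary elliptic curve and $E_0/k$ is a supersingular elliptic curve; both exist in every characteristic by Section~\ref{SssE}. With the product principal polarization, $X_f$ represents a point of $\cA_g$, and by the additivity of the $p$-rank in equation~\eqref{eqfadditive} we obtain $f(X_f) = f \cdot 1 + (g-f) \cdot 0 = f$, so $[X_f] \in \cA_g^f$.

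To bound the codimension from above by $g-f$, I would exhibit, locally at $[X_f]$, a smooth formal subscheme of the universal deformation space of dimension $g(g+1)/2 - (g-f)$ all of whose points parametrize p.p.\ abelian varieties of $p$-rank $\geq f$. The Serre--Tate theorem on canonical coordinates for the ordinary factor $E^f$ contributes a formal torus of dimension $f(f+1)/2$ along which the $p$-rank stays $\geq f$; deformations of the supersingular factor $E_0^{g-f}$, together with the deformations of the polarization that mix the two factors, fill out the remaining directions without killing the étale part of the $p$-divisible group. Combined with the semicontinuity of $p$-rank (Theorem~\ref{katzslope}), an open neighborhood of $[X_f]$ in this formal subscheme lies in $\cA_g^f$, giving $\mathrm{codim}(\cA_g^f,\cA_g) \leq g-f$.

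To bound the codimension from below by $g-f$, I would argue inductively on the chain
\[
\cA_g = \cA_g^{\leq g} \supsetneq \cA_g^{\leq g-1} \supsetneq \cdots \supsetneq \cA_g^{\leq 0},
\]
where each $\cA_g^{\leq j}$ is closed in $\cA_g$ by Theorem~\ref{katzslope}. The key step is to show that each successive inclusion is strict of codimension at most one, so that $\cA_g^{\leq f}$ has codimension at least $g-f$. At a generic point of an irreducible component of $\cA_g^{\leq j}$, a local deformation-theoretic calculation with the Hasse--Witt matrix on the universal abelian scheme shows that the $p$-rank can be raised by exactly one along a suitable formal parameter in the direction of the ordinary stratum; equivalently, the rank of the Hasse--Witt matrix over the formal deformation ring drops by at most one in passing to a divisor, so raising the $p$-rank from $f$ to $g$ requires at least $g-f$ independent directions. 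Purity of codimension then follows because both bounds must hold on every irreducible component of $\cA_g^f$.

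The hardest part will be the deformation-theoretic step bounding the codimension from above: one must choose deformations that mix the ordinary and supersingular factors while respecting the principal polarization, and verify via a Dieudonné module computation that the dimension count is tight. Serre--Tate coordinates behave well for purely ordinary abelian varieties, but for a product $E^f \times E_0^{g-f}$ with $f < g$ the interaction between the two pieces requires careful bookkeeping of the symmetric form defining the polarization on the tangent space of the deformation functor.
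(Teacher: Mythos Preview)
The paper does not give its own proof of this theorem; it is stated with a citation to Norman--Oort and no argument. So there is no proof in the paper to compare against, and I can only assess your proposal on its own terms.

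Your two codimension bounds are essentially reversed, and the argument you give for the upper bound does not work. You propose to show $\mathrm{codim}(\cA_g^f,\cA_g)\leq g-f$ by exhibiting, near $[X_f]$, a formal subscheme of dimension $g(g+1)/2-(g-f)$ on which the $p$-rank is $\geq f$, and then invoking semicontinuity to conclude that an open neighborhood lies in $\cA_g^f$. But the $p$-rank is \emph{lower} semicontinuous (it can only drop under specialization, Theorem~\ref{katzslope}), so the locus where the $p$-rank is $\geq f$ is open in the whole deformation space and has full dimension $g(g+1)/2$; constructing a subscheme inside it says nothing about $\cA_g^f$. Concretely, your deformations of the supersingular factor $E_0^{g-f}$ will generically make that factor ordinary, so the total $p$-rank jumps to $g$, not $f$. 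What you need for this direction is a subscheme on which the $p$-rank stays $\leq f$; equivalently, use purity (Theorem~\ref{Tpurity}/Corollary~\ref{Cpurity2}) applied $g-f$ times along the chain from ordinary to $p$-rank $f$, exactly as in the proof of Theorem~\ref{TFVdG}.

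For the lower bound you write that each inclusion $\cA_g^{\leq j-1}\subset\cA_g^{\leq j}$ has ``codimension at most one'' and conclude $\mathrm{codim}(\cA_g^{\leq f},\cA_g)\geq g-f$; this is the wrong direction (codimension at most one per step gives at most $g-f$ total). The substantive idea you sketch afterwards---that at every point of $\cA_g^{\leq j-1}$ one can deform so that the $p$-rank rises---is the correct Norman--Oort ingredient, but what it yields is that each inclusion is \emph{proper} (codimension at least one at every generic point), which, combined with the upper bound from purity, gives equality. So the skeleton is salvageable once you swap the roles of the two arguments and fix the semicontinuity direction; the genuinely nontrivial step (and the content of \cite{normanoort}) is the lifting statement that the $p$-rank can always be increased by one via a deformation, which requires a real Dieudonn\'e-theoretic calculation rather than the Hasse--Witt heuristic you describe.
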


{\bf (B) Newton polygon strata:}
See Koblitz \cite{koblitz75}, Katz \cite{katzslope}, de Jong--Oort \cite{oortpurity}, Oort \cite{OortNPstrata}, and Chai--Oort  \cite{CO11}.

Let $\xi$ be a symmetric Newton polygon of height $2g$.
Consider the stratum $\cA_g[\xi]$ of $\cA_{g}$ 
whose points represent p.p.\ abelian varieties with Newton polygon $\xi$.
As in \cite[3.3]{OortNPformalgroups} or \cite[1.9]{OortNPstrata}, define 
$\mathrm{sdim}(\xi):=\# \Delta(\xi)$,
where
\[\Delta(\xi) = \{(x,y) \in \ZZ \times \ZZ \mid y < x \leq g, \ (x,y) \text{ on or above } \xi\}.\]

\begin{example} \label{Ecidim}
When $g=4$ and $\xi$ has slopes $1/4, 3/4$, then $\mathrm{sdim}(\xi)= 6$.
When $g=5$ and $\xi$ has slopes $2/5, 3/5$, then $\mathrm{sdim}(\xi) = 7$. 
\end{example}

\begin{theorem} \label{Tnpdim}
\cite[Theorem 4.1]{OortNPstrata} The dimension of the Newton polygon stratum $\cA_g[\xi]$ is 
$\mathrm{dim}(\cA_g[\xi]) = \mathrm{sdim}(\xi)$.
In particular, the supersingular locus has dimension $\mathrm{dim}(\cA_g[\sigma_g]) = \lfloor g^2/4 \rfloor$.
\end{theorem}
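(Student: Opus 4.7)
The plan is to establish matching upper and lower bounds on $\dim \cA_g[\xi]$. By Serre--Tate theory, the local deformation space of a p.p.\ abelian variety $X_0/k$ is canonically isomorphic to the deformation space of its principally quasipolarized $p$-divisible group $X_0[p^\infty]$, so the problem reduces to a local question at well-chosen base points, which one then globalizes using irreducibility of $\cA_g$ together with the fact that every Newton polygon stratum contains a superspecial point in its closure. The local deformation ring at a superspecial point is $W(k)[[t_{ij}]]_{1\leq i\leq j\leq g}$, of relative dimension $g(g+1)/2 = \dim \cA_g$, so the target formula $\mathrm{sdim}(\xi)$ must be read off as the dimension of the Newton polygon locus inside this explicit formal scheme.

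For the upper bound, I would examine the universal Dieudonn\'e display over this local deformation ring. The condition that the Newton polygon be at most $\xi$ translates into the vanishing of explicit polynomial expressions in the $t_{ij}$ coming from required divisibilities of certain matrix entries under iterates of $F$ and $V$; these are the so-called ``Hasse--Witt'' loci, one for each lattice point strictly below $\xi$ and above the line $y=0$. Combining the Grothendieck--Katz specialization theorem (Theorem~\ref{katzslope}) with the de Jong--Oort purity theorem (Theorem~\ref{Tpurity}) ensures that the resulting locally closed stratum has codimension at least $\binom{g+1}{2} - \mathrm{sdim}(\xi)$, giving $\dim \cA_g[\xi] \leq \mathrm{sdim}(\xi)$.

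For the lower bound, one must produce deformations whose Newton polygon is \emph{exactly} $\xi$, and this is the main obstacle: it is easy to construct deformations of Newton polygon $\leq \xi$, but such families can specialize to strictly smaller (less ordinary) polygons by Theorem~\ref{katzslope}, so the construction must be tight. Following Oort's catalogue of Newton polygon strata, one starts from the minimal $p$-divisible group $H(\xi) = \bigoplus_\lambda G_{c,d}^{m_\lambda}$ realizing $\xi$ and produces a smooth formal subscheme of its local deformation space, of dimension $\mathrm{sdim}(\xi)$, on which the Newton polygon stays equal to $\xi$; the deformation parameters are canonically indexed by the lattice points of $\Delta(\xi)$, and a direct slope computation on the resulting display verifies the constancy of the Newton polygon. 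Algebraizing this formal family yields a subvariety of $\cA_g$ of dimension $\mathrm{sdim}(\xi)$ contained in $\cA_g[\xi]$.

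For the supersingular case $\xi = \sigma_g$, I would additionally invoke the Li--Oort structure theorem, which (up to a purely inseparable finite morphism) realizes $\cA_g[\sigma_g]$ as a union of moduli spaces of polarized flags of $\alpha_p$-isogenies emanating from superspecial abelian varieties; dimension counts on these flag moduli give $\lfloor g^2/4 \rfloor$. A direct enumeration shows $\Delta(\sigma_g) = \{(x,y) \in \ZZ^2 : x/2 \leq y < x,\ 1 \leq x \leq g\}$, and summing over $x$ recovers $\mathrm{sdim}(\sigma_g) = \lfloor g^2/4 \rfloor$, matching the general formula.
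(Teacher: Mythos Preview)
The paper does not provide its own proof of this theorem; it is stated with a citation to Oort's paper \cite{OortNPstrata} and no argument is given. So there is nothing in this manuscript to compare your proposal against directly.

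That said, your sketch contains the right ingredients from Oort's original argument (Serre--Tate reduction, the minimal $p$-divisible group $H(\xi)$, the explicit formal family indexed by $\Delta(\xi)$, and Li--Oort for the supersingular case), but you have the roles of the two bounds reversed. Purity (Theorem~\ref{Tpurity}) says that where the Newton polygon jumps, the jump locus has pure codimension one; iterating along a chain from the ordinary polygon to $\xi$ therefore gives $\mathrm{codim}(\cA_g[\xi],\cA_g) \leq$ (chain length), which is a \emph{lower} bound on $\dim \cA_g[\xi]$, not an upper bound. The upper bound $\dim \cA_g[\xi] \leq \mathrm{sdim}(\xi)$ does not come from purity; it comes from the explicit deformation-theoretic analysis at $H(\xi)$, where one shows that the locus with Newton polygon $\geq \xi$ in the versal deformation space is a smooth formal subscheme of dimension $\mathrm{sdim}(\xi)$. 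Your ``lower bound'' paragraph is actually doing double duty: it establishes both inequalities at once by exhibiting the Newton stratum locally as a smooth formal scheme of the correct dimension. The invocation of purity as producing the upper bound is a genuine logical error, and the phrase ``Hasse--Witt loci, one for each lattice point strictly below $\xi$'' is not standard and does not describe an actual mechanism. Your enumeration of $\Delta(\sigma_g)$ and the supersingular dimension count are correct.
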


By \cite{CO11}, $\cA_g[\xi]$ is irreducible as long as $\xi$ is not the supersingular Newton polygon $\sigma_g$.
This implies that $\cA_g^f$ is irreducible, except when $g=1,2$ and $f=0$.

\smallskip

{\bf (C) Ekedahl-Oort strata:}
See Kraft \cite{Kraft}, Oort \cite{O:strat}, Ekedahl--Van der Geer \cite{EVdG}, and Moonen--Wedhorn \cite{moonenwedhorn}.

\begin{theorem} \label{Teodim} \cite[Theorem 1.2]{O:strat} The stratum of $\cA_g$ 
whose points represent p.p.\ abelian varieties with Ekedahl-Oort type $\nu=[\nu_1, \ldots, \nu_g]$
is locally closed and quasi-affine with dimension $\sum_{i=1}^g \nu_i$.
\end{theorem}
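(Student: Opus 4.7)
The strategy is to follow the Bruhat-theoretic approach originating in work of Kraft and carried out rigorously by Oort. First I would set up the stratum functorially: let $\mathcal{X}_g \to \cA_g$ denote the universal abelian scheme and form its $p$-torsion $\mathcal{X}_g[p]$, a relative symmetric $\mathrm{BT}_1$ group scheme with principal quasipolarization. As in Section~\ref{Seotype}, the final filtration $N_\bullet$ on the associated Dieudonn\'e crystal is characterized by the rank function $i \mapsto \mathrm{dim}(V(N_i))$, and the stratum $\cA_g[\nu]$ is cut out by the rank conditions $\mathrm{dim}(V(N_i)) = \nu_i$ for $1 \leq i \leq g$. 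These are constructible conditions on the ranks of compositions of $F$ and $V^{-1}$ acting on the relative $p$-torsion, so a standard semi-continuity argument exhibits $\cA_g[\nu]$ as a locally closed reduced substack of $\cA_g$.

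For the dimension, the plan is to compare the local structure of $\cA_g$ at a point $x \in \cA_g[\nu]$ with a Schubert cell in a symplectic flag variety. By Serre--Tate theory, the formal neighborhood of $x$ is canonically identified with the universal deformation space of the principally polarized $p$-divisible group $X[p^\infty]$, and this space is controlled via Grothendieck--Messing theory by lifts of the Hodge filtration in the Dieudonn\'e crystal. The condition of preserving the Ekedahl--Oort type $\nu$ translates, after passing to the associated graded, into a condition on symplectic flags whose type under the $V, F^{-1}$ action matches $\nu$. In Bruhat--Weyl language, the stratum $\cA_g[\nu]$ is \emph{locally} isomorphic to a Schubert cell $C_w$ in $\mathrm{Sp}_{2g}/B$ indexed by an element $w$ of the Weyl group of $\mathrm{Sp}_{2g}$; a direct combinatorial computation shows that the length of the admissible element $w = w(\nu)$ attached to a final sequence is exactly $\ell(w) = \sum_{i=1}^g \nu_i$. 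Since Schubert cells are affine spaces whose dimension equals the Weyl length, this yields both the desired dimension formula and, locally, the affineness.

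The main obstacle is upgrading local affineness to the global quasi-affine property, and ensuring the dimension count is independent of the chosen base point $x$. For this I would adopt Oort's device of an auxiliary moduli stack $\cA_g^{\mathrm{flag}}$ parametrizing p.p.\ abelian varieties of dimension $g$ together with a symplectic final filtration on their $p$-torsion, equipped with a proper forgetful map $\pi: \cA_g^{\mathrm{flag}} \to \cA_g$. One stratifies $\cA_g^{\mathrm{flag}}$ by admissible Weyl group elements, shows each such stratum is quasi-affine (in fact, a fiber bundle over a Schubert stratum on a symplectic flag variety), and then verifies that $\pi$ restricts to an isomorphism onto $\cA_g[\nu]$ for the Ekedahl--Oort locus of type $\nu$. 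The delicate point is checking that the final filtration is canonical, so that $\pi$ really is an isomorphism over each stratum and not just a finite-to-one map; this requires knowing that the final filtration on a symmetric $\mathrm{BT}_1$ group scheme is unique up to isomorphism. A secondary technical subtlety is the modification needed when $p=2$ to accommodate the principal quasipolarization as in Sections~9.2, 9.5, and 12.2 of \cite{O:strat}, but the final dimension formula $\sum_{i=1}^g \nu_i$ is unaffected.
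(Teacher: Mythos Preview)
The paper does not prove this theorem; it is stated as a citation of \cite[Theorem~1.2]{O:strat} in the background Section~\ref{Sdimstrata}, alongside the analogous cited results for the $p$-rank and Newton polygon strata. There is therefore no proof in the paper to compare your proposal against.

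Your sketch is a reasonable outline of the ideas behind Oort's original argument, but a few points are overstated or imprecise. The claim that the stratum is ``locally isomorphic to a Schubert cell'' is not literally how Oort proceeds; rather, the argument goes through an explicit description of the deformation space of a polarized $\mathrm{BT}_1$ group scheme via displays (or the Dieudonn\'e crystal), and the dimension count $\sum_i \nu_i$ emerges from a direct analysis of which deformation parameters preserve the final type, not from an identification with $\mathrm{Sp}_{2g}/B$. The Weyl-group and Bruhat interpretation you describe was developed later (by Moonen, Wedhorn, and in \cite{EVdG}) and gives a cleaner conceptual picture, but it is not the route in \cite{O:strat}. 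Also, quasi-affineness in Oort's paper is obtained by showing each stratum admits a finite surjective morphism from a quasi-affine scheme, using the standard $\mathrm{BT}_1$ over each stratum and the classification theorem, rather than via an auxiliary flag stack.
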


The closure relations for the Ekedahl--Oort strata are determined using information 
gleaned from the Weyl group of the symplectic group \cite{EVdG}.

\subsection{Unlikely intersections} \label{Sunlikely} 

The dimension of $\cA_g$ is $g(g + 1)/2$.
Its supersingular locus $\cA_g[\sigma_g]$ has dimension $\lfloor g^2/4 \rfloor$.
The difference $\delta_g:=g(g + 1)/2 - \lfloor g^2/4 \rfloor$ is the length of a maximal chain 
between the ordinary and supersingular Newton polygons in the partially ordered set of symmetric Newton polygons of height $2g$.

Oort observed the following in \cite[Expectation 8.5.4]{oortpadova05}.

\begin{remark} \label{Rlongchain}
If $g \geq 9$, then $\delta_g > 3g - 3 =\mathrm{dim}(\cM_g)$.
\end{remark}

Because of Remark \ref{Rlongchain}, for every $g \geq 9$ and every prime $p$, at least one of the following is true:
\begin{enumerate}
\item Either $\cM_g$ does not admit a perfect stratification by Newton polygon in characteristic $p$: this means that 
there are two symmetric Newton polygons $\xi_1$ and $\xi_2$ of height $2g$ such that 
$\xi_1 < \xi_2$ 
but $\cM_g[\xi_1]$ is not in the closure of $\cM_g[\xi_2]$;
\item or there is a symmetric Newton polygon of height $2g$ which does not occur for the Jacobian of a smooth curve of genus $g$
over $\bar{\FF}_p$.
\end{enumerate}

At this time, there is no example known of either (1) or (2) for any pair of $g$ and $p$.
In particular, no symmetric Newton polygon has been excluded from occurring for a Jacobian in any characteristic.

\begin{definition}
Let $\xi$ denote a symmetric Newton polygon of height $2g$.
The intersection of the open Torelli locus $\cT^\circ_g$ and the Newton polygon stratum $\cA_g[\xi]$ is \emph{unlikely} if 
$\mathrm{codim}(\cA_g[\xi], \cA_g) > 3g-3$.
Also, $\cT^\circ_g$ and $\cA_g[\xi]$ have an \emph{unlikely intersection} if their intersection is unlikely and also 
not empty.
\end{definition}

\begin{example}
For $g \geq 9$, the intersection of the open Torelli locus $\cT^\circ_g$ and the supersingular locus $\cA_g[\sigma_g]$ is unlikely.

To see this, note that $\mathrm{dim}(\cA_g) = g(g+1)/2$ (e.g.\ $45$ when $g=9$).
By Theorem~\ref{Tnpdim}, $\mathrm{dim}(\cA_g[\sigma_g]) = \lfloor g^2/4 \rfloor$ (e.g.\ $20$ when $g=9$).
Also $\mathrm{dim}(\cM_g) = 3g-3$ (e.g.\ $24$ when $g=9$).
The intersection of $\cT_g^\circ$ and $\cA_g[\sigma_g]$ is unlikely 
for $g \geq 9$ since $\mathrm{dim}(\cM_g) < \mathrm{codim}(\cA_g[\sigma_g], \cA_g)$.
\end{example}

\begin{remark}
This manuscript contains numerous examples that unlikely intersections do occur.  
By Section~\ref{Sartinschreier}, for fixed $p$, there exist supersingular curves over $\bar{\FF}_p$ 
whose genus is arbitrarily large.  For fixed $g$, under congruence conditions on $p$, 
there exist curves of genus $g$ that are supersingular (whose Jacobians have complex multiplication).
For example, the technique in Section~\ref{Scomplexmultiplication} shows that
the genus $9$ curve with equation $y^{19} = x(x-1)$ is supersingular if $p$ is not a quadratic residue modulo $19$.
Theorem~\ref{main-thm-ord} provides more examples of unlikely intersections of Newton polygon strata in $\cA_g$ with 
$\cT^\circ_g$ for $g$ arbitrarily large.  
\end{remark}

\section{Theorems about $p$-torsion stratifications of $\cM_g$} \label{S2BNPmain}

In this section, we describe several results about the geometry of the $p$-torsion strata of $\cM_g$, 
whose points represent smooth curves of genus $g$ whose Jacobians have a given $p$-torsion invariant.  

Section~\ref{Sg4ssnewproof} contains a geometric proof that
there exists a smooth curve of genus $4$ that is supersingular for every prime $p$ \cite{Pssg=4}. 
While this result was already known \cite[Corollary~1.2,1.3]{khs20} (see Theorem~\ref{Tkhs}), 
the geometric proof leads to some new applications, which are included in Section~\ref{Srealizing}.

Section~\ref{SFVdG} contains a proof of \cite[Theorem 2.3]{FVdG} by Faber and Van der Geer,
about the dimensions of the $p$-rank strata of $\cM_g$.  
This answers Question~\ref{Qmot3} for the $p$-rank strata.  
We highlight this result because it inspired a lot of research.

Section \ref{Sinductive} contains a result from \cite{priesCurrent}
which gives a method to reduce questions about the geometry of the 
Newton polygon and Ekedahl-Oort strata of $\cM_g$ to the case of $p$-rank $0$.

Recall that the open (resp.\ closed) Torelli locus $\cT^\circ_g$ (resp.\ $\cT_g$) 
is the image of $\cM_g$ (resp.\ $\cM_g^{ct}$) under the Torelli morphism $\tau_g: \cM_g^{ct} \to \cA_g$.  
The techniques in Section~\ref{S2BNPmain} make use of $\tau_g$ and $\partial \cM_g$.
Some complications are that, in most cases:
it is not known whether the $p$-torsion strata of $\cT^\circ_g$ are irreducible; the closure $\cT_g$ of $\cT^\circ_g$ in $\cA_g$
contains points representing Jacobians of singular curves of compact type; and $\tau_g$ is not flat, 
since its fibers have positive dimension over $\partial \cM_g$. 

\subsection{A geometric proof for supersingular genus $4$ curves} \label{Sg4ssnewproof}

This result was inspired by a conversation with Oort, in which we discussed
a geometric method for studying the Newton polygons $\xi$ that occur on $\mathcal{M}_g$. 
This method applies when $\mathrm{codim}(\mathcal{A}_g[\xi], \mathcal{A}_g)$ is small.  

As an illustration of this method, here is a new proof of \cite[Corollary~1.2]{khs20}.
Let $\mathcal{M}_g[\sigma_g]$ (resp.\ $\mathcal{A}_g[\sigma_g]$) denote the supersingular locus 
of $\mathcal{M}_g$ (resp.\ $\mathcal{A}_g$).

\begin{theorem} \label{T4supersingular}  \cite{Pssg=4}
For every prime $p$, there exists a smooth curve of genus $4$ that is supersingular. 
Thus $\mathcal{M}_4[\sigma_4]$ is non-empty and its irreducible components have dimension at least $3$ for every prime $p$.
\end{theorem}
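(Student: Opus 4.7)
The plan is to combine a purity argument on $\cM_4^{ct}$ with a dimension count at the boundary, using supersingular genus $2$ curves as building blocks to produce a supersingular point in the closed Torelli locus and then forcing the supersingular locus of $\cT_4$ to be too large to fit in the boundary.

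I would first record the key dimensions. By Theorems~\ref{TdimensionAgnew}, \ref{TdimensionMg}, and \ref{Tnpdim}, we have $\dim \cA_4 = 10$, $\dim \cM_4 = 9$, and $\dim \cA_4[\sigma_4] = \lfloor 16/4 \rfloor = 4$, so $\mathrm{codim}(\cA_4[\sigma_4], \cA_4) = 6$; this $6$ is also the length $\delta_4$ of a maximal chain from the ordinary to the supersingular Newton polygon of height $8$. Then I would produce a supersingular point of $\cT_4$. Using the existence of supersingular smooth genus $2$ curves over $\bar{\FF}_p$ for every prime $p$ (Section~\ref{Smallg}, via \cite{serre82}), take two such curves $C_1, C_2$ with a marked point each, and clutch via $\kappa_{2,1;2,1}$ to obtain $x \in \Delta_2 \subset \cM_4^{ct}$. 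By \eqref{eqblr}, $\tau_4(x) = \mathrm{Jac}(C_1) \times \mathrm{Jac}(C_2) \in \cA_4[\sigma_4]$, so $\cM_4^{ct}[\sigma_4]$ is non-empty.

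Next I would apply purity. Since $\cM_4^{ct}$ is irreducible with generic Newton polygon equal to the ordinary one, Corollary~\ref{Cpurity3} forces every irreducible component of $\cM_4^{ct}[\sigma_4]$ to have dimension at least $9 - 6 = 3$. In parallel, I would bound the supersingular part of the boundary $\partial \cM_4^{ct} = \Delta_1 \cup \Delta_2 \cup \Delta_3$. The image $\tau_4(\Delta_i) \subset \cA_i \times \cA_{4-i}$, so its supersingular part is contained in $\cA_i[\sigma_i] \times \cA_{4-i}[\sigma_{4-i}]$, of dimension $0 + 2 = 2$ (for $i = 1, 3$) and $1 + 1 = 2$ (for $i = 2$), using Theorem~\ref{Tnpdim} on each factor. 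Hence $(\cT_4 \setminus \cT_4^\circ) \cap \cA_4[\sigma_4]$ has dimension at most $2$, while each irreducible component of $\cT_4 \cap \cA_4[\sigma_4]$ has dimension at least $3$ by purity. So no component can lie entirely in the boundary, and therefore every such component meets $\cT_4^\circ$, producing a smooth supersingular curve of genus $4$. Using generic injectivity of $\tau_4$ (Theorem~\ref{Ttorelli2}), the preimage $\cM_4[\sigma_4]$ likewise has components of dimension at least $3$.

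The main obstacle is ensuring that the purity bound and the boundary bound are compared consistently. Purity is applied upstairs on $\cM_4^{ct}$, where the boundary has codimension $1$, but the cleanest dimension bound on the boundary uses the product structure of Jacobians downstairs in $\cA_4$, where the marked-point data is forgotten and the dimensions of the $\tau_4(\Delta_i)$ drop by two relative to $\Delta_i$. I would need to verify that the Torelli morphism is generically finite onto $\cT_4^\circ$ so that a component of dimension $\geq 3$ of $\cM_4^{ct}[\sigma_4]$ maps to a component of dimension $\geq 3$ of $\cT_4 \cap \cA_4[\sigma_4]$, and that an irreducible component of $\cM_4^{ct}[\sigma_4]$ cannot be supported entirely on $\Delta_i$ by a dimension bound that only controls the image in $\cA_4$. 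A careful bookkeeping of these dimensions is the crux of the argument.
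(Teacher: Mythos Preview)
Your overall strategy is the paper's: clutch supersingular curves to produce a supersingular point of $\cT_4$, establish a dimension lower bound of $3$ for the supersingular locus there, and compare against the decomposable locus, which has supersingular part of dimension $\leq 2$ in $\cA_4$. The substantive difference is where the lower bound is obtained. The paper works entirely in $\cA_4$: since $\cA_4$ is a smooth stack, the codimension of $\cA_4[\sigma_4]\cap\cT_4$ in $\cA_4$ is at most $6+1=7$ (\cite[page~614]{V:stack}), so every component of this intersection has dimension $\geq 3$. You instead apply purity on $\cM_4^{ct}$.

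That choice creates a genuine gap, exactly the one you flag but do not resolve. On $\cM_4^{ct}$ the supersingular locus of $\Delta_2$ has dimension $4$, not $2$: it is covered by $\bar\cM_{2;1}[\sigma_2]\times\bar\cM_{2;1}[\sigma_2]$, and each factor has dimension $2$ (one from $\cM_2[\sigma_2]$, one from the marked point). So purity on $\cM_4^{ct}$ does not by itself prevent a component of $\cM_4^{ct}[\sigma_4]$ from sitting entirely inside $\Delta_2$. The drop to dimension $\leq 2$ happens only after applying $\tau_4$, precisely because $\tau_4$ forgets the marked points and collapses two dimensions on $\Delta_2$. Your attempt to transfer the $\geq 3$ bound to $\cT_4\cap\cA_4[\sigma_4]$ therefore begs the question: $\dim\tau_4(W)\geq 3$ requires $W\not\subset\partial\cM_4^{ct}$, which is what you are trying to prove. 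The fix is to run the lower bound in $\cA_4$ from the start---either via the intersection-codimension inequality as the paper does, or equivalently by applying Corollary~\ref{Cpurity3} directly on $\cT_4$---so that both bounds live in the same space and the comparison $3>2$ is conclusive.
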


Note that \cite[Theorem~1.1]{khs20} (whose proof uses a different technique)
also gives information about the $a$-number,
namely there exists a supersingular smooth curve of genus $4$ with $a$-number $a \geq 3$ for every prime $p > 3$.

\begin{proof}[Proof of Theorem~\ref{T4supersingular}]
Working over $\bar{\mathbb{F}}_p$, consider a chain of four supersingular elliptic curves, clutched together
at ordinary double points.
This yields a stable curve $C_\circ$ of compact type that has genus $4$ and is supersingular. 
By \eqref{eqblr}, the Jacobian $J_\circ$ of $C_\circ$ is isomorphic to the product of four supersingular elliptic curves and 
thus is supersingular.
As such, $J_\circ$ is represented by a point in ${\mathcal A}_4[\sigma_4] \cap \cT_4$, which shows that this intersection is non-empty. 
 
The codimension of $\mathcal{A}_4[\sigma_4]$ in $\mathcal{A}_4$ is $10-4=6$.
The codimension of $\cT_4 \cap \mathcal{A}_4$ in $\mathcal{A}_4$ is $10-9=1$.
Since $\mathcal{A}_4$ is a smooth stack, the codimension of an intersection of two substacks is at
most the sum of their codimensions \cite[page 614]{V:stack}.  Thus
$\mathrm{codim}(\mathcal{A}_4[\sigma_4] \cap \cT_4, \mathcal{A}_4) \leq 7$.  
To summarize, each of the irreducible components of the non-empty intersection $\mathcal{A}_4[\sigma_4] \cap \cT_4$ 
has dimension at least $3$.

Let $\Delta$ denote the locus in $\mathcal{A}_4[\sigma_4] \cap \cT_4$
whose points represent the Jacobian of a curve $C_s$ that is stable but not smooth.
By hypothesis, these points represent abelian varieties, so the curve $C_s$ has compact type.
From Section~\ref{Sboundary}, $\mathrm{Jac}(C_s)$ is a p.p.\ abelian fourfold that decomposes,
with the product polarization.  

Then $\mathrm{dim}(\Delta) \leq 2$.
This is because points in $\Delta$ represent abelian fourfolds either of the form (i) $E \oplus X$ where $E$ is a supersingular 
elliptic curve and $X$ is a supersingular abelian threefold, or of the form (ii) $X \oplus X'$ where $X,X'$ are supersingular 
abelian surfaces.  In case (i), 
$\mathrm{dim}(\Delta) \leq \mathrm{dim}(\mathcal{A}_1[\sigma_1] \times \mathcal{A}_3[\sigma_3]) = 2$.
In case (ii), $\mathrm{dim}(\Delta) \leq \mathrm{dim}(\mathcal{A}_2[\sigma_2] \times \mathcal{A}_2[\sigma_2]) =2$.
Since $2<3$, every generic geometric point of $\mathcal{A}_4[\sigma_4] \cap \cT_4$ represents 
the Jacobian of a supersingular curve of genus $4$ which is smooth.

Thus ${\mathcal M}_4[\sigma_4]$ is non-empty for every $p$; equivalently,
there exists a smooth curve of genus $4$ that is supersingular.
If $W$ is an irreducible component of 
${\mathcal M}_4[\sigma_4]$, then the image of $W$ under $\tau_g$ is open and dense 
in an irreducible component of $\mathcal{A}_4[\sigma_4] \cap \cT_4$.
Thus $\mathrm{dim}(W) \geq 3$ by purity (Corollary~\ref{Cpurity2}).
\end{proof}

\begin{remark}
One expects that the dimension of every irreducible component of $\cM_4[\sigma_4]$ is three.  
For $7 < p < 20,000$ or $p \equiv 5 \bmod 6$, this is true for at least one irreducible component of $\cM_4[\sigma_4]$ by 
\cite{harashita22}. 
It is true for every irreducible component of $\cM_4[\sigma_4]$ when $p=2,3$ by \cite{dusan1, dusan2}. 
\end{remark}

\subsection{Results about the $p$-rank stratification} \label{SFVdG} 

We describe a theorem of Faber and Van der Geer about the dimensions of the $p$-rank strata of $\cM_g$.

Fix a prime $p$ and integers $g \geq 2$ and $f$ such that $0 \leq f \leq g$. 

Recall that $\cA_g^f$ is irreducible unless $g=1,2$ and $f=0$.
In most cases, it is not known whether $\cM_g^f$ is irreducible. 

The definition of the $p$-rank in \eqref{Edefprank} applies also for stable curves.
The $p$-rank stratification extends to the moduli space $\bar{\cM}_g$ by \cite[Section~2]{FVdG}.

We denote by $\bar{\cM}_g^f$ the stratum of $\bar{\cM}_g$ 
of points that represent stable curves of genus $g$ and $p$-rank $f$.\footnote{
There are some technical details here that we are choosing to suppress so that the main ideas of the proof are more clear.
If $\bar{W}$ is the closure of $W$, there is a difference between $\bar{W}[\xi]$ and $\overline{W[\xi]}$.
For example, $\bar\cM_g^f:=(\bar \cM_g)^f$ is the $p$-rank $f$ stratum of $\bar \cM_g$, while
$\overline{(\cM_g^f)}$ is the closure of the $p$-rank $f$ stratum of $\cM_g$.
A consequence of the proof is that the former is contained in the latter.
For $f >0$, the containment is proper since the 
latter contains points representing curves whose $p$-rank is less than $f$.
More precisely, as part of the proof one can show that the generic geometric point of every irreducible component of 
$(\bar{\cM}_g)^{\leq f}$ represents a smooth curve whose $p$-rank is exactly $f$. 
We refer the reader to \cite{FVdG} and \cite{AP:mono} for more details.}

\begin{theorem} \label{TFVdG} \cite[Theorem 2.3]{FVdG}
Let $g \geq 2$.  Every irreducible component of $\bar{\cM}_g^f$ has dimension $2g-3+f$ 
(codimension $g-f$ in $\bar{\cM}_g$);
in particular, there exists a smooth curve over $\bar{\FF}_p$ with genus $g$ and $p$-rank $f$. 
\end{theorem}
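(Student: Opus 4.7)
The plan is to proceed by induction on $g$, combining the purity theorem of de Jong and Oort (Theorem~\ref{Tpurity}) applied to the universal stable curve over $\bar\cM_g$ for the lower bound, with a boundary analysis via the clutching morphisms of Section~\ref{Sboundary} for the upper bound and for existence.

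First I would establish the lower bound. Let $\bar\cM_g^{\leq f}$ denote the closed locus in $\bar\cM_g$ of stable curves of genus $g$ and $p$-rank at most $f$. Applying purity at each successive drop of the $p$-rank, every irreducible component of $\bar\cM_g^{\leq f}$ has codimension at most $g-f$ in $\bar\cM_g$. Since $\bar\cM_g^f$ is the open subset of $\bar\cM_g^{\leq f}$ where the $p$-rank is exactly $f$, every irreducible component $W$ of $\bar\cM_g^f$ is open and dense in some irreducible component of $\bar\cM_g^{\leq f}$ of the same dimension, so $\dim W \geq (3g-3) - (g-f) = 2g-3+f$.

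Next I would run the induction on $g$. The base case $g = 2$ is direct: the ordinary, $p$-rank one, and supersingular strata of $\bar\cM_2$ are non-empty of dimensions $3$, $2$, and $1$, respectively (using Serre's theorem for existence of supersingular genus two curves). For the inductive step, assume the statement for all $g' < g$. The main input is the additivity of the $p$-rank under clutching: under the clutching morphism $\bar\cM_{i;1} \times \bar\cM_{g-i;1} \to \Delta_i$ (for $1 \leq i \leq g/2$), the $p$-rank of the image equals the sum of the $p$-ranks of the two factors by \eqref{eqblr}; and under $\bar\cM_{g-1;2} \to \Delta_0$, the $p$-rank of the image exceeds the $p$-rank of the source by one, due to the toric contribution in \eqref{eqblr0}. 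Applying the inductive hypothesis in the form $\dim \bar\cM_{g';r}^{f'} = 2g'-3+f'+r$ to each factor, one verifies that every irreducible component of $\bar\cM_g^{\leq f} \cap \Delta_i$ has dimension at most $2g-4+f$, uniformly in $0 \leq i \leq g/2$.

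To deduce the upper bound and existence, I would analyze an arbitrary irreducible component $W$ of $\bar\cM_g^f$ with closure $\overline{W}$ in $\bar\cM_g$. If $\overline{W} \subseteq \partial\cM_g$, then $\overline{W} \subseteq \Delta_i$ for some $i$, forcing $\dim W \leq 2g-4+f$ and contradicting the lower bound. If $\overline{W}$ is disjoint from $\partial\cM_g$, then $\overline{W}$ is a complete subvariety of $\cM_g$, and Diaz's theorem (Theorem~\ref{Tdiaz}) gives $\dim W \leq g-2 < 2g-3+f$, again a contradiction. Therefore $\overline{W}$ meets $\partial\cM_g$ properly, so $\dim(\overline{W} \cap \partial\cM_g) = \dim \overline{W} - 1$; since this intersection lies in $\bar\cM_g^{\leq f} \cap \partial\cM_g$, we obtain $\dim W \leq 2g-3+f$. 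Combined with the lower bound, this gives $\dim W = 2g-3+f$, and the fact that $W \not\subseteq \partial\cM_g$ forces the generic point of $W$ to represent a smooth curve. Non-emptiness of $\bar\cM_g^f$ follows by clutching a suitably chosen elliptic curve to a curve in $\bar\cM_{g-1;1}^{f-1}$ (if $f \geq 1$) or $\bar\cM_{g-1;1}^{f}$ (if $f = 0$), both non-empty by induction.

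The main obstacle will be the boundary bookkeeping: the bound $\dim(\bar\cM_g^{\leq f} \cap \Delta_i) \leq 2g-4+f$ must be verified uniformly in $i$, with careful attention to the toric contribution to the $p$-rank on $\Delta_0$ and to the marked points on the factor spaces $\bar\cM_{g';r}$. A second subtle point is excluding the possibility that a component of $\bar\cM_g^f$ is a complete subvariety contained entirely in the interior $\cM_g$; this is where Diaz's theorem plays an essential role. Once both ingredients are in place, the purity lower bound and the boundary upper bound match exactly, forcing every irreducible component of $\bar\cM_g^f$ to have the predicted dimension $2g-3+f$ and to possess a smooth generic point.
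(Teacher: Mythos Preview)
Your proposal is correct and follows essentially the same route as the paper's sketch: purity for the lower bound, then induction on $g$ with a clutching/boundary analysis for the upper bound, using a Diaz-type result to force $\overline{W}$ to meet $\partial\cM_g$. The only difference is cosmetic: the paper invokes \cite[Lemma~2.5]{FVdG} (building on Theorem~\ref{Tdiaz2}) to obtain the stronger fact that $\Gamma$ meets \emph{every} $\Delta_i$ with $1 \leq i \leq g-1$, whereas your trichotomy via Theorem~\ref{Tdiaz} only shows $\overline{W}$ meets \emph{some} $\Delta_i$ (and you also handle $\Delta_0$), which is all that is needed for the dimension bound here.
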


By \eqref{eqblr}, it is easy to construct a \emph{singular} curve of genus $g$ with $p$-rank $f$, by constructing 
a chain of $f$ ordinary and $g-f$ supersingular elliptic curves, joined at ordinary double points.
This singular curve can be deformed to a smooth one, but it takes additional work to show that
the $p$-rank stays constant in this deformation.
To prove that there is a \emph{smooth} curve of genus $g$ with $p$-rank $f$, 
singular curves are still useful, but the argument must be made carefully.

\begin{proof} (Sketch of proof of Theorem \ref{TFVdG})
The proof is by induction on $g$.
When $g =2,3$, the result is true since $\cT_g^\circ$ is open and dense in $\cA_g$.

Suppose $g \geq 4$.
The dimension of $\bar{\cM}_g$ is $3g-3$.  There are singular stable curves that are ordinary, namely
chains of $g$ ordinary elliptic curves.  Since $\bar{\cM}_g$ is irreducible and 
the $p$-rank is lower semi-continuous (Theorem~\ref{katzslope}), the generic geometric point of $\bar{\cM}_g$
is ordinary, with $p$-rank $g$.

Let $\Gamma$ be an irreducible component of $\bar{\cM}_g^f$ in $\bar{\cM}_g$.
Consider the maximal chain which connects the ordinary Newton polygon $\mathrm{ord}^g$ to 
the maximal Newton polygon having $(f,0)$ as a break point; it has length $g-f$.
Using purity of the Newton polygon stratification \cite{oortpurity} (see Theorem~\ref{Tpurity}), 
\[\mathrm{dim}(\Gamma) \geq (3g-3) -(g-f)=2g-3+f.\]

Building on Theorem~\ref{Tdiaz2}, consider the intersection of $\Gamma$ with $\partial \cM_g$.
By \cite[Lemma 2.5]{FVdG}, $\Gamma$ intersects $\Delta_i$ for each $1 \leq i \leq g-1$.
By Theorem~\ref{Tboundarydivisor}, $\mathrm{codim}(\Delta_i, \bar{\cM}_g) = 1$.
Thus, by \cite[page 614]{V:stack}, 
$\mathrm{dim}(\Gamma) \leq \mathrm{dim}(\Gamma \cap \Delta_i) +1$.

Using \eqref{eqblr}, as seen in \cite[Proposition 3.4]{AP:mono}, $\Gamma \cap \Delta_i$ is the union of the images of the 
clutching morphisms (restricted to the $p$-rank strata) from Section~\ref{Sboundary}:
\begin{equation} \label{Ekapparestrictprank}
\kappa_{i;1,g-i;1}: \bar{\cM}^{f_1}_{i;1} \times \bar{\cM}^{f_2}_{g-i;1} \to \bar{\cM}^{f_1+f_2}_g,
\end{equation}
for pairs of non-negative integers $(f_1,f_2)$ with $f_1 + f_2 = f$.

So $\mathrm{dim}(\Gamma \cap \Delta_i) \leq 
\mathrm{dim}(\bar{\cM}^{f_1}_{i;1}) + \mathrm{dim}(\bar{\cM}^{f_2}_{g-i;1})$, 
for some pair $(f_1,f_2)$.
If $i \not = 1, g-1$, then adding a marked point adds one to the dimension. 
By the inductive hypothesis (or an explicit computation when $i=1,g-1$), 
$\mathrm{dim}(\bar{\cM}^{f_1}_{i;1})=2i - 3 + f_1 +1$
and $\mathrm{dim}(\bar{\cM}^{f_2}_{g-i;1})=2(g-i) -3 +f_2+1$.
It follows that $\mathrm{dim}(\Gamma \cap \Delta_i) \leq 2g - 4 + f$. 
Thus $\mathrm{dim}(\Gamma) \leq 2g-3 +f$, which completes the proof.
\end{proof}

\begin{remark}
In \cite{AP:mono}, there are additional results about how $\bar{\cM}_g^f$ intersects $\partial \cM_g$. 
This leads to applications about the $\ell$-adic monodromy of the irreducible components of $\bar{\cM}_g^f$, which are described in 
Section~\ref{Smonoprank}.  
\end{remark}

\begin{corollary} \cite[Corollary~4.4]{APgen}
Let $\Gamma$ be an irreducible component of $\cM_g^f$.
If $f < g$, then $\Gamma$ is in the closure of $\cM_g^{f+1}$ in $\cM_g$.
If $f >0$, then the closure of $\Gamma$ in $\cM_g$ contains an irreducible component of $\cM_g^{f-1}$.
\end{corollary}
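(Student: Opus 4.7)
The plan is to prove both parts via dimension counts using Theorem~\ref{TFVdG}, purity of the Newton polygon stratification (Corollary~\ref{Cpurity3}), and the boundary analysis from the proof of Theorem~\ref{TFVdG}; part 2 will be carried out by induction on $g$. An auxiliary fact I would establish first (by downward induction on $n$): for every $0 \leq n \leq g$, each irreducible component of $\cM_g^{\leq n}$ has dimension exactly $2g-3+n$ and is the closure in $\cM_g$ of a component of $\cM_g^n$. The lower bound on dimension follows from Corollary~\ref{Cpurity3} applied to the generic ordinary Newton polygon on $\cM_g$; the upper bound, together with the statement that every such component has generic $p$-rank exactly $n$, follows by induction, since otherwise the component would lie inside $\cM_g^{\leq n-1}$, which has strictly smaller dimension.

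For part 1, since $\cM_g^{\leq f+1}$ is closed in $\cM_g$ and contains $\Gamma$, the component $\Gamma$ lies in some irreducible component $Y$ of $\cM_g^{\leq f+1}$. By the auxiliary fact, $Y$ is the closure in $\cM_g$ of a component $Y^\circ$ of $\cM_g^{f+1}$, so $\Gamma \subset Y \subset \overline{\cM_g^{f+1}}$.

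For part 2, let $\bar{\Gamma}$ denote the closure of $\Gamma$ in $\bar{\cM}_g$, still of dimension $2g-3+f$. I would first exhibit $p$-rank $f-1$ points inside $\bar{\Gamma}$: the clutching analysis in the proof of Theorem~\ref{TFVdG} shows $\bar{\Gamma} \cap \Delta_1$ has dimension $2g-4+f$, and some component of $\kappa_{1;1,g-1;1}^{-1}(\bar{\Gamma})$ equals the closure of $\bar{\cM}_{1;1}^{f_1} \times Y'_2$ for a pair $(f_1,f_2)$ with $f_1+f_2 = f$ and $Y'_2$ a component of $\bar{\cM}_{g-1;1}^{f_2}$. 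If $f_1=1$, replacing the ordinary elliptic curve with a supersingular one in the closure immediately produces points of $\bar{\Gamma}$ of $p$-rank $0+(f-1)$; if $f_1=0$, the induction hypothesis on $g$ applied to $Y'_2 \subset \bar{\cM}_{g-1;1}^{f}$ provides a component of $\bar{\cM}_{g-1;1}^{f-1}$ in its closure, and clutching with the supersingular $E$ again yields $p$-rank $f-1$ points. Let $S \subset \bar{\Gamma}$ denote the $p$-rank $\leq f-1$ locus; Corollary~\ref{Cpurity3} applied to the universal curve over $\bar{\Gamma}$ shows that $S$ is non-empty and pure of codimension one, so $\dim S = 2g-4+f$.

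To conclude, I show $S \not\subset \partial\cM_g$. For each $i \geq 1$, $\bar{\Gamma} \cap \Delta_i$ has generic $p$-rank $f$ and dimension $2g-4+f$, so purity within this divisor bounds the dimension of its $p$-rank $\leq f-1$ sublocus by $2g-5+f$; for $\Delta_0$, the toric rank shifts the $p$-rank up by one, so $\bar{\Gamma} \cap \Delta_0 \subset \kappa_{g-1;2}(\bar{\cM}_{g-1;2}^{f-1})$, of dimension at most $2g-5+f$. Hence $S \cap \cM_g$ is non-empty of dimension $2g-4+f$; its irreducible components lie in $\cM_g^{\leq f-1}$, and by the auxiliary fact each equals the closure in $\cM_g$ of a component $\Gamma'$ of $\cM_g^{f-1}$, giving $\Gamma' \subset \bar{\Gamma} \cap \cM_g$. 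The main obstacle is this boundary dimension comparison, ensuring the $p$-rank drop locus does not collapse entirely into $\partial\cM_g$; it crucially combines purity applied both to $\bar{\Gamma}$ and inside each boundary divisor, together with the fact that the toric contribution to the $p$-rank forces $\bar{\Gamma} \cap \Delta_0$ to be unexpectedly small.
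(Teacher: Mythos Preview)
The paper does not supply its own proof of this corollary; it simply records the statement with a citation to \cite[Corollary~4.4]{APgen}. Your overall strategy --- use Theorem~\ref{TFVdG} for the dimensions, invoke purity to produce a codimension-one $p$-rank-drop locus $S\subset\bar\Gamma$, and then run a boundary dimension count to force $S$ to meet $\cM_g$ --- is sound and is essentially the argument in that reference. Two steps in your boundary comparison are mis-argued as written, though both conclusions are correct and easily repaired.

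For $\Delta_i$ with $i\geq 1$: you say ``purity within this divisor bounds the dimension of its $p$-rank $\leq f-1$ sublocus by $2g-5+f$,'' but purity (Theorem~\ref{Tpurity}, Corollary~\ref{Cpurity2}) only gives \emph{lower} bounds on the dimension of the locus where the Newton polygon jumps, never upper bounds. The upper bound you need is more elementary: $S\cap\Delta_i\subset\Delta_i^{\leq f-1}$, and applying Theorem~\ref{TFVdG} to the two clutched factors shows $\Delta_i^{\leq f-1}$ is pure of dimension $2g-5+f$. For $\Delta_0$: you assert $\bar\Gamma\cap\Delta_0\subset\kappa_{g-1;2}(\bar\cM_{g-1;2}^{f-1})$ has dimension at most $2g-5+f$, but that image actually has dimension $2g-4+f$. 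What the argument requires is the bound on $S\cap\Delta_0$, not $\bar\Gamma\cap\Delta_0$: a point of $S\cap\Delta_0$ has total $p$-rank $\leq f-1$ and toric rank $\geq 1$, so its preimage in $\bar\cM_{g-1;2}$ has $p$-rank $\leq f-2$; hence $S\cap\Delta_0\subset\kappa_{g-1;2}(\bar\cM_{g-1;2}^{\leq f-2})$, which does have dimension at most $2g-5+f$. With these two corrections your proof goes through.
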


\subsection{Reducing to the case of $p$-rank $0$} \label{Sinductive}

We describe a technique to reduce Question~\ref{Qmot3} to the case of $p$-rank $0$.
This uses an inductive process on $g$ found in earlier work, 
e.g., \cite[Theorem~2.3]{FVdG}, 
\cite[Section~3]{AP:mono}, \cite[Proposition~3.7]{Pr:large}, and \cite[Proposition~5.4]{APgen}.
Theorem~\ref{Tinductive} is stronger than these results because 
it provides more control over the Newton polygon and Ekedahl-Oort type; 
however, the hypotheses needed to apply the result can be hard to verify.

First, we fix some notation about Newton polygons and $\mathrm{BT}_1$ group schemes. 

\begin{notation} \label{Nstrata}
Let $\xi$ denote a symmetric Newton polygon of height $2g$ (or a symmetric $\mathrm{BT}_1$ group scheme of rank $2g$). 
Let $\cA_g[\xi]$ be the stratum in $\cA_g$ whose points represent p.p.\ abelian varieties of 
dimension $g$ and $p$-torsion invariant $\xi$.
Let $cd_\xi=  \mathrm{codim}({\mathcal A}_g[\xi], {\mathcal A}_g)$.
Let ${\mathcal M}_g[\xi]$ be the stratum in ${\mathcal M}_g$ whose points represent curves
of genus $g$ with $p$-torsion invariant $\xi$.
\end{notation}

\begin{notation} \label{Nadde}
Let $e$ be a positive integer.

When $\xi$ is a Newton polygon of height $2g$,
let $\xi\oplus \mathrm{ord}^{e}$ be the Newton polygon of height $2(g+e)$
obtained by adjoining $e$ slopes of $0$ and $e$ slopes of $1$ to $\xi$.

When $\xi$ is a $\mathrm{BT}_1$ group scheme of rank $2g$, 
let $\xi\oplus \mathrm{ord}^{e}$ be the $\mathrm{BT}_1$ group scheme of rank $2(g+e)$
obtained by adjoining $e$ copies of $L = \ZZ/p \oplus \mu_p$ to $\xi$.
\end{notation}

\begin{theorem} \label{Tinductive}  \cite[Theorem~6.4]{priesCurrent}
With notation as in \ref{Nstrata} and \ref{Nadde}, 
suppose that there exists an irreducible component $\Gamma=\Gamma_0$ of ${\mathcal M}_g[\xi]$ such that
$\mathrm{codim}(\Gamma, {\mathcal M}_g)=cd_\xi$.
Then, for all positive integers $e$, there exists an irreducible component $\Gamma_e$ of ${\mathcal M}_{g+e}[\xi\oplus \mathrm{ord}^{e}]$
such that $\mathrm{codim}(\Gamma_e, {\mathcal M}_{g+e})=cd_\xi$.
\end{theorem}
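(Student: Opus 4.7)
The plan is to induct on $e$, reducing to the case $e=1$: given an irreducible component $\Gamma=\Gamma_0$ of $\cM_g[\xi]$ of codimension $cd_\xi$, construct an irreducible component $\Gamma_1$ of $\cM_{g+1}[\xi \oplus \mathrm{ord}]$ of codimension $cd_\xi$, then iterate. A direct computation shows that $\mathrm{sdim}(\xi \oplus \mathrm{ord}) = \mathrm{sdim}(\xi) + (g+1)$, so $\mathrm{codim}(\cA_{g+1}[\xi \oplus \mathrm{ord}], \cA_{g+1}) = cd_\xi$ and the target codimension in the ambient moduli space is preserved at every step.

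First I would produce a boundary point via clutching. Choose a generic point $(C,Q) \in \Gamma_0^{(1)}$, the preimage of $\Gamma_0$ under the forgetful map $\cM_{g;1} \to \cM_g$, and a generic ordinary pointed elliptic curve $(E, \mathcal{O}_E) \in \cM_{1;1}^1$. The clutching morphism $\kappa_{g;1,1;1}: \bar\cM_{g;1} \times \bar\cM_{1;1} \to \bar\cM_{g+1}$ sends this pair to a point $s_0 \in \Delta_1$ of compact type, whose Jacobian decomposes as $\mathrm{Jac}(C) \times E$ by \eqref{eqblr}. Additivity of the Newton polygon (resp.\ Ekedahl--Oort type) under products of abelian varieties implies that $s_0$ has $p$-torsion invariant $\xi \oplus \mathrm{ord}$ (resp.\ $\xi \oplus L$ with $L = \ZZ/p\ZZ \oplus \mu_p$). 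Let $Z := \kappa_{g;1,1;1}(\Gamma_0^{(1)} \times \cM_{1;1}^1)$; this is an irreducible closed substack of $\Delta_1 \cap \bar\cM_{g+1}[\xi \oplus \mathrm{ord}]$ of dimension $(3g - 3 - cd_\xi) + 1 + 1 = 3g - 1 - cd_\xi$.

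Next I would apply purity to obtain a lower bound on the dimension of an irreducible component. Because the generic point of $\bar\cM_{g+1}$ represents an ordinary smooth curve (Theorem~\ref{TFVdG}), Corollary~\ref{Cpurity3} applied to the Jacobian of the universal curve yields that every irreducible component of $\bar\cM_{g+1}[\xi \oplus \mathrm{ord}]$ has codimension at most $d(\xi \oplus \mathrm{ord}, \mathrm{ord}^{g+1}) = cd_\xi$ in $\bar\cM_{g+1}$. Taking $\Gamma_1$ to be an irreducible component containing $Z$, we get $\mathrm{dim}(\Gamma_1) \geq 3g - cd_\xi$. For Ekedahl--Oort types, where a general purity theorem is not available (Question~\ref{QpurityEO}), the same lower bound must instead be established locally at $s_0$ via a Serre--Tate deformation argument applied to the decomposable $p$-divisible group of $\mathrm{Jac}(C) \times E$.

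The main technical obstacle is to show that $\Gamma_1$ has a smooth generic point, i.e., $\Gamma_1 \not\subset \partial\bar\cM_{g+1}$. For each $i \neq 1$ (with $i \leq g/2$), the point $s_0$ splits into exactly two smooth components of genera $g$ and $1$ joined at a single node, hence $s_0 \notin \Delta_i$ and therefore $\Gamma_1 \not\subset \Delta_i$. The delicate case is $i = 1$, which I would handle locally at $s_0$. Since ordinariness is open on elliptic curves, a small neighborhood of $s_0$ in $\Delta_1$ parametrizes clutched pairs $((C', Q'), (E', \mathcal{O}_{E'}))$ with $E'$ ordinary, and additivity of the $p$-torsion invariant forces $\eta_{C'} = \xi$ and $\eta_{E'} = \mathrm{ord}$. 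By selecting $(C, Q)$ to be generic in $\Gamma_0^{(1)}$ and outside every other irreducible component of $\cM_{g;1}[\xi]$, the local dimension of $\cM_{g;1}[\xi]$ at $(C, Q)$ is exactly $3g-2-cd_\xi$. Consequently the local dimension of $\Delta_1 \cap \bar\cM_{g+1}[\xi \oplus \mathrm{ord}]$ at $s_0$ is at most $(3g - 2 - cd_\xi) + 1 = 3g - 1 - cd_\xi$, which is strictly less than $\mathrm{dim}_{s_0}(\Gamma_1)$. Hence $\Gamma_1$ extends out of $\Delta_1$ near $s_0$, its generic point represents a smooth curve, and $\Gamma_1 \cap \cM_{g+1}$ is the desired irreducible component of $\cM_{g+1}[\xi \oplus \mathrm{ord}]$ of codimension $cd_\xi$. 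The hardest parts will be to handle the Ekedahl--Oort stratification in the absence of a global purity theorem and to verify that the generic choice of $(C,Q)$ really does avoid all competing components of $\cM_{g;1}[\xi]$.
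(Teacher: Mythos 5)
Your overall strategy — clutch a generic point of $\Gamma_0$ with an ordinary elliptic curve to land in $\Delta_1$, use purity for the lower bound on a component $\Gamma_1 \supset Z$, and force $\Gamma_1$ out of the boundary by a local dimension count at $s_0$ — is the same approach the paper indicates (``the proof uses the boundary component $\Delta_1$''), and for the Newton polygon case it is essentially complete. One step is left implicit and should be spelled out, because it is what gives the claimed equality rather than just a lower bound on dimension: once you know $\Gamma_1 \not\subset \Delta_1$, every component of $\Gamma_1 \cap \Delta_1$ has pure codimension $1$ in $\Gamma_1$ (Krull's principal ideal theorem on the smooth stack $\bar\cM_{g+1}$), and since $\Gamma_1 \cap \Delta_1$ contains $s_0$ and sits inside $\Delta_1 \cap \bar\cM_{g+1}[\xi \oplus \mathrm{ord}]$, your local bound $\dim_{s_0}\bigl(\Delta_1 \cap \bar\cM_{g+1}[\xi \oplus \mathrm{ord}]\bigr) \leq 3g-1-cd_\xi$ forces $\dim\Gamma_1 \leq 3g - cd_\xi$. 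Without writing this down you have only shown $\mathrm{codim}(\Gamma_1) \leq cd_\xi$, not $=$.

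The genuine gap is the Ekedahl--Oort half of the statement, which you flag but do not resolve. Purity (Corollary~\ref{Cpurity3}) supplies the lower bound $\dim\Gamma_1 \geq 3g - cd_\xi$ only for the Newton polygon stratification, and there is no purity theorem for E--O strata (Question~\ref{QpurityEO}). Your proposed repair via a Serre--Tate deformation argument at $s_0$ has a hole: Serre--Tate controls the deformation functor of the \emph{abelian variety} $\mathrm{Jac}(C)\times E$ inside $\cA_{g+1}$, but $\Gamma_1$ lives in $\bar\cM_{g+1}$, and the extended Torelli map $\cM_{g+1}^{ct} \to \cA_{g+1}$ has a positive-dimensional fiber through $s_0$ (varying the gluing point $Q\in C$ does not change $\mathrm{Jac}(C)\times E$). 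So the dimension of the E--O stratum of $\cA_{g+1}$ at $[\mathrm{Jac}(C)\times E]$ does not directly translate into a lower bound on $\dim_{s_0}\Gamma_1$. What you actually need is a lower bound computed \emph{within} the moduli of curves, e.g.\ by realizing the E--O stratum near $s_0$ as a degeneracy locus of the relative de Rham cohomology of the universal curve with its $F$ and $V$ actions (cf.\ Section~\ref{SEOdRham}), whose codimension is bounded above by $cd_\xi$; as written, your argument establishes the theorem only for Newton polygons.
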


The proof uses the boundary component $\Delta_1$ from Section~\ref{Sboundary}.

\section{Related results on families of non-ordinary curves} \label{Srelatedstrata}

\subsection{$p$-ranks of curves with involutions} \label{Srealizing}

The next two results are proven using the strategy of Theorem~\ref{TFVdG}.

\begin{theorem} (for $p$ odd) \cite[Theorem 1]{glasspries}, see also \cite[Lemma 3.1]{AP:hyp}, 
(for $p=2$) \cite[Corollary 1.3]{prieszhu}
Every irreducible component of $\bar{\cH}_g^f$ has dimension $g-1+f$ 
(codimension $g-f$ in $\bar{\cH}_g$);
in particular, there exists a smooth hyperelliptic curve over $\bar{\FF}_p$ 
with genus $g$ and $p$-rank $f$.
\end{theorem}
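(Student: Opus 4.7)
The plan is to mimic the proof of Theorem~\ref{TFVdG}, replacing $\bar{\cM}_g$ by the hyperelliptic locus $\bar{\cH}_g$ and using the appropriate boundary structure. Recall that $\cH_g$ is irreducible of dimension $2g-1$. Moreover, by clutching together a chain of $g$ ordinary elliptic curves at Weierstrass points one obtains a singular hyperelliptic stable curve which is ordinary (using \eqref{eqfadditive}), so by lower semi-continuity of the $p$-rank (Theorem~\ref{katzslope}) combined with irreducibility of $\bar\cH_g$, the generic geometric point of $\bar\cH_g$ is ordinary. I would proceed by induction on $g$, with the base cases $g=1,2$ being covered either trivially or by Theorem~\ref{TFVdG} since $\bar\cH_g=\bar\cM_g$ there.

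For the inductive step, let $\Gamma$ be an irreducible component of $\bar\cH_g^f$. The lower bound $\dim(\Gamma)\geq g-1+f$ is immediate from purity of the Newton polygon stratification: a maximal chain in the partially ordered set of symmetric Newton polygons joining $\mathrm{ord}^g$ to the maximal Newton polygon with break point $(f,0)$ has length $g-f$, so by Theorem~\ref{Tpurity}, $\dim(\Gamma)\geq (2g-1)-(g-f)$. For the upper bound, I would use the boundary structure of $\bar\cH_g$: its boundary divisors come from hyperelliptic clutching morphisms of the form
\[
\kappa^{hyp}_{i}:\bar\cH_{i;1}^W\times\bar\cH_{g-i;1}^W\to\bar\cH_g,\qquad 1\leq i\leq \lfloor g/2\rfloor,
\]
clutching at Weierstrass points (which are fixed by the involution), together with a self-clutching component corresponding to identifying a pair of non-Weierstrass points swapped by the involution on a hyperelliptic curve of genus $g-1$. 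On each of these boundary divisors, the $p$-rank is additive by the hyperelliptic analogue of \eqref{eqblr} (and its variant for a single non-compact-type clutching).

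The key step is then to show $\Gamma$ must meet this boundary. If $\Gamma\cap \partial\bar\cH_g=\emptyset$, one finds $\Gamma$ complete inside $\cH_g$, and a Diaz-type bound (or an argument in the spirit of Theorem~\ref{Tdiaz2}, now for the hyperelliptic locus) gives a contradiction when $f<g$; for $f=g$ the statement is trivial since $\bar\cH_g^g$ is open dense. Then, restricting the clutching maps to $p$-rank strata and applying the inductive hypothesis, for every pair $(f_1,f_2)$ with $f_1+f_2=f$,
\[
\dim(\bar\cH_i^{f_1})+\dim(\bar\cH_{g-i}^{f_2})=(i-1+f_1)+((g-i)-1+f_2)=g-2+f,
\]
with the Weierstrass marked points contributing no extra dimension (since they form a finite set on each hyperelliptic curve). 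The self-clutching component is handled similarly. Hence $\dim(\Gamma\cap\partial\bar\cH_g)\leq g-2+f$, and since $\partial\bar\cH_g$ is a divisor, $\dim(\Gamma)\leq g-1+f$. Finally, because $\dim(\Gamma)=g-1+f>g-2+f\geq \dim(\Gamma\cap\partial\bar\cH_g)$, the generic geometric point of $\Gamma$ represents a \emph{smooth} hyperelliptic curve of genus $g$ and $p$-rank $f$, yielding the existence statement.

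The main obstacle is the boundary bookkeeping: one must correctly identify the irreducible components of $\partial\bar\cH_g$ (especially the non-compact-type self-clutching component for $p$ odd, which has a different combinatorial structure) and verify $p$-rank additivity along each, with extra care when $p=2$ where the hyperelliptic cover is wildly ramified and one should appeal to the Deuring--Shafarevich formula in place of Riemann--Hurwitz; this is where the separate reference to \cite{prieszhu} is needed. The inductive closure argument, that a $p$-rank stratum component on the boundary actually extends to a smooth curve with the prescribed $p$-rank, is the point at which all three ingredients (purity, boundary dimension count, and induction) must fit together tightly.
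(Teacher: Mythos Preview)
Your proposal is correct and matches the paper's approach: the paper does not give its own proof of this theorem but simply states that it is ``proven using the strategy of Theorem~\ref{TFVdG}'' and cites the original references, which is exactly what you have sketched.

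A few small sharpenings are worth noting. First, your ``Diaz-type bound'' for showing $\Gamma$ meets the boundary can be made concrete: the coarse moduli space of $\cH_g$ is affine (being a quotient of a configuration space of points in $\PP^1$), so any complete subvariety of $\cH_g$ has dimension $0$, which contradicts $\dim(\Gamma)\geq g-1+f\geq 1$ for $g\geq 2$. Second, your list of boundary divisors of $\bar\cH_g$ is missing one type: two hyperelliptic components of genera $i$ and $g-1-i$ meeting at \emph{two} nodes (a conjugate pair of non-Weierstrass points on each); this is a non-compact-type component, so the toric rank contributes $1$ to the $p$-rank and the inductive dimension count still yields $g-2+f$. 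You correctly flag the boundary bookkeeping as the delicate point, and indeed this extra component must be tracked. Finally, your remark about $p=2$ is well placed: the moduli space of Artin--Schreier covers has a genuinely different boundary stratification, which is why \cite{prieszhu} requires a separate argument.
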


\begin{remark}
For one of the Arizona Winter School projects in 2024, a group 
studied the $p$-rank stratification of double covers of a fixed elliptic curve \cite{AWSdoublecover}.
\end{remark}

\subsection{Realizing other $p$-torsion invariants for curves} \label{Srealizing}

We obtain more results about Question~\ref{Qmot3} by combining the methods described in Sections~\ref{Sg4ssnewproof}, 
\ref{SFVdG}, and \ref{Sinductive}.
Here are some applications.

\begin{corollary} \cite[Corollary~4.3]{Pssg=4}
For every prime $p$, every symmetric Newton polygon of height $2g$ having $p$-rank $f \geq g-4$ occurs on $\cM_g$.
\end{corollary}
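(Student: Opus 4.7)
The plan is to reduce to the $p$-rank $0$ case in low genus and then invoke Theorem~\ref{Tinductive}. Any symmetric Newton polygon $\xi$ of height $2g$ with $p$-rank $f$ decomposes uniquely as $\xi = \xi_0 \oplus \mathrm{ord}^f$, where $\xi_0$ is a symmetric Newton polygon of height $2h$ and $p$-rank $0$, with $h = g - f$. The hypothesis $f \geq g - 4$ forces $h \leq 4$, so $\xi_0$ lies in an explicit finite list: the empty polygon when $h=0$; $\mathrm{ss}$ when $h=1$; $\mathrm{ss}^2$ when $h=2$; $\sigma_3$ and $(1/3,2/3)$ when $h=3$; and $\sigma_4$, $(1/3,1/2,2/3)$, and $(1/4,3/4)$ when $h=4$.

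The first step is to verify, for each base polygon $\xi_0$ on this list, that there exists an irreducible component $\Gamma$ of $\cM_h[\xi_0]$ with $\mathrm{codim}(\Gamma, \cM_h) = cd_{\xi_0}$. For $h \leq 3$ this is known: the empty case is trivial; the strata $\sigma_1$, $\sigma_2$, $\sigma_3$ are handled respectively by the elementary theory of supersingular elliptic curves, by Serre \cite{serre82} (see also \cite{iko}) for genus $2$, and by Oort \cite{O:hypsup} for genus $3$; and for $(1/3,2/3)$ in $\cM_3$ the existence and the correct codimension follow from \cite[Theorem~5.12(2)]{O:hypsup} together with the fact that $\cT^\circ_3$ is open and dense in $\cA_3$, so the Newton polygon strata in $\cM_3$ inherit the expected codimensions of the corresponding strata in $\cA_3$. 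For $h=4$ there are exactly three cases, and these are the recent results highlighted in Section~\ref{S2BNPmain}: $(1/4,3/4)$ by \cite[Corollary~5.6]{APgen}; $(1/3,1/2,2/3)$ by \cite[Corollary~4.1]{Pssg=4}; and $\sigma_4$ by Theorem~\ref{T4supersingular}.

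With these base cases in hand, the second step is to apply Theorem~\ref{Tinductive} with $e = f$ to the component $\Gamma$ of $\cM_h[\xi_0]$. The conclusion produces an irreducible component $\Gamma_f$ of $\cM_g[\xi_0 \oplus \mathrm{ord}^f] = \cM_g[\xi]$ with $\mathrm{codim}(\Gamma_f, \cM_g) = cd_{\xi_0}$. In particular, $\cM_g[\xi]$ is non-empty, so $\xi$ occurs on $\cM_g$, as claimed.

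The main obstacle is verifying the codimension equality in the three $h=4$ base cases, because Theorem~\ref{Tinductive} requires an irreducible component of the expected dimension rather than merely a non-empty stratum. The lower bound $\dim \Gamma \geq 3h - 3 - cd_{\xi_0}$ comes from the purity theorem (Corollary~\ref{Cpurity3}) applied to a point in the boundary of $\overline{\cM}_h$, typically constructed by clutching together supersingular elliptic curves (or factors with the required lower-genus Newton polygons) via the morphisms of Section~\ref{Sboundary}. The matching upper bound is more delicate: one must control the locus in $\cT_h \cap \cA_h[\xi_0]$ parametrized by Jacobians of singular stable curves of compact type and show its dimension is strictly smaller than the expected dimension, so that every irreducible component of the intersection has a generic geometric point coming from a smooth curve. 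This is exactly the structure of the proof of Theorem~\ref{T4supersingular} for $\sigma_4$, and the parallel dimension counts for $(1/4,3/4)$ and $(1/3,1/2,2/3)$ are carried out in \cite[Corollary~5.6]{APgen} and \cite[Corollary~4.1]{Pssg=4}.
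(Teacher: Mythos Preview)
Your overall strategy---decompose $\xi = \xi_0 \oplus \mathrm{ord}^f$ with $h = g-f \leq 4$, establish the base cases in $\cM_h[\xi_0]$, then invoke Theorem~\ref{Tinductive}---matches the combination of methods the paper indicates. The gap is in your last paragraph, where you conflate two distinct roles of the dimension count. The argument you describe (bounding the decomposable locus inside $\cT_h \cap \cA_h[\xi_0]$ strictly below the expected dimension) is exactly the content of Theorem~\ref{T4supersingular}: it shows that the generic point of each component of $\cT_4 \cap \cA_4[\sigma_4]$ comes from a \emph{smooth} curve, hence that $\cM_4[\sigma_4]$ is non-empty with every component of dimension \emph{at least} $3$. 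It does \emph{not} give an upper bound on $\dim\Gamma$; nothing in that argument rules out a component of dimension $4$. Indeed, the remark immediately following Theorem~\ref{T4supersingular} says explicitly that the equality $\dim\cM_4[\sigma_4]=3$ is open in general and only known for certain primes. So for $\xi_0 = \sigma_4$ you have not verified the hypothesis $\mathrm{codim}(\Gamma,\cM_4) = cd_{\sigma_4}$ that Theorem~\ref{Tinductive} demands. (For $\xi_0 = (1/4,3/4)$ you are in better shape: this polygon is maximal among $p$-rank~$0$ polygons, so its stratum is open in $\cM_4^0$, and the exact codimension then follows from non-emptiness together with Theorem~\ref{TFVdG}. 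The intermediate case $(1/3,2/3)\oplus\mathrm{ss}$ also needs care.)

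Since the corollary asserts only non-emptiness of $\cM_g[\xi]$, the argument in \cite{Pssg=4} presumably uses a form of the inductive step whose input is merely $\mathrm{codim}(\Gamma,\cM_h) \leq cd_{\xi_0}$---which purity (Corollary~\ref{Cpurity3}) already gives once $\cM_h[\xi_0]$ is non-empty---and whose output is only non-emptiness of $\cM_g[\xi]$, not the stronger codimension equality. Your sketch does not make this distinction, and as written it claims more than the cited results deliver.
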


\begin{corollary} \cite{dusanrachel}
For every prime $p$, there exists a smooth curve of genus $g$ with $p$-rank $0$ and $a$-number at least $2$.
\end{corollary}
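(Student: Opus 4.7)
The strategy is to establish the existence by induction on $g$, combining a dimension count in $\cA_g$ (as in the proof of Theorem~\ref{T4supersingular}) with the boundary-based induction underlying Theorem~\ref{TFVdG}.

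First, let $\mathcal{S}_g \subset \cA_g$ denote the reduced locally closed substack of p.p.\ abelian varieties of dimension $g$ with $p$-rank $0$ and $a$-number at least $2$. I would compute its codimension via the Ekedahl--Oort stratification: ``$p$-rank $0$'' corresponds to $\nu_1=0$ and ``$a\geq 2$'' to $\nu_g\leq g-2$, and subject to $\nu_i\leq \nu_{i+1}\leq \nu_i+1$ the stratum of largest dimension satisfying both constraints is $\nu=(0,1,2,\ldots,g-2,g-2)$. By Theorem~\ref{Teodim}, this stratum has dimension $\sum \nu_i=(g-2)(g+1)/2$, so $\mathrm{codim}(\mathcal{S}_g,\cA_g)=g+1$. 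Combined with $\mathrm{codim}(\cT_g,\cA_g)=(g-2)(g-3)/2$ and the smoothness of $\cA_g$ as a Deligne--Mumford stack, every non-empty irreducible component of $\cT_g\cap\mathcal{S}_g$ has dimension at least $(3g-3)-(g+1)=2g-4$.

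Second, for the base case $g=3$, the table in Section~\ref{SStablesNPEO} lists three pqp $\mathrm{BT}_1$ group schemes in $\cA_3$ with $p$-rank $0$ and $a\geq 2$, namely $I_{3,2}$, $I_{1,1}\oplus I_{2,1}$, and $(I_{1,1})^3$. By the references compiled in Section~\ref{Smallg}, for every prime $p$ the stratum $I_{3,2}$ is realized by a smooth genus-$3$ curve, settling the base case. For the inductive step from $g-1$ to $g$, choose a smooth curve $C$ of genus $g-1$ with $p$-rank $0$ and $a\geq 2$ (by induction) and a supersingular elliptic curve $E$, and clutch them via $\kappa_{g-1;1,\,1;1}$ to obtain a stable curve $D$ of compact type in $\Delta_1\subset\overline{\cM}_g$. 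By \eqref{eqblr} and \eqref{eqfadditive}, $\mathrm{Jac}(D)\cong \mathrm{Jac}(C)\times E$ has $p$-rank $0$ and $a$-number at least $3$, so $[\mathrm{Jac}(D)]\in\cT_g\cap\mathcal{S}_g$, and the intersection admits an irreducible component $W$ of dimension at least $2g-4$ passing through $[\mathrm{Jac}(D)]$.

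The main obstacle, which is the hard step, is to show that the generic point of $W$ represents a \emph{smooth} curve of genus $g$ rather than a stable singular curve of compact type. Unlike the proof of Theorem~\ref{T4supersingular}, a crude dimension count is not decisive: for each $1\leq i\leq \lfloor g/2\rfloor$, the image of $\overline{\cM}_{i;1}^{0}\times \overline{\cM}_{g-i;1}^{0}$ under the clutching morphism contributes a boundary stratum of dimension $2g-4$, matching the lower bound on $\dim W$, and on any such product the inequality $a_1+a_2\geq 2$ is automatic because $p$-rank $0$ forces $a_i\geq 1$ on each factor. The plan to overcome this is to exploit the strict inequality $a(\mathrm{Jac}(D))\geq 3>2$: the point $D$ lies in a strictly lower-dimensional Ekedahl--Oort substratum of $\overline{\cM}_g^{0,\,a\geq 2}$ than its generic point would, so adapting the deformation-theoretic argument behind Theorem~\ref{Tinductive} together with upper semi-continuity of $a$ should produce a one-parameter family through $D$ inside $\overline{\cM}_g^{0,\,a\geq 2}$ whose generic fiber smooths the node and lands in the open substratum $\cM_g^{0,\,a=2}$. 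Verifying this transversality amounts to a careful analysis of the Dieudonn\'e module of $\mathrm{Jac}(D)[p]$ (Section~\ref{subsecdefcartier}) at the clutching node, and is where the bulk of the work lies.
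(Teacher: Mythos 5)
The paper cites this corollary without reproducing a proof, attributing it to \cite{dusanrachel} and noting that it follows by combining the methods of Sections~\ref{Sg4ssnewproof}, \ref{SFVdG}, and \ref{Sinductive}; the decisive ingredient is the dimension-count argument of Theorem~\ref{T4supersingular}. Your setup is correct: the codimension of $\mathcal{S}_g$ in $\cA_g$ is $g+1$ (via the top Ekedahl--Oort stratum $[0,1,\ldots,g-2,g-2]$), so every non-empty component $W$ of $\cT_g\cap\mathcal{S}_g$ has $\dim W\geq 2g-4$, and clutching shows $\cT_g\cap\mathcal{S}_g\neq\emptyset$.

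The genuine gap is your assessment that ``a crude dimension count is not decisive,'' which traces to a mismatch between moduli spaces. You compute that the clutching image of $\overline{\cM}_{i;1}^0\times\overline{\cM}_{g-i;1}^0$ has dimension $2g-4$ and note it ``matches the lower bound on $\dim W$,'' but the boundary strata have dimension $2g-4$ \emph{in $\overline{\cM}_g$}, whereas $W$ lives in $\cA_g$. What matters is the dimension of the Torelli image of those boundary strata, and this is strictly smaller: $\tau_g$ sends a clutched curve to $\mathrm{Jac}(C_1)\times\mathrm{Jac}(C_2)$ independently of the gluing points, so the marked-point directions are collapsed. The decomposable part of $\cT_g\cap\mathcal{S}_g$ is therefore covered by the images of $\cM_{g_1}^0\times\cdots\times\cM_{g_k}^0$ over partitions $g=g_1+\cdots+g_k$ with $k\geq 2$; since $\dim\cM_1^0=0$ and $\dim\cM_{g_j}^0=2g_j-3$ for $g_j\geq 2$, the maximum is $2g-5$ (from the partition $(1,g-1)$) once $g\geq 3$. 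Since $2g-5<2g-4\leq\dim W$, the generic point of $W$ is indecomposable, hence the Jacobian of a smooth curve with $p$-rank $0$ and $a\geq 2$. Measured in the right space, the count is decisive, and the deformation-theoretic ``hard step'' you set up is unnecessary. (You also do not need the base case and induction for nonemptiness: a chain of $g$ supersingular elliptic curves has superspecial Jacobian, so $p$-rank $0$ and $a=g\geq 2$ directly.)
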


See \cite[Corollary~6.4]{dusan2} for some additional results when $p=2$.

\subsection{Mass formula for non-ordinary curves} \label{Smassformula}

In this section, we describe another way to study the geometry of the non-ordinary locus of $\cM_g$.

Suppose $\xi$ is a $p$-torsion invariant such that $\cM_g[\xi]$ is non-empty,
and suppose that we know the dimension of (the irreducible components of) $\cM_g[\xi]$.
To fully answer Question~\ref{Qmot3}, we would like to know more about the geometry of $\cM_g[\xi]$.

One framework to address this question is with a mass formula.
Here we are motivated by the Eichler--Deuring mass formula which states that 
\begin{equation} \label{Eeichlerdeuring}
\sum_{[E] ss} \frac{1}{\#\mathrm{Aut}(E)}= \frac{p-1}{24}, 
\end{equation}
where the sum is over the isomorphism classes of supersingular curves $E$ over $\bar{\mathbb{F}}_p$.  

One generalization of \eqref{Eeichlerdeuring} is in \cite{YuYu} and \cite{KaremakerYobukoYu}, where the authors provide 
mass formulae for the number of supersingular p.p.\ abelian varieties of dimensions $2$ and $3$.

Generalizing in a different direction, we consider this question.

\begin{question}  \label{Qmassformula}
Given a one dimensional family $\mathcal{F}$ of curves of genus $g$, whose generic point represents an ordinary curve,
determine a mass formula for the number of non-ordinary curves in $\mathcal{F}$.
\end{question}

In \cite[Proposition~3.2, Theorem~3.3]{iko}, the authors essentially answer Question~\ref{Qmassformula} when $\mathcal{F}$
is the one dimensional family of curves of genus two that have an automorphism of order $3$ (resp.\ $4$).

\begin{remark} \label{CavPriesmoonen}
In \cite{cavalieripries}, Cavalieri and I answer Question~\ref{Qmassformula} when $\mathcal{F}$
is a one dimensional family of curves that are cyclic covers of $\mathbb{P}^1$.
We intersect $\mathcal{F}$ with the codimension one locus of non-ordinary curves, 
using cycle classes in the tautological ring of $\cA_g$.
The cycle class for the non-ordinary locus is $(p-1) \lambda_1$ where $\lambda_1$ is the first Chern class of the Hodge bundle.
This yields a mass formula for the number of non-ordinary curves in $\mathcal{F}$.

This mass formula depends on the intersection of $\mathcal{F}$ with $\partial \cM_g$.
In particular, we compute it when the covers in $\mathcal{F}$ are branched at $4$ points 
(namely, of the form $y^m=x^{a_1}(x-1)^{a_2}(x-t)^{a_3}$) \cite[Corollary~6.1]{cavalieripries} 
or when $\mathcal{F}$ is a family of hyperelliptic curves where only one branch point varies 
(namely, of the form $y^2=h(x)(x-t)$) \cite[Corollary~5.4]{cavalieripries}.
In \cite[Corollary~6.4]{cavalieripries}, we provide additional information for the special families found in Example~\ref{Mspecial}.
\end{remark}

\section{Open questions on $p$-torsion strata of $\cM_g$} \label{Squestionstrata}

Earlier chapters include several open questions about the existence of smooth curves having a given $p$-torsion type;
see Questions~\ref{Qmot2}, \ref{Qmot2general}, and \ref{Qopenss}. 
Question~\ref{QpurityEO} is about purity for the E--O stratification on $\cA_g$.

Question~\ref{Qmot3} is a general question about the geometry of the $p$-torsion strata of $\cM_g$.
We discuss refinements of Question~\ref{Qmot3} which may be more tractable.

\subsection{The generic Newton polygon for $p$-rank $0$ curves}

Let $g \geq 2$.  The $p$-divisible group $G_{g-1,1} \oplus G_{1,g-1}$, 
has Newton polygon slopes $1/g$ and $(g-1)/g$.  
This is the maximal symmetric Newton polygon of height $2g$ with $p$-rank $0$.  If it occurs for a smooth curve of genus $g$, then 
it is the generic Newton polygon for at least one irreducible component of the $p$-rank $0$ stratum $\cM_g^0$.

\begin{question} \label{Qslopesprank0}
Suppose $p$ is prime and $g \geq 4$.
Suppose $\Gamma$ is an irreducible component of the $p$-rank $0$ stratum $\cM_g^0$ of $\cM_g$.
Does the Newton polygon of the generic geometric point of $\Gamma$ have slopes $1/g$ and $(g-1)/g$?
\end{question}

Question~\ref{Qslopesprank0} is closely related to Question~\ref{Qanumber1}.
There are partial results for $g=4$ in \cite{APgen} and 
for $g=5,6$ under congruence conditions on $p$ in \cite{LMPT2}.

\subsection{Number of irreducible components of the strata} \label{Squestioncomponents}

If $\xi$ is a Newton polygon that is not supersingular,
then the locus $\cA_g[\xi]$ is irreducible \cite{CO11}.
Similarly, if $\xi$ is an Ekedahl--Oort type that is not fully contained in the supersingular locus $\cA_g[\sigma_g]$, then 
the locus $\cA_g[\xi]$ is irreducible \cite{EVdG}.

Suppose that $\cM_g[\xi]$ is non-empty, 
meaning that there exists a smooth curve of genus $g$ defined over $\bar{\FF}_p$ having $p$-torsion invariant $\xi$.
In this case, we can ask more refined questions about the geometry of $\cM_g[\xi]$, such as the number of its irreducible 
components and their dimensions.
In almost all cases, the number of irreducible components of $\cM_g[\xi]$ is not known.

\begin{question} \label{Qnumbercomponents}
Suppose $g \geq 4$.  Let $\xi$ denote a Newton polygon (resp. Ekedahl--Oort type) 
that is not ordinary, and that is not supersingular 
(resp.\ whose stratum is not fully contained in the supersingular locus $\cA_g[\sigma_g]$).
If $\cM_g[\xi]$ is not empty, then what is the number of its irreducible components?
\end{question}

The answer to Question~\ref{Qnumbercomponents} might depend on $p$.
This question might be most tractable for the $p$-rank $f$ stratum, because the dimension of $\cM_g^f$ is known ($2g-3+f$).
The answer is not known even for the case $f=g-1$.

\begin{example} \label{Enonordcomp}
The non-ordinary locus of $\cM_g$
is closed of codimension $1$ in $\cM_g$.
When $g=2$ (resp.\ $g=3$), this locus is irreducible.
For $g \geq 4$, the number of irreducible components of the non-ordinary locus of $\cM_g$ is not known. 
\end{example}

Question~\ref{Qnumbercomponentsnonord} is related to Example~\ref{Enonordcomp} which 
is about the order of magnitude of $\#\cM_g[\ooo^{g-1} \oplus \sss](\FF_{p^r})$, 
where $\ooo^{g-1} \oplus \sss$ is the almost ordinary Newton polygon.

\subsection{Intersection of Newton polygon strata and the boundary}

In Section~\ref{Squesexistss}, we mentioned \cite[Conjecture~8.5.7]{oortpadova05}, which is about the existence of smooth curves 
whose Newton polygons have slopes with small denominators.  Here we provide some underlying motivation for that conjecture.
The idea is to try to deform singular curves of compact type to smooth curves while staying within the same Newton polygon stratum.
It turns out to be difficult to do this and the first open case is when $g=3$.

\begin{question} \label{Qdeformg3}
Determine the intersection of the closure of $\cM_3[\sigma_3]$ (the supersingular locus of $\cM_3$) with the boundary of $\cM_3$.
\end{question}

Note that each irreducible component $\Gamma$ of $\cM_3[\sigma_3]$ has dimension $2$.  
The closure of $\Gamma$ intersects $\Delta_1$ (by Theorem~\ref{Tdiaz}) and such an intersection has dimension $1$.

However, $R:=\bar{\cM}_{1;1}[ss] \times \bar{\cM}_{2;1}[\sigma_2]$ has dimension two. 
A point of $R$ represents a supersingular elliptic curve $E$ clutched together (at its marked point $\mathcal{O}_E$) 
with a supersingular curve $C$ of genus $2$ (at a marked point $P$).  
Note that $\cM_{2,1}[\sigma_2]$ has dimension $2$, representing the choice of the marked curve $(C,P)$.
So a starting point with Question~\ref{Qdeformg3} is to determine which points of $R$ can be deformed 
to a smooth curve of genus $3$ that is supersingular.

\subsection{Mass formula}

There are many possible ways to generalize the material in Section~\ref{Smassformula}.  For example, one can 
compute the mass formula in Question~\ref{Qmassformula} for other one dimensional families $\mathcal{F}$ of curves 
(whose generic point represents an ordinary curve).
Alternatively, one can generalize Question~\ref{Qmassformula} to the case of intersections of $d$-dimensional families of curves 
with Newton polygon strata that have codimension $d$ in $\cM_g$.

\chapter{Curves and abelian varieties with cyclic actions} \label{C4Aspecial}

\section{Overview: cyclic actions on curves and abelian varieties}

In this chapter, we focus on curves $C$ and abelian varieties $X$ that have an automorphism of order $m$.
We assume throughout that $m \geq 2$ and $\mathrm{char}(k) \nmid 2m$.

In Section~\ref{Scurvecyclic}, we consider curves $C$ that are branched cyclic covers of the projective line. 
The moduli spaces for these covers of curves are examples of Hurwitz spaces.
The irreducible components of these Hurwitz spaces are indexed by the monodromy data of the cover
(the degree $m$, the number of branch points $N$, and the inertia type $a$).
The dimension of each component of the Hurwitz space is $N-3$.

In Section~\ref{Sabvarcyclic},
we consider abelian varieties $X$ having an automorphism of order $m$, with the restriction that the
trivial eigenspace for the $\mu_m$-action is zero.
The moduli spaces for these abelian varieties are examples of Deligne--Mostow Shimura varieties.
For $m >2$, they are unitary Shimura varieties of PEL-type.  A more technical perspective on this material can be found in the optional 
Section~\ref{SShimura}.

Using a generalization of the Torelli morphism, it is possible to map each such Hurwitz space to one of these Shimura varieties.
When the image of the Torelli morphism is open and dense in a component of the Shimura variety, the family is called \emph{special}.
Section~\ref{Smoonenspecial} contains some examples of special families.

\section{Hurwitz spaces for cyclic covers of the projective line} \label{Scurvecyclic}

Let $\zeta_m$ be a fixed $m$th root of unity in $k$.
Let $\pi: C \to \mathbb{P}^1$ be a branched cyclic cover of degree $m$.
Let $\tau \in \mathrm{Aut}(\pi: C \to \mathbb{P}^1)$ be an automorphism of order $m$. 

\subsection{Equations of cyclic covers of the projective line}\label{prelim_curve} 

\begin{lemma}
Suppose $C$ is a curve that admits a $\mu_m$-cover $\pi:C \to \PP^1$.
Let $N$ be the number of branch points of $\pi$.
Then $C$ has an equation of the form 
\begin{equation} \label{Esuperelliptic}
y^m = \prod_{i = 1}^{N}(x-b_i)^{a_i},
\end{equation}
for some distinct values $b_1, \ldots, b_N \in k$ and 
some integers $a_1, \ldots, a_N$ such that $1 \leq a_i < m$ and $\sum_{i=1}^N a_i \equiv 0 \bmod m$.
Also, the chosen automorphism $\tau$ of order $m$ acts by $\tau((x,y)) = (x, \zeta_m y)$.
\end{lemma}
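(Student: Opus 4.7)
The plan is to apply Kummer theory to the function field extension $k(C)/k(x)$ and then normalize. Since $\mu_m \subset k$ and $\mathrm{char}(k) \nmid m$, the cyclic cover $\pi : C \to \PP^1$ of degree $m$ corresponds to a Kummer extension: there exists $y \in k(C)^*$ with $k(C) = k(x)(y)$ and $y^m = f(x) \in k(x)^*$, where $f$ is well-defined modulo $(k(x)^*)^m$. Furthermore $\tau(y) = \zeta y$ for some primitive $m$-th root of unity $\zeta \in \mu_m$, because $\tau(y)$ must also be an $m$-th root of $f(x)$ and $\tau$ has order $m$.

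Next I would normalize the branch locus. The set of branch points of $\pi$ is a finite subset of $\PP^1$, so after a Möbius transformation on $\PP^1$ we may assume $\infty \notin B$. Factoring $f(x)$ over the algebraically closed field $k$ gives $f(x) = c \prod_{i=1}^{r}(x-b_i)^{e_i}$ for distinct $b_i \in k$ and $e_i \in \ZZ$. Since $k$ is algebraically closed, $c$ is an $m$-th power, and replacing $y$ by $y/\sqrt[m]{c}$ removes $c$. Writing $e_i = q_i m + a_i$ with $0 \leq a_i \leq m-1$, we substitute $y \leftarrow y \cdot \prod_i (x-b_i)^{-q_i}$; this is an element of $k(C)^*$, still generates $k(C)/k(x)$, and the new $y$ satisfies $y^m = \prod_i (x-b_i)^{a_i}$. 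Factors with $a_i = 0$ can be deleted, and the remaining $b_i$ (with $1 \leq a_i \leq m-1$) are exactly the branch points in $\AA^1$: indeed, the inertia at a point above $b_i$ has order $m/\gcd(m,a_i)$, which is $>1$ iff $a_i \neq 0$. Since we arranged $\infty \notin B$, the number of remaining factors equals $N$.

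For the sum condition, the order of vanishing of $f(x) = \prod_{i=1}^{N}(x-b_i)^{a_i}$ at $\infty$ is $-\sum a_i$. Ramification at $\infty$ in the Kummer extension $y^m = f$ is governed by this valuation modulo $m$, and since $\infty$ is unramified we must have $\sum_{i=1}^{N} a_i \equiv 0 \bmod m$.

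Finally I would adjust $y$ so that the action of the prescribed $\tau$ is $\tau(y) = \zeta_m y$ (rather than a different primitive root $\zeta = \zeta_m^s$ with $\gcd(s,m)=1$). Pick $t$ with $st \equiv 1 \bmod m$ and replace $y$ by $y^t$; since $\gcd(t,m)=1$ this element still generates $k(C)/k(x)$, and $\tau(y^t) = \zeta_m^{st} y^t = \zeta_m y^t$. The new equation $(y^t)^m = \prod_i (x-b_i)^{ta_i}$ has exponents $ta_i$, which after reducing mod $m$ as in the previous paragraph give new integers $a_i' \in \{1,\dots,m-1\}$ with $\sum a_i' \equiv t\sum a_i \equiv 0 \bmod m$; so the normal form is preserved. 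The only genuinely subtle point is ensuring that the successive substitutions on $y$ do not spoil either the monomial shape of the right-hand side or the fact that $y$ still generates the whole extension, and the verification above handles both; the rest is bookkeeping with Kummer theory.
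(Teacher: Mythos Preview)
Your proof is correct and follows essentially the same Kummer-theoretic approach as the paper: obtain $y^m=f(x)$ from Kummer theory, normalize so that $\infty$ is unramified and each exponent lies in $\{1,\dots,m-1\}$, and identify the remaining factors with the branch points. The only differences are cosmetic: you justify $\sum a_i\equiv 0\bmod m$ via the valuation of $f$ at $\infty$, whereas the paper appeals to the relation in the fundamental group of $\PP^1-B$; and you add an extra paragraph making the action $\tau(y)=\zeta_m y$ match the prescribed $\zeta_m$, a point the paper's proof leaves implicit.
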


\begin{proof}
The cover $\pi$ is determined by an inclusion of $k(x)$ into the function field of $C$. 
By Kummer theory, there is an affine equation for $C$ of the form $y^m=f(x)$, for some rational function $f(x) \in k(x)$.
After some changes of coordinates, we can suppose that $f(x) \in k[x]$ is a polynomial and
that each root of $f(x)$ has multiplicity less than $m$.
After a fractional linear transformation, we can suppose that $\pi$ is not branched at $\infty$.
Then the roots of $f(x)$ are the set $B$ of branch points of $\pi$ and we label these
as $b_1, \ldots, b_N$.  
Thus there are integers $a_1, \ldots, a_N$ such that $1 \leq a_i < m$ such that \eqref{Esuperelliptic} is satisfied.
The fact that $\sum_{i=1}^N a_i \equiv 0 \bmod m$ comes from 
the topological description of the fundamental group of ${\mathbb{P}^1} - B$.
\end{proof}

\begin{definition}
Fix integers $m\geq 2, N\geq 2$ and an $N$-tuple of positive integers $a=(a_1,\dots, a_N)$.
Then $a$ is an {\em inertia type} for $m$ and $\gamma=(m,N,a)$ is a {\em monodromy datum} if
\begin{enumerate}
\item $a_i\not\equiv 0\bmod m$, for each $1 \leq i \leq N$, 
\item $\gcd(m, a_1,\dots, a_N)=1$, and
\item $\sum_{i=1}^N a_i \equiv 0 \bmod m$.
\end{enumerate}
\end{definition}

Fix a monodromy datum $\gamma=(m,N,a)$. 
Let $U\subset (\mathbb{A}^1)^N$ be the locus of points where no two of the coordinates are equal.
Let $C \to U$ be the relative curve
whose fiber at each point $t=(b_1,\dots, b_N)\in U$ is the smooth projective curve with affine equation 
\begin{equation} \label{EformulaC1}
y^m=\prod_{i=1}^N(x-b_i)^{a_i}.
\end{equation}

The function $x$ on $C$ yields a map $C \to {\mathbb P}^1_U$
and there is a $\mu_m$-action on $C$ over $U$ given by $\zeta\cdot (x,y)=(x,\zeta\cdot y)$ for all $\zeta\in\mu_m$.
Thus $C \to {\mathbb P}^1_U$ is a $\mu_m$-cover of (relative) curves. 

In addition, 
one can move three of the branch points to $0,1,\infty$.
Then we take $U \subset (\mathbb{A}^1 - \{0,1\})^{N-3}$ to be the 
locus of points where no two of the coordinates are equal.
In that case, \eqref{EformulaC1} simplifies to:
\begin{equation} \label{EformulaC2}
y^m=x^{a_1}(x-1)^{a_2}\prod_{i=3}^{N-1}(x-b_i)^{a_i}.
\end{equation}

For a closed point $t\in U$, let $C_t$ denote the smooth projective curve 
with affine equation \eqref{EformulaC1} (or \eqref{EformulaC2}).
There is a $\mu_m$-cover $C_t\to \PP^1$ taking $(x,y) \mapsto x$; it is
branched at $N$ points $b_1,\dots , b_N$ in $\PP^1$, and 
has local monodromy $a_i$ at $b_i$.
Let $J_t$ be the Jacobian of $C_t$.

If $a_i > 1$, then the affine curve \eqref{EformulaC1} (or \eqref{EformulaC2}) has a singularity at the point $(b_i,0)$.
Finding the equation for the blow-up is a long process which is best to avoid.

\subsection{The genus and the signature}

\begin{lemma} \label{Lspecialgenus} [Riemann--Hurwitz formula]
For all $t \in U$, the curve $C_t$ is irreducible.  
Its genus $g$ is $(m-1)(N-2)/2$ if $m$ is prime.
More generally, the genus is:
\begin{equation} \label{Egenus}
g=g(m,N,a)=1+\frac{(N-2)m-\sum_{i=1}^N\gcd(a_i,m)}{2}.
\end{equation}
\end{lemma}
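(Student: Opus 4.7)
The plan has three parts: irreducibility of $C_t$, a local analysis of the ramification of $\pi:C_t\to\mathbb{P}^1$ at each branch point (including a check at $\infty$), and then an application of Theorem~\ref{Triemannhurwitz}.

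First I would prove irreducibility via Kummer theory. The function field of the normalization of the affine curve $y^m=f(x)$ with $f(x)=\prod_{i=1}^N(x-b_i)^{a_i}$ is $k(x)[y]/(y^m-f(x))$, and this is a field (i.e.\ the cover is irreducible) iff $f(x)$ is not a $d$-th power in $k(x)^\times$ for any divisor $d>1$ of $m$. Because the $b_i$ are distinct and $0<a_i<m$, the factorization of $f$ shows that $f$ is a $d$-th power in $k(x)^\times$ exactly when $d\mid a_i$ for every $i$. The hypothesis $\gcd(m,a_1,\dots,a_N)=1$ rules this out for $d>1$, so $C_t$ is irreducible.

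Next I would compute the ramification. Fix $i$ and set $d_i=\gcd(a_i,m)$, $m=d_im_i'$, $a_i=d_ia_i'$ with $\gcd(m_i',a_i')=1$. In the complete local ring at $b_i$, write $f(x)=u(x)(x-b_i)^{a_i}$ with $u$ a unit, choose $w$ with $w^{m}=u$, and set $z=y/w$, so $z^m=(x-b_i)^{a_i}$. Since $\gcd(m_i',a_i')=1$, there exist integers $r,s$ with $ra_i'+sm_i'=1$; then $t:=z^r(x-b_i)^s$ satisfies $t^{m_i'}=x-b_i$ and $z=t^{a_i'}\cdot(\text{unit})$. This exhibits $d_i$ distinct points of $C_t$ above $b_i$ (one for each choice of an $m$-th root $w$ of $u$ modulo the $\mu_{m_i'}$-stabilizer), each with ramification index $e=m/d_i$. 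For the point at infinity, the condition $\sum a_i\equiv 0\bmod m$ means that with $\bar x=1/x$ the equation $y^m=f(x)$ can be rewritten, after clearing $\bar x^{-\sum a_i}$ via the substitution $y=\bar y\,\bar x^{-\sum a_i/m}$, as $\bar y^m=\prod(1-b_i\bar x)^{a_i}$, whose right-hand side is a unit at $\bar x=0$; hence $\pi$ is unramified at $\infty$ (and in fact has $m$ unramified points there, whose count $\gcd(m,\sum a_i)=m$ is consistent).

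Finally, Theorem~\ref{Triemannhurwitz} applied to the tame cover $\pi:C_t\to\mathbb{P}^1$ of degree $m$ (tameness holds because $\mathrm{char}(k)\nmid 2m$) gives
\[
2g-2 \;=\; m(2\cdot 0-2)+\sum_{i=1}^N d_i\!\left(\frac{m}{d_i}-1\right) \;=\; -2m+Nm-\sum_{i=1}^N\gcd(a_i,m),
\]
which rearranges to the claimed formula \eqref{Egenus}. When $m$ is prime, every $a_i$ with $0<a_i<m$ is coprime to $m$, so $\gcd(a_i,m)=1$ for each $i$ and the formula collapses to $1+\tfrac{(N-2)m-N}{2}=\tfrac{(N-2)(m-1)}{2}$. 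The only subtle step is the local analysis at the branch points; the change of variables involving the $m$-th root $w$ of the local unit $u$ is the one piece that requires care, but it is routine once the Bézout identity $ra_i'+sm_i'=1$ is used to extract a uniformizer $t$ downstairs from the two functions $z$ and $x-b_i$.
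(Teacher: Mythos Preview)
Your proof is correct and follows exactly the approach the paper intends: the paper does not supply a proof of Lemma~\ref{Lspecialgenus} at all, labeling it simply ``[Riemann--Hurwitz formula]'' and relying on the earlier lemma after Fact~\ref{Factsup}, which already records that above $b_i$ there are $\gcd(m,a_i)$ points each with ramification index $m/\gcd(m,a_i)$, and then applies Theorem~\ref{Triemannhurwitz}. Your write-up fills in the irreducibility argument via Kummer theory and the local uniformizer computation that the paper leaves implicit, but the route is the same.
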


The Jacobian $J_t$ and all the cohomology groups of $C_t$ are modules for the group ring $\ZZ[\mu_m]$.
Thus they decompose into eigenspaces under the $\mu_m$-action.\footnote{We are implicitly using 
the fact that the family in \eqref{EformulaC1} can be defined over $\ZZ[1/m]$, and that
one can work over $\CC$ to calculate the dimensions of these eigenspaces.} 
Let $V$ be the first Betti cohomology group $H^1(C_t(\CC), \QQ)$. 
Let $V^+=H^0(C_t(\CC), \Omega^1_{C_t})$.

The data of a $\mu_m$-cover includes an inclusion of $\mu_m$ in $\mathrm{Aut}(C_t)$.
There is an induced action of $\mu_m$ on $V^+$.
Recall that we fixed an $m$th root of unity $\zeta_m$.
For $0 \leq n \leq m-1$, let $L_n$ denote the subspace of $\omega \in V^+$ such that 
$\zeta_m \cdot \omega = \zeta_m^n \omega$.  
The dimension of $L_n$ is independent of the choice of $t\in U$. 
The subspace $L_0$ is trivial since $C_t$ is a $\mu_m$-cover of $\PP^1$.
There is a decomposition:
\[V^+=\oplus_{1 \leq n \leq m-1} L_n.\]

\begin{definition}
Let $\cf_n = \mathrm{dim}(L_n)$. 
The \emph{signature type} of the monodromy datum $\gamma=(m,N,a)$ is 
\[\cf=(\cf_1, \ldots, \cf_{m-1}).\]
\end{definition}

Note that $\sum_{n=1}^{m-1} \cf_n= g$.

For any $q\in \QQ$, let $\langle q\rangle$ denote the fractional part of $q$. 

\begin{lemma} \label{Lsignature} [Hurwitz, Chevalley-Weil] 
If $1 \leq n \leq m-1$, then
\begin{equation}\label{DMeqn}
\cf_n= -1+\sum_{i=1}^N\langle\frac{-na(i)}{m}\rangle
\end{equation}
\end{lemma}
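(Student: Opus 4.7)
My plan is to exhibit each eigenspace $L_n$ as the global sections of a line bundle on $\PP^1$ and then apply Riemann--Roch on $\PP^1$.

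\emph{Step 1 (Eigenspace decomposition).} Let $\sigma \in \mu_m$ act on $C_t$ by $(x,y) \mapsto (x, \zeta_m y)$. Then $\sigma^\ast$ acts on $dx/y^k$ by $\zeta_m^{-k}$, so every meromorphic 1-form on $C_t$ decomposes uniquely as $\sum_{k=0}^{m-1} g_k(x) \,dx/y^k$ with $g_k \in k(x)$, and the $\zeta_m^n$-eigenspace $L_n$ is identified with the space of rational functions $g \in k(x)$ for which $\omega_n := g(x)\,dx/y^{m-n}$ is holomorphic on $C_t$.

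\emph{Step 2 (Local analysis).} At a point $P_i$ of $C_t$ above $b_i$, set $d_i = \gcd(m,a_i)$ and $e_i = m/d_i$; a local uniformizer $t$ satisfies $x-b_i = u\,t^{e_i}$ and $y = v\, t^{a_i/d_i}$ for units $u,v$. Thus $\mathrm{ord}_{P_i}(dx) = e_i-1$ and $\mathrm{ord}_{P_i}(y^{m-n}) = (m-n)a_i/d_i$. Requiring $\mathrm{ord}_{P_i}(\omega_n) \geq 0$ together with $\mathrm{ord}_{b_i}(g) \in \ZZ$ yields $\mathrm{ord}_{b_i}(g) \geq \lfloor (m-n)a_i/m \rfloor$. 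Above $\infty$ the cover is unramified (because $m \mid \sum a_i$), with $\mathrm{ord}(y) = -k$ where $k = \sum_i a_i/m \in \ZZ$; regularity of $\omega_n$ at the points over $\infty$ translates to $\deg g \leq (m-n)k - 2$.

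\emph{Step 3 (Riemann--Roch on $\PP^1$).} The allowed $g$ are precisely the sections of $\mathcal{O}_{\PP^1}(D_n)$ for the divisor
\[
D_n \;=\; -\sum_{i=1}^N \lfloor (m-n)a_i/m \rfloor\,[b_i] \;+\; \bigl((m-n)k - 2\bigr)[\infty].
\]
A short computation gives
\[
\deg D_n \;=\; \sum_{i=1}^N \bigl((m-n)a_i/m - \lfloor (m-n)a_i/m \rfloor\bigr) - 2 \;=\; \sum_{i=1}^N \langle -n a_i/m \rangle - 2,
\]
where the last equality uses $\langle (m-n)a_i/m \rangle = \langle -na_i/m \rangle$ since $a_i \in \ZZ$. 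The hypothesis $\gcd(m, a_1, \ldots, a_N) = 1$ forces $\sum_i \langle -na_i/m \rangle \geq 1$ for $1 \leq n \leq m-1$ (otherwise $m$ would divide $na_i$ for every $i$, making $m/\gcd(m,n)$ divide all $a_i$ and also $m$, contradicting the gcd condition). Therefore $\deg D_n \geq -1$ in all cases, and the standard count $\dim H^0(\PP^1, \mathcal{O}(D_n)) = \max(0, \deg D_n + 1)$ collapses to the claimed formula $\cf_n = -1 + \sum_i \langle -na_i/m \rangle$.

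The main technical obstacle is the local rounding in Step 2: one must verify that the real inequality $\mathrm{ord}_{b_i}(g) \geq (m-n)a_i/m - 1 + d_i/m$ rounds up to exactly $\lfloor (m-n)a_i/m \rfloor$ rather than to the next integer. Writing $(m-n)a_i \equiv r_i \pmod m$ with $0 \leq r_i < m$ and using $d_i \mid r_i$, one checks that $r_i + d_i \leq m$, so $\lceil (r_i + d_i)/m \rceil = 1$, giving the correct rounding. As a consistency check, summing $\cf_n$ over $1 \leq n \leq m-1$ recovers $\sum_n \cf_n = -(m-1) + \sum_i (m-d_i)/2$, which equals the Riemann--Hurwitz genus of Lemma~\ref{Lspecialgenus}.
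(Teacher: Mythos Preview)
The paper does not actually prove this lemma; it records the formula as a classical result attributed to Hurwitz and Chevalley--Weil, with no argument given. So there is no ``paper proof'' to compare against, only your direct computation.

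Your proof is correct. The identification of $L_n$ with the holomorphic forms of shape $g(x)\,dx/y^{m-n}$ matches the paper's eigenspace convention, the local orders at the $b_i$ and at $\infty$ are computed correctly, and the rounding argument in Step~2 (using $d_i\mid r_i$ and $d_i<m$ to get $0<r_i+d_i\le m$) is the right way to pin down the floor. Your observation that $\sum_i\langle -na_i/m\rangle$ is a nonnegative integer, and that it cannot vanish without forcing $m/\gcd(m,n)$ to divide every $a_i$, is exactly what is needed to drop the $\max(0,-)$ from the Riemann--Roch count on $\PP^1$.

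For context, the Chevalley--Weil approach that the citation points to is representation-theoretic: one computes the character of the $\mu_m$-representation on $H^0(C,\Omega^1)$ via the equivariant Riemann--Roch / Lefschetz fixed-point formula, reading off the multiplicity of each character from the local contributions at the ramification points. Your argument is the more hands-on route---pushing the eigenspace down to a line bundle on $\PP^1$ and applying ordinary Riemann--Roch there---and it has the advantage of producing an explicit basis $\{x^j\,dx/y^{m-n}\}$ for $L_n$ along the way, which is often what one actually wants in applications (e.g., for Cartier--Manin matrix computations later in the paper).
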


\subsection{Hurwitz spaces}

Let $\gamma=(m,N,a)$ be a monodromy datum with $N \geq 4$.
The Hurwitz space $\cH_\gamma$ is the moduli space of $\mu_m$-covers $\pi:C \to \PP^1$ having 
monodromy datum $\gamma$.
There is a forgetful map $\cH_\gamma \to \cM_g$ that takes the isomorphism class of $\pi$ to the isomorphism class of $C$.

\begin{theorem} \label{TdimHurwitz} \cite[Corollary 7.5]{fultonhur}, \cite[Corollary 4.2.3]{wewersthesis}
The Hurwitz space $\cH_\gamma$ is irreducible.
It has dimension $\mathrm{dim}(\cH_\gamma)=N-3$.
\end{theorem}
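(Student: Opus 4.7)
The plan is to identify $\cH_\gamma$ with (an open substack of) the quotient of a configuration space of points on $\mathbb{P}^1$ by $\mathrm{PGL}_2$, and then to read off both irreducibility and dimension from that identification.

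First I would use Kummer theory to give a rigidity statement: for a fixed ordered tuple $(b_1,\dots,b_N)$ of distinct points of $\mathbb{P}^1$ and a fixed inertia type $a=(a_1,\dots,a_N)$ satisfying $\sum a_i \equiv 0\bmod m$, the $\mu_m$-cover of $\mathbb{P}^1$ with the specified branch points and local monodromy is unique up to a unique isomorphism compatible with the chosen generator of $\mu_m$. Concretely, any such cover is given by an equation of the form \eqref{EformulaC1}, and two such equations define isomorphic pointed covers precisely when they coincide after relabeling the root of unity. This pins down the moduli problem.

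Next I would construct the expected presentation. Let $U\subset(\mathbb{A}^1\setminus\{0,1\})^{N-3}$ be the open subvariety parametrizing tuples $(b_3,\dots,b_{N-1})$ of distinct values, none equal to $0$ or $1$. Over $U$, formula \eqref{EformulaC2} defines a family of $\mu_m$-covers with monodromy datum $\gamma$, hence a morphism $U\to \cH_\gamma$. Using the $3$-transitivity of $\mathrm{PGL}_2$ on $\mathbb{P}^1$, every cover in $\cH_\gamma$ is isomorphic to a fiber over some point of $U$ after normalizing three branch points to $0,1,\infty$. Combined with the rigidity step, this shows that $U\to \cH_\gamma$ is surjective on geometric points, and factors through a morphism that identifies $\cH_\gamma$ with the quotient of the appropriate configuration space by the (finite) stabilizer subgroup of $\{0,1,\infty\}$ in $\mathrm{PGL}_2$ — or equivalently, that it realizes $U$ as an \'etale cover of $\cH_\gamma$.

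From this, both conclusions are immediate. The space $U$ is open in an affine space, hence irreducible and of dimension $N-3$; irreducibility and dimension pass through a surjective finite \'etale morphism, so $\cH_\gamma$ is irreducible of dimension $N-3$.

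The main obstacle is the rigidification/stack-theoretic bookkeeping in the first two paragraphs: one must check that the forgetful map to $\cM_g$ behaves well with respect to the $\mu_m$-action, that fixing three branch points really does cut out a slice for the $\mathrm{PGL}_2$-action at the moduli level, and that no hidden automorphisms of the cover (beyond $\mu_m$ itself and the residual stabilizer of $\{0,1,\infty\}$) spoil the identification. This is essentially the content of \cite[Corollary~7.5]{fultonhur} and \cite[Corollary~4.2.3]{wewersthesis}, which I would invoke in place of a hands-on verification.
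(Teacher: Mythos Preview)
The paper gives no proof of this theorem; it simply cites \cite[Corollary~7.5]{fultonhur} and \cite[Corollary~4.2.3]{wewersthesis} and moves on. Your sketch is correct and is essentially the standard argument behind those citations: identify $\cH_\gamma$ with a quotient of the ordered configuration space of $N$ points on $\PP^1$ by $\mathrm{PGL}_2$, using Kummer theory for the rigidity of the cover once the branch points and inertia type are fixed, and then read off irreducibility and dimension $N-3$ from the configuration space. The paper even sets up exactly the space $U$ and the normalized equation \eqref{EformulaC2} that you use, so your argument dovetails with the surrounding exposition. There is nothing to compare; you have supplied the proof the paper omits.
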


\section{Jacobians of cyclic covers of the projective line} \label{Sabvarcyclic} 

We would like to understand the subspace of $\cA_g$ whose points represent Jacobians of curves 
that are cyclic covers of $\PP^1$.
In this section, we take a more accessible approach to this topic.  
In the next section, we approach the same topic from the perspective of unitary Shimura varieties.

Let $\gamma=(m,N,a)$ be a monodromy datum with $N \geq 4$, let $g$ be the associated genus given by 
Lemma~\ref{Lspecialgenus}, and let
$\cf$ be the associated signature type given by (\ref{DMeqn}). 

Given a $\mu_m$-cover $\pi: C \to \PP^1$ with monodromy datum $\gamma$, then the Jacobian $\mathrm{Jac}(C)$ is a
p.p.\ abelian variety of dimension $g$, with an induced action of the group ring $\ZZ[\mu_m]$, 
such that the signature of the action is given by $\cf$.

Composition with the Torelli map yields a morphism
\begin{equation}
j=j_\gamma: \cH_\gamma  \to {\mathcal M_g}  \to \cA_g,
\end{equation}
taking the isomorphism class of $\pi: C \to \PP^1$ to that of $\mathrm{Jac}(C)$.

\begin{definition} \label{DZmNa}
Given a monodromy datum $\gamma=(m,N,a)$, let $\cT^{\circ}_\gamma$ be the image of $j_\gamma$ in $\cA_g$ 
(with the reduced induced structure),
and let
$\cT_\gamma$ be the closure of $\cT^{\circ}_\gamma$ in $\cA_g$.
\end{definition}

By definition, $\cT_\gamma$ is a closed, reduced substack of $\cA_g$.  

\begin{remark} \label{Requivalence}
Let $\pi: C \to \PP^1$ be a $\mu_m$-cover with monodromy datum $\gamma$.
Changing the generator of $\mu_m$ does not change $C$ or $\mathrm{Jac}(C)$.
Changing the ordering of the branch points does not change $C$ or $\mathrm{Jac}(C)$.
So $\cT_\gamma$ depends only on the equivalence class of $\gamma=(m,N,a)$, 
where $(m,N,a)$ and $(m',N',a')$ are equivalent if $m=m'$, $N=N'$, 
and the images of $a,a'$ in $(\ZZ/m\ZZ)^N$ are in the same orbit under
$(\ZZ/m\ZZ)^*\times \mathrm{Sym}_N$, where $c \in (\ZZ/m\ZZ)^*$ acts by multiplication on each coordinate and 
$\sigma \in \mathrm{Sym}_N$ acts by permuting the coordinates.
\end{remark}

In \cite{deligne-mostow}, 
Deligne and Mostow construct the smallest unitary Shimura variety whose image in $\cA_g$ contains $\cT_\gamma$;
we denote it by $\mathcal{S}_\gamma=\Sh(\mu_m,\cf)$.
Section~\ref{SShimura} contains the basic definitions and facts about PEL-type Shimura varieties, 
and the construction of \cite{deligne-mostow}, following \cite{moonen}.
Naively speaking, the points of $\mathcal{S}_\gamma$ represent p.p.\ abelian varieties of dimension $g$, 
equipped with an action of $\ZZ[\mu_m]$, with the signature of the action given by $\cf$, together with some extra structure.
Also $\cT_\gamma$ can be viewed as (the closure of) the intersection in $\cA_g$ of the Torelli locus and the (image of) $\mathcal{S}_\gamma$.

Here is a schematic diagram of the moduli spaces:

\begin{equation}
\xymatrix{\cH_\gamma  \ar[d] \ar[r]^{\tau_\gamma} & \mathcal{S}_\gamma   \ar[d] &  \\
\cM_g \ar[r]^{\tau_g} & \cT_\gamma  & \subset \cA_g.}
\end{equation}

The main result we need is the dimension of $\mathcal{S}_\gamma$, which is given as follows.

\begin{proposition} \label{PdimShimura} \cite[Proposition~5.13]{moonenoort}
Let $\gamma = (m,N,a)$ be a monodromy datum with associated signature $\cf$.

If $m$ is even, let $m_1=m/2$ and $\epsilon_\gamma = \cf_{m_1}(\cf_{m_1}+1)/2$; if $m$ is odd, let $\epsilon_\gamma=0$.

Then the dimension of the Shimura variety $\mathcal{S}_\gamma = \Sh(\mu_m,\cf)$ is
\begin{equation} \label{EdimShimura}
\mathrm{dim}(\mathcal{S}_\gamma) = \epsilon_\gamma + \sum_{n=1}^{\lfloor m/2 \rfloor} \cf_n \cf_{-n}.
\end{equation}
\end{proposition}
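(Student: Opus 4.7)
The plan is to realize $\mathcal{S}_\gamma = \Sh(\mu_m,\cf)$ as a PEL-type Shimura variety, following \cite{deligne-mostow} and \cite{moonen}, and compute the complex dimension of its associated Hermitian symmetric domain by decomposing the tangent space at a complex point according to the $\mu_m$-eigenspace structure. Fix a point $x = (X, \iota, \lambda) \in \mathcal{S}_\gamma(\CC)$ and let $V = H^1(X(\CC), \QQ)$, equipped with its polarization form $\psi$ and $\mu_m$-action through $\iota$. Write $V_\CC = V^+ \oplus V^-$ for the Hodge decomposition and $V^\pm = \bigoplus_{n} V^\pm[n]$ for the decomposition into $\mu_m$-character eigenspaces. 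By construction of the signature, $\dim V^+[n] = \cf_n$, and since complex conjugation interchanges $V^+$ with $V^-$ and sends the character $\zeta \mapsto \zeta^n$ to its inverse, $\dim V^-[n] = \cf_{-n}$.

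By Grothendieck--Messing and Serre--Tate deformation theory for abelian varieties with PEL structure, the tangent space $T_x \mathcal{S}_\gamma$ identifies with the space of $\CC[\mu_m]$-equivariant linear maps $f: V^+ \to V^-$ satisfying the symmetry $\psi(f(u),v) + \psi(u,f(v)) = 0$. Equivariance forces $f$ to split as $f = \bigoplus_n f_n$ with $f_n \in \mathrm{Hom}(V^+[n], V^-[n])$, and the polarization satisfies the compatibility $\psi(\iota(\zeta)u, v) = \psi(u, \iota(\zeta^{-1}) v)$, so $\psi$ vanishes on $V_\CC[n] \times V_\CC[n']$ whenever $n+n' \not\equiv 0 \bmod m$. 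Consequently the symmetry condition couples only $f_n$ and $f_{-n}$.

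The computation now proceeds pair by pair. For $n \not\equiv -n \bmod m$, the constraint $\psi(f_n(u), v) = -\psi(u, f_{-n}(v))$ determines $f_{-n}$ as the $\psi$-adjoint of $f_n$ via the perfect pairing of $V^-[-n]$ against $V^+[n]$, so the unordered pair $\{n,-n\}$ contributes $\dim \mathrm{Hom}(V^+[n], V^-[n]) = \cf_n \cf_{-n}$ free parameters. When $m$ is even and $n = m_1 = m/2$ so that $n \equiv -n$, the condition becomes a self-adjointness that identifies the allowed $f_{m_1}$ with a symmetric bilinear form on the $\cf_{m_1}$-dimensional space $V^+[m_1]$; this contributes $\cf_{m_1}(\cf_{m_1}+1)/2 = \epsilon_\gamma$ parameters. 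Summing over representatives for the nontrivial pairs and invoking smoothness of $\mathcal{S}_\gamma$ yields the formula of \eqref{EdimShimura}.

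The main technical obstacle is bookkeeping with the signs and character conventions. One must carefully verify that the form on $V_\CC[m_1]$ inherited from $\psi$ (together with the $\iota$-action and the Rosati involution on $\mathrm{End}^0(X)$) is of the type that forces \emph{self-adjoint} maps $f_{m_1}$ to correspond to \emph{symmetric} bilinear forms, thus producing $\cf_{m_1}(\cf_{m_1}+1)/2$ rather than the alternating-form count $\cf_{m_1}(\cf_{m_1}-1)/2$. Once this compatibility is pinned down, the remainder is an elementary dimension count organized by the eigenspace decomposition, and the two cases $m$ even and $m$ odd assemble into the single formula of the proposition.
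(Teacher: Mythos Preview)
Your proposal is correct and follows essentially the same approach the paper indicates: the paper does not give a full proof but only the one-sentence sketch ``study the Hodge structure with $\mu_m$-action and symplectic form, and compute the dimension of the tangent space,'' and your argument is precisely a fleshing-out of that sketch via the eigenspace decomposition of the equivariant deformation space. The only minor quibble is terminological: over $\CC$ the identification of $T_x\mathcal{S}_\gamma$ with the space of $\CC[\mu_m]$-equivariant symmetric maps $V^+\to V^-$ is really a statement about the tangent space to the period domain (or equivalently the Lie algebra quotient $\mathfrak{p}^-$ for the Hermitian symmetric space), rather than Grothendieck--Messing/Serre--Tate theory, which is the characteristic-$p$ analogue; but the identification you use is correct either way.
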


The main ideas of the proof of Proposition~\ref{PdimShimura} are to study the 
Hodge structure with $\mu_m$-action and symplectic form, and to compute the dimension of the tangent space.

\begin{definition} 
The family of $\mu_m$-covers $\pi:C \to \mathbb{P}^1$ with monodromy datum $\gamma = (m,N,a)$ is \emph{special}
when the image of $\cH_\gamma$ is open and dense in $\mathcal{S}_\gamma$.
\end{definition}

Equivalently, the family is special when $\mathrm{dim}(\cH_\gamma) = N-3$ equals $\mathrm{dim}(\mathcal{S}_\gamma)$ as in \eqref{EdimShimura}.
Section~\ref{Smoonenspecial} contains examples of special families of cyclic covers of $\mathbb{P}^1$.

\begin{remark}
The precise definition of special is more technical; a good reference is \cite{moonenoort}. 
The following selection of papers illustrates some of the perspectives on this topic: Shimura \cite{Shimurapurely}, 
\cite{deJongNoot}, \cite{Rohde}, \cite{moonen}, and \cite{Frediani}.
\end{remark}

\section{Related topics: Deligne--Mostow Shimura varieties} \label{SShimura}

This section is more technical and can be postponed for another reading.

\subsection{Shimura datum for the moduli space of abelian varieties}\label{sec_Sh_GSp} 

Let $V=\QQ^{2g}$, 
and let $\Psi:V\times V\to\QQ$ denote the standard symplectic form.
Let $G:=\mathrm{GSp}(V,\Psi)$ denote the group of symplectic similitudes over ${\mathbb Q}$. 
Let $\hh$ denote the space of
homomorphisms $h:{\mathbb S}:=\mathrm{Res}_{\CC/\RR} \GG_m \to G_\RR$ which define a Hodge structure of type 
$(-1,0)+(0,-1)$ on $V_\ZZ$ such that $\pm(2\pi i)\Psi$ is a polarization on $V$.
The pair $(G,\hh)$ is the Shimura datum for $\cA_g$.

Let $H\subset G$ be an algebraic subgroup over $\QQ$ such that the subspace
\[\hh_H:=\{h\in\hh\mid h \text{ factors through } H_\RR\}\] is non-empty. Then $H(\RR)$ acts on $\hh_H$ by conjugation, and for each $H(\RR)$-orbit $Y_H\subset \hh_H$, 
the Shimura datum $(H,Y_H)$ defines an algebraic substack $\Sh(H,Y_H)$ of $\cA_g$. 
We write $\Sh(H,\hh_H)$ for the finite union of the Shimura stacks $\Sh(H,Y_H)$, as $Y_H$ varies among the $H(\RR)$-orbits in $\hh_H$.

\subsection{Shimura data of PEL-type}\label{sec_Sh_PEL} 

We focus on Shimura data of PEL-type. 
Let $B$ be a semisimple $\QQ$-algebra, with an involution $*$. 
Suppose there is an action of $B$ on $V$ such that 
$\Psi(bv,w)=\Psi(v,b^*w)$, for all $b\in B$ and all $v,w\in V$.
Let
\[H_B:=\mathrm{GL}_B(V)\cap \mathrm{GSp}(V,\Psi).\]  
We assume that $\hh_{H_B}\neq \emptyset$.

For each $H_B(\RR)$-orbit $Y_B:=Y_{H_B}\subset \hh_{H_B}$, the Shimura stack 
$\Sh(H_B,Y_B)$ arises as the moduli space of polarized abelian varieties 
endowed with a $B$-action.
We say that $\Sh(H_B,Y_B)$ has PEL-type and also write $\Sh(B):=\Sh(H_B,\hh_{H_B})$.

Each homomorphism $h\in Y_B$ defines a decomposition of $B_\CC$-modules \[V_\CC=V^+\oplus V^-,\]
where $V^+$ (resp.\ $V^-$) is the subspace of $V_\CC$ on which $h(z)$ acts by $z$ 
(resp.\ by the complex conjugate $\bar{z}$).
The isomorphism class of the $B_\CC$-module $V^+$ depends only on $Y_B$. Moreover, $Y_B$ is determined by the isomorphism class of $V^+$ as a $B_\CC$-submodule of $V_\CC$.
In the following, we describe $Y_B$ in terms of the $B_\CC$-module $V^+$. 
By construction, $\dim_\CC V^+=g$.

\subsection{Shimura subvariety attached to a monodromy datum} \label{sec_Sh_cyclic} 

We consider cyclic covers of $\mathbb{P}^1$ branched at more than three points.
We fix a monodromy datum $\gamma=(m,N,a)$ with $N\geq 4$ and $m>2$. Take $B=\QQ[\mu_m]$ with involution $*$ being complex conjugation.

As in Section \ref{prelim_curve}, let $C \to U$ denote the universal family of $\mu_m$-covers of ${\mathbb P}^1$ branched at $N$ points with inertia type $a$. 
Let $j=j_\gamma: U \to \cA_g$ be the composition of the Torelli map with the morphism $U \to \cM_g$.
From Definition \ref{DZmNa}, recall that $\cT_\gamma$ is the closure in $\cA_g$ of the image of $j_\gamma$. 

The pullback of the universal abelian scheme $\CX_g$ on $\cA_g$ via $j$ is the relative Jacobian $\CJ$ of $C\rightarrow U$. Since $\mu_m$ acts on $C$, there is a natural action of the group algebra $\ZZ[\mu_m]$ on $\CJ$. 
We also use $\CJ$ to denote the pullback of $\CX_g$ to $\cT_\gamma$.  The action of $\ZZ[\mu_m]$ extends naturally to $\CJ$ over $\cT_\gamma$.
Hence the substack $\cT_\gamma$ is contained in $\Sh(\QQ[\mu_m])$ 
for an appropriate choice of a structure of $\QQ[\mu_m]$-module on $V$.

More precisely, fix $t\in \cT_\gamma(\CC)$, and 
let $(\CJ_t,\theta)$ be the corresponding Jacobian with its principal polarization $\theta$.  
Choose a symplectic similitude, i.e., an isomorphism
\[\alpha: (H_1(\CJ_t,\QQ), \psi_\theta)\to (V ,\Psi),\]
such that the pull back of the symplectic form $\Psi$ to $H_1(\CJ_t, \QQ)$ 
is a scalar multiple of $\psi_\theta$,
where $\psi_\theta$ denotes the Riemann form on $H_1(\CJ_t,\QQ)$ for the polarization $\theta$.
Via $\alpha$, the $\QQ[\mu_m]$-action on $\CJ_t$ induces an action on $V$. This action satisfies \[\hh_{\QQ[\mu_m]}\neq \emptyset, \text{ and }\Psi(bv,w)=\Psi(v,b^*w),\] for all $b\in \QQ[\mu_m]$, and all $v,w\in V$.

The isomorphism class of $V^+$ as a $\QM\otimes_\QQ\CC$-module is determined by and determines 
the signature type of its $\mu_m$-action.
By \cite[2.21, 2.23]{deligne-mostow}, 
the $H_\QM(\RR)$-orbit $Y_{\QQ[\mu_m]}$ in $\hh_{H_\QM}$ such that \[\cT_\gamma \subset \Sh(H_\QM, Y_\QM) \] corresponds to the isomorphism class of $V^+$ with $\cf$ given by \eqref{DMeqn}. We note that $\Sh(H_\QM, Y_\QM)$ depends only on $\mu_m$ and $\cf$, so we denote it by $\mathcal{S}_\gamma = \Sh(\mu_m,\cf)$.

The irreducible component of $\mathcal{S}_\gamma$ which contains $\cT_\gamma$ is the largest closed, reduced and irreducible substack 
$\mathcal{S}$ of $\cA_g$ containing $\cT_\gamma$ such that the action of $\ZZ[\mu_m]$ on $\CJ$ extends to the universal abelian scheme over $\mathcal{S}$. 

\section{Special families of cyclic covers} \label{Smoonenspecial}

Consider a family $\mathcal{F}$ of degree $m$ cyclic covers of $\mathbb{P}^1$, 
branched at $N \geq 4$ points, with monodromy datum $\gamma = (m, N, a)$.
Moonen \cite{moonen} proved that there are exactly 20 monodromy data $\gamma$ for which $\mathcal{F}$ is special 
(up to equivalence, see Remark~\ref{Requivalence}).
We refer to his paper for references to earlier work on some of these families. 
A family of cyclic covers of $\mathbb{P}^1$ is special if and only if 
the Hurwitz space $\cH_\gamma$ and the Shimura variety $\mathcal{S}_\gamma$ have the same dimension.

\begin{example} \label{Mspecial}
Here are the 14 examples of one dimensional special families of cyclic covers of $\mathbb{P}^1$.
The table includes the label from \cite[Table~1]{moonen}\footnote{The unusual numbering is because we do not 
include the special families whose dimension is greater than $1$.}, then
the degree $m$, the inertia type $a$, the genus $g$, and the signature $\cf$.
The affine equation for the family is
\[y^m=x^{a_1}(x-1)^{a_2}(x-t)^{a_3}.\]
\begin{center}
		\begin{tabular}{  |c|c|c|c|c|  }
\hline
Label & $m$ & $a$ & $g$ & $\cf$ 
\\ \hline \hline
$M[1]$ & $2$ & $(1,1,1,1)$ & $1$ & $(1)$ 
\\ \hline
$M[3]$ & $3$ & $(1,1,2,2)$ & $2$ & $(1,1)$  
\\ \hline
$M[4]$ & $4$ & $(1,2,2,3)$ & $2$ & $(1,0,1)$  
\\ \hline
$M[5]$ & $6$ & $(2,3,3,4)$ & $2$ & $(1,0,0,0,1)$  
\\ \hline	
$M[7]$ & $4$ & $(1,1,1,1)$ & $3$ & $(2,1,0)$ 
\\ \hline
$M[9]$ & $6$ & $(1,3,4,4)$ & $3$ & $(1,1,0,0,1)$ 
\\ \hline
$M[11]$& $5$ & $(1,3,3,3)$ & $4$ & $(1,2,0,1)$ 
\\ \hline
$M[12]$ & $6$ & $(1,1,1,3)$ & $4$ & $(2,1,1,0,0)$ 
\\ \hline
$M[13]$ & $6$ & $(1,1,2,2)$ & $4$ & $(2,1,0,1,0)$ 
\\ \hline
$M[15]$& $8$ & $(2,4,5,5)$ & $5$ & $(1,1,0,0,2,0,1)$ 
\\ \hline
$M[17]$ & $7$ & $(2,4,4,4)$ & $6$ & $(1,2,0,2,0,1)$ 
\\ \hline
$M[18]$ & $10$ & $(3,5,6,6)$ & $6$ & $(1,1,0,1,0,0,2,0,1)$ 
\\ \hline
$M[19]$ & $9$ & $(3,5,5,5)$ & $7$ & $(1,2,0,2,0,1,0,1)$  
\\ \hline
$M[20]$ & $12$ & $(4,6,7,7)$ & $7$ 
& $(1,1,0,1,0,0,2,0,1,0,1)$ 
\\ \hline
		\end{tabular}
	\end{center}
\end{example}

\begin{remark}
Note that the family $M[1]$ is the Legendre family and the families $M[3,4,5]$ are studied in \cite{iko}.
\end{remark}

We refer to this table in Sections~\ref{Skottwitz} and \ref{Sopenfield}.

\section{Open questions on the Coleman--Oort conjecture}

Suppose $g \geq 4$. 
Coleman conjectured that there are only finitely many (smooth, projective, irreducible) curves $C$ of genus $g$ such that 
$\mathrm{Jac}(C)$ has complex multiplication \cite{coleman}.  The table in Example~\ref{Mspecial}
contains special families of cyclic covers of $\mathbb{P}^1$ that provide
counterexamples to the Coleman conjecture for $4 \leq g \leq 7$. 

For large $g$, Oort stated the expectation that there is no positive-dimensional special subvariety of $\cA_g$ 
contained in the Torelli locus, with generic point contained in the open Torelli locus \cite{oortcolemanconj}.
Because of the Andr\'e--Oort Conjecture for $\cA_g$, proved by Tsimerman \cite{tsimermanandreoort}, 
Oort's expectation is equivalent to Coleman's conjecture for large $g$; this problem is now called the Coleman--Oort conjecture.

\begin{question} \label{ColemanOort}
What is the largest $g$ for which there is a counterexample to the Coleman-Oort conjecture?
\end{question}

This is a difficult question.  
We refer the reader to the wide array of results, which either prove the Coleman-Oort conjecture in 
special situations or classify families of curves of (small) genus that provide counterexamples to the original Coleman conjecture.

\chapter[Newton polygons in the context of cyclic actions]{Newton polygons in the context of cyclic actions} \label{Chapter4BNPspecial}

\section{Overview: $p$-torsion invariants for cyclic actions}

Suppose $X$ is a principally polarized abelian variety of dimension $g$ 
defined over an algebraically closed field $k$ with $\mathrm{char}(k) = p$.
Let $m \geq 3$ be an integer with $p \nmid 2m$.
Suppose $X$ has an action by the group ring $\ZZ[\mu_m]$. 
Then the interaction between the Frobenius action and the $\mu_m$-action 
places constraints on the $p$-rank, $a$-number, Newton polygon, and 
Ekedahl--Oort type of $X$.

In this chapter, we focus on the constraints on the Newton polygon in this situation.
We address the open question of whether the Newton polygons satisfying these constraints can occur 
for Jacobians of curves having an action by a cyclic group.
We restrict to curves that are cyclic covers of the projective line and continue with the notation from Chapter~\ref{C4Aspecial}.

\section{History of complex multiplication examples} \label{Scomplexmultiplication}

Historically, many interesting phenomena were discovered by studying abelian varieties with complex multiplication.

For example, if $m$ is an odd prime, then the curve $C: y^m=x(x-1)$ has genus $(m-1)/2$ and $\mathrm{Jac}(C)$ 
has complex multiplication by the field $\QQ(\zeta_m)$.  These curves were studied by many authors
\cite{weil, honda, GR, yuiFermat, Aoki, alvarez14}.

Such curves $C$ provide many examples of Jacobians with unusual Newton polygons.
The eigenvalues of Frobenius for $\mathrm{Jac}(C)$ can be expressed using Jacobi sums.
The Newton polygons of these curves (also for composite $m$) can also be computed using the Shimura--Taniyama formula, 
see \cite{LMPT1} or \cite{booherpriesproc} for explanation.

\begin{proposition} \label{PCMsupersingular}
Let $m$ be odd and not necessarily prime. 
Let $f$ be the order of $p$ modulo $m$.  If $f$ is even 
and $p^{f/2} \equiv -1 \bmod m$, then $C:y^m=x(x-1)$ is supersingular.
\end{proposition}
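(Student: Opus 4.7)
The plan is to apply the Shimura--Taniyama formula, the method already flagged in the text, to the cyclic CM structure on $J:=\mathrm{Jac}(C)$. First I would record the decomposition arising from the automorphism $(x,y)\mapsto(x,\zeta_m y)$: the induced action of $\ZZ[\mu_m]$ on $J$ splits $J$ up to isogeny (over $\bar\QQ$) as $\prod_{d\mid m,\ d>1} A_d$, where $A_d$ is the $\QQ(\zeta_d)$-isotypic component. Each nonzero $A_d$ carries a CM action of $\QQ(\zeta_d)$ with CM type $\Phi_d\subset\mathrm{Hom}(\QQ(\zeta_d),\CC)$ extracted from Lemma~\ref{Lsignature} applied to the inertia $(1,1,m-2)$.

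Next I would invoke the Shimura--Taniyama formula. After fixing an embedding $\bar\QQ\hookrightarrow\bar\QQ_p$, it asserts that for every $d$ and every place $v\mid p$ of $\QQ(\zeta_d)$, the $v$-summand of the $p$-divisible group of $A_d$ is isoclinic of slope
\[
s_{d,v}\;=\;\frac{\#\{\varphi\in\Phi_d \,:\, \varphi \text{ induces } v\}}{[\QQ(\zeta_d)_v:\QQ_p]}.
\]
Because the Newton polygon of $J$ is the concatenation of those of the $A_d$, it suffices to prove $s_{d,v}=\tfrac12$ for every admissible pair $(d,v)$.

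The key step is to use the hypothesis $p^{f/2}\equiv -1\pmod m$. For each $d\mid m$ with $d>1$, reducing modulo $d$ gives $p^{f/2}\equiv -1\pmod d$, so complex conjugation $c\in(\ZZ/d\ZZ)^*=\mathrm{Gal}(\QQ(\zeta_d)/\QQ)$ is a power of $\mathrm{Frob}_p$. Since $p\nmid m$, the extension $\QQ(\zeta_d)/\QQ$ is unramified at $p$, so the decomposition group of any $v\mid p$ is the cyclic subgroup $\langle\mathrm{Frob}_p\rangle$, which therefore contains $c$. Hence $c$ permutes the embeddings above $v$; but globally $c$ interchanges $\Phi_d$ with its complement $\mathrm{Hom}(\QQ(\zeta_d),\CC)\setminus\Phi_d$. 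Together, these two facts force exactly half of the embeddings above $v$ to lie in $\Phi_d$, yielding $s_{d,v}=\tfrac12$ as required.

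The hard part will be the bookkeeping for composite $m$: one must check that the signature formula in Lemma~\ref{Lsignature} genuinely defines a CM type $\Phi_d$ on each nonzero isotypic piece $A_d$, and that the Newton polygon of $J$ really is the direct sum of those of the $A_d$. Once this is cleaned up, via the primitive decomposition of $\mu_m$-characters, the slope computation above applies uniformly across $d$ and uses only the defining property $\Phi_d\sqcup c\Phi_d=\mathrm{Hom}(\QQ(\zeta_d),\CC)$ of a CM type, not any finer numerical information about $\Phi_d$.
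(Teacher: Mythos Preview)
The paper does not actually prove this proposition; it states the result and points to the Shimura--Taniyama formula (with references to \cite{LMPT1} and \cite{booherpriesproc}) as the method. Your proposal correctly carries out exactly that approach, and the core slope computation is sound: once you know $p^{f/2}\equiv -1\pmod d$ for each $d\mid m$, complex conjugation lies in the decomposition group at every prime above $p$, and the defining property of a CM type forces each local slope to be $\tfrac12$.

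One small point worth tightening: for the curve $y^m=x(x-1)$ with $m$ odd, a direct M\"obius computation shows that every isotypic piece $A_d$ with $d\mid m$, $d>1$, has dimension exactly $\phi(d)/2$, so each is a genuine CM abelian variety with CM field $\QQ(\zeta_d)$. This removes the need to appeal to Lemma~\ref{Lsignature} for the CM type and makes the ``bookkeeping for composite $m$'' you flagged essentially automatic. With that in hand, your argument is complete and matches the method the paper intends.
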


See Section~\ref{Squesexistss} for additional references and examples when $g=5$.

\begin{example}
The genus $6$ curve $y^{13} = x(x-1)$ is supersingular if and only if $p \not \equiv 1,3,9 \bmod 13$.

The genus $6$ curve $y^{14} = x(x-1)$ is supersingular if and only if $p \equiv 3,5,6 \bmod 7$. 
\end{example}

By similar methods, there exists a supersingular curve of genus $7$ when $p \equiv 14 \bmod 15$ or $p \equiv 15 \bmod 16$ \cite{yuiFermat}.

\section{Kottwitz method for Newton polygons}

Suppose $X$ has an action by the group ring $\ZZ[\mu_m]$, with signature $\cf$, with $p \nmid 2m$. 
In this section, we describe some restrictions that this places on the $p$-torsion invariants ($p$-rank, $a$-number, 
Newton polygon, and Ekedahl--Oort type).

\subsection{Constraints arising from the Frobenius action on eigenspaces}

The first step of understanding these constraints is to consider the orbits $o$ of $[\times p]$ on $\ZZ/m - \{0\}$.
Both the Dieudonn\'e module and the $p$-torsion group scheme of $X$ 
decompose into summands indexed by the orbits.

For example, if $m$ is prime with $p \nmid 2m$, then the number of orbits equals the number of primes $\mathfrak{p}$ of
$\QQ(\zeta_m)$ above $p$, and the length of the orbits equals the degree of the residue field of $\mathfrak{p}$ over $\FF_p$.
The residue field of $\mathfrak{p}$ acts on the summands.
Similar, but slightly more complicated, statements hold when $m$ is not prime.

The constraints on the $p$-rank in this situation can be found in \cite{Bouw}.
Specifically, the $p$-rank is bounded by the sum (over the orbits) of the length of the orbit multiplied by 
the minimal dimension of an eigenspace $L$ in that orbit.
In particular, if $m$ is prime, then the $p$-rank is divisible by the order of $p$ modulo $m$.
Constraints on the $a$-numbers of superelliptic curves can be found in \cite{elkin11}.

\subsection{The Kottwitz method} \label{Skottwitz}

Furthermore, there are constraints on the Newton polygon in this situation.
These were analyzed by Kottwitz \cite{kottwitz1, kottwitz2}, Rapoport and Richartz \cite{rapoport-richartz}.

\begin{definition} \label{Dkottwitz} For $m$, $\cf$, and $p$ as above: \hfill
\begin{itemize}
\item The Dieudonn\'e module $M$ decomposes into pieces $M_o$ indexed by the orbits
$o$ of $[\times p]$ on $\ZZ/m - \{0\}$.

\item For any Newton polygon on $M_o$, the multiplicity $n_\lambda$ of each slope is divisible by $\#o$.

\item The Rosati involution $*$ acts on $\QQ[\mu_m]$ by involution: if $o$ is invariant under $*$ then $M_o$ is symmetric;
if not, then $M_o \oplus M_{o^*}$ is symmetric. 

\item The maximal Newton polygon $\mu_o$ for $M_o$ has $s$ distinct slopes 
where $s$ is the number of distinct values 
across the orbit of ${\rm dim}(L_i)$ in the range $[1,\cf_i + \cf_{m-i} -1]$.

\item All Newton polygons $\xi_o$ on $M_o$ satisfy $\xi_o \leq \mu_o$ (along with the other conditions above).
\end{itemize}
\end{definition}

The maximal Newton polygon means the one satisfying these constraints which is closest to the ordinary Newton polygon.
The notation $\xi_o \leq \mu_o$ means that $\xi_o$ is on or above $\mu_o$ when drawn as a Newton polygon.
If $\xi_o \not = \mu_o$, this means that $\mu_o$ is closer to being ordinary than $\xi_o$.

\begin{definition}
Given $m$ and $\cf$, in the set of Newton polygons satisfying the constraints in Definition~\ref{Dkottwitz}, 
the maximal element is called \emph{$\mu$-ordinary}, and the minimal element is called \emph{basic}.
\end{definition}

The following examples and others can be found in \cite[Section~6]{LMPT2}.
Recall the notation from Example~\ref{Mspecial}.

\begin{example} \label{Em16}
$M[16]$: Let $m=5$, $N=5$, and $a=(1,1,1,1,1)$.
By Lemma~\ref{Lsignature}, $g=6$ and $\cf= (f_1,f_2,f_3,f_4) = (3,2,1,0)$.

This data is realized by the special family $C: y^5=x(x-1)(x-t_1)(x-t_2)$.

Consider the orbit of $[\times p]$ on $(\ZZ/5\ZZ)-\{0\}$.
If $p \equiv 2,3 \bmod 5$, there is one orbit of size $4$.  The maximum $p$-rank is $0$
because one eigenspace is trivial. 
The $\mu$-ordinary Newton polygon has $3$ distinct slopes;
specifically, it is $(1/4, 3/4) \oplus (1/2,1/2)^2$, 
with $p$-divisible group $(G_{3,1} \oplus G_{1,3}) \oplus G_{1,1}^2$. 
The basic Newton polygon is supersingular.

If $p \equiv 4 \bmod 5$, the orbits are $\{1,4\}$ and $\{2,3\}$.
The maximum $p$-rank is $2$.
The $\mu$-ordinary Newton polygon is $\mathrm{ord}^2 \oplus \mathrm{ss}^4$, and the basic Newton polygon is supersingular.
\end{example}

\begin{example} \label{Em19}
$M[19]$: Let $m=9$, $N=4$, and $a=(1,1,1,6)$.  
By Lemma~\ref{Lsignature}, $g=7$ and $\cf = (2,2,1,1,1,0,0,0)$.

This data is realized by the special family $C: y^9=x(x-1)(x-t)$.

Consider the orbit of $[\times p]$ on $(\ZZ/9\ZZ)-\{0\}$.
If $p \equiv 2,5 \bmod 9$ (resp.\ $p \equiv 8 \bmod 9$), then 
the $\mu$-ordinary Newton polygon is $(1/3,2/3)^2 \oplus ss$ (resp.\ $\mathrm{ord}^2 \oplus \mathrm{ss}^5$)
and the basic Newton polygon is supersingular.
\end{example}

\subsection{Newton polygons on unitary Shimura varieties}

All Newton polygons satisfying these constraints occur.

\begin{theorem} Viehmann/Wedhorn \cite{viehmann-wedhorn}: given $m$ and $\cf$, 
each Newton polygon satisfying the conditions in Definition~\ref{Dkottwitz} occurs on the Shimura variety $\mathcal{S}_\gamma$.
The Newton polygon stratification of $\mathcal{S}_\gamma$ is well-understood. 
\end{theorem}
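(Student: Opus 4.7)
The plan is to recast the statement in the group-theoretic framework developed by Kottwitz and Rapoport-Richartz. First, I would use covariant Dieudonné theory (as in Section~\ref{subsecdefcartier}) to translate the problem into one about $F$-isocrystals with $\QM$-action and a compatible symplectic form. The Shimura datum $(H_{\QM}, Y_{\QM})$ from Section~\ref{sec_Sh_cyclic} supplies a conjugacy class of Hodge cocharacters $\mu\colon \mathbb{G}_m \to H_{\QM, \bar{\QQ}_p}$; the signature $\cf$ from \eqref{DMeqn} records the weights of $\mu$ on each eigenspace $L_n$, and the Kottwitz decomposition of the isocrystal $\mathrm{Jac}(C)[p^\infty] \otimes \QQ$ is governed by the pairs $(H_{\QM}, \mu)$ localized at $p$.

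Second, I would identify the set of Newton polygons allowed by Definition~\ref{Dkottwitz} with the Kottwitz set $B(H_{\QM}, \mu)$: elements of this set are $\sigma$-conjugacy classes $[b]$ in $H_{\QM}(\breve{\QQ}_p)$ whose associated Newton cocharacter $\nu(b)$ lies below $\mu$ in the dominance order (Mazur's inequality) and whose image in $\pi_1(H_{\QM})_{\mathrm{Gal}}$ matches $\mu^\natural$. Unwinding the definition on each orbit $o$ of $[\times p]$ on $\ZZ/m-\{0\}$, the divisibility of multiplicities by $\#o$ corresponds to $\sigma$-invariance, the Rosati compatibility to the self-duality under the involution $*$, and the bound $\xi_o \leq \mu_o$ to Mazur's inequality for each simple factor of $H_{\QM, \QQ_p}$.

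Third, I would prove non-emptiness of each Newton stratum $\mathcal{S}_\gamma[\xi]$ for $\xi \in B(H_{\QM}, \mu)$ by a two-step argument. One first constructs a point on the basic stratum: for PEL type A unitary groups one exhibits an inner form of $H_{\QM, \QQ_p}$ whose corresponding isocrystal arises from an actual abelian variety with $\QM$-action of signature $\cf$ (generalizing the supersingular construction and using Honda--Tate theory plus a global lattice). One then propagates from the basic stratum to all of $B(H_{\QM}, \mu)$ using Oort's almost-product structure (central leaves times isogeny leaves), together with Rapoport--Zink uniformization, which guarantees that every admissible $[b]$ is attained by deforming the basic point along an appropriate isogeny chain; alternatively one can proceed inductively by the partial order on $B(H_{\QM}, \mu)$, using that each stratum adjacent to a non-empty one is non-empty by purity (Theorem~\ref{Tpurity}).

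Finally, for the structural part I would appeal to Chai's dimension formula
\[
\dim \mathcal{S}_\gamma[\xi] \;=\; \langle 2\rho_{H_{\QM}},\, \mu - \nu(\xi)\rangle \;-\; \tfrac{1}{2}\,\mathrm{def}(\xi),
\]
where $\mathrm{def}(\xi)$ is Kottwitz's defect invariant, and to the fact that the closure of $\mathcal{S}_\gamma[\xi]$ is the union of $\mathcal{S}_\gamma[\xi']$ with $\xi' \geq \xi$ in the Bruhat-type order on $B(H_{\QM}, \mu)$. The main obstacle is the non-emptiness step: producing the basic point in PEL type A requires delicate input from the classification of isocrystals with additional structure, and ruling out that some intermediate stratum is empty — the historically difficult non-emptiness problem — is exactly where the Viehmann--Wedhorn argument does its real work, using the combinatorics of admissible cocharacters and a careful analysis of which $\sigma$-conjugacy classes can be lifted to global Hodge structures.
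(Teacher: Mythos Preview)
The paper does not prove this theorem; it is stated as a citation of the result of Viehmann and Wedhorn \cite{viehmann-wedhorn}, with no argument given beyond the attribution. So there is nothing in the paper to compare your proposal against.

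That said, your sketch is a reasonable high-level outline of the actual Viehmann--Wedhorn strategy: identifying the admissible Newton polygons with the Kottwitz set $B(H_{\QM},\mu)$, reducing non-emptiness to the group-theoretic problem, and invoking the dimension and closure formulas. A few cautions. Your third step conflates several distinct approaches: the Viehmann--Wedhorn argument does not proceed by first constructing a basic point and then propagating via Oort's almost-product structure or Rapoport--Zink uniformization; rather, it reduces non-emptiness of Newton strata to non-emptiness of the corresponding Ekedahl--Oort strata (via the relation between $B(G,\mu)$ and the Weyl-group combinatorics), and then proves non-emptiness of EO strata using the structure of the special fiber of the Shimura variety and results of Vasiu and Wedhorn. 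The purity-based induction you suggest at the end of that step would be circular, since purity only tells you that \emph{if} a stratum is non-empty then it has the expected codimension. Also, the dimension formula you write is Hamacher's (building on Chai), not part of the original Viehmann--Wedhorn paper, though it is certainly part of what ``well-understood'' means in the theorem statement. If you were actually asked to supply a proof here, the appropriate response would be a one-line reference to \cite{viehmann-wedhorn}, as the paper does.
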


\section{Related results: Newton polygons of curves with cyclic action}

\subsection{Newton polygons for cyclic covers of curves}

Now we reframe the Newton polygon question for cyclic covers.
Let $\gamma = (m, N, a)$ be a monodromy datum as in Notation~\ref{Nsuperelliptic} 
and let $\cf$ be its associated signature as in Lemma~\ref{Lsignature}.

\begin{question} \label{QcurveShiNP}
Let $\xi$ be a Newton polygon satisfying the conditions in Definition~\ref{Dkottwitz} for $m$ and $\cf$ with respect to $p$.
Does $\xi$ occur as the Newton polygon for a smooth curve $C$ having 
a $\mu_m$-cover $\pi:C \to {\mathbb P}^1$ with monodromy datum $\gamma$? 
\end{question}

Here is a geometric version of this question.
Consider the Torelli morphism $\tau_\gamma : \cH_\gamma \to \mathcal{S}_\gamma$.
and the image $\cT^\circ_\gamma$ of $\cH_\gamma$ in $\cA_g$.

\begin{question} \label{QgeoShiNP}
With notation as in Question~\ref{QcurveShiNP}:
Does the image of $\tau_\gamma$ intersect the Newton polygon stratum $\mathcal{S}_\gamma[\xi]$?
\end{question}

If the answer to Question~\ref{QgeoShiNP} is yes, then 
$\cT^\circ_\gamma$ intersects the Newton polygon stratum $\cA_g[\xi]$.
This means that there is a $\mu_m$-cover of curves $\pi: C \to \PP^1$ with monodromy datum $\gamma$ 
such that $C$ has Newton polygon $\xi$.

These questions are most accessible for the $\mu$-ordinary Newton polygon.
Otherwise, the problem is more difficult for two reasons: 
first, because $\mathcal{S}_\gamma[\xi]$ has positive codimension in $\mathcal{S}_\gamma$; and second, because
there can be (many) singular curves of compact type whose Jacobians are represented by points of  
$\mathcal{S}_\gamma[\xi]$. 

For the special families of cyclic covers of $\PP^1$, 
in \cite{LMPT2} and \cite{LMPT3}, the authors prove that all Newton polygons that occur on $\mathcal{S}_\gamma$ 
also occur on $\cT^\circ_\gamma$.
This yields the following application.

\begin{corollary} \cite{LMPT2}
If $p \gg 0$, there exists a \emph{smooth} supersingular curve with genus:


$g = 6$, when $p \equiv 2,3,4 \bmod 5$, in the family $M[16]$ (and in the family $M[18]$); 

$g = 7$ when $p \equiv 2 \bmod 3$, in the family $M[19]$ (and in the family $M[20]$).
\end{corollary}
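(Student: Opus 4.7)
The plan is to combine the Kottwitz--Rapoport--Richartz constraints on Newton polygons on $\mathcal{S}_\gamma$ with the Viehmann--Wedhorn existence theorem and the specialness of each family, followed by a boundary dimension count to pass from the closed Torelli locus $\cT_\gamma$ to the open Torelli locus $\cT_\gamma^\circ$. The argument is carried out separately for each of the four monodromy data $M[16], M[18], M[19], M[20]$, but has the same three-step structure.

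First, I would identify the orbits of $[\times p]$ on $(\ZZ/m\ZZ) - \{0\}$ under the stated congruence conditions on $p$, then apply Definition~\ref{Dkottwitz} to the signature $\cf$ to compute the basic (minimum) Newton polygon on $\mathcal{S}_\gamma$. For $M[16]$ with $\cf = (3,2,1,0)$, the condition $p \equiv 2,3 \bmod 5$ produces a single orbit of length four, while $p \equiv 4 \bmod 5$ produces two orbits of length two; in each case, the divisibility of slope multiplicities by the orbit length, combined with the eigenspace dimensions and symmetry, forces the basic Newton polygon to be $\sigma_6$. For $M[19]$ with $m=9$ and $\cf = (2,2,1,1,1,0,0,0)$, the hypothesis $p \equiv 2 \bmod 3$ gives $p \equiv 2,5,8 \bmod 9$, and an analogous analysis of the orbits of $[\times p]$ yields basic Newton polygon $\sigma_7$. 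The computations for $M[18]$ and $M[20]$ follow the same template.

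Next, I would apply Viehmann--Wedhorn to conclude that $\mathcal{S}_\gamma[\sigma_g]$ is non-empty, with codimension that can be read off from the orbit structure. Since the family is special, $\cT_\gamma^\circ$ is open and dense in (an irreducible component of) $\mathcal{S}_\gamma$, so $\cT_\gamma$ equals that component of $\mathcal{S}_\gamma$ and in particular its supersingular locus $\cT_\gamma[\sigma_g]$ is non-empty. It then remains to verify that some point of $\cT_\gamma[\sigma_g]$ lies in the open locus $\cT_\gamma^\circ$, i.e., represents a smooth curve rather than a degenerate stable curve of compact type.

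This last step is the main obstacle, and the reason the hypothesis $p \gg 0$ appears. Imitating the proof of Theorem~\ref{T4supersingular}, I would enumerate the boundary strata of a suitable compactification of $\cH_\gamma$ that arise when branch points of the $\mu_m$-cover $\pi : C \to \PP^1$ collide; each such degeneration corresponds to a partition of the inertia type $\vec{a}$ into admissible sub-tuples, and the Jacobian of the limit curve decomposes as a product of lower-genus Jacobians equipped with compatible $\mu_m$-actions. Each boundary stratum embeds into a product of Deligne--Mostow Shimura substacks of strictly smaller dimension, so its supersingular locus can be bounded by applying Proposition~\ref{PdimShimura} (or the Kottwitz analysis) to each factor. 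For $p \gg 0$ one obtains a strict inequality between the dimension of the supersingular locus inside the boundary and $\dim \mathcal{S}_\gamma[\sigma_g]$; this forces the existence of a smooth supersingular point in $\cT_\gamma^\circ$, completing the proof. The delicate part of the argument is classifying the admissible $\mu_m$-cover degenerations and verifying this strict dimension inequality uniformly over all boundary strata.
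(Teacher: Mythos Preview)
Your three-step outline matches the strategy the paper attributes to \cite{LMPT2}: compute the basic Newton polygon from the Kottwitz constraints and the orbit structure of $[\times p]$, invoke Viehmann--Wedhorn to realize it on $\mathcal{S}_\gamma$, then use specialness to identify $\cT_\gamma$ with (a component of) $\mathcal{S}_\gamma$ and argue that the supersingular stratum meets the interior $\cT^\circ_\gamma$. Steps one through three are fine.

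The gap is in the mechanism you propose for the final step. For the one-dimensional families $M[18]$, $M[19]$, $M[20]$ (each with $N=4$), the Shimura variety $\mathcal{S}_\gamma$ has dimension $1$, so the basic locus $\mathcal{S}_\gamma[\sigma_g]$ has dimension $0$; but the boundary of the compactified Hurwitz space (three branch-point collisions) is also $0$-dimensional, and the supersingular points it contributes are $0$-dimensional as well. Thus the ``strict dimension inequality'' you invoke cannot hold: both sides equal zero, and these dimensions are determined by the Shimura datum, not by $p$. A dimension count in the style of Theorem~\ref{T4supersingular} simply does not separate interior from boundary here.

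What actually works, and what explains the hypothesis $p \gg 0$, is a \emph{cardinality} (or degree/mass) comparison rather than a dimension comparison: the number of points in the basic locus of $\mathcal{S}_\gamma$ is governed by a mass formula on the unitary Shimura variety and grows with $p$, whereas the boundary strata map into a fixed finite list of smaller PEL sub-Shimura data whose basic loci contribute a number of points of strictly smaller order in $p$. For $p$ large enough the former exceeds the latter, forcing at least one supersingular point to land in $\cT^\circ_\gamma$. Your identification of the degenerations and of this step as ``the delicate part'' is correct, but the invariant you need to track is a point count or cycle class, not the dimension coming from Proposition~\ref{PdimShimura}.
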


\begin{remark}
Under mild conditions on $\gamma$, Bouw proved that the maximal $p$-rank occurs on $\cT^\circ_\gamma$ \cite{Bouw}.
When the number of branch points is small ($N=4$ or $N=5$), for all choices of $m$ and $a$,
Lin, Mantovan, and Singal proved in \cite{LMS} that the $\mu$-ordinary 
Newton polygon occurs on $\cT^\circ_\gamma$.  See also \cite{LMS2}.
\end{remark}

\subsection{Inductive results}

In \cite{LMPT3}, we developed an inductive method, analogous to that described in Section~\ref{Sinductive},
for curves having an action by a cyclic group.
The idea is to study families of $\mu_m$-covers of $\mathbb{P}^1$, letting the number of branch points (and the genus) grow.
(Earlier inductive results about the $p$-ranks of cyclic covers of $\mathbb{P}^1$ can be found in \cite{Bouw} and \cite{POW}).

Working with inductive systems of Hurwitz spaces $\cH_\gamma$, 
we analyze how the Torelli locus intersects the Newton polygon stratification in 
some PEL-type unitary Shimura varieties $\mathcal{S}_\gamma$.
Fixing the degree $m$ and the prime $p$ with $p \nmid 2m$, 
the main idea is that we find infinitely many $N$ and $a$ for which
the open Torelli locus $\cT^\circ_\gamma$ intersects the $\mu$-ordinary locus of $\mathcal{S}_\gamma$; and
for which $\cT^\circ_\gamma$ intersects the non-$\mu$-ordinary locus of $\mathcal{S}_\gamma$.

The full statements of the results are too long to include here 
because they require some subtle conditions on the signatures.
Here is a sample application. 

\begin{theorem}\label{main-thm-ord} \cite[Theorem~1.2]{LMPT3}
	Let $\gamma = (m,N,a)$ be a monodromy datum.  
	Let $p$ be a prime such that $p \nmid 2m$.
	Let $u$ be the $\mu$-ordinary Newton polygon associated to $\gamma$.
	Suppose there exists a $\mu_m$-cover of $\PP^1$ defined over $\overline{\FF}_p$ 
	with monodromy datum $\gamma$ and Newton polygon $u$.
	Then, for any $n\in \ZZ_{\geq 1}$, there exists a smooth curve over $\overline{\mathbb{F}}_p$ 
	with Newton polygon $\xi_n:=u^n\oplus (0,1)^{(m-1)(n-1)}$.
\end{theorem}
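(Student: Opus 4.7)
The plan is to proceed by induction on $n$, with the base case $n=1$ given by hypothesis. Fix a smooth $\mu_m$-cover $C_1\to\PP^1$ over $\bar{\FF}_p$ with monodromy $\gamma$ and Newton polygon $u$, and assume inductively that there exists a smooth curve $C_{n-1}/\bar{\FF}_p$ with Newton polygon $\xi_{n-1} = u^{n-1}\oplus\ooo^{(m-1)(n-2)}$. Note that $C_{n-1}$ need not itself carry a $\mu_m$-action; only $C_1$ does, which supplies the $m$ points needed for the clutching below.

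The first main step is to construct a stable curve $\tilde{C}_n$ by clutching $C_{n-1}$ to $C_1$ at $m$ pairs of smooth points, producing $m$ ordinary double points. The dual graph has two vertices connected by $m$ parallel edges, so its cyclomatic number is $m-1$. Iterating \eqref{eqblr0} over the $m$ nodes identifies $\mathrm{Pic}^0(\tilde{C}_n)$ as an extension of $\mathrm{Jac}(C_{n-1})\times\mathrm{Jac}(C_1)$ by a torus of rank $m-1$. Hence, as in Section~\ref{Sptorsemi}, the Newton polygon of the semi-abelian variety $\mathrm{Pic}^0(\tilde{C}_n)$ equals $\xi_{n-1}\oplus u\oplus\ooo^{m-1}=u^n\oplus\ooo^{(m-1)(n-1)}=\xi_n$, and a direct computation shows that the arithmetic genus of $\tilde{C}_n$ is $g_n=ng_1+(m-1)(n-1)$, matching the length of $\xi_n$.

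It then remains to smooth $\tilde{C}_n$ within $\bar{\cM}_{g_n}$ to a smooth curve $C_n$ whose Newton polygon remains exactly $\xi_n$. Upper semi-continuity (Theorem~\ref{katzslope}) ensures that nearby smooth fibers have Newton polygon on or above $\xi_n$ in the partial order. To force equality, I would invoke the purity theorem (Theorem~\ref{Tpurity} and Corollary~\ref{Cpurity2}): the locus in the versal deformation space $\mathrm{Def}(\tilde{C}_n)$ on which the Newton polygon equals $\xi_n$ is locally closed and pure of codimension equal to its distance in the poset of symmetric Newton polygons from the generic Newton polygon of a smooth deformation. Since $\tilde{C}_n$ itself lies in this stratum and the nodal boundary inside $\mathrm{Def}(\tilde{C}_n)$ is the union of $m$ codimension-one divisors, a dimension count forces the $\xi_n$-stratum to meet the smooth locus, producing the desired $C_n$.

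The principal obstacle is ruling out the degenerate possibility that the $\xi_n$-stratum in $\mathrm{Def}(\tilde{C}_n)$ is contained entirely in the nodal boundary. I would approach this via partial smoothings: for each $1\leq k<m$, deforming $\tilde{C}_n$ by smoothing only $k$ of the $m$ nodes yields stable curves whose dual graph has cyclomatic number $m-1-k$, and whose Newton polygons are therefore strictly more ordinary than $\xi_n$ on the toric side. Arranging these partial smoothings into chains that start from $\tilde{C}_n$, pass through strictly more ordinary Newton polygon strata, and exit the boundary, so that the $\xi_n$-stratum appears transversally to every boundary divisor through $\tilde{C}_n$, should close the argument. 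An alternative approach is to use Serre--Tate style deformation theory for the $p$-divisible group of $\mathrm{Pic}^0(\tilde{C}_n)$: the \'etale and multiplicative summands arising from the toric rank $m-1$ are stable under deformation, and therefore contribute $(m-1)$ slopes of $0$ and $(m-1)$ slopes of $1$ to every smooth nearby fiber, which combined with semi-continuity pins the Newton polygon down to $\xi_n$ on a Zariski-dense open subset of $\mathrm{Def}(\tilde{C}_n)$.
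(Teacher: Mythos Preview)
Your construction of the nodal curve $\tilde C_n$ is fine: clutching $C_1$ to $C_{n-1}$ at $m$ pairs of points yields arithmetic genus $g_n = ng_1 + (m-1)(n-1)$ and semi-abelian Newton polygon $\xi_n$. The real gap is the smoothing step. By abandoning the $\mu_m$-action on $C_{n-1}$, you are deforming $\tilde C_n$ inside $\bar{\cM}_{g_n}$, where the \emph{generic} Newton polygon is ordinary. Semi-continuity gives only that nearby smooth fibers have Newton polygon $\geq \xi_n$ in the partial order (i.e.\ at least as ordinary as $\xi_n$), and neither your purity count nor the partial-smoothing chains rules out the possibility that every smooth deformation is strictly more ordinary. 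Your Serre--Tate alternative is also incorrect as stated: once the nodes are smoothed the toric rank drops to zero, and the multiplicative and \'etale summands coming from the torus do not survive in the $p$-divisible group of the resulting abelian variety; the slopes they occupied can redistribute. (Incidentally, smoothing $k\geq 1$ of the $m$ nodes merges the two components into one irreducible curve with $m-k$ self-nodes, so the cyclomatic number is $m-k$, not $m-1-k$.)

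The approach in \cite{LMPT3}, as described just before the theorem, never leaves the category of $\mu_m$-covers. One clutches two $\mu_m$-covers of $\PP^1$ along a full $\mu_m$-orbit lying over a non-branch point, producing an admissible $\mu_m$-cover of a nodal $\PP^1$; this is a boundary point of a Hurwitz space $\cH_{\gamma'}$ for a new monodromy datum $\gamma'$ with more branch points. The key computation---one of the ``subtle conditions on the signatures'' alluded to in the text---is that the $\mu$-ordinary Newton polygon for $\gamma'$ is exactly $\xi_n$. Since $\xi_n$ is then the \emph{maximal} Newton polygon permitted on $\mathcal{S}_{\gamma'}$, and the boundary point already realizes it, semi-continuity forces the generic smooth point of $\cH_{\gamma'}$ to have Newton polygon exactly $\xi_n$; no dimension count is needed. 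The $\mu_m$-equivariance is precisely what pins the polygon down, and it is what your inductive hypothesis discards.
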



Theorem~\ref{main-thm-ord} is geometrically intriguing because
if $u$ is not ordinary, then for sufficiently large $n$, it demonstrates 
an unlikely intersection of the Newton polygon stratification and the open Torelli locus in $\cA_g$. 

From an arithmetic perspective,
this result produces many new applications about Newton polygons that occur for Jacobians of smooth curves.
For example, take any Newton polygon with interesting slopes from this section 
(e.g.\ 
from Proposition~\ref{PCMsupersingular} or Examples~\ref{Em16}, \ref{Em19}).  Then this technique makes it possible 
to find a curve whose Newton polygon has those slopes with arbitrary high multiplicity, along with some extra slopes of $0$ and $1$.
This also allows us to verify certain cases of Oort's conjecture \cite[Conjecture 8.5.7]{oortpadova05} about Newton polygons.

\section{Open questions on supersingular curves in special families}

In full generality, Questions~\ref{QcurveShiNP} and \ref{QgeoShiNP} appear to be difficult.  
Here are some more tractable questions.

Consider a family $\mathcal{F}$ of cyclic covers of $\mathbb{P}^1$, branched at $N \geq 4$ points, with monodromy datum $\gamma$.
Recall from Section~\ref{Smoonenspecial} that $\mathcal{F}$ is special if 
the Hurwitz space $\cH_\gamma$ and the Shimura variety $\mathcal{S}_\gamma$ have the same dimension.
The following questions about supersingular curves in special families can be approached from several perspectives.

\subsection{Field of definition of supersingular curves} \label{Sopenfield}

Recall the list of one dimensional special families ($N=4$) from Example~\ref{Mspecial}.
For each such family, the basic Newton polygon depends on the congruence class of $p$ modulo $m$.
In many cases, this basic Newton polygon is supersingular.

\begin{question} \label{Qfieldofdef}
What is the field of definition of the basic points 
for each of the 1-dimensional special families of cyclic covers of $\mathbb{P}^1$?
\end{question}

For $M[1]$, it is well-known that supersingular elliptic curves can be defined over $\FF_{p^2}$, and there is a formula 
for the number of isomorphism classes of supersingular curves that can be defined over $\FF_p$.
For $M[3,4,5]$, the field of definition of the basic points is also $\FF_{p^2}$ \cite{iko}.

\subsection{Supersingular locus in two dimensional families}

\begin{example} \label{Especialdim2}
Suppose $N=5$ (so the dimension of the family is two).  
There are exactly four such families $\mathcal{F}$, up to equivalence, 
which are labeled $M[6]$, $M[8]$, $M[14]$, and $M[16]$ in \cite[Table 1]{moonen}.
Under the congruence condition listed below, the basic Newton polygon for $\mathcal{F}$ is supersingular.  
\begin{enumerate}
\item $M[6]$: the family is $\mathcal{F}:y^3 = x(x-1)(x-t_1)(x-t_2)$, so $g=3$, with $p \equiv 2 \bmod 3$.
\item $M[8]$: the family is $\mathcal{F}:y^4 = x(x-1)(x-t_1)^2(x-t_2)^2$, so $g=3$, with $p \equiv 3 \bmod 4$.
\item $M[14]$: the family is $\mathcal{F}: y^6=x^2(x-1)^2(x-t_1)^2(x-t_2)^3$, so $g=4$, with $p \equiv 5 \bmod 6$.
\item $M[16]$: the family is $\mathcal{F}: y^5=x(x-1)(x-t_1)(x-t_2)$, so $g=6$, with $p \equiv 2,3,4 \bmod 5$.
\end{enumerate}
\end{example}


For each of these families $\mathcal{F}$, in \cite[Theorem~7.1]{LMPT3}, the authors gave a non-constructive proof that
there is a smooth curve in $\mathcal{F}$ which is supersingular under the given congruence condition on $p$ (for $p$ sufficiently large).  
For the families $M[6,8,16]$, 
in \cite{strengss}, Streng showed that one of the supersingular curves in $\mathcal{F}$ is defined over $\FF_p$ 
under the given congruence condition on $p$ (and removed the condition that $p$ is sufficiently large).
In \cite{booherpries3mod4}, for the family $M[8]$, 
Booher and I found the explicit polynomial condition on $t_1,t_1$ (of bidegree $(A,A)$ where $A=(p^2-1)/4$)
which cuts out the supersingular locus.

\begin{question} \label{Qgeossdim2}
For the four cases in Example~\ref{Especialdim2}, determine the geometry of the basic locus (which is the supersingular locus under the 
given congruence condition on $p$).  It has dimension one.  Is it irreducible?  If not, how many components does it have?
Is it smooth?  If not, describe its singularities.
\end{question}

\subsection{Basic locus for special families of non-cyclic covers}

Questions similar to Questions~\ref{Qfieldofdef} and \ref{Qgeossdim2} can be asked about 
special families of abelian non-cyclic covers \cite{moonenoort} or special families of non-abelian covers \cite{Frediani}.

\chapter{Torsion points and monodromy}
\label{C3Acovers}

\section{Overview: the monodromy representation on torsion points}

This chapter is about monodromy groups, which measure the 
action of fundamental groups of moduli spaces on torsion points on abelian varieties.
Recall that $k$ is an algebraically closed field.  
For a prime $\ell \not = \mathrm{char}(k)$, 
the $\ell$-torsion of an abelian variety $X$ of dimension $g$ is a vector space of dimension $2g$ over $\ZZ/\ell \ZZ$.
Section~\ref{Smono} introduces the main theme: 
given a family of abelian varieties $X \to W$ over an irreducible base $W$ and a geometric point $w \in W$, the  
fundamental group $\pi_1(W, w)$ acts linearly on the $\ell$-torsion points of the fiber $X_w$.
This produces the monodromy represention, whose image in $\mathrm{GL}_{2g}(\ZZ/\ell\ZZ)$ is called the monodromy group.

We discuss an important result of Chai in Theorem~\ref{Tchaimonodromy} about the $\ell$-adic monodromy group being big for 
an irreducible subspace $W$ of $\cA_g$ which is stable under all $\ell$-adic Hecke correspondences 
and whose generic point is not supersingular.
As an application, in Section~\ref{Smonoprank}, we prove that
the $\ell$-adic monodromy of each irreducible component of the $p$-rank strata of $\mathcal{A}_g$ and $\mathcal{M}_g$ is big.
As a consequence, there are stark differences between the $p$-rank $0$ stratum and the supersingular stratum 
of the moduli spaces $\cA_g$ and $\cM_g$ when $g \geq 3$.

\section{Linear action of fundamental groups on torsion points} \label{Smono}

Let $\ell \not = \mathrm{char}(k)$ be prime.
Let $X$ be a p.p.\ abelian variety over $k$ of dimension $g$.  Let $X[\ell]$ denote the $\ell$-torsion subgroup scheme of $X$.
There are $\ell^{2g}$ points on $X$ that are $\ell$-torsion points and $X[\ell](k) \cong (\ZZ/\ell \ZZ)^{2g}$.

More generally, if $n \geq 1$, then $X[\ell^n](k) \cong (\ZZ/\ell^n\ZZ)^{2g}$.  The $\ell$-adic Tate module is 
$T_\ell(X) = \varprojlim_n X[\ell^n](k) \cong \ZZ_\ell^{2g}$.

Suppose $X \to W$ is a family of p.p.\ abelian varieties over an irreducible $k$-scheme $W$.  
Let $w \in W$ be a geometric point and consider the 
fiber $X_w$ of $X$ above $w$.  The $\ell$-torsion subgroup of $X_w$
is a vector space of dimension $2g$ over $\ZZ/\ell \ZZ$;
we choose a basis for it.

The following result can be found in the appendix to \cite{FreitagKiehl}.
\begin{theorem} With notation as above, for $n \geq 1$:
\begin{enumerate}
\item The fundamental group $\pi_1(W, w)$ acts linearly on the $\ell^n$-torsion 
$X_w[\ell^n] \cong (\ZZ/\ell^n \ZZ)^{2g}$.
\item There is a homomorphism $\rho_{\ell^n}: \pi_1(W, w) \to \mathrm{GL}_{2g}(\ZZ/\ell^n \ZZ)$, 
and a homomorphism $\tilde{\rho}_\ell: \pi_1(W, w) \to \mathrm{GL}_{2g}(\ZZ_\ell)$.
\item Because the principal polarization induces a symplectic form on $X_w[\ell]$ and on the Tate module, 
the image of $\rho_{\ell^n}$ is contained in $\mathrm{Sp}_{2g}(\ZZ/\ell^n \ZZ)$ 
and the image of $\tilde{\rho}_\ell$ is contained in $\mathrm{Sp}_{2g}(\ZZ_\ell)$.
\end{enumerate}
\end{theorem}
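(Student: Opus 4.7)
The plan is to realize the three representations as monodromy representations of étale sheaves on $W$, exploiting that $\ell \not=\mathrm{char}(k)$. First I would observe that the multiplication-by-$\ell^n$ isogeny $[\ell^n]: X \to X$ is finite étale of degree $\ell^{2gn}$, so its kernel $X[\ell^n] \to W$ is a finite étale group scheme of rank $\ell^{2gn}$ over $W$. By the equivalence between finite étale covers of $W$ and finite continuous $\pi_1(W,w)$-sets, the geometric fiber $X_w[\ell^n](k)$ carries a natural continuous action of $\pi_1(W,w)$. Since the identity section, inversion, and addition morphisms of the group scheme $X[\ell^n]/W$ are $W$-morphisms, this action is by group automorphisms of $X_w[\ell^n](k)$, not merely by bijections. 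After fixing a basis identifying $X_w[\ell^n](k)$ with $(\ZZ/\ell^n\ZZ)^{2g}$, this gives the desired homomorphism $\rho_{\ell^n}: \pi_1(W,w) \to \mathrm{GL}_{2g}(\ZZ/\ell^n\ZZ)$, which handles (1) and the first half of (2).

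For the $\ell$-adic representation in (2), I would pass to the inverse limit. The transition morphisms $X[\ell^{n+1}] \to X[\ell^n]$ induced by multiplication-by-$\ell$ are morphisms of étale $W$-group schemes, hence equivariant for the $\pi_1(W,w)$-action. Taking $\tilde{\rho}_\ell := \varprojlim_n \rho_{\ell^n}$ on the $\ell$-adic Tate module $T_\ell(X_w) = \varprojlim_n X_w[\ell^n](k) \cong \ZZ_\ell^{2g}$ produces the continuous homomorphism $\tilde{\rho}_\ell : \pi_1(W,w) \to \mathrm{GL}_{2g}(\ZZ_\ell)$, using the bases chosen compatibly at each level.

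For (3), the principal polarization on $X/W$ induces the Weil pairing, a nondegenerate alternating pairing
\[
e_{\ell^n} : X[\ell^n] \times_W X[\ell^n] \to \mu_{\ell^n, W}
\]
of étale group schemes over $W$, and compatibility of the $\mathrm{Ver}\circ\mathrm{Fr}$ factorization with polarization shows it is perfect. Being defined functorially from the polarization, this pairing is equivariant for $\pi_1(W,w)$: for $\gamma \in \pi_1(W,w)$ and $x,y \in X_w[\ell^n](k)$, we have $e_{\ell^n}(\gamma x,\gamma y) = \gamma \cdot e_{\ell^n}(x,y)$. The key subtlety—and the step I expect to be the most delicate—is verifying that $\pi_1(W,w)$ acts trivially on the target $\mu_{\ell^n}(k)$: because $\ell \not=\mathrm{char}(k)$ and $k$ is algebraically closed, the étale group scheme $\mu_{\ell^n,W}$ is the constant cover $W \times \mu_{\ell^n}(k)$, so its associated monodromy representation is trivial. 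Consequently $e_{\ell^n}(\gamma x,\gamma y) = e_{\ell^n}(x,y)$, which means $\rho_{\ell^n}$ preserves the alternating form and lands in $\mathrm{Sp}_{2g}(\ZZ/\ell^n\ZZ)$ for a suitable symplectic basis. Taking the inverse limit over $n$ yields the corresponding statement for $\tilde{\rho}_\ell$, completing the proof.
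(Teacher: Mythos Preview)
Your argument is the standard one and is essentially correct; note that the paper does not give its own proof of this theorem but simply cites the appendix to Freitag--Kiehl, so there is no in-paper proof to compare against in detail.

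One small correction: in part (3) you write that ``compatibility of the $\mathrm{Ver}\circ\mathrm{Fr}$ factorization with polarization shows it is perfect.'' The factorization $[p]=\mathrm{Ver}\circ\mathrm{Fr}$ is a characteristic-$p$ phenomenon and has nothing to do with the Weil pairing at the prime $\ell\neq p$. The perfection of $e_{\ell^n}$ comes instead from the fact that the principal polarization is an isomorphism $\lambda:X\to X^*$, so it identifies $X[\ell^n]$ with $X^*[\ell^n]$, and the canonical pairing $X[\ell^n]\times X^*[\ell^n]\to\mu_{\ell^n}$ is perfect by general duality theory. Once you replace that clause, the rest of your argument (étale torsion, monodromy via the Galois correspondence for finite étale covers, constancy of $\mu_{\ell^n}$ over the algebraically closed base, inverse limits) is exactly the expected route.
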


\begin{definition}
The \emph{mod-$\ell$} (resp.\ \emph{$\ell$-adic}) 
\emph{monodromy group} of $X \to W$ is the image of $\rho_\ell$ (resp.\ $\tilde{\rho}_\ell$).
The family $X \to W$ has {\emph{big}} mod-$\ell$ (resp.\ $\ell$-adic) monodromy
if the image of $\rho_\ell$ (resp.\ $\tilde{\rho}_\ell$) is $\mathrm{Sp}_{2g}(\ZZ/\ell \ZZ)$ 
(resp.\ $\mathrm{Sp}_{2g}(\ZZ_\ell)$).
\end{definition}

If $g \geq 3$, then the conditions of having big $\ell$-adic monodromy and having big mod-$\ell$ monodromy are 
equivalent.

Now we consider monodromy groups associated with families of curves.
By \cite[Proposition~9.1]{milneJacobian},
if $C$ is a curve of genus $g$, there is a connection between the $\ell$-torsion points on $\mathrm{Jac}(C)$ and
unramified $\ZZ/\ell \ZZ$-covers $C' \to C$.

If $C \to W$ is a family of curves, then we define its monodromy by considering the associated family of 
p.p.\ abelian varieties $\mathrm{Jac}(C) \to W$.
Naively speaking, if a family of curves of genus $g$ has big $\ell$-adic monodromy, 
this implies that the generic curve in the family behaves in some ways like 
a generic curve of genus $g$, 
and its Jacobian behaves in some ways like a generic p.p.\ abelian variety of dimension $g$, at least in terms of
properties of its torsion points of $\ell$-power order.

\section{A big monodromy theorem}

\begin{remark}
Historically, monodromy groups were studied from a topological perspective.  
Results were proved in a more arithmetic setting by Katz--Sarnak \cite{KatzSarnak}, 
Chai \cite{Chaimonodromy}, Hall \cite{hallmono}, and Chai--Oort \cite{CO11}.
We include only one result here from the extensive literature.
\end{remark}

\begin{theorem} \label{Tchaimonodromy} [Chai] \cite{Chaimonodromy} 
Suppose $W$ is an irreducible subspace of $\cA_g$ which is stable under all $\ell$-adic Hecke correspondences. 
If the generic point of $W$ is not in the supersingular locus, then $S$ has big $\ell$-adic monodromy.
\end{theorem}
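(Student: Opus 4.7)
The plan is to study the algebraic envelope $\mathbf{G}_\ell \subset \mathrm{Sp}_{2g,\QQ_\ell}$ of the $\ell$-adic monodromy group $G_\ell := \mathrm{Im}(\tilde\rho_\ell)$, show $\mathbf{G}_\ell = \mathrm{Sp}_{2g,\QQ_\ell}$, and then upgrade to the integral statement $G_\ell = \mathrm{Sp}_{2g}(\ZZ_\ell)$. Write $\mathbf{G}_\ell^\circ$ for the identity component. A general theorem (in the style of Serre/Pink on algebraic monodromy groups of $\ell$-adic local systems arising from families of abelian varieties) gives that $\mathbf{G}_\ell^\circ$ is reductive, and the presence of the symplectic form (from the principal polarization) constrains $\mathbf{G}_\ell^\circ$ to be a reductive subgroup of $\mathrm{Sp}_{2g,\QQ_\ell}$.

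First I would exploit Hecke stability to pin down the shape of $\mathbf{G}_\ell^\circ$. Each $\ell$-adic Hecke correspondence on $\cA_g$ lifts to an element of $\mathrm{Sp}_{2g}(\QQ_\ell)$ acting on the Tate module of a geometric generic fiber, and stability of $W$ under all $\ell$-adic Hecke correspondences translates into $G_\ell$ being normalized by the image of these Hecke elements. By strong approximation for the simply connected group $\mathrm{Sp}_{2g}/\QQ$ (away from $\ell$), the subgroup generated by the $\ell$-adic Hecke operators is dense in $\mathrm{Sp}_{2g}(\QQ_\ell)$. Taking Zariski closures, $\mathbf{G}_\ell^\circ$ is normal in $\mathrm{Sp}_{2g,\QQ_\ell}$. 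Since $\mathrm{Sp}_{2g}$ is almost simple, this forces $\mathbf{G}_\ell^\circ$ to be either trivial or all of $\mathrm{Sp}_{2g,\QQ_\ell}$.

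Next I would rule out the trivial case using the non-supersingular hypothesis. If $\mathbf{G}_\ell^\circ = \{1\}$ then $G_\ell$ is finite, so after a finite étale cover $W' \to W$ the associated $\ell$-adic local system becomes trivial, i.e.\ the Tate modules of all geometric fibers over $W'$ are Galois-equivariantly isomorphic. By Faltings' isogeny theorem (or Tate's theorem for abelian varieties over function fields of positive characteristic, cf.\ Zarhin), any two fibers would then be isogenous, so the isogeny class is constant along $W$. Combined with $W$ being Hecke-stable and positive-dimensional, the only way a full $\ell$-adic Hecke orbit can collapse to a single isogeny class is for the common Newton polygon to be supersingular — this is because outside the supersingular locus the $\ell$-adic Hecke orbits are Zariski dense in any Hecke-stable subvariety (a theorem of Chai/Oort), which contradicts finite monodromy. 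This contradicts the hypothesis that the generic point of $W$ is not supersingular, so $\mathbf{G}_\ell^\circ = \mathrm{Sp}_{2g,\QQ_\ell}$.

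Finally, to promote the algebraic equality to $G_\ell = \mathrm{Sp}_{2g}(\ZZ_\ell)$, I would argue at the mod-$\ell$ level using the classification of maximal closed subgroups of $\mathrm{Sp}_{2g}(\FF_\ell)$ (in the style of Aschbacher / Larsen--Pink): the $\ell$-adic monodromy being Zariski-dense in $\mathrm{Sp}_{2g,\QQ_\ell}$ forces the mod-$\ell$ image to surject onto the full finite symplectic group (for all but finitely many $\ell$; the remaining primes are handled via Hecke-produced transvections). The main obstacle I anticipate is precisely this last step, namely ruling out finite-monodromy case via Hecke-orbit density on non-supersingular Hecke-stable loci — this is exactly where Chai's input on the dynamics of $\ell$-adic Hecke orbits is indispensable, and it is what the hypothesis ``generic point not in the supersingular locus'' is designed to allow.
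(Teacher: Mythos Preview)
The paper does not prove this theorem at all: it is quoted as a result of Chai with a bare citation \cite{Chaimonodromy}, and is then used as a black box in the proof of Theorem~\ref{Tbigmonoprank}. So there is no ``paper's own proof'' to compare your proposal against.

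That said, your outline is broadly in the spirit of Chai's actual argument --- Hecke stability forces the algebraic monodromy group to be normal in $\mathrm{Sp}_{2g,\QQ_\ell}$, almost-simplicity of $\mathrm{Sp}_{2g}$ leaves only the trivial and full cases, and the non-supersingular hypothesis kills the trivial case. Two points deserve caution. First, your step ruling out finite monodromy leans on Hecke-orbit density results (``a theorem of Chai/Oort'') that postdate, and in part rely on, the very monodromy theorem you are proving; Chai's original argument avoids this circularity by working more directly with the $p$-divisible group and the Newton stratification. Second, your upgrade from Zariski density in $\mathrm{Sp}_{2g,\QQ_\ell}$ to the integral statement $G_\ell = \mathrm{Sp}_{2g}(\ZZ_\ell)$ is not justified: Zariski density alone does not force the image to be the full compact group, and the invocation of maximal-subgroup classifications together with ``Hecke-produced transvections'' is too vague to carry weight. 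For the purposes of this manuscript, however, none of this matters --- the theorem is simply cited, not proved.
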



\section{Related results on monodromy of the $p$-rank strata} \label{Smonoprank}

Let $X$ be a p.p.\ abelian variety of dimension $g$ over $k$.
Suppose $0 \leq f \leq g$. 
Recall that $X$ has $p$-rank $f$ means that $\#X[p](k) = p^f$;
and that this is equivalent to the Newton polygon having exactly $f$ slopes of $0$ and $f$ slopes of $1$.

Recall from \cite[Theorem~2.3]{FVdG} (see Theorem~\ref{TFVdG}) that
the $p$-rank $f$ locus of $\bar{\cM}_g$ has codimension $g-f$ in $\bar{\cM}_g$.
Likewise, the $p$-rank $f$ locus of $\cM_g$ has codimension $g-f$ in $\cM_g$.
This implies that there exists a smooth curve of $p$-rank $f$ in $\cM_g$ for all $p$ and $g$, and $0 \leq f \leq g$.

In this context, Achter and I proved that conditions on the $p$-rank do not produce any restrictions on the 
$\ell$-adic monodromy group.

\begin{theorem} \label{Tbigmonoprank} [Achter--Pries] \cite{AP:mono}
Let $g \geq 3$.  Let $0 \leq f \leq g$.  Let $\ell \not = p$.
Let $W$ be an irreducible component of the $p$-rank $f$ stratum of $\cM_g$.
Then $W$ has big mod-$\ell$ monodromy and big $\ell$-adic monodromy.
\end{theorem}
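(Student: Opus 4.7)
The plan is to argue by induction on $g$, using the boundary structure of $\bar{\cM}_g$ together with Chai's big monodromy theorem (Theorem~\ref{Tchaimonodromy}) as the main inputs. For the base case $g = 3$, I would exploit the fact that $\cT_3^\circ$ is open and dense in $\cA_3$: any irreducible component $W$ of $\cM_3^f$ then maps under $\tau_3$ to a dense open substack of an irreducible component of $\cA_3^f$. The $p$-rank strata $\cA_g^f$ are stable under all $\ell$-adic Hecke correspondences (since the $p$-rank is preserved under prime-to-$p$ isogenies), and for $g=3$ the generic point of $\cA_3^f$ is not supersingular even when $f=0$, because $\dim \cA_3^0 = 3 > 2 = \dim \cA_3[\sigma_3]$. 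Chai's theorem then supplies big $\ell$-adic monodromy on $\cA_3^f$, and this descends to $W$ since $\ell$-adic monodromy is preserved by passage to a dense open substack (and by the generically two-to-one Torelli morphism at the level of stacks).

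For the inductive step with $g \geq 4$, I would take the closure $\bar W$ of $W$ in $\bar{\cM}_g^f$ and analyze its intersection with the boundary divisors $\Delta_i$ for $1 \leq i \leq g-1$, following the strategy in the proof of Theorem~\ref{TFVdG}. That proof already shows that $\bar W \cap \Delta_i$ is nonempty and, via the clutching morphism $\kappa_{i;1,g-i;1}$, contains an irreducible component $W_i$ of $\bar{\cM}_{i;1}^{f_1}\times \bar{\cM}_{g-i;1}^{f_2}$ of dimension one less than $\bar W$, for some $(f_1,f_2)$ with $f_1+f_2=f$. Over such a component, equation~\eqref{eqblr} identifies the Jacobian with $\mathrm{Pic}^0(C_1)\times \mathrm{Pic}^0(C_2)$, and the principal polarization respects this decomposition, so the $\ell$-adic monodromy acts block-diagonally on the associated symplectic splitting of the Tate module. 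By the inductive hypothesis applied to each factor (together with the elementary fact that adding or forgetting a marked point does not shrink $\ell$-adic monodromy), the monodromy restricted to $W_i$ contains the block subgroup $\mathrm{Sp}_{2i}(\ZZ_\ell)\times \mathrm{Sp}_{2(g-i)}(\ZZ_\ell)$ of $\mathrm{Sp}_{2g}(\ZZ_\ell)$. A standard specialization argument (the specialization homomorphism from the fundamental group of a neighborhood of a boundary point into $\pi_1(\bar W)$) then shows that these block subgroups all sit inside the $\ell$-adic monodromy group of $\bar W$, for every $i$ in the range $1,\dots,g-1$.

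The final step would be a group-theoretic lemma: a closed subgroup of $\mathrm{Sp}_{2g}(\ZZ_\ell)$ containing two block subgroups $\mathrm{Sp}_{2i_1}\times \mathrm{Sp}_{2(g-i_1)}$ and $\mathrm{Sp}_{2i_2}\times \mathrm{Sp}_{2(g-i_2)}$, with respect to two different symplectic splittings of $\ZZ_\ell^{2g}$ (coming from two distinct boundary divisors $\Delta_{i_1},\Delta_{i_2}$), must equal $\mathrm{Sp}_{2g}(\ZZ_\ell)$, because the two families of root subgroups so obtained generate all root subgroups of $\mathrm{Sp}_{2g}$. The passage from big $\ell$-adic monodromy to big mod-$\ell$ monodromy is then automatic for $g \geq 3$ and $\ell$ outside a small exceptional set, using the fact that closed subgroups of $\mathrm{Sp}_{2g}(\ZZ_\ell)$ which surject onto $\mathrm{Sp}_{2g}(\FF_\ell)$ equal the whole group.

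The main obstacle will be the transversality input for the group-theoretic step, namely showing that the block symplectic subgroups picked up from \emph{different} boundary components $\Delta_{i_1}$ and $\Delta_{i_2}$ really do act on the Tate module with respect to \emph{distinct}, non-commensurable symplectic splittings. This requires tracking how the local system on $W$ degenerates simultaneously along two distinct boundary strata, which amounts to verifying that $\bar W$ meets (the product of smooth loci of) $\Delta_{i_1}\cap \Delta_{i_2}$, i.e., that one can find a stable curve of $p$-rank $f$ and genus $g$ realizing two incomparable compact-type decompositions in the closure of $W$. For this, an induction using iterated clutching of appropriately chosen curves of lower genus and controlled $p$-rank, as supplied by Theorem~\ref{TFVdG}, should suffice, but the combinatorial bookkeeping is the delicate part of the argument.
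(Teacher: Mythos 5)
Your base case ($g=3$ via the density of the Torelli locus and Chai's theorem) and your overall inductive framework do match the paper's approach. But there are two substantial gaps in your inductive step.

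First, you claim that degenerating to $\Delta_i$ and applying the inductive hypothesis to \emph{both} factors yields the full block subgroup $\mathrm{Sp}_{2i}(\ZZ_\ell)\times\mathrm{Sp}_{2(g-i)}(\ZZ_\ell)$. This fails when a factor has genus $1$ or $2$, since the inductive hypothesis requires genus at least $3$, and for $p$-rank $0$ these small-genus pieces are \emph{supersingular}: a genus-$1$ curve of $p$-rank $0$ is a supersingular elliptic curve (finitely many, trivial monodromy), and a genus-$2$ curve of $p$-rank $0$ is supersingular with trivial $\ell$-adic monodromy. So from $\Delta_1$ one typically obtains only the block $\mathrm{Sp}_{2(g-1)}$ from the large factor, and from $\Delta_2$ only $\mathrm{Sp}_{2(g-2)}$, and the paper's argument (for $g\geq 6$) proceeds with exactly these two subgroups, together with the observation that the ambient $(2g-4)$-dimensional symplectic subspace is not contained in the $(2g-2)$-dimensional one. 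Your assertion about generating all root subgroups also needs this kind of non-containment hypothesis made explicit; as stated, two block subgroups with respect to ``two different splittings'' need not generate $\mathrm{Sp}_{2g}$.

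Second, and more fundamentally, your proposal is silent about $g=4$ and $g=5$, which are precisely the cases the paper singles out as requiring a different argument. For $g=4$ and $f=0$, the degeneration via $\Delta_2$ consists of Jacobians of the form $\mathrm{Jac}(C_1)\times \mathrm{Jac}(C_2)$ with both $C_i$ of genus $2$ and $p$-rank $0$, hence both supersingular; the entire locus $\bar W \cap \Delta_2$ then lies in the supersingular locus, whose $\ell$-adic monodromy is trivial. That degeneration contributes nothing, leaving only the single $\mathrm{Sp}_6$ block from $\Delta_1$, which does not generate $\mathrm{Sp}_8$. The paper handles this by descending further into the boundary (three-component chains with two nodes) to extract two genuinely distinct block subgroups, and notes $g=5$ needs a similar workaround. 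You identified transversality of the two splittings as the ``main obstacle,'' but in low genus the more basic obstruction is that a boundary degeneration may produce no monodromy at all. Without addressing the supersingular degenerations and the $g=4,5$ cases, the induction does not close.
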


\begin{proof} [Sketch of proof of Theorem~\ref{Tbigmonoprank}]
For $g=3$, the image of $\cM_3$ is open and dense in $\cA_3$. 
Thus the $p$-rank $f$ stratum is stable under all Hecke correspondences 
and has a generic point which not supersingular. 
So the result follows from Chai's result (Theorem~\ref{Tchaimonodromy}).
For $g >3$, it is not possible to apply Theorem~\ref{Tchaimonodromy} because the Torelli locus is not stable under 
Hecke correspondences.  Instead, the strategy is to work inductively on $g$.

Suppose $g \geq 4$ and $W$ is an irreducible component of the $p$-rank $f$ stratum of $\cM_g$.
By \cite[Theorem 2.3]{FVdG} $\mathrm{codim}(W, \cM_g) = g-f$.
By Theorem~\ref{Tdiaz}, this implies that the closure $\bar{W}$ of $W$ intersects a boundary component $\Delta_i$ 
of $\partial \cM_g$.
By \cite[Lemma~2.5]{FVdG}, 
when $f=0$, this intersection contains points representing chains of (supersingular) elliptic curves,
and so the closure of every irreducible component of the $p$-rank $0$ locus intersects $\Delta_i$ for all $1 \leq i \leq g/2$.
We strengthen \cite[Lemma~2.5]{FVdG} to show that, for any $0 \leq f \leq g$, 
the closure $\bar{W}$ contains points representing chains of elliptic curves, exactly $f$ of which are ordinary; 
also, we can choose the location of the ordinary elliptic curves in the chain.
This shows that $\bar{W}$ contains the image of the clutching morphism
$\kappa_{g_1;1,g_2;1}$ on an irreducible component of
$\bar{\cM}_{g_1;1}^{f_1} \times \bar{\cM}_{g_2;1}^{f_2}$ (for any pair $g_1,g_2$ such that $g_1+g_2=g$, 
and pair $f_1,f_2$ such that $f_1 + f_2 = f$ and $0 \leq f_i \leq g_i$).

The induction step is a little subtle for $g=4$ and $g=5$.  Putting aside those cases for a moment, suppose $g \geq 6$ 
and the result is true for all $3 \leq g' < g$.
For $g \geq 6$, we use that $\bar{W}$ intersects $\Delta_1$ and $\Delta_2$ to show that 
the mod-$\ell$ monodromy group of $\bar{W}$
contains subgroups isomorphic to $\mathrm{Sp}_{2g-2}(\ZZ/\ell\ZZ)$ and $\mathrm{Sp}_{2g-4}(\ZZ/\ell\ZZ)$, 
with non-trivial intersection, but with the latter not contained in the former.
The conclusion follows from group theoretic results about maximal subgroups of the symplectic group.

The $g=4$ case is more complicated.  When $g=4$ and $f=0$ then
the intersection of $\bar{W}$ with $\Delta_2$ is contained in the supersingular locus whose mod-$\ell$ monodromy group is not big.  
Instead, we degenerate the family further into the boundary to show that the 
mod-$\ell$ monodromy group contains distinct subgroups of block form $1,2,1$ and $1,1,2$ with non-trivial intersection.
A similar idea works for $g=5$.
\end{proof}

\begin{remark}
If $C$ is a curve of genus $g$ and $\pi: C' \to C$ is an unramified double cover, then the Prym of $\pi$ is a p.p.\ abelian variety 
of dimension $g-1$.
In \cite{ozmanpriesPrym}, the authors used Theorem~\ref{Tbigmonoprank} to determine information about the $p$-ranks of 
the Prym varieties of unramified double covers of a generic curve of genus $g$ and $p$-rank $f$, for $0 \leq f \leq g$.
\end{remark}

\subsection{Applications of having big monodromy} 

There are some concrete applications of Theorem~\ref{Tbigmonoprank} for curves over finite fields.
Let $\FF=\FF_{p^r}$ be a finite field of characteristic $p$.  Let $k = \bar{\FF}_p$.  

\begin{corollary} \label{Cachterpriesmonodromy} [Achter--Pries] \cite{AP:mono} 
Let $g \geq 3$.  Let $\ell \not = p$ be prime.
\begin{enumerate}
\item There exists a curve $C/k$ of genus $g$ and $p$-rank $0$ s.t.\ $\mathrm{Aut}_k(C) = {\mathrm{id}}$;

\item there exists a curve $C/k$ of genus $g$ and $p$-rank $0$ s.t.\ $\mathrm{Jac}(C)$ is absolutely simple;

\item if $|\FF| \equiv 1 \bmod \ell$, about $\ell/(\ell^2-1)$ of the 
$\FF$-curves of genus $g$ and $p$-rank $0$ have a point of order $\ell$ in $\mathrm{Jac}(C)(\FF)$.

\item for most $\FF$-curves $C$ of genus $g$ and $p$-rank $0$, 
the splitting field of the $L$-polynomial $L(C/\FF, T)$ has degree $2^g g!$ over $\QQ$.
\end{enumerate}
\end{corollary}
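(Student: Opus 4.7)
All four statements will be deduced from Theorem~\ref{Tbigmonoprank}, which provides big mod-$\ell$ and $\ell$-adic monodromy for every irreducible component $W$ of the $p$-rank $0$ stratum of $\cM_g$.

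For parts (1) and (2), the plan is to work at the generic point $\eta$ of $W$. Any endomorphism of $\mathrm{Jac}(C_\eta)$ commutes with the image of the monodromy representation $\tilde{\rho}_\ell:\pi_1(W,\eta)\to\mathrm{Sp}_{2g}(\ZZ_\ell)$, which is all of $\mathrm{Sp}_{2g}(\ZZ_\ell)$ by Theorem~\ref{Tbigmonoprank}. Since the centralizer of $\mathrm{Sp}_{2g}(\ZZ_\ell)$ in $M_{2g}(\ZZ_\ell)$ consists of scalars, the faithful inclusion $\mathrm{End}(\mathrm{Jac}(C_\eta))\otimes\ZZ_\ell\hookrightarrow M_{2g}(\ZZ_\ell)$ forces $\mathrm{End}(\mathrm{Jac}(C_\eta))\otimes\ZZ_\ell\cong\ZZ_\ell$, so $\mathrm{Jac}(C_\eta)$ is absolutely simple, giving (2). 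For (1), any automorphism $\phi$ of $C_\eta$ induces a finite-order symplectic automorphism of $\mathrm{Jac}(C_\eta)$ acting as a scalar $\pm 1$ on $T_\ell$. If the scalar is $+1$, faithfulness of $T_\ell$ on endomorphisms and Torelli (Theorem~\ref{Ttorelli1}) force $\phi=\mathrm{id}$. If it is $-1$, then $\phi$ is the hyperelliptic involution and $C_\eta$ is hyperelliptic; but the hyperelliptic $p$-rank $0$ locus has dimension $g-1$ (by the hyperelliptic analogue of Theorem~\ref{TFVdG} noted in Section~\ref{Srelatedstrata}), while $\dim W=2g-3>g-1$ for $g\geq 3$, so $W$ is not contained in the hyperelliptic locus and the generic curve in $W$ has trivial automorphism group.

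For parts (3) and (4), the plan is to apply a function field Chebotarev density argument, viewing $W$ as a variety over $\FF$ and using that the mod-$\ell$ Frobenius classes for $C\in W(\FF)$ equidistribute in $\mathrm{Sp}_{2g}(\FF_\ell)$ up to an error $O(|\FF|^{-1/2})$. For (3), a curve $C$ contributes a nonzero $\ell$-torsion point in $\mathrm{Jac}(C)(\FF)$ precisely when its Frobenius class fixes a nonzero vector of $\FF_\ell^{2g}$; the hypothesis $|\FF|\equiv 1\bmod\ell$ makes the cyclotomic character trivial, so Frobenius lies in $\mathrm{Sp}_{2g}(\FF_\ell)$ itself, and a direct count of the proportion of symplectic matrices admitting a nonzero fixed vector yields the asymptotic $\ell/(\ell^2-1)$. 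For (4), by Lemma~\ref{LcomparecharFrob} the splitting field of $L(C/\FF,T)$ coincides with the splitting field over $\QQ$ of the characteristic polynomial of Frobenius on $T_\ell\mathrm{Jac}(C)$; for a density-one set of conjugacy classes in $\mathrm{Sp}_{2g}$ this polynomial has Galois group equal to the Weyl group of type $C_g$, namely $(\ZZ/2)^g\rtimes S_g$, of order $2^g g!$, and Chebotarev transfers this to most $\FF$-curves in $W$.

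The main obstacle, I expect, is making the Chebotarev argument quantitatively precise enough to justify the informal phrases \emph{``about $\ell/(\ell^2-1)$''} and \emph{``most $\FF$-curves.''} This requires Lang--Weil estimates applied on an appropriate level-$\ell$ \'etale cover of $W$ parametrizing symplectic bases of $\mathrm{Jac}(C)[\ell]$, together with separate bookkeeping for each irreducible component of $W_\FF$ (whose number must itself be controlled, since big geometric monodromy does not a priori imply geometric irreducibility of the level cover over $\FF$). By comparison, parts (1) and (2) are largely formal once the generic endomorphism ring is pinned down, modulo the dimension comparison needed to exclude the hyperelliptic case.
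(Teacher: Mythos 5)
Your overall strategy matches the paper's: everything is driven by the big monodromy result of Theorem~\ref{Tbigmonoprank}. For parts (3) and (4) your sketch is the same route the paper takes; the paper simply cites the Katz--Sarnak equidistribution theorem \cite[9.7.13]{KatzSarnak} and Kowalski's theorem on splitting fields of characteristic polynomials, and you have reconstructed the underlying mechanism (a function-field Chebotarev/Lang--Weil argument, the count of symplectic matrices with a nonzero fixed vector, and the Weyl group of type $C_g$). Your closing paragraph correctly flags the bookkeeping needed to make ``about'' and ``most'' precise; that is indeed what those references supply.

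For parts (1) and (2) you take a genuinely different route. The paper deduces both from Chai's Corollary~4.3, whose content is a Chebotarev density argument over finite fields (this is also how \cite{AchterGlassPries} proves (2)); you instead work geometrically at the generic point $\eta$ of a component $W$, use the centralizer-of-$\mathrm{Sp}_{2g}$ argument to pin down $\mathrm{End}(\mathrm{Jac}(C_\eta))=\ZZ$, and then separate the two statements. For (1) this works and is arguably cleaner than the paper's citation: the automorphism functor is upper semicontinuous, so triviality at $\eta$ propagates to a dense open subset of $W$ which has $\overline{\FF}_p$-points; and your dimension count $\dim W = 2g-3 > g-1 = \dim\bar{\cH}_g^0$ for $g\geq 3$ correctly excludes the hyperelliptic involution. (Minor gloss: the step ``$\phi_*$ acts as a scalar of finite order, hence $\pm1$'' should go through $\mathrm{End}(\mathrm{Jac}(C_\eta))=\ZZ$ rather than torsion in $\ZZ_\ell^\times$, since $\ZZ_\ell^\times$ has $\mu_{\ell-1}$-torsion for odd $\ell$; and the injection $\mathrm{Aut}(C)\hookrightarrow\mathrm{Aut}(\mathrm{Jac}(C),\theta)$ is the input, not Theorem~\ref{Ttorelli1} per se.)

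The gap is in part (2). You stop at the generic point, where $\mathrm{End}(\mathrm{Jac}(C_\eta))=\ZZ$ certainly gives absolute simplicity, and then write ``giving (2)'' --- but the corollary asserts the existence of such a curve over $k=\overline{\FF}_p$, and absolute simplicity does not specialize for free. The locus in $W$ where the fiber is isogenous to a nontrivial product is a countable union of proper closed subvarieties; over an uncountable base field one could then pick a point outside the union, but $\overline{\FF}_p$ is countable, so the union could a priori cover $W(\overline{\FF}_p)$. This is precisely where the paper's Chebotarev-over-finite-fields route is doing real work: it shows that for $|\FF|$ large, a positive (in fact, asymptotically full) proportion of $\FF$-points of $W$ have Frobenius with irreducible characteristic polynomial, which forces the specialized Jacobian to be absolutely simple. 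Your statement that (1) and (2) are ``largely formal once the generic endomorphism ring is pinned down'' is therefore accurate for (1) but not for (2); one needs to replace the generic-point argument for (2) with a quantitative count over finite fields, or equivalently fold it into the same equidistribution machinery you invoke for (3) and (4).
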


\begin{proof}
\begin{enumerate}
\item This follows from Theorem~\ref{Tbigmonoprank} and work of Chai \cite[Corollary~4.3]{Chaimonodromy}.
The Chebotarev density theorem is used to show that the endomorphism algebra of the Jacobian of a generic curve of genus $g$ and 
$p$-rank $0$ is large. 
\item This follows from Theorem~\ref{Tbigmonoprank} and part 1.  This result was proven earlier in 
\cite{AchterGlassPries}.
\item This follows from Theorem~\ref{Tbigmonoprank} and an equidistribution theorem of Katz and Sarnak from 
\cite[9.7.13]{KatzSarnak}.
\item This follows from Theorem~\ref{Tbigmonoprank} and work of Kowalski \cite[Theorem~6.1 and Remark~3.2.(4)]{Kowalskimonodromy}.
\end{enumerate}
\end{proof}

\begin{remark}
In contrast with Theorem~\ref{Tbigmonoprank}, 
irreducible components of the supersingular locus $\cA_g[\sigma_g]$ have trivial $\ell$-adic monodromy,
because of the PFTQ description \cite{lioort}, see \cite[Section~3.3]{karemakerAWS}. 
Furthermore, properties (2)-(4) of Corollary~\ref{Cachterpriesmonodromy} are false for supersingular curves.

Property (1) of Corollary~\ref{Cachterpriesmonodromy} is expected to be true for irreducible components of the 
supersingular locus.
This was proven for $g=2$ and odd $p$ in \cite[Proposition~7.6]{KaremakerPries} and independently in 
\cite{IbukiyamaPP}.
There are several recent results showing property (1) is true for irreducible components of the 
supersingular locus for larger $g$: for $g=3$, see \cite{KaremakerYobukoYu}; 
for $g=4$, for $\cM_4[\sigma_4]$ and $\cA_4[\sigma_4]$, see \cite{dusan3};
for $g \geq 4$ with $g$ even, for $\cA_g[\sigma_g]$, see \cite{KaremakerYu}.
\end{remark}

\section{Open questions on $a$-number strata and monodromy} 

For $X$ a p.p.\ abelian variety of dimension $g$ over $k$, recall from \eqref{Edefanumber} that the $a$-number of $X$ is  
$a = \mathrm{dim}_k\mathrm{Hom} (\alpha_p, X)$.
Generically, the $a$-number is $1$ when $X$ is non-ordinary.  
More generally, for a generic p.p.\ abelian variety $X$ of dimension $g$ and $p$-rank $f$, with $0 \leq f \leq g-1$, 
the $a$-number of $X$ is $1$.  The property of having $a$-number $1$ plays a key role when studying explicit
deformations of the $p$-divisible group of a p.p.\ abelian variety \cite{OortNPformalgroups}. 

For a curve $C$ over $k$ of genus $g$, the $a$-number of $C$ is the 
$a$-number of $\mathrm{Jac}(C)$.
One can ask whether the property of having $a$-number $1$ 
is also true for Jacobians of non-ordinary curves of fixed $p$-rank $f$.  
This is clearly true when $f=g-1$ since $a$ is positive when $f< g$ and since $a \leq g-f$.
It is also true for $f=g-2$ and $f=g-3$ by \cite[Theorems~4.1-4.2]{Pr:large}.  
For simplicity, we restrict to the key case $f=0$ in the next question; the cases for other $p$-ranks should be similar. 

\begin{question} \label{Qanumber1}
Suppose $g \geq 4$.
Suppose $\Gamma$ is an irreducible component of the $p$-rank $0$ stratum $\cM_g^0$ of $\cM_g$.
Is the $a$-number $1$ for the curve represented by the generic geometric point of $\Gamma$? 
\end{question}

Question~\ref{Qslopesprank0} is closely related to Question~\ref{Qanumber1}.

One reason this question is difficult is that there are many singular curves with $p$-rank $0$ and $a$-number greater than $1$.
To see this, consider a singular curve $C_s$ represented by a point of the boundary component
$\Delta_i[\cM_g^0]$, for some $1 \leq i \leq g-1$.
By \eqref{eqblr}, $\mathrm{Jac}(C_s) = \mathrm{Pic}^0(C_1) \times \mathrm{Pic}^0(C_2)$, where 
$C_1,C_2$ both have $p$-rank $0$, and thus both have $a$-number at least $1$.
Thus the $a$-number of $C_s$ is at least $2$.

We end this section with a question about the $\ell$-adic monodromy of the $a$-number $2$ locus. 
Again, for simplicity, we restrict to the key case $f=0$; the cases for other $p$-ranks should be similar.  
For an irreducible component $\Gamma$ of the $p$-rank $0$ stratum $\cM_g^0$, there are some hypotheses 
that are necessary for this question to be meaningful:
first, in light of Theorem~\ref{Tbigmonoprank}, we suppose the answer to Question~\ref{Qanumber1} is yes, meaning 
that the $a$-number is $1$ for a generic curve in $\Gamma$;
second, we suppose that there are curves having $a$-number at least two in $\Gamma$.

\begin{question}
\label{Qmonodromya2}
Let $g \geq 4$.
Suppose $\Gamma$ is an irreducible component of the $p$-rank $0$ stratum $\cM_g^0$ of $\cM_g$.
Suppose that the generic geometric point of $\Gamma$ represents a curve with $a$-number $1$.
Suppose the intersection of $\Gamma$ with the $a$-number $\geq 2$ stratum is non-empty and 
let $W$ be an irreducible component of this intersection.
If $\ell \not = p$ is prime, is the $\ell$-adic monodromy of $W$ big? 
\end{question}

\appendix

\backmatter
\bibliographystyle{amsalpha}
\bibliography{supersingular}

\printindex

\end{document}